\pgfplotsset{compat=1.16}
\pgfplotsset{
table/row sep=\\,
cycle list/Set1-5,
ignore legend/.style={
    every axis legend/.code={\renewcommand\addlegendentry[2][]{}}
}
}
\newtheorem{thm}{Theorem}
\newtheorem{prop}[thm]{Proposition}
\newtheorem{rem}[thm]{Remark}
\newtheorem{defn}[thm]{Definition}
\newtheorem{lem}[thm]{Lemma}
\newtheorem{cor}[thm]{Corollary}
\newtheorem{ex}[thm]{Example}
\newtheorem{assn}[thm]{Assumption}
\Crefname{thm}{Theorem}{Theorems}
\Crefname{defn}{Definition}{Definitions}
\Crefname{rem}{Remark}{Remarks}
\Crefname{prop}{Proposition}{Propositions}
\Crefname{lem}{Lemma}{Lemmas}
\Crefname{section}{Section}{Sections}
\renewcommand{\namecref}{\lcnamecref}
\renewcommand{\namecrefs}{\lcnamecrefs}
\newcommand{\leqnomode}{\tagsleft@true\let\veqno\@@leqno}
\newcommand{\reqnomode}{\tagsleft@false\let\veqno\@@eqno}
\definecolor{Green}{rgb}{0,0.9,0.5}
\definecolor{Red}{rgb}{0.8,0,0}
\definecolor{Blue}{rgb}{0,0,0.8}
\newif\ifShowCommentsAndColors
  \newcommand{\reply}[1]{{\itshape\color{Blue}{#1}}}
  \newcommand{\added}[1]{{\phantomsection\color{Red}#1}} %
  \newcommand{\lookUp}[1]{\marginnote[{\color{Blue}\normalfont\scriptsize cf.\,\textbf{#1}}]{{\color{Blue}\normalfont\scriptsize cf.\,\textbf{#1}}}}
  \newcommand{\showCommentForReviewers}[1]{#1}
  \newcommand{\removed}[1]{}
  \newcommand{\reply}[1]{}
  \newcommand{\added}[1]{#1}
  \newcommand{\lookUp}[1]{}
  \newcommand{\showCommentForReviewers}[1]{}
  \newcommand{\removed}[1]{}
\newcommand{\R}{\mathbb{R}}
\newcommand{\N}{\mathbb{N}}
\newcommand{\Z}{\mathbb{Z}}
\newcommand{\sphere}{S}
\newcommand{\sphered}{\sphere^{d-1}}
\newcommand{\thins}{\theta\in\sphered}
\newcommand{\domstat}{\Omega}
\newcommand{\domdyn}{\Lambda}
\newcommand{\projdomstat}{\Xi}
\newcommand{\projdomdyn}{\Gamma}
\newcommand{\domt}{\Sigma}
\newcommand{\alltimes}{{\mathcal T}}
\newcommand{\goodtimes}{{\mathcal T'}}
\newcommand{\ngoodtimes}{{r}}
\newcommand{\alldirs}{\Theta}
\newcommand{\gooddirs}{\Theta'}
\renewcommand{\emptyset}{\varnothing}
\newcommand{\M}{\mathcal{M}}
\newcommand{\Mp}{\mathcal{M}_+}
\newcommand{\fopstat}{\mathrm{Ob}}
\newcommand{\fopdyn}{\overline\fopstat}
\newcommand{\fopdynp}{\underline\fopstat}
\newcommand{\BregmanDistance}[2]{D^{#1}_{#2}}
\newcommand{\Rs}{ \mathrm{Rd} } %
\newcommand{\Rj}{ \mathrm{Rj} } %
\newcommand{\Rf}{ \overline{\Rs} } %
\newcommand{\Rfp}{ \underline{\Rs} } %
\newcommand{\Rfm}{ \overline{\underline{\Rs}} } %
\newcommand{\mv}{ \mathrm{Mv} } %
\newcommand{\amv}[1][]{ \overline{\mv^{#1}} } %
\newcommand{\amvt}[2][]{ \overline{\mv^{#1}_{#2}} } %
\newcommand{\amvp}[1][]{ \underline{\mv^{#1}} } %
\newcommand{\amvm}[1][]{ \overline{\underline{\mv^{#1}}} } %
\newcommand{\ft}{ \mathcal{F} } %
\newcommand{\ftrunc}{ \mathcal{F}_{\maxFrequency} } %
\newcommand{\fidelity}[1]{F_{#1}}
\newcommand{\regularizer}{G}
\newcommand{\energy}{E}
\newcommand{\prodm}[1]{ {\otimes}_{#1}}
\newcommand{\sdot}[2]{#1\cdot #2}
\newcommand{\thdot}[1]{\sdot{\theta}{#1}}
\newcommand{\data}{f}
\newcommand{\noisyData}{f^\delta}
\newcommand{\noiseFreeData}{f^\dagger}
\newcommand{\approximation}{y^\delta}
\newcommand{\groundTruth}{y^\dagger}
\newcommand{\optimalDual}{w^\dagger}
\newcommand{\ghostDistance}{\Delta'}
\newcommand{\minSeparation}{\Delta}
\newcommand{\maxFrequency}{\Phi}
\newcommand{\liftVar}{\gamma}
\newcommand\problemTagNoLink[2][\alpha]{$\mathrm P_{#1}(#2)$}
\newcommand\problemTag[2][\alpha]{\hyperref[eq:dim_reduced_general]{\problemTagNoLink[#1]{#2}}}
\newcommand\PIProblemTagNoLink[2][\alpha]{$\Pi$-$\mathrm P_{#1}(#2)$}
\newcommand\PInoisyProblemTag[2][\alpha]{\hyperref[eq:dim_reduced_product_only_noisy]{\PIProblemTagNoLink[#1]{#2}}}
\newcommand{\dsupp}{S}
\newcommand{\lebesgue}{\mathcal{L}}
\newcommand{\hd}{\mathcal H} %
\newcommand{\mtime}{\Hc_\domt}
\newcommand{\mdirs}{\Hc_\Theta}
\newcommand{\weakstarto}{\stackrel*\rightharpoonup}
\newcommand{\narrowto}{\overset{\textbf{n}}{\rightharpoonup}}
\newcommand{\restr}{{\mbox{\LARGE$\llcorner$}}}
\newcommand{\RadonNikodym}[2]{\frac{\mathrm d#1}{\mathrm d#2}}
\newcommand{\wrt}{\:\mathrm{d}}
\newcommand{\pf}[1]{{#1}_\#}
\newcommand{\unbalancedWasserstein}[2][2]{W_{#1,#2}^{#1}}
\newcommand{\weight}{h}
\DeclareMathOperator{\sgn}{sgn}
\newcommand{\e}{{\mathrm e}}
\newcommand{\ii}{{\mathrm i}} %
\renewcommand{\d}{{\mathrm d}}
\newcommand{\Bc}{ \mathcal{B} }
\newcommand{\Hc}{ \mathcal{H} }
\newcommand{\Cc}{ \mathcal{C} }
\DeclarePairedDelimiter{\mnorm}{\lVert}{\rVert_\M}
\DeclarePairedDelimiter{\norm}{\lVert}{\rVert}
\newcommand{\notinclude}[1]{}
\DeclareMathOperator{\supp}{supp}
\DeclareMathOperator{\range}{Rg}
\DeclareMathOperator{\domain}{\text{dom}}
\DeclareMathOperator{\card}{card}
\DeclareMathOperator{\vspan}{span}
\DeclareMathOperator{\dist}{dist}
\newcommand{\st}{ \, \left|\right.\, }
\DeclarePairedDelimiterX\set[1]\lbrace\rbrace{\def\given{\;\delimsize\vert\;}#1}
\DeclarePairedDelimiterX{\abs}[1]{|}{|}{#1}
\newlist{assumptions}{enumerate}{1}
\setlist[assumptions]{label=\arabic*)}
\crefname{assumptionsi}{assumption}{assumptions}
\newcommand{\gridmu}[1]{G_\liftVar(#1)}
\newcommand{\gridu}[1]{G_u(#1)}
\newcommand{\gridr}[2]{G_r(#1,#2)}
\newcommand{\matmu}[2]{M_\mv(#1,#2)}
\newcommand{\matu}[2]{M_\Rs(#1,#2)}
\newcommand{\measmat}[1]{M_{\fopstat}(#1)}
\newcommand{\muvec}[1]{\vec{\liftVar}_{#1}}
\newcommand{\uvec}[1]{\vec{u}_{#1}}
\newcommand{\nod}{M}
\newcommand{\redconsbnd}{\tau}
\newcommand{\clusterthresh}{w_{\mathrm{min}}}
\newcommand{\dataset}[1]{$D_{#1}$}
\newcommand{\noiseconst}{C_{\alpha}}
\newcommand{\dynsep}{\Delta_{\mathrm{dyn}}}
\newcommand{\maxT}{T}
\newcommand{\domdynone}{\domdyn_1}
\DeclareMathOperator{\conv}{conv}
\newcommand{\domdynnum}{\tilde{\domdyn}}
\begin{document}
\newpage\pagenumbering{arabic}
\setcounter{page}{1}
\begin{frontmatter}
\title{Dimension reduction, exact recovery, and error estimates for sparse reconstruction in phase space}

\author[label1]{M. Holler\fnref{fn1}}
\ead{martin.holler@uni-graz.at}

\author[label2]{A. Schlüter\corref{cor1}}
\ead{alex.schlueter@uni-muenster.de}

\author[label2]{B. Wirth}
\ead{benedikt.wirth@uni-muenster.de}

\fntext[fn1]{MH is a member of NAWI Graz (\url{https://www.nawigraz.at/en}) and BioTechMed Graz (\url{https://biotechmedgraz.at/en}).}
\cortext[cor1]{Corresponding author}

\affiliation[label1]{organization={Institute of Mathematics and Scientific Computing},
            addressline={Heinrichstraße 36},
            city={Graz},
            postcode={A-8010},
            country={Austria}}
\affiliation[label2]{organization={Institute for Applied Mathematics, University of Münster},
            addressline={Orléans-Ring 10},
            city={Münster},
            postcode={48149},
            country={Germany}}

\begin{abstract}
An important theme in modern inverse problems
is the reconstruction of \emph{time-dependent} data from only \emph{finitely many} measurements.
To obtain satisfactory reconstruction results in this setting
it is essential to strongly exploit temporal consistency between the different measurement times.
The strongest consistency can be achieved by reconstructing data directly in \emph{phase space},
the space of positions \emph{and} velocities.
However, this space is usually too high-dimensional for feasible computations.
We introduce a novel dimension reduction technique, based on projections of phase space onto lower-dimensional subspaces, which provably circumvents this curse of dimensionality:
Indeed, in the exemplary framework of superresolution we prove that known exact reconstruction results stay true after dimension reduction,
and we additionally prove new error estimates of reconstructions from noisy data in optimal transport metrics
which are of the same quality as one would obtain in the non-dimension-reduced case.
\end{abstract}

\begin{keyword}
Superresolution \sep Dimension reduction \sep Radon measures \sep Radon transform \sep Optimal transport \sep Convex programming

\MSC[2020] 65J22 \sep 44A12 \sep 49Q22 \sep 90C25 \sep 28A33

\end{keyword}
\end{frontmatter}

\tableofcontents

\section{Introduction}\label{sec:intro}

In several practically relevant inverse problems, the objects to be reconstructed are moving,
and one is interested in information on that motion in addition to the shape and distribution of the objects themselves.
For instance, one might try to reconstruct the distribution of radioactively labelled blood cells and their velocities in a patient from positron emission tomography measurements.
Simultaneous reconstruction of objects and motions is typically a highly nonlinear task since the forward operator involves the composition of the objects with the motion.
Sometimes, though, nonlinear problems can be transformed into convex optimization problems via so-called functional lifting.
The underlying idea is to replace a $Y$-valued variable with a (probability) measure on $Y$.
In imaging, this was first applied to the Mumford--Shah functional \cite{AlbertiBouchitteDalMaso2003}, but many other models followed.
In the context of dynamic inverse problems the approach of functional lifting was pursued by Alberti et al.\ \cite{AlbertiAmmariRomeroWintz2019}
for the reconstruction of linear particle trajectories from indirect observations.
The resulting convex program is very high-dimensional, though.
In this work we propose a technique to reduce the number of these dimensions without sacrificing too much of the reconstruction properties.
Essentially, using different variants of the Radon transform,
we replace the original reconstruction problem by a collection of reconstruction problems with only one spatial dimension.
Here we analyse this technique completely for the reconstruction of linear particle trajectories,
including results on exact reconstruction without noise and error estimates for reconstruction from noisy data,
but the technique is more general and may in principle be applied to other settings as well.

\subsection{Functional lifting and novel dimension reduction for reconstruction of linear particle trajectories} \label{sec:intro_dim_red}

We first briefly introduce a classical method for reconstruction of stationary particles as analysed by Cand\`es and Fernandez-Granda in \cite{Candes2014}
and then the method for moving particles proposed by Alberti et al.\ in \cite{AlbertiAmmariRomeroWintz2019},
to which we finally apply our dimension reduction technique.
The presentation in this section is expository; all involved spaces and operators will be introduced more rigorously at a later point.

Stationary particles of mass $m_i>0$ at location $x_i\in\R^d$ can be described
as a linear combination $u=\sum_{i=1}^Nm_i\delta_{x_i}$ of Dirac measures in $\R^d$.
If an observation $\data$ of this particle configuration is obtained with a \emph{forward} or \emph{observation operator} $\fopstat$,
then one may try to reconstruct $u$ from the observation by solving
\begin{equation}\label{eqn:staticModel}
\min_{u \in \Mp(\R^d)} \| u \|_\M \quad \text{such that } \fopstat u =  \data.
\end{equation}
(We denote by $\M(X)$ and $\Mp(X)$ the signed and the nonnegative Radon measures on some domain $X$ \added{\lookUp{\ref{itemd}}and by $\| \cdot \|_\M$ the total variation norm on Radon measures}).
In \cite{Candes2014} Cand\`es and Fernandez-Granda consider the exemplary case where $u$ is supported in the unit square and $\fopstat$ yields a finite number of Fourier coefficients.
\added{\label{txt:item1}\lookUp{\ref{item1}}In our setting, this would mean to take $\fopstat u=\ftrunc u$, where $(\ftrunc u)_\xi=\int_{[0,1]^d} e^{-\ii2\pi x\cdot\xi}\wrt u(x)$ for all frequencies $\xi\in\Z^d$ with $\norm{\xi}_\infty\leq\maxFrequency$, but we will consider a general forward operator in our model}.
Under a bound on the minimum particle distance Cand\`es and Fernandez-Granda prove that the solution of \eqref{eqn:staticModel} yields exact reconstruction of the ground truth
(in fact they even allow complex-valued masses).
In case of noisy observations $\data$ they replace the constraint $\fopstat u = \data$ with a fidelity term and obtain error estimates for the reconstruction \cite{Candes2013}.

In \cite{AlbertiAmmariRomeroWintz2019} Alberti et al.\ generalize the above reconstruction problem to the case of \emph{moving} particles with observations at \emph{multiple} points in time.
The motivation is twofold.
First, one hopes to gain motion information in addition to the mere particle reconstruction at the different observation times.
Second, the observations at multiple times may help to improve the reconstruction quality at every single point in time.
Alberti et al.\ describe a dynamic particle configuration as a linear combination $\lambda=\sum_{i=1}^Nm_i\delta_{(x_i,v_i)}$\label{txt:highDimMeasure} of Dirac measures in $\R^d\times\R^d$,
where this time $x_i\in\R^d$ is the initial particle position and $v_i\in\R^d$ the particle velocity.
They, too, allow complex-valued masses, but our reduction technique relies to some extent on the nonnegativity of the measure
(their numerical optimization only applies to nonnegative measures as well).
\added{\label{txt:item2}\lookUp{\ref{item2},\ref{itemd}}In the following, we will use the notation $\pf{g}\nu$ to denote the pushforward measure of $\nu\in\M(X)$ under a measurable map $g\colon X\to Y$, that is the measure on $Y$ defined by $\pf{g}\nu(A)=\nu(g^{-1}(A))$ for all measurable sets $A$.}
The spatial configuration at time $t$ is then obtained by a \emph{``move''-operator}, the pushforward of $\lambda$ under $(x,v)\mapsto x+tv$,
\begin{equation*}
\mv^d_t:\M(\R^d\times\R^d)\to\M(\R^d),\,\mv^d_t\lambda=\pf{[(x,v)\mapsto x+tv]}\lambda\added{,}
\end{equation*}
\added{\lookUp{\ref{item1}}which, when applied to a discrete measure $\lambda=\sum_{i=1}^Nm_i\delta_{(x_i,v_i)}$, simply yields
\begin{equation*}
    \mv^d_t\lambda=\sum_{i=1}^Nm_i\delta_{x_i+tv_i}.
\end{equation*}%
}%
Similarly to the stationary case, if at \added{\lookUp{\ref{item5},\ref{iteme}}a finite number of} times $t\in\alltimes\subset\R$ observations $\data_t$ of this spatial configuration are taken with a forward operator $\fopstat_t$,
then the authors reconstruct $\lambda$ by solving
\begin{equation}\label{eqn:highDimensionalModel}
\min_{\lambda \in \Mp(\R^d\times\R^d)} \| \lambda \|_\M \quad \text{such that } \fopstat_t\mv^d_t \lambda =  \data_t \quad \text{for all }t \in \alltimes.
\end{equation}
Based on the theory by Cand\`es and Fernandez-Granda,
Alberti et al.\ show exact reconstruction of particle trajectories in the noise-free case
and in the discretized setting stability of their reconstruction with respect to noise (again replacing the equality constraint by a fidelity term).

The sought Radon measure $\lambda$ lives in a high-dimensional space and thus can typically not be discretized to solve the optimization problem.
As a remedy, the numerical optimization in \cite{AlbertiAmmariRomeroWintz2019} is performed by iteratively placing Dirac measures into $\R^d\times\R^d$ via the Frank--Wolfe algorithm.
However, it remains challenging to generalize this algorithm to non-Dirac measures (see for instance \cite{bredies2020cond_gradient_opt_transport_mh} for a recent approach 
that works with Dirac measures moving along absolutely continuous curves 
using optimal transport regularization).
For this reason we suggest the following dimension reduction.
For a unit vector $\theta\in\sphere^{d-1}$ we introduce the \emph{Radon transform} and the \emph{joint Radon transform} as
\begin{align*}
\Rs_{\theta}&:\M(\R^d)\to\M(\R),&&\Rs_\theta\nu=\pf{[x\mapsto\theta\cdot x]}\nu,\\
\Rj_{\theta}&:\M(\R^d\times\R^d)\to\M(\R^2),&&\Rj_\theta\lambda=\pf{[(x,v)\mapsto(\theta\cdot x,\theta\cdot v)]}\lambda.
\end{align*}
Intuitively, the joint Radon transform projects the system onto the span of $\theta$
so that one only observes the one-dimensional positions $\theta\cdot x$ and velocities $\theta\cdot v$ of any moving object.
Our new, dimension-reduced variables will then be the \emph{snapshots} $u_t\in\Mp(\R^d)$ of the system at a collection $\domt\supset\alltimes$ of times,
\begin{equation*}\phantomsection\label{eqn:snapshots}
u_t=\mv^d_t\lambda,\quad t\in\domt,
\end{equation*}
and the collection of one-dimensional \emph{position-velocity projections} $\liftVar_\theta\in\Mp(\R^2)$ for a set $\alldirs\subset\sphere^{d-1}$ of directions,
\begin{equation*}
\liftVar_\theta=\Rj_\theta\lambda,\quad \theta\in\alldirs.
\end{equation*}
To ensure that the snapshots and position-velocity projections are compatible with each other we make use of the identity $\Rs_\theta\mv^d_t=\mv^1_t\Rj_\theta$,
thus $\Rs_\theta u_t=\mv^1_t\liftVar_\theta$ for all $t\in\domt$, $\theta\in\alldirs$.
\added{\label{txt:item3}\lookUp{\ref{item3}}%
Collecting the position-velocity projections $\liftVar_\theta\in\Mp(\R^2)$ for $\theta \in \alldirs$ in a variable $\liftVar \in \Mp(\alldirs \times \R^2)$ and using further that the nonnegativity of $\lambda$ ensures $\|\lambda\|_\M=\|\liftVar\|_\M$,    as a dimension-reduced version of \eqref{eqn:highDimensionalModel} we suggest to solve
}
\removed{Furthermore, the nonnegativity of $\lambda$ ensures $\|\lambda\|_\M=\|\liftVar_{\hat\theta}\|_\M$ for an arbitrarily fixed $\hat\theta\in\alldirs$.
Summarizing, as a dimension-reduced version of \eqref{eqn:highDimensionalModel} we suggest to solve}
\begin{equation}\label{eqn:lowDimensionalModel}
\min_{ \substack{ \liftVar \in \Mp(\alldirs \times \R^2) \\ u \in \Mp(\R^d)^{\domt} }}
\| \liftVar \|_\M \quad \text{such that }
\begin{cases}
\mv_t^1 \liftVar_\theta = \Rs_\theta u_t &\text{for all }t \in \domt,\theta\in\alldirs, \\
\fopstat_tu_t = \data_t &\text{for all }t \in \alltimes,
\end{cases}
\end{equation}
\added{where the slice $\liftVar_\theta$ is defined by the relation $\liftVar=\mdirs(\theta)\otimes\liftVar_\theta$, with  $\mdirs$ a probability measure on $\alldirs$.}
\added{\label{txt:itemd}\lookUp{\ref{itemd}}Above, $\Mp(X)^I$ denotes the (possibly uncountably infinite) Cartesian product of spaces of nonnegative Radon measures for an index set $I$.}
In the case of noisy data $\data_t$, the constraint $\fopstat_tu_t = \data_t$ is again replaced with a fidelity term.
Note that $\liftVar$ and $u$ can be interpreted as measures on the $(d+1)$-dimensional manifolds $\alldirs \times \R^2\subset\sphere^{d-1}\times\R^2$ and $\R^d\times\domt\subset\R^d\times\R$, respectively,
which is indeed a reduction compared to the $2d$-dimensional domain of the original variable $\lambda$.

\removed{In fact, there are different versions of the above dimension-reduced problem,
depending on whether one interprets $\liftVar$ and $u$ as elements of product spaces as in \eqref{eqn:lowDimensionalModel} or as Radon measures on product manifolds.
In this work we will
1) analyse the relation between these different versions,
2) prove exact reconstruction of particles in the noise-free case, and
3) stable reconstruction of particles from noisy data with convergence rates.}
\added{\lookUp{\ref{item3}}
Besides ensuring well-posedness of the dimension-reduced problem, in this work we will prove 1) exact reconstruction of particles in the noise-free case and 2) stable reconstruction of particles from noisy data with convergence rates.}
The conditions for exact and for stable reconstruction are essentially the same as for the high-dimensional model \eqref{eqn:highDimensionalModel},
so the dimension reduction did not cause any deterioration of the reconstruction properties. \added{
\lookUp{\ref{item3}}In addition to these main results, we also prove equivalence of different variants of the reduced problem \eqref{eqn:lowDimensionalModel}
in which the snapshots and position-velocity projections are taken from different spaces.}
The theoretical results are supported by numerical experiments.

\subsection{Main results}

To model a more realistic setting and to avoid technicalities associated with Radon measures on unbounded or open domains, from now on we restrict ourselves to compact domains
(though in principle one may also work with infinite domains for position, velocity, and time).
In detail, the moving particles are assumed to stay within the compact region $\domstat$,
and the sets $\domt\subset\R$ of times and $\alldirs\subset\sphere^{d-1}$ of directions considered in our dimension-reduced model are also assumed compact.
Exemplary choices are $\domt=[-1,1]$ and $\alldirs=\sphere^{d-1}$ or finite sets.
For later convenience we equip both $\Sigma$ and $\alldirs$ with probability measures $\mtime$ and $\mdirs$.
Finally, we choose a compact $\projdomdyn\subset\R^2$ to represent all allowed pairs of (one-dimensional) projected positions and velocities.

For simplicity we assume the forward operator $\fopstat_t$ to map $\Mp(\domstat)$ into a Hilbert space $H$ (typically finite-dimensional),
and in case of noisy data we employ the quadratic penalty term
\begin{equation*}
\frac1{2\alpha}\sum_{t\in\alltimes}\|\fopstat_tu_t-\data_t\|_H^2
\end{equation*}
with weight parameter $\alpha\geq0$ (our analysis will not depend on that particular choice).
For notational convenience we explicitly allow the value $\alpha=0$, in which case the fidelity term shall be interpreted as the constraint $\fopstat_tu_t=\data_t$ for all $t\in\alltimes$.

\removed{
Our first result relates different variants of the reduced problem \eqref{eqn:lowDimensionalModel}
in which the snapshots and position-velocity projections are taken from different spaces.
In detail, snapshots could be interpreted as elements of the product space $\Mp(\domstat)^\domt$ (with the product topology)
or alternatively as Radon measures on the product manifold $\domstat\times\domt$.
Likewise, the space of position-velocity projections could be $\Mp(\projdomdyn)^\alldirs$ or $\Mp(\alldirs\times\projdomdyn)$.
(In fact one could also think of other topological spaces for snapshots and position-velocity projections
that encode some additional regularity inherited from the dimension reduction, but we found little benefit from that.)
This leads to the different model variants
\begin{align}
\min_{ \substack{ \liftVar \in \Mp( \projdomdyn)^{\alldirs}  \\ u\in \Mp(\domstat)^{\domt} }}\!
\| \liftVar_{\hat\theta} \|_\M
\!+\!\frac1{2\alpha}\!\sum_{t\in\alltimes}\|\fopstat_tu_t\!-\!\data_t\|_H^2
&\text{ s.\,t.\ }
\mv_t^1 \liftVar_\theta = \Rs_\theta u_t \text{ for all }t\!\in\!\domt, \, \theta\!\in\!\alldirs,
\label{eqn:productTopologyModel}\\
\min_{ \substack{ \liftVar \in \Mp(\alldirs \times \projdomdyn)  \\ u \in \Mp(\domstat)^{\domt} }}\!
\| \liftVar \|_\M
\!+\!\frac1{2\alpha}\!\sum_{t\in\alltimes}\|\fopstat_tu_t\!-\!\data_t\|_H^2
&\text{ s.\,t.\ }
\mv_t^1 \liftVar_\theta = \Rs_\theta u_t \text{ for all }t\!\in\!\domt,\, \mdirs\text{-a.a.\ }\theta\!\in\!\alldirs,
\label{eqn:mixedModel}\\
\min_{ \substack{ \liftVar \in \Mp(\alldirs \times \projdomdyn)  \\ u \in \Mp(\domstat \times \domt) }}\!
\| \liftVar \|_\M
\!+\!\frac1{2\alpha}\!\sum_{t\in\alltimes}\|\fopstat_tu_t\!-\!\data_t\|_H^2
&\text{ s.\,t.\ }
\mv_t^1 \liftVar_\theta = \Rs_\theta u_t \text{ for $\mtime$-a.a.\,}t\!\in\!\domt,\, \mdirs\text{-a.a.\,}\theta\!\in\!\alldirs.
\label{eqn:measureTopologyModel}
\end{align}
where for a measure $\liftVar\in\Mp(\alldirs\times\projdomdyn)$ the slice $\liftVar_\theta$ is defined by the relation $\liftVar=\mdirs(\theta)\otimes\liftVar_\theta$
and for a measure $u\in\Mp(\domstat\times\domt)$ the slice $u_t$ is defined by $u=u_t\otimes\mtime(t)$
(the condition $\mv_t^1\liftVar_\theta=\Rs_\theta u_t$ thus implies that the above disintegrations have to exist;
in \cref{sec:alternativeFormulations} we will use a slightly different notation that does not use measure slices but instead adapts the involved linear operators).
Note that one could also consider a fourth combination, however, it has no advantages over the above stated ones, so we do not treat it explicitly in this work.
While product spaces are most convenient if the reconstruction error in single or all snapshots or position-velocity projections is analysed,
classical Radon measures allow a simpler and standard variational well-posedness analysis,
so all these models have their advantages and disadvantages.
If $\domt$ and $\alldirs$ are such that the Radon transform and the move operator are injective (for instance if they have nonempty interior \cite[Thm.~II.3.4]{Natterer86_mh}), however,
we prove that the models are even equivalent.
Below, we say that $\liftVar\in\Mp(\alldirs\times\projdomdyn)$ and $\tilde\liftVar\in\Mp(\projdomdyn)^\alldirs$ coincide
if the slices $\liftVar_\theta$ are uniquely defined and coincide with $\tilde\liftVar_\theta$ for all $\theta\in\alldirs$.
Similarly, $u\in\Mp(\domstat\times\domt)$ and $\tilde u\in\Mp(\domstat)^\domt$ coincide
if the slices $u_t$ are uniquely defined and coincide with $\tilde u_t$ for all $t\in\domt$.

\begin{thm}[Model well-posedness and equivalence]\label{thm:modelEquivalences}
Let $\data_t\in H$ for all $t\in\alltimes$ and $\alpha\geq0$,
where for $\alpha=0$ we assume that there exists a configuration compatible with the data, that is, some $\lambda\in\Mp(\R^d\times\R^d)$
with $\mv_t^d\lambda\in\Mp(\domstat)$ for all $t\in\domt$ and $\fopstat_t\mv_t^d\lambda=\data_t$ for all $t\in\alltimes$.
\begin{enumerate}
\item
If the operator $(\Rs_\theta)_{\theta\in\supp\mdirs}$ is injective, then for any admissible pair $(\liftVar,u)$ in \eqref{eqn:measureTopologyModel}
the slice $u_t$ is uniquely defined for all $t\in\domt$
so that the fidelity term in \eqref{eqn:measureTopologyModel} is well-defined.
\item
The minimization problems \eqref{eqn:productTopologyModel}, \eqref{eqn:mixedModel}, and \eqref{eqn:measureTopologyModel} all admit a solution,
the latter under the condition that $(\Rs_\theta)_{\theta\in\supp\mdirs}$ is injective.
\item
If the operator $(\mv_t^1)_{t\in\supp\mtime}$ is injective, then the minimization problems \eqref{eqn:productTopologyModel} and \eqref{eqn:mixedModel} are equivalent
(the optima $(\liftVar,u)$ coincide).
If the operator $(\Rs_\theta)_{\theta\in\supp\mdirs}$ is injective, then the minimization problems \eqref{eqn:mixedModel} and \eqref{eqn:measureTopologyModel} are equivalent
(the optima $(\liftVar,u)$ coincide).
\end{enumerate}
\end{thm}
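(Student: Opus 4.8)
The plan is to prove all three assertions by the direct method, tied together by one device: because $\Rs_\theta$ and $\mv_t^1$ are pushforwards under maps depending continuously on $\theta$ resp.\ $t$, the maps $\theta\mapsto\Rs_\theta\nu$ and $t\mapsto\mv_t^1\nu$ are weak-* continuous, so an identity between such pushforwards holding for $\mdirs$- (resp.\ $\mtime$-)almost every parameter automatically holds for every parameter in $\supp\mdirs$ (resp.\ $\supp\mtime$), and injectivity of $(\Rs_\theta)_{\theta\in\supp\mdirs}$ (resp.\ $(\mv_t^1)_{t\in\supp\mtime}$) then singles out a unique slice. I also use that pushforwards preserve total mass of nonnegative measures, so the constraint $\mv_t^1\liftVar_\theta=\Rs_\theta u_t$ forces $\|\liftVar_\theta\|_\M=\|u_t\|_\M$ wherever it holds; these slice masses are therefore constant in $t$ and in $\theta$ — equal to $\|\liftVar\|_\M$ in the manifold formulations and to $\|\liftVar_{\hat\theta}\|_\M$ in the product formulation — hence uniformly bounded along any sequence of bounded objective, which is the source of all compactness below. (On the compact domains involved, testing against constants shows weak-* convergence of nonnegative measures preserves mass.)

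For claim (1), fix an admissible $(\liftVar,u)$ for \eqref{eqn:measureTopologyModel}. From $u=u_t\otimes\mtime(t)$ and Fubini there is a $\mtime$-conull set of times with $\Rs_\theta u_t=\mv_t^1\liftVar_\theta$ for $\mdirs$-a.a.\ $\theta$; by the device above this upgrades to all $\theta\in\supp\mdirs$, and injectivity of $(\Rs_\theta)_{\theta\in\supp\mdirs}$ shows $u_t$ is the unique $\nu\in\Mp(\domstat)$ with $\Rs_\theta\nu=\mv_t^1\liftVar_\theta$ for all $\theta\in\supp\mdirs$. Since $\domt=\supp\mtime$, these good times are dense, and along good times $t_n\to t$ the uniform mass bound makes $\{u_{t_n}\}$ weak-* precompact; weak-* continuity of $t\mapsto\mv_t^1\liftVar_\theta$ shows every subsequential limit $\nu$ satisfies $\Rs_\theta\nu=\mv_t^1\liftVar_\theta$ for all $\theta\in\supp\mdirs$, so by injectivity it is unique and independent of the chosen sequence. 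Declaring it to be $u_t$ yields a representative for every $t\in\domt$, making $\sum_{t\in\alltimes}\|\fopstat_tu_t-\data_t\|_H^2$ unambiguous.

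For claim (2), I run the direct method. In \eqref{eqn:mixedModel} and \eqref{eqn:measureTopologyModel}, $\liftVar$ lives on the compact manifold $\alldirs\times\projdomdyn$ and $u$ (in the latter) on $\domstat\times\domt$, so bounded objective and the mass bounds give weak-* sequentially convergent minimizing subsequences with nonnegative limits; weak-* lower semicontinuity of the total variation and — assuming each $\fopstat_t$ weak-* continuous — of the quadratic fidelity handles the objective, and the constraint passes to the limit once the slice relation is rewritten through the adapted linear operators on the product manifolds announced in \cref{sec:alternativeFormulations}, so that no limit of a disintegration is needed; in \eqref{eqn:measureTopologyModel} one invokes claim (1) in addition, the injectively reconstructed slice depending weak-* continuously on $\liftVar$. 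For the product-space model \eqref{eqn:productTopologyModel}, where an uncountable Tychonoff product is not sequentially compact, I extract by a diagonal argument over a countable dense set of parameters $(t,\theta)$ a subsequence along which all corresponding slices converge weak-*, extend the limit slices to all parameters by the density/continuity/compactness argument of claim (1) (no injectivity needed, since one only needs \emph{some} minimizer), note that the constraint — valid for \emph{all} $(t,\theta)$ in this model — survives by density and continuity, and use mass preservation to keep $\|\liftVar_{\hat\theta}\|_\M$ at the infimum.

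For claim (3), I build objective-preserving correspondences between the feasible sets that fix $u$. If $(\mv_t^1)_{t\in\supp\mtime}$ is injective, then in a feasible point of \eqref{eqn:mixedModel} the slice $\liftVar_\theta$ is, for $\mdirs$-a.a.\ $\theta$, the unique measure with $\mv_t^1\liftVar_\theta=\Rs_\theta u_t$ for all $t$, which extends to all $\theta\in\alldirs$ as in claim (1) and gives a coinciding feasible point of \eqref{eqn:productTopologyModel}; conversely injectivity forces $\liftVar_\theta$ in a feasible point of \eqref{eqn:productTopologyModel} to equal that same measure, so $\theta\mapsto\liftVar_\theta$ is Borel (the Borel inverse of the injective weak-* continuous family $(\mv_t^1)$ applied to the continuous datum $\theta\mapsto(\Rs_\theta u_t)_{t}$) and $\bar\liftVar=\mdirs(\theta)\otimes\liftVar_\theta$ is feasible for \eqref{eqn:mixedModel}; the identity $\|\liftVar_{\hat\theta}\|_\M=\|u_t\|_\M=\|\liftVar\|_\M$ makes the objectives agree, so the minima and the minimizers coincide. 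The equivalence of \eqref{eqn:mixedModel} and \eqref{eqn:measureTopologyModel} under injectivity of $(\Rs_\theta)_{\theta\in\supp\mdirs}$ is the same argument with $t$ and $\theta$ interchanged, using claim (1) to pass between $u\in\Mp(\domstat\times\domt)$ and its everywhere-defined slices via $u=u_t\otimes\mtime(t)$ (with $t\mapsto u_t$ Borel by the same inverse observation). The genuine obstacle is precisely this passage from ``almost every parameter'' to ``every parameter'' — showing the injectively reconstructed slice exists everywhere, via mass conservation, weak-* compactness and continuity of the parameter dependence, and is Borel in the parameter — whereas weak-* lower semicontinuity, Banach--Alaoglu/Tychonoff, and weak-* continuity of pushforwards and of $\fopstat_t$ are routine.
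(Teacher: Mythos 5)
Your argument is correct in substance, but at the technical pinch points it takes a genuinely different route from the paper. Existence for \eqref{eqn:productTopologyModel} is obtained in \cref{prop:existence_dim_reduced_product_only} by equipping the product space with the product of weak-$\ast$ topologies and invoking Tychonoff together with lower semicontinuity; topological (non-sequential) compactness suffices there, so the obstacle you work around with a diagonal argument over countable dense parameter sets plus a two-stage extension-by-continuity is not an obstacle for the paper --- though your substitute does work. Existence for \eqref{eqn:measureTopologyModel} is deduced in the paper from the equivalence with \eqref{eqn:mixedModel} (\cref{prop:existence_reduced_measure_only,prop:equivalence_measure}), whereas you prove it directly, which is fine because along bounded admissible sequences the injectively reconstructed slices $u_t$ do converge (cluster point plus injectivity). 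The biggest divergence is in claim (3): to turn a slice family $(u_t)_{t}$ or $(\liftVar_\theta)_{\theta}$ into a Radon measure on the product manifold, the paper produces an element of the range of $\Rf$ resp.\ $\amv$ via the Fenchel--Rockafellar range characterization \cref{prop:abstractRangeCharacterization} and then identifies its disintegration through \cref{prop:decomposition_measure_u_unique,prop:unique_decomp_move}; you instead build the transition-kernel products $u_t\prodm t\mtime$ and $\mdirs\prodm\theta\liftVar_\theta$ directly and supply the kernel measurability by a Lusin--Souslin-type measurable-inverse argument (bounded weak-$\ast$ balls are compact metrizable, $\Rs$ and $\mv$ are injective and weak-$\ast$ continuous there, hence have Borel inverses, and one passes from continuous test functions to Borel sets by a monotone-class step). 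That trades the appendix duality lemma for descriptive-set-theoretic machinery, which you only gesture at but which does hold; both routes share the core mechanism of constant slice masses, weak-$\ast$ compactness of balls, and the a.e.-to-everywhere extension of slices by density, compactness, parameter continuity and injectivity.

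Three details should be tightened. First, your blanket ``device'' (an identity valid for a.e.\ parameter upgrades to the whole support) is legitimate only when \emph{both} sides are weak-$\ast$ continuous in the parameter; in claim (1) you apply it in $\theta$ to $\Rs_\theta u_t=\mv^1_t\liftVar_\theta$, and in claim (3) you assert per-$t$ identities ``for all $t$'' for a.e.\ $\theta$, although the right-hand sides are disintegration slices with no a priori continuity. The cure is the one the paper uses: phrase the a.e.\ information as identities of joint measures ($\Rs u_t=\amv_t\liftVar$, resp.\ $\mv\liftVar_\theta=\Rs_\theta u_t\prodm t\mtime$, the latter continuous in $\theta$ because the masses $\|u_t\|_\M$ are constant) and apply injectivity and the extension argument to those; all your conclusions are then reachable. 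Second, in claim (2) you treat the $u$-component of \eqref{eqn:mixedModel} as if it lived on a compact manifold, but it is a product over the uncountable $\domt$; the per-$t$ further-subsequence extraction of \cref{prop:well_posedness_dim_reduced_noisy} (or your own extension device) closes this. Third, you never invoke the compatibility hypothesis for $\alpha=0$: it is what makes the feasible sets nonempty (via $((\mv^d_t\lambda)_{t\in\domt},\Rj\lambda)$), without which the direct method has nothing to minimize.
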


Our further main results concern the reconstruction properties of the above models.
} %
\added{\lookUp{\ref{item3}}
The resulting dimension-reduced problem, including the case of noisy data, is given as
\begin{equation}
\min_{ \substack{ \liftVar \in \Mp(\alldirs \times \projdomdyn)  \\ u \in \Mp(\domstat)^{\domt} }}\!
\| \liftVar \|_\M
\!+\!\frac1{2\alpha}\!\sum_{t\in\alltimes}\|\fopstat_tu_t\!-\!\data_t\|_H^2
\text{ s.\,t.\ }
\mv_t^1 \liftVar_\theta = \Rs_\theta u_t \text{ for all }t\!\in\!\domt,\, \mdirs\text{-a.a.\ }\theta\!\in\!\alldirs.
\label{eqn:mixedModel}
\end{equation}
After establishing well-posedness of the above model, our first, main results concern its reconstruction properties for $\alpha=0$.}
Since the work of de Castro and Gamboa \cite{Castro2012} and Cand\`es and Fernandez-Granda \cite{Candes2014} it is known
that a measure of the form $u_t=\sum_{i=1}^Nm_i\delta_{x_i}$
(i.\,e.\ a finite collection of particles with masses $m_i\in\R$ and locations $x_i\in\R^d$)
can be exactly reconstructed via \eqref{eqn:staticModel} even from a finite-dimensional observation $\fopstat_tu_t$
as long as the particle configuration satisfies certain spatial regularity conditions depending on the forward operator $\fopstat_t$
(for instance, if $\fopstat_t\added{=\ftrunc}$ yields a truncated Fourier series, the particles have to satisfy a minimum distance condition).
Additionally, in case of noisy observations the reconstruction error can be quantified in terms of the noise strength \cite{Candes2013}.
The interesting insight of Alberti et al.\ in \cite{AlbertiAmmariRomeroWintz2019} essentially was
that these reconstruction properties for static particle configurations $u_t$ can be transferred
to the reconstruction of dynamic particle configurations $\lambda$ via \eqref{eqn:highDimensionalModel}.
In detail, if $\lambda=\sum_{i=1}^Nm_i\delta_{(x_i,v_i)}$ represents the ground truth dynamic particle configuration
and if the snapshot $u_t=\mv_t^d\lambda$ can be exactly reconstructed from the noise-free observation $\fopstat_tu_t$
for times $t\in\goodtimes$ from a small set $\goodtimes\subset\alltimes$ of observation times,
then model \eqref{eqn:highDimensionalModel} is guaranteed to reconstruct the correct $\lambda$ exactly as long as $\lambda$ satisfies a certain dynamic regularity.
This means that not only the particle positions $x_i$ and masses $m_i$ are correctly recovered, but in addition also their velocities $v_i$.
The required dynamic regularity is that the snapshots $u_t$ for $t\in\goodtimes$ neither contain coincidences nor ghost particles.

\begin{defn}[Coincidence and ghost particle]\label{def:coincidenceGhostParticle}
Let $\lambda=\sum_{i=1}^Nm_i\delta_{(x_i,v_i)}\in\M(\R^d\times\R^d)$ be a particle configuration
with $m_i\in\R$, $x_i,v_i\in\R^d$ for $i=1,\ldots,N$, and let $\goodtimes\subset\R$ be a set of times.
\begin{enumerate}
\item
Configuration $\lambda$ has a \emph{coincidence} at some time $t\in\goodtimes$ if the locations of two distinct particles $i\neq j$ coincide at time $t$, that is, $x_i+tv_i=x_j+tv_j$.
\item
Configuration $\lambda$ admits a \emph{ghost particle} with respect to $\goodtimes$
if there exists $(x,v)\in\R^d\times\R^d$ outside the support of $\lambda$
such that at each time $t\in\goodtimes$ the ghost particle location coincides with a particle location, that is, $x+tv=x_i+tv_i$ for some $i\in\{1,\ldots,N\}$ depending on $t\in\goodtimes$.
\end{enumerate}
\end{defn}

Our contribution is to show that this behaviour persists after the dimension reduction.
In fact, we prove that in absence of coincidences or ghost particles
\removed{models \eqref{eqn:productTopologyModel}-\eqref{eqn:mixedModel}}\added{the model \eqref{eqn:mixedModel}} with $\alpha=0$ yields exact reconstruction of the ground truth $\liftVar$ and $u$ in case of noise-free data
and that in case of noisy observations the reconstruction error can be quantified in terms of the noise strength. \removed{(we consider model \eqref{eqn:measureTopologyModel} no further since, if well-posed, it is equivalent to \eqref{eqn:mixedModel} anyway).}
To state the results, let the ground truth configuration and corresponding noise-free observations be labelled with $\dagger$,
\begin{equation*}\phantomsection\label{eqn:groundTruth}
\lambda^\dagger=\sum_{i=1}^Nm_i\delta_{(x_i,v_i)}\in\Mp(\R^d\times\R^d),\quad
u_t^\dagger=\mv_t^d\lambda^\dagger,\quad
\liftVar_\theta^\dagger=\Rj_\theta\lambda^\dagger,\quad
\data_t^\dagger=\fopstat_tu_t^\dagger.
\end{equation*}
\removed{Further, we will call a snapshot $u_t^\dagger$ \emph{reconstructible} from observations via $\fopstat_t$ if it satisfies a particular type of source condition
which is sufficient to reconstruct $u_t^\dagger$ exactly from the observation $\fopstat_tu_t^\dagger$ via \eqref{eqn:staticModel}.
This source condition involves the existence of two dual variables $v^\dagger,v$ and three constants $\kappa,\mu,R>0$,
and it will be stated in detail in \cref{def:reconstructible}.
The main result of Cand\`es and Fernandez-Granda in \cite{Candes2013,Candes2014} essentially concerns this source condition:
if the forward operator $\fopstat_t$ represents a truncated Fourier series,
then $u_t^\dagger$ is reconstructible as soon as the minimum distance between its particles is (up to an explicit constant) no smaller than the inverse truncation frequency.
}%
\added{\label{txt:itemeReconstructible}\lookUp{\ref{iteme}}%
Further, we will say that the snapshot $u_t^\dagger$ can be \emph{reconstructed exactly} from the observation $\data_t^\dagger$ with the observation operator $\fopstat_t$
if it is the unique solution to the static problem at the time instant $t\in\alltimes$,
\begin{equation*}\tag*{$\mathrm P_{\text{stat}}^t(\data^\dagger_t)$}\label{eqn:erstat_intro}
    \min_{u\in\Mp(\domstat)}\mnorm{u}
    \quad\text{such that } \fopstat_tu=\data^\dagger_t.
\end{equation*}
}%
To obtain \added{dynamic} exact reconstruction or a bound on the reconstruction error, the following assumption will be relevant.

\begin{assn}[Ground truth regularity]\label{ass:regularity}
Assume that there is $\emptyset \neq \goodtimes\subset\alltimes$ with the following properties:
\begin{enumerate}
\item\label{enm:reconstructibility}
\removed{For all $t\in\goodtimes$, snapshot $u_t^\dagger$ is reconstructible from observations with $\fopstat_t$.}%
\added{\label{txt:itemeReconstructible}\lookUp{\ref{iteme}}%
For all $t\in\goodtimes$, snapshot $u_t^\dagger$ can be reconstructed exactly from $\data_t^\dagger$ with $\fopstat_t$.
}%
\item\label{enm:coincidences}
Configuration $\lambda^\dagger$ has no coincidences in $\goodtimes$.
\item\label{enm:ghosts}
Configuration $\lambda^\dagger$ admits no ghost particle with respect to $\goodtimes$.
\end{enumerate}
\end{assn}

{\Cref{ass:regularity} is the same as in \cite{AlbertiAmmariRomeroWintz2019} for model \eqref{eqn:highDimensionalModel}\added{\lookUp{\ref{itema}}, see \cref{subsec:comparison_sota} below for a detailed comparison.}

Depending on the choice of the measure $\mdirs$, we obtain exact reconstruction results under this assumption or an even weaker version of it. 

The first result in that direction relies on \cref{ass:regularity} and is applicable in particular the case that $\mdirs$ is the (rescaled) $(d-1)$-dimensional Hausdorff measure restricted to a relatively open subset of $\sphere^{d-1}$.
\begin{thm}[Exact reconstruction for noise-free data -- Hausdorff measure]\label{thm:exactReconstruction_hausdorff}
Take $\mdirs$ to have a nonzero lower semi-continuous density with respect to the $(d-1)$-dimensional Hausdorff measure on $\sphere^{d-1}$.

If \cref{ass:regularity} holds, the solution $(u,\liftVar)$ of  \eqref{eqn:mixedModel} \removed{or \eqref{eqn:productTopologyModel}}for $\data=\data^\dagger$ and $\alpha=0$ satisfies 
\begin{enumerate}
\item
$u_t=u_t^\dagger$ for all $t\in\domt$, and
\item
$\liftVar_\theta=\liftVar_\theta^\dagger$ for $\mdirs$-almost all $\theta\in\alldirs$.
\end{enumerate}
\end{thm}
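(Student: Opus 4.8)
The plan is to reduce the reduced problem to the static recovery result \eqref{eqn:staticModel} at the times in $\goodtimes$ and then to transport the resulting identities to all of $\domt$ and $\alldirs$, exploiting that the ground truth has, \emph{after projection}, no coincidences and no ghost particles for $\mdirs$-almost every direction. First I would calibrate the objective. Since the pushforward of a nonnegative measure preserves total mass and $\Rs_\theta\mv_t^d=\mv_t^1\Rj_\theta$, the pair $(\liftVar^\dagger,u^\dagger)$ is admissible for \eqref{eqn:productTopologyModel} and \eqref{eqn:mixedModel} with $\data=\data^\dagger$ and $\alpha=0$, with objective value $\|\lambda^\dagger\|_\M=\|u_t^\dagger\|_\M$ for every $t$. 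For \emph{any} minimizer $(\liftVar,u)$, the coupling constraint $\mv_t^1\liftVar_\theta=\Rs_\theta u_t$ and mass conservation give $\|\liftVar_\theta\|_\M=\|u_t\|_\M$ for all relevant $\theta$ and all $t\in\domt$, so the objective of either model equals $\|u_t\|_\M$ for any fixed $t$; hence $\|u_t\|_\M\le\|u_t^\dagger\|_\M$ for $t\in\goodtimes$. Because $\alpha=0$ forces $\fopstat_tu_t=\data_t^\dagger=\fopstat_tu_t^\dagger$, the measure $u_t$ is a nonnegative minimizer of \eqref{eqn:staticModel} for the data $\data_t^\dagger$, and reconstructibility of $u_t^\dagger$ (\cref{ass:regularity}; see \cref{def:reconstructible}) makes $u_t^\dagger$ the \emph{unique} such minimizer, so $u_t=u_t^\dagger$ for all $t\in\goodtimes$.

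The heart of the argument is a one-dimensional exact recovery lemma, which I would establish separately: if $\nu^\dagger=\sum_i\mu_i\delta_{p_i}\in\Mp(\projdomdyn)$ has distinct atoms $p_i$, masses $\mu_i>0$, and admits neither a coincidence nor a ghost particle with respect to a set $\goodtimes$ in the sense of \cref{def:coincidenceGhostParticle} (now read in $\R^1\times\R^1$), then every $\nu\in\Mp(\projdomdyn)$ with $\mv_t^1\nu=\mv_t^1\nu^\dagger$ for all $t\in\goodtimes$ coincides with $\nu^\dagger$. Indeed $\mv_t^1\nu$ is supported on the finitely many projected positions of $\nu^\dagger$, so $\supp\nu$ lies, for each $t\in\goodtimes$, in the corresponding finite union of affine lines inside $\projdomdyn$; a point common to all these unions that is not one of the $p_i$ would be a ghost particle of $\nu^\dagger$, hence $\supp\nu\subseteq\{p_i\}$, so $\nu=\sum_i c_i\delta_{p_i}$ with $c_i\ge0$, and matching $\mv_t^1\nu=\mv_t^1\nu^\dagger$ at a time that separates the atoms (absence of coincidences) yields $c_i=\mu_i$. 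No mass inequality is needed here, it is automatic from mass conservation.

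Next I would run a genericity argument over the projection direction. One may assume $\goodtimes$ finite: reconstructibility and absence of coincidences are inherited by subsets of $\goodtimes$, and a pigeonhole argument (two distinct times pin down a linear trajectory) shows that $N+1$ observation times already rule out ghost particles of $\lambda^\dagger$, so $\goodtimes$ may be shrunk to at most $N+1$ times while keeping \cref{ass:regularity}. Then, for $\mdirs$-a.a.\ $\theta$, the projected configuration $\liftVar_\theta^\dagger=\Rj_\theta\lambda^\dagger=\sum_i m_i\delta_{(\theta\cdot x_i,\theta\cdot v_i)}$ meets the hypotheses of the lemma: its atoms fail to be distinct only for $\theta$ in the proper subsphere $\{x_i-x_j,v_i-v_j\}^\perp\cap\sphered$; a coincidence at $t\in\goodtimes$ forces $\theta\perp\big((x_i+tv_i)-(x_j+tv_j)\big)$, a proper subsphere since that vector is nonzero by the no-coincidence property of $\lambda^\dagger$; and a would-be ghost particle, parametrized by its values at two times of $\goodtimes$, produces one of finitely many candidate lines, each of which either is a genuine projected trajectory or forces $\theta$ orthogonal to a fixed nonzero vector, nonvanishing following from the no-coincidence and no-ghost properties of $\lambda^\dagger$. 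The union of these exceptional sets is a finite union of proper subspheres, hence null for the $(d-1)$-dimensional Hausdorff measure and therefore, by the assumption on its density, null for $\mdirs$. For every non-exceptional $\theta$ the coupling constraint and the first paragraph give $\mv_t^1\liftVar_\theta=\Rs_\theta u_t=\Rs_\theta u_t^\dagger=\mv_t^1\liftVar_\theta^\dagger$ on $\goodtimes$, so the lemma yields $\liftVar_\theta=\liftVar_\theta^\dagger$ — the second assertion.

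It remains to recover all snapshots. For every $t\in\domt$ and $\mdirs$-a.a.\ $\theta$ the coupling constraint now reads $\Rs_\theta u_t=\mv_t^1\liftVar_\theta=\mv_t^1\liftVar_\theta^\dagger=\Rs_\theta u_t^\dagger$; since $\theta\mapsto\langle\Rs_\theta(u_t-u_t^\dagger),\phi\rangle$ is continuous for each $\phi\in C(\R)$ while the nonzero lower semicontinuous density of $\mdirs$ is positive on a relatively open subset of $\sphered$, this identity in fact holds for \emph{all} $\theta$ in that relatively open set, on which $(\Rs_\theta)_\theta$ is injective \cite[Thm.~II.3.4]{Natterer86_mh}; therefore $u_t=u_t^\dagger$ for all $t\in\domt$, the first assertion. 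I expect the genericity step to be the main obstacle: a ghost particle of the one-dimensional projection $\liftVar_\theta^\dagger$ need not be the projection of a ghost particle of $\lambda^\dagger$, so excluding it for $\mdirs$-almost every $\theta$ requires the explicit finite enumeration of candidate ghost lines together with the reduction to finitely many times, and for each such line one must verify that the resulting constraint on $\theta$ is nondegenerate precisely because $\lambda^\dagger$ itself has neither coincidences nor ghost particles.
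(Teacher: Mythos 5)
Your proposal is correct and follows the paper's overall skeleton (static uniqueness at the good times, then the projections $\liftVar_\theta$ for generic $\theta$, then all snapshots via injectivity of the Radon transform), but its central step is genuinely different from the paper's. Where the paper proves $\liftVar_\theta=\liftVar_\theta^\dagger$ in \cref{thm:exactrecov_gooddirs} by a dual-certificate argument --- averaging the sign functions $\tilde q_t$ over $\goodtimes$, splitting $\liftVar_\theta-\liftVar_\theta^\dagger$ by Lebesgue decomposition, and deriving a contradiction with the \emph{minimality} of the total variation --- your one-dimensional lemma uses only nonnegativity: since $\supp(\pf{f}\nu)$ is the image of $\supp\nu$ for $\nu\geq0$, the constraints $\mv_t^1\liftVar_\theta=\mv_t^1\liftVar_\theta^\dagger$ at $t\in\goodtimes$ confine $\supp\liftVar_\theta$ to the intersection of the line unions, absence of projected ghosts and coincidences forces $\supp\liftVar_\theta\subset\supp\liftVar_\theta^\dagger$, and matching atoms at any coincidence-free time fixes the masses. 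This is more elementary and yields the slightly stronger statement that optimality of $\liftVar$ is never used beyond pinning $u_t=u_t^\dagger$ on $\goodtimes$; what the paper's longer route buys is that its certificate construction is exactly the blueprint that is quantified and reused for the noisy-data estimates in \cref{sec:dualVariables,sec:dualConstructionII}, so it is not wasted effort there. Your genericity step is essentially the paper's \cref{thm:projcoinc,thm:projghost-ext,thm:ghost}: you correctly identify that a projected ghost need not come from a full-dimensional ghost and sketch the same finite enumeration of candidate points (pinned down by two times) with nondegenerate hyperplane conditions on $\theta$ justified by the no-coincidence/no-ghost assumptions; note only that your reduction of $\goodtimes$ to at most $N+1$ times is unnecessary, since $\alltimes$ is finite by assumption (though your pigeonhole shrinking is valid --- be aware that shrinking $\goodtimes$ does not in general preserve the no-ghost property, which is exactly why the pigeonhole argument is needed there). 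Your final step, upgrading the $\mdirs$-a.e.\ identity $\Rs_\theta u_t=\Rs_\theta u_t^\dagger$ to all $\theta$ in an open set by continuity and then invoking injectivity over a direction set with nonempty interior, is a sound variant of the paper's route via \cref{prop:radon_injective} and \cref{thm:exactrecovAllSnapshots}.
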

In a second branch of results, we consider the case that $\mdirs$ is a counting measure. Here we obtain exact reconstruction by only requiring \added{that} a certain number of snapshots $u_t^\dagger$ \added{can be exactly reconstructed} from observations with $\fopstat_t$ (without any assumptions on ghost particles or coincidences); see \cref{sec:numDirections}.

While those results are mostly deterministic, one particular case that we highlight here shows that \emph{most} point configuration can be reconstructed already if only three snapshots $u_t^\dagger$ \added{can be reconstructed exactly} from observations with $\fopstat_t$.
\begin{thm}[Exact reconstruction for noise-free data -- Counting measure]\label{thm:exactReconstruction_counting}
Let the location vector of $\lambda^\dagger$ in $(\R^d\times\R^d)^N$ be distributed
according to a probability distribution which is absolutely continuous with respect to the $2Nd$-dimensional Lebesgue measure, and choose $\mdirs$ to be the counting measure on $d+1$ directions in $\alldirs$ with any $d$ of them being linearly independent.

If there are three times $t\in \goodtimes$ at which snapshot $u_t^\dagger$ \added{can be reconstructed exactly} from observations with $\fopstat_t$, then almost surely the solution $(u,\liftVar)$ of \eqref{eqn:mixedModel} \removed{or \eqref{eqn:productTopologyModel} }for $\data=\data^\dagger$ and $\alpha=0$ satisfies 
\begin{enumerate}
\item
$u_t=u_t^\dagger$ for all $t\in\domt$, and
\item
$\liftVar_\theta=\liftVar_\theta^\dagger$ for $\mdirs$-almost all $\theta\in\alldirs$.
\end{enumerate}
\end{thm}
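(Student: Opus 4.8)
The plan is to combine the deterministic exact-reconstruction machinery developed for the counting measure (the second branch of results, which I may assume gives exact reconstruction whenever a sufficiently large number of snapshots are reconstructible \emph{and} the dimension-reduced configuration avoids coincidences and ghost particles in a suitable sense) with a measure-theoretic argument showing that for a Lebesgue-a.e.\ choice of the ground-truth location vector, three reconstructible snapshots already force the absence of the obstructing configurations. Since \cref{thm:modelEquivalences} and \cref{thm:exactReconstruction_hausdorff} already reduce exact recovery to verifying \cref{ass:regularity} (or its counting-measure weakening), the real content here is purely a genericity statement about point configurations in $(\R^d\times\R^d)^N$.

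First I would fix the $d+1$ directions $\theta_0,\dots,\theta_d\in\alldirs$, any $d$ of which are linearly independent, and fix three times $t_1,t_2,t_3\in\goodtimes$ at which $u_{t_k}^\dagger$ is reconstructible. The key observation is that knowing the one-dimensional snapshots $\Rs_{\theta_j}u_{t_k}^\dagger$ for all $j$ and these three $k$ is enough to pin down $\lambda^\dagger$ uniquely among \emph{generic} configurations: at each time $t_k$, the $d+1$ projected position clouds $\{\theta_j\cdot(x_i+t_k v_i)\}_i$ determine the full positions $x_i+t_k v_i\in\R^d$ up to the combinatorial ambiguity of matching projected coordinates across the $d+1$ directions, and two time snapshots then determine each pair $(x_i,v_i)$ by linear algebra — provided no spurious match is consistent. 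I would show that the set of location vectors for which some spurious cross-direction or cross-time match occurs is contained in a finite union of proper algebraic subvarieties of $(\R^d\times\R^d)^N$, hence Lebesgue-null, hence null for the assumed absolutely continuous distribution.

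Concretely, the bad events to exclude are: (i) a coincidence at some $t_k$, i.e.\ $x_i+t_k v_i = x_j+t_k v_j$ for $i\neq j$ — a linear condition, so a proper subvariety; (ii) a ``ghost'' in the reduced problem, i.e.\ the existence of a projected point-velocity pair $(\theta_j\cdot x,\theta_j\cdot v)$ not coming from a true particle but matching a true projected particle at all three times for each $j$ — this reduces, via the pigeonhole over the finite index set and the three times, to finitely many polynomial equations in the coordinates, each of which is nontrivial for generic configurations and hence again cuts out a proper subvariety; (iii) a degenerate match across directions that would let the reconstruction merge or split particles — captured by requiring that the $d+1$ projected clouds at each $t_k$ be ``in general position'' relative to one another, again an open dense condition failing only on a subvariety. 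The union over the finitely many $(i,j)$, the three times, the $d+1$ directions, and the finitely many combinatorial matchings is still a finite union of null sets, so its complement is a full-measure set on which \cref{ass:regularity} holds for the counting measure $\mdirs$ with $\goodtimes$ containing these three times. Applying \cref{thm:exactReconstruction_hausdorff}'s counting-measure analogue (the second branch) then yields $u_t=u_t^\dagger$ for all $t\in\domt$ and $\liftVar_\theta=\liftVar_\theta^\dagger$ for $\mdirs$-a.a.\ $\theta$, which is the claim.

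The main obstacle I anticipate is item (ii): showing that the ghost-exclusion conditions are genuinely nontrivial polynomial constraints rather than identically satisfied. One must argue that for \emph{some} configuration (hence generically) no phantom pair $(x,v)$ can shadow the true particles at all three times across all $d+1$ directions — the subtlety being that with only one spatial dimension per direction, a phantom can locally impersonate different true particles at different times, so the three-time, $(d+1)$-direction bookkeeping has to be set up carefully to rule this out. I expect this to hinge on a dimension count: a phantom trajectory has $2d$ free parameters, each time-direction constraint removes one degree of freedom, and $3(d+1) > 2d$ for $d\geq1$, with strict inequality giving the needed codimension; making this rigorous requires exhibiting one explicit non-shadowed configuration (or invoking that the relevant resultant polynomial is not the zero polynomial), which is the one place where a short but genuine computation is unavoidable.
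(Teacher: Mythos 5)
Your route is genuinely different from the paper's. The paper verifies no ghost-particle or coincidence conditions at all in the counting-measure case: it first gets $u_t=u_t^\dagger$ at the three good times from \cref{thm:exactrecov}, then observes that $\mv_t$ is (up to a rescaling) a two-dimensional Radon transform along the direction $(1,t)/\sqrt{1+t^2}$, so the constraint turns the three good times into three Radon projections of $\liftVar_\theta$ in the projected position-velocity plane; \cref{thm:genericPointMeasureFromRadon} (determination of a generic discrete nonnegative measure from $d+1$ projections, applied with $d=2$, hence three directions) then gives $\liftVar_\theta=\liftVar_\theta^\dagger$ almost surely for each of the finitely many $\theta$, and a second application of the same corollary in $\R^d$ with the $d+1$ directions recovers $u_t$. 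Your plan -- show generically that each projected configuration $\Rj_\theta(\supp\lambda^\dagger)$ has neither coincidences nor ghost particles with respect to the three times and then run the machinery of \cref{sec:exact_recon_dim_reduced} -- is exactly the alternative that the closing remark of \cref{sec:numDirections} describes as possible but less concise. Your genericity arguments for (i) and (ii) are sound in outline (one nontrivial polynomial condition per direction, index triple and time), though the relevant count for (ii) is per direction: three time constraints against the two degrees of freedom of a phantom in the projected plane, not $3(d+1)$ versus $2d$, which would pertain to a full-dimensional phantom and is not what the $\gooddirs$-based route requires.

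The genuine gap is claim 1 for $t\in\domt\setminus\goodtimes$. All of your conditions (i)--(iii) are imposed only at the three good times; together with \cref{thm:exactrecov,thm:exactrecov_gooddirs} they yield $u_t=u_t^\dagger$ on $\goodtimes$ and $\liftVar_\theta=\liftVar_\theta^\dagger$ for all $\theta\in\alldirs$, i.e.\ claim 2. To conclude $u_t=u_t^\dagger$ at an arbitrary $t\in\domt$ you must invoke \cref{thm:exactrecovAllSnapshots}, whose hypothesis is that $u_t^\dagger$ is uniquely determined \emph{among all nonnegative measures} (note $u_t$ is not a priori discrete, so the set-version \cref{thm:setFromRadon}-type statement does not suffice) by its Radon data along the $d+1$ directions -- and this must hold at \emph{every} $t\in\domt$. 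For each fixed $t$ this is an almost-sure statement by \cref{thm:genericPointMeasureFromRadon}, using that the pushforward of the configuration distribution under $(x,v)\mapsto x+tv$ is still absolutely continuous; but $\domt$ is in general uncountable, so you cannot simply take the union of the per-$t$ null sets. A simultaneous genericity statement is needed, and this is precisely the point the paper addresses by extending the proof of \cref{thm:genericPointReconstructionRadon} so that almost surely all snapshots $u_t^\dagger$, $t\in\domt$, are determined by their projections at once. Without that step (or some replacement, e.g.\ an argument exploiting the structure of the exceptional times), your proposal establishes claim 2 but claim 1 only on $\goodtimes$.
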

Our final main result concerns convergence rates in case of noisy data.
The noise strength $\delta>0$ of noisy observations $\data^\delta$ is for simplicity defined as the size of the data term at the ground truth,
\begin{equation*}\phantomsection%
\delta=\tfrac12\sum_{t\in\alltimes}\|\data_t^\delta-\data_t^\dagger\|_H^2.
\end{equation*}
The reconstruction error will be estimated using the unbalanced optimal transport cost
\begin{equation*}
\unbalancedWasserstein[2]{R}(\nu_1,\nu_2)=\inf\left\{W_2^2(\nu,\nu_2)+\tfrac12R^2\|\nu_1-\nu\|_{\M}\,\middle|\,\nu\in\Mp(\R^n)\right\},
\end{equation*}
where \added{\label{txt:itemd2}$R>0$ is a parameter
indicating how far mass may be transported (any mass requiring a farther transport is simply removed, paying $R^2$ times the removed amount of mass) and
\lookUp{\ref{itemd},\ref{iteme}}
\begin{multline*}
W_2(\nu_1,\nu_2)=\inf\left\{\left(\int_{\R^n\times\R^n}\dist(x,y)^2\,\d\pi(x,y)\right)^{1/2}\,\middle|\,\pi\in\Mp(\R^n\times\R^n),\right.\\
\left.\vphantom{\left(\int_{\R^n}\right)^{1/p}}\pf{[(x,y)\mapsto x]}\pi=\nu_1,\pf{[(x,y)\mapsto y]}\pi=\nu_2\right\}
\end{multline*}%
}%
denotes the Wasserstein-2 distance between $\nu_1,\nu_2\in\Mp(\R^n)$.
\added{\label{txt:itemeReconstructible2}\lookUp{\ref{itemd},\ref{iteme}}%
The error estimates naturally require a quantitative version of \cref{ass:regularity}\eqref{enm:reconstructibility} that additionally encodes the stability of the reconstruction.
This is done via a particular type of source condition, which essentially states that dual variables must exist
which take any prescribed combination of the values $\{-1,1\}$ on the support of the snapshot $u_t^\dagger$ and behave like paraboloids in the vicinity.
Denoting the subdifferential of the total variation norm by $\partial\|\cdot\|_\M$ and the (pre-)dual observation operator by $\fopstat_t^*:H\rightarrow C(\domstat)$,
the source condition reads as follows.

\begin{defn}[Stably reconstructible measure]\label{def:reconstructible}
We call a measure $\nu^\dagger=\sum_{i=1}^Nm_i\delta_{x_i}\in\Mp(\domstat)$ \emph{\added{stably} reconstructible} (from measurements via $\fopstat_t$),
if one can find $\kappa,\mu,R>0$ %
such that for any $N$-tuple $s\in\{-1,1\}^N$ and associated measure $\nu^s=\sum_{i=1}^Ns_i\delta_{x_i}\in\M(\domstat)$ there exist dual variables $v^\dagger,v^s$ with
$-\fopstat_t^*v^\dagger\in\partial\|\cdot\|_{\M}(\nu^\dagger)$, $-\fopstat_t^*v^s\in\partial\|\cdot\|_{\M}(\nu^s)$ and
\begin{align*}
-\fopstat_t^*v^\dagger(x)&\leq1-\kappa\min\{R,\dist(x,\{x_1,\ldots,x_N\})\}^2
&&\text{for all }x\in\domstat,\\
\fopstat_t^*v^s(x_i)\fopstat_t^*v^s(x)&\geq1-\mu\dist(x,x_i)^2
&&\text{for all }x\in B_R(\{x_i\}),\,i=1,\ldots,N.
\end{align*}
\end{defn}

Stable reconstructibility turns out to imply exact reconstructibility.
The main result of Cand\`es and Fernandez-Granda in \cite{Candes2013,Candes2014} essentially concerns this source condition:
if the forward operator $\fopstat_t\added{=\ftrunc}$ represents a truncated Fourier series,
then $u_t^\dagger$ is stably reconstructible as soon as the minimum distance between its particles is (up to an explicit constant) no smaller than the inverse truncation frequency.
}%
We prove the following reconstruction error estimates for noisy observations.

\begin{thm}[Reconstruction error estimates]\label{thm:reconstructionErrorEstimate}
Let $(u,\liftVar)$ be the solution of \eqref{eqn:mixedModel} \removed{or \eqref{eqn:productTopologyModel} }for $\data=\data^\delta$ and $\alpha=\sqrt\delta$, and let \added{$\emptyset\neq\goodtimes\subset\alltimes$ such that $u_t^\dagger$ is stably reconstructible for all $t\in\goodtimes$}. Further,
\begin{enumerate}
\item
let $\overline\alldirs\subset\alldirs$ compact such that $\liftVar^\dagger_\theta$ contains no coincidence or ghost particle with respect to $\goodtimes$ for any $\theta\in\overline\alldirs$, and
\item
let $\overline\domt\subset\domt$ compact such that for all $t\in\overline\domt$
all particles have positive mutual distance in the projections $(\Rs_\theta u^\dagger_t)_{\theta\in\overline\alldirs}$
and the support $\supp u^\dagger_t$ can be uniquely reconstructed from $(\supp(\Rs_\theta u^\dagger_t))_{\theta\in\overline\alldirs}$.
\end{enumerate}
Then there exist constants $R,C>0$ (depending on $\lambda^\dagger$, $\goodtimes$, $\overline\alldirs$ and $\overline\domt$) such that
\begin{enumerate}
\item\label{enm:snapshotEstimateIntro}
we have $\unbalancedWasserstein[2]{R}(u_t,u_t^\dagger)\leq C\sqrt\delta$ for all $t\in\overline\domt$,
\item\label{enm:liftVarEstimateIntro}
we have $\unbalancedWasserstein[2]{R}(\liftVar_\theta,\liftVar_\theta^\dagger)\leq C\sqrt\delta$ for $\mdirs$-almost all $\theta\in\overline\alldirs$ \removed{ all $\theta\in\overline\alldirs$.}
\end{enumerate}
\end{thm}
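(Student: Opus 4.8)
The plan is to extract a-priori bounds from optimality, then to localise the snapshots at the good times $\goodtimes$ via the source condition, and finally to propagate the resulting estimates to all of $\overline\domt$ and $\overline\alldirs$ by stably inverting the Radon and move operators.

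\emph{Step 1 (a-priori bounds).} Since $\delta>0$ we have $\alpha=\sqrt\delta>0$, so both models are unconstrained, and $(u^\dagger,\liftVar^\dagger)$ is admissible because $\mv_t^1\liftVar_\theta^\dagger=\mv_t^1\Rj_\theta\lambda^\dagger=\Rs_\theta\mv_t^d\lambda^\dagger=\Rs_\theta u_t^\dagger$. Comparing a minimizer $(u,\liftVar)$ with this competitor and using $\data_t^\dagger=\fopstat_tu_t^\dagger$ together with $\tfrac12\sum_{t\in\alltimes}\|\data_t^\delta-\data_t^\dagger\|_H^2=\delta$ gives $\|\liftVar\|_\M\le\|\liftVar^\dagger\|_\M+\sqrt\delta$ and $\sum_{t\in\alltimes}\|\fopstat_tu_t-\data_t^\delta\|_H^2\le 2\sqrt\delta(\|\liftVar^\dagger\|_\M-\|\liftVar\|_\M)+2\delta$. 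Because $\Rs_\theta$, $\mv_t^1$, $\mv_t^d$ and $\Rj_\theta$ all preserve total mass on $\Mp$, the move-compatibility constraint (evaluated at some fixed $t_0\in\domt$) forces $\|u_t\|_\M=\|\liftVar_\theta\|_\M$ to be independent of $t\in\domt$ and (in \eqref{eqn:mixedModel}: $\mdirs$-a.a.) $\theta\in\alldirs$; write $M:=\|u_t\|_\M$ and likewise $M^\dagger:=\|u_t^\dagger\|_\M=\|\liftVar_\theta^\dagger\|_\M=\|\lambda^\dagger\|_\M$. (Here $\|\liftVar\|_\M$ means $\|\liftVar_{\hat\theta}\|_\M$ for \eqref{eqn:productTopologyModel}; in both models it equals $M$.)

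\emph{Step 2 (fidelity and mass gap).} For each $t\in\goodtimes$, \cref{ass:regularity}\eqref{enm:reconstructibility} and \cref{def:reconstructible} provide $p_t\in H$ with $\eta_t:=\fopstat_t^*p_t$ satisfying $\eta_t\le1$, $\eta_t\equiv1$ on $\supp u_t^\dagger$, and quadratic decay away from $\supp u_t^\dagger$. From the identity $\langle 1-\eta_t,u_t\rangle=M-M^\dagger-\langle p_t,\data_t^\delta-\data_t^\dagger\rangle-\langle p_t,\fopstat_tu_t-\data_t^\delta\rangle$ and $1-\eta_t\ge0$ (using $u_t\ge0$), Cauchy--Schwarz yields $M^\dagger-M\le\|p_t\|(\|\data_t^\delta-\data_t^\dagger\|_H+\|\fopstat_tu_t-\data_t^\delta\|_H)$. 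Inserting this into the fidelity bound of Step 1 produces a quadratic inequality for $s:=\bigl(\sum_{t\in\alltimes}\|\fopstat_tu_t-\data_t^\delta\|_H^2\bigr)^{1/2}$ that forces $s\lesssim\sqrt\delta$; hence also $\sum_{t\in\alltimes}\|\fopstat_tu_t-\data_t^\dagger\|_H^2\lesssim\delta$ and $\bigl|M-M^\dagger\bigr|\lesssim\sqrt\delta$, with constants controlled by $\lambda^\dagger$ and $\goodtimes$ through the $\|p_t\|$. Re-inserting these bounds into the identity above then gives $\langle1-\eta_t,u_t\rangle\lesssim\sqrt\delta$ for every $t\in\goodtimes$.

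\emph{Step 3 (snapshots at the good times).} The structure of $\eta_t$ (namely $1-\eta_t\gtrsim\kappa\,\dist(\cdot,\supp u_t^\dagger)^2$ on an $R$-neighbourhood of $\supp u_t^\dagger$ and $1-\eta_t\ge\mu$ outside it) lets one build an explicit competitor for $\unbalancedWasserstein{R}(u_t,u_t^\dagger)$: transport the near mass of $u_t$ onto $\supp u_t^\dagger$ at cost $\lesssim\kappa^{-1}\langle1-\eta_t,u_t\rangle$, delete the far mass at cost $\lesssim(R^2/\mu)\langle1-\eta_t,u_t\rangle$, and balance the total masses at cost $\lesssim R^2|M-M^\dagger|$. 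This gives $\unbalancedWasserstein{R}(u_t,u_t^\dagger)\lesssim\sqrt\delta$ for all $t\in\goodtimes$ — the quantitative form of reconstructibility, which may be isolated as a standalone lemma.

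\emph{Step 4 (propagation; main obstacle).} Pushing the Step-3 bounds through $\Rs_\theta$ (a contraction for the unbalanced cost, being the pushforward under the $1$-Lipschitz map $x\mapsto\theta\cdot x$) and using the constraints $\Rs_\theta u_t=\mv_t^1\liftVar_\theta$, $\Rs_\theta u_t^\dagger=\mv_t^1\liftVar_\theta^\dagger$ yields $\unbalancedWasserstein{R}(\mv_t^1\liftVar_\theta,\mv_t^1\liftVar_\theta^\dagger)\lesssim\sqrt\delta$ for all $t\in\goodtimes$ and ($\mdirs$-a.a.\ in model \eqref{eqn:mixedModel}) $\theta\in\overline\alldirs$; since $\liftVar_\theta^\dagger$ has no coincidence and no ghost particle with respect to $\goodtimes$ and $\bigl|\|\liftVar_\theta\|_\M-\|\liftVar_\theta^\dagger\|_\M\bigr|\lesssim\sqrt\delta$, the quantitative (noisy) counterpart of the one-dimensional dynamic reconstruction argument underlying \cref{thm:exactReconstruction_hausdorff,thm:exactReconstruction_counting} gives $\unbalancedWasserstein{R}(\liftVar_\theta,\liftVar_\theta^\dagger)\lesssim\sqrt\delta$, i.e.\ estimate~\eqref{enm:liftVarEstimateIntro}. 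Finally, for $t\in\overline\domt$, pushing this through $\mv_t^1$ (Lipschitz with constant bounded on $\overline\domt$) gives $\unbalancedWasserstein{R}(\Rs_\theta u_t,\Rs_\theta u_t^\dagger)\lesssim\sqrt\delta$ for all $\theta\in\overline\alldirs$, and stability of point-cloud recovery from the line projections $(\Rs_\theta)_{\theta\in\overline\alldirs}$ — guaranteed by the well-separation and unique-support hypotheses on $u_t^\dagger$ — upgrades this to $\unbalancedWasserstein{R}(u_t,u_t^\dagger)\lesssim\sqrt\delta$, i.e.\ estimate~\eqref{enm:snapshotEstimateIntro}; collecting all constants produces the asserted $R,C$ depending on $\lambda^\dagger,\goodtimes,\overline\alldirs,\overline\domt$. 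I expect the real difficulty to lie precisely in these last two inversions: since $\mv_t^1$ and the $\Rs_\theta$ are only contractions, not isometries, one must first localise and \emph{count} the limiting particles from slightly perturbed projections (this is where absence of coincidences and ghosts, and the injectivity encoded in the hypotheses, are used) before one can reassemble an optimal-transport competitor with controlled radius and constant.
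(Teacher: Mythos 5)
Your Steps 1--2 are sound (they reproduce, by hand, the Bregman-distance estimate of \cref{thm:BregmanEstimates} for the quadratic fidelity), but Step 3 contains a genuine gap: the bound $\unbalancedWasserstein[2]{R}(u_t,u_t^\dagger)\lesssim\sqrt\delta$ does \emph{not} follow from $\langle 1-\eta_t,u_t\rangle\lesssim\sqrt\delta$ together with $|M-M^\dagger|\lesssim\sqrt\delta$. These two quantities control the mass far from $\supp u_t^\dagger$, the quadratic moment near the spikes, and the \emph{total} mass defect, but not the per-spike mass defects $\sum_i|(u_t-u_t^\dagger)(B_R(\{x_i+tv_i\}))|$, which is exactly the term $B$ needed in \cref{thm:unbalancedWasserstein}. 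For instance $u_t=2m\delta_{x_1}$ and $u_t^\dagger=m\delta_{x_1}+m\delta_{x_2}$ give $\langle1-\eta_t,u_t\rangle=0$ and $M=M^\dagger$, yet the divergence is of order $R^2m$; such a configuration is only excluded by the data fit, and to exploit the data fit one needs certificates in the range of $\fopstat_t^*$ that separate the individual spike masses. This is precisely why \cref{def:reconstructible} demands the second family of dual variables $v^s$ for \emph{all} sign patterns $s\in\{-1,1\}^N$, and why \cref{thm:massFromBregman,thm:BregmanEstimates} each carry a third inequality involving $\langle v,\approximation-\groundTruth\rangle$; your construction never uses the $v^s$, so your Step 3 as written does not give the claimed estimate even at the good times.

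The second, larger gap is Step 4: the two quantitative "inversion" statements -- (a) that approximate knowledge of $\mv^1_t\liftVar_\theta$ for the finitely many $t\in\goodtimes$, plus absence of coincidences and ghost particles for $\liftVar_\theta^\dagger$ and a mass bound, forces $\unbalancedWasserstein[2]{R}(\liftVar_\theta,\liftVar^\dagger_\theta)\lesssim\sqrt\delta$, and (b) the analogous recovery of $u_t$, $t\in\overline\domt$, from approximate projections $(\Rs_\theta u_t)_{\theta\in\overline\alldirs}$ -- are only asserted, and they are the actual content of the theorem. The paper never inverts anything: it constructs explicit dual certificates for $\liftVar_\theta$ and for the bad snapshots by averaging parabola-type static certificates over $\goodtimes$ (composed with $\mv_t^*$), respectively over a finite $\gooddirs$ (composed with $\Rs_\theta^*$); their coercivity constant is the smallest eigenvalue of $\frac1{\card\goodtimes}\sum_{t\in\goodtimes}(1,-t)\otimes(1,-t)$, respectively of $\frac1{\card\gooddirs}\sum_{\theta\in\gooddirs}\theta\otimes\theta$, and their validity radius is quantified via $\Delta$-coincidences and $\Delta$-ghost particles (\cref{thm:estimateMu,thm:estimateBadU}, with the measure-space versions in \cref{sec:dualConstructionII}); a final compactness argument makes $\Delta$, hence $R$ and $C$, uniform over $\overline\alldirs$ and $\overline\domt$. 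If you wanted to carry out your inversion route instead, you would need to prove a stability lemma of exactly this strength, including the per-line mass bookkeeping (which again requires the sign certificates missing from Step 3) and a uniform-in-$(t,\theta)$ quantification of "no coincidences/ghosts"; so the steps you flag as "the real difficulty" are not a loose end to be filled in routinely but essentially the whole of the paper's Section 5.
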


For any given finite particle configuration $\lambda^\dagger$ satisfying \cref{ass:regularity}
the set of $\theta\in\sphere^{d-1}$ such that $\liftVar^\dagger_\theta$ contains a coincidence or ghost particle with respect to $\goodtimes$
has vanishing $(d-1)$-dimensional Hausdorff measure $\hd^{d-1}$,
thus only a $\hd^{d-1}$-nullset of directions is excluded in \cref{thm:reconstructionErrorEstimate}.
Similarly, already if $\overline\alldirs$ contains just $d+1$ generic directions,
the set of times $t\in\R$ violating the conditions on $\overline\domt$ is also a Lebesgue-nullset
so that again just a nullset of times is excluded in \cref{thm:reconstructionErrorEstimate}.

\subsection{Comparison to the state of the art} \label{subsec:comparison_sota}

\added{\label{txt:item0}\lookUp{\ref{item0}}As mentioned before, in contrast to \cite{AlbertiAmmariRomeroWintz2019} we require nonnegativity of the measures to be reconstructed,
which is the cost to pay for achieving a dimension reduction (the technical reason will be pinpointed later in \cref{rem:signedMeasures}).
From an application viewpoint, the restriction to nonnegative measures is still of significant relevance,
for instance, if the measures describe the mass distribution of fluorescent molecules in fluorescence microscopy
or if, as in \cite{AlbertiAmmariRomeroWintz2019}, they describe the distribution of ultrasonic reflectors in ultrafast ultrasonography.
This restriction is also considered elsewhere in the literature,
for instance, exact recovery and error estimates for the reconstruction of nonnegative measures are analysed in \cite{Castro2012,DenoyelleDuvalPeyre2017}.
In general it seems easier to satisfy source conditions for nonnegative measures
so that \cref{ass:regularity}\eqref{enm:reconstructibility} and stable reconstructibility in the sense of \cref{def:reconstructible} are more easily satisfied,
see \cite{DenoyelleDuvalPeyre2017}.}

\added{\lookUp{\ref{itema}} Except for positivity, \Cref{ass:regularity} is essentially the same as what is required in \cite{AlbertiAmmariRomeroWintz2019} for model \eqref{eqn:highDimensionalModel}: The main result of \cite{AlbertiAmmariRomeroWintz2019} also needs to exclude the existence of ghost particles, and in addition it requires the existence of static dual certificates, which implies properties \ref{enm:reconstructibility} and \ref{enm:coincidences} of \Cref{ass:regularity}.}

\added{\lookUp{\ref{itema}}Exact reconstruction is shown in both \cite{AlbertiAmmariRomeroWintz2019} and our \cref{thm:exactReconstruction_hausdorff,thm:exactReconstruction_counting}.
In contrast, our convergence result in \cref{thm:reconstructionErrorEstimate} extends the work of \cite{AlbertiAmmariRomeroWintz2019} in several ways: Besides being formulated for any dimension $d \in \N$ (instead of $d=1$), it considers the infinite-dimensional setting and an unmodified version of \eqref{eqn:mixedModel}. In comparison, \cite{AlbertiAmmariRomeroWintz2019} shows convergence only in a discretized setting (with the involved constants being inversely proportional to the resolution of the discretization so that they blow up in the continuum limit) and for a specific type of noise model. 
}

\added{\label{txt:item4}\lookUp{\ref{item4}}%
It is also worth noting that, while truncated Fourier measurements $\ftrunc$ are most commonly used for the operator $\fopstat$ as in \cite{Candes2013,Candes2014,DenoyelleDuvalPeyre2017,AlbertiAmmariRomeroWintz2019}, they represent just one possible example (admittedly, an important one for which dual variables for source conditions can most readily be constructed).
There exist alternative observation operators allowing \cref{ass:regularity}\eqref{enm:reconstructibility} and stable reconstructibility to be satisfied so that our theory applies:
De Castro and Gamboa list several examples of so-called generalized moments with corresponding properties for which exact reconstruction holds \cite{Castro2012}.
In particular, our theory does not rely on the specific dual variables constructed in \cite{AlbertiAmmariRomeroWintz2019} (of which we perform a dimension reduction),
but it only relies on their properties, which also can be attained for observation operators $\fopstat$ other than truncated Fourier measurements.}

\added{\subsection{Paper outline}}

\removed{The models \eqref{eqn:productTopologyModel}-\eqref{eqn:measureTopologyModel} and corresponding operators will be introduced in detail in \cref{sec:models},
where also \cref{thm:modelEquivalences} will be proven.}
\added{\lookUp{\ref{item3}} The model and corresponding operators will be introduced in detail in \cref{sec:models}. Further, in \ref{sec:alternativeFormulations}, different alternative formulations will be introduced and their equivalence will be proven.}
\Cref{sec:exactRecovery} contains the proofs of \cref{thm:exactReconstruction_hausdorff} and \cref{thm:exactReconstruction_counting} and some variants as well as a discussion of how big a direction set $\alldirs$ is useful,
which may be interesting from the viewpoint of numerical effort.
The error estimation results from \cref{thm:reconstructionErrorEstimate} and its variants for finite $\alldirs$ are derived in \cref{sec:noisyReconstruction}
based on a general strategy of how to obtain unbalanced optimal transport error estimates for inverse problems of measures via convex duality techniques.
The model is implemented and illustrated numerically in \cref{sec:numerics}.

\subsection{Notation and preliminaries}
In this paragraph we recall the (standard) notion of Radon measures and collect some properties of measures
before finally providing a list of symbols used throughout the article.
While different results in this section on measures on a space $X$ require different assumptions on $X$, we note that in our application $X$ will always be either a closed subset of $\R^n$ or of the sphere $\sphere^{d-1}$ with the induced norm or metric, respectively. Consequently, $X$ will always be a separable, complete, $\sigma$-compact metric space such that all of the below results are applicable to $X$.

\begin{defn}[Borel and Radon measure, total variation]\label{def:RadonMeasure}
Let $X $ be a topological space. A \emph{real Borel measure} is a countably-additive set function $\nu: \Bc(X) \rightarrow \R$, with $\Bc(X)$ being the Borel $\sigma$-algebra on $X$. The \emph{total variation} of a real Borel measure $\nu: \Bc(X) \rightarrow \R$ is the nonnegative Borel measure defined as
\[ |\nu|(E) = \sup \left\{ \sum_{i} |\nu(E_i)| \,\middle|\, E_1,E_2,\ldots\in\Bc(X) \text{ are pairwise disjoint},\, E = \bigcup _i E_i \right\}. \]
A \emph{finite Radon measure} is a real Borel measure whose total variation is a regular measure. We denote by $\M(X)$ the space of finite Radon measures on $X$ and by $\Mp(X) \subset \M(X)$ the set of nonnegative finite Radon measures.
\end{defn}

Note that $\M(X)$ equipped with the norm $\|\nu\|_\M = |\nu|(X)$ is a normed vector space.\phantomsection\label{def:norm}

\begin{thm}[Duality, {\cite[Thm.~6.19]{rudin2006real_complex_analysis_mh}}]\label{thm:measure_predual}
Let $X$ be a locally compact Hausdorff space. Then, the space $\M(X)$ can be identified with the dual space of $C_0(X)$, the latter being the completion of the space of compactly supported continuous functions with respect to the supremum norm $\|\cdot\|_\infty$. The duality pairing is given as
\[ \langle \nu,\phi \rangle = \int_X \phi(x) \wrt \nu(x), \]
and we have
\[ \|\nu\|_\M = \sup _{\phi \in C_0(X), \|\phi\|_\infty \leq 1 } \int_\Omega \phi(x) \wrt \nu(x) .\]
In particular, $\M(X)$ is a Banach space.
\end{thm}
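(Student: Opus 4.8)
The plan is to exhibit the canonical map $\Phi\colon\M(X)\to C_0(X)^*$ given by $\Phi(\nu)=\langle\nu,\cdot\rangle$ and to show it is an isometric linear isomorphism; the Banach-space property of $\M(X)$ will then come for free, since the dual of any normed space is complete. First I would check that $\Phi$ is well-defined and norm-nonincreasing: for $\nu\in\M(X)$ and $\phi\in C_0(X)$ the elementary estimate $\lvert\int_X\phi\wrt\nu\rvert\le\int_X\lvert\phi\rvert\wrt\lvert\nu\rvert\le\|\phi\|_\infty\|\nu\|_\M$ shows $\Phi(\nu)$ is a bounded linear functional of operator norm at most $\|\nu\|_\M$, and linearity of $\Phi$ is immediate.

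Second, I would establish the reverse norm inequality, which simultaneously yields the stated norm identity and injectivity. By the polar (Hahn--Jordan) decomposition of a finite signed measure there is a Borel function $g$ with $\lvert g\rvert=1$ and $\wrt\nu=g\wrt\lvert\nu\rvert$, where $g=\sgn$ takes only the values $\pm1$; hence $\int_X g\wrt\nu=\int_X g^2\wrt\lvert\nu\rvert=\lvert\nu\rvert(X)=\|\nu\|_\M$. Since $g$ is in general discontinuous, I would invoke Lusin's theorem together with the regularity of $\lvert\nu\rvert$ to approximate $g$ in $L^1(\lvert\nu\rvert)$ by functions $\phi\in C_0(X)$ with $\|\phi\|_\infty\le1$, so that $\lvert\int_X\phi\wrt\nu-\|\nu\|_\M\rvert=\lvert\int_X(\phi-g)g\wrt\lvert\nu\rvert\rvert\le\int_X\lvert\phi-g\rvert\wrt\lvert\nu\rvert$ becomes arbitrarily small. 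This gives $\sup_{\|\phi\|_\infty\le1}\int_X\phi\wrt\nu=\|\nu\|_\M$, so $\Phi$ is an isometry and in particular injective.

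The main work, and the principal obstacle, is surjectivity, i.e.\ the Riesz representation theorem proper. Given $L\in C_0(X)^*$ I would first reduce to positive functionals by the lattice decomposition: for $\phi\ge0$ put $L^+(\phi)=\sup\{L(\psi):0\le\psi\le\phi\}$, verify that $L^+$ extends to a positive bounded linear functional and that $L^-=L^+-L$ is positive as well, whence $L=L^+-L^-$. For a positive functional I would construct the representing measure by the classical outer-measure method: set $\mu(U)=\sup\{L(\phi):\phi\in C_c(X),\,0\le\phi\le1,\,\supp\phi\subset U\}$ on open sets $U$, extend by $\mu^*(E)=\inf\{\mu(U):E\subset U\text{ open}\}$, show $\mu^*$ is a metric outer measure so that every Borel set is Carath\'eodory measurable, and verify outer and inner regularity; here the $\sigma$-compactness and metrizability of $X$ noted in the preliminaries make every open set $\sigma$-compact and render regularity automatic. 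The crux is then the identity $L(\phi)=\int_X\phi\wrt\mu$ for $\phi\in C_c(X)$, which I would prove by slicing the range of $\phi$ into thin layers and summing over a subordinate partition of unity; density of $C_c(X)$ in $C_0(X)$ and the mass bound $\mu(X)=\|L\|<\infty$ extend the identity to all of $C_0(X)$ and show $\mu$ is finite. Applying this to $L^+$ and $L^-$ and setting $\nu=\mu^+-\mu^-\in\M(X)$ yields $\Phi(\nu)=L$.

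Finally, since $\Phi$ is a linear isometric bijection of $\M(X)$ onto $C_0(X)^*$, and the dual of a normed space is always a Banach space, $\M(X)$ inherits completeness, which gives the last assertion. I expect the genuinely delicate points to lie entirely within the surjectivity step, namely the Carath\'eodory measurability of Borel sets for $\mu^*$ and the layer-cake identity $L(\phi)=\int_X\phi\wrt\mu$, as these are precisely where regularity and the locally compact structure are used; the isometry and completeness parts are then routine.
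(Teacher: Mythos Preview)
The paper does not give its own proof of this statement: it is quoted verbatim as \cite[Thm.~6.19]{rudin2006real_complex_analysis_mh} with no argument supplied, so there is no in-paper proof to compare against. Your outline is essentially the standard Riesz--Markov proof one finds in Rudin, and the overall structure (isometry via Lusin, surjectivity via the outer-measure construction on positive functionals, completeness from duality) is correct.

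One small point worth flagging: in the surjectivity step you appeal to ``the $\sigma$-compactness and metrizability of $X$ noted in the preliminaries'' to make regularity automatic. The theorem as stated assumes only that $X$ is locally compact Hausdorff, and the paper's remark about $\sigma$-compact metric spaces concerns its specific applications, not the hypotheses of this theorem. The classical proof establishes regularity of the constructed measure directly from local compactness (via Urysohn functions and inner/outer approximation), without any metrizability or $\sigma$-compactness assumption. If your aim is to prove the theorem in the generality stated, you should not lean on those extra hypotheses; if you only need it for the spaces arising later in the paper, your shortcut is fine but you should say so explicitly.
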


In accordance with the duality $\M(X) = C_0(X)^*$ we will denote the weak-$\ast$ convergence of a sequence $\nu_1,\nu_2,\ldots\in\M(X)$ to some $\nu\in\M(X)$ by $\nu_n\weakstarto\nu$ as $n\to\infty$.
In addition, we will also need the notion of \emph{narrow convergence}: We say that $\nu_1,\nu_2,\ldots$ converges to $\nu \in \M(X)$ with respect to the narrow topology and write $\nu_n \narrowto \nu$ as $n \rightarrow \infty$, if
\[ \lim_{n \rightarrow \infty} \int_X \varphi (x) \wrt \nu_n (x) = \int_X \varphi(x) \wrt \nu(x) \]
for all $\varphi \in C^b (X)$, the latter denoting the Banach space of bounded, continuous functions on $X$ equipped with the supremum norm $\|\cdot \|_\infty$. Obviously, narrow convergence is a stronger property than weak-$\ast$ convergence. We will, however, also require equivalence of the two notions for certain compactly supported sequences of measures. To this end, we first define the support of Radon measures and provide some basic properties.
\begin{defn}[Support of a measure] \label{defn:support_measure}
The \emph{support} of a real Borel measure $\nu:\Bc(X) \rightarrow \R$ on a topological space $X$ is defined as
\[ \supp(\nu) = \left\{ x \in X \,\middle|\,|\nu|(B)>0  \text{ for each neighbourhood } B \text{ of } x \right\}. \]
\end{defn}
\begin{prop}[Support properties] \label{prop:support_radon_measure}
Let $X$ be a locally compact Hausdorff space and $\nu\in\M(X)$. Then the following holds.
\begin{itemize}
\item $\supp(\nu)$ is closed, and $\nu$ is concentrated on $\supp(\nu)$, that is, $\nu(E) = \nu(E \cap \supp(\nu))$ for all $E \in \Bc(X)$.
\item If $\nu_n\weakstarto\nu$ in $\M(X)$ as $n\to\infty$ and if $\supp(\nu_n) \subset S$ for all $n$ with $S \subset X$ closed, then $\supp(\nu) \subset S$.
\end{itemize}
\end{prop}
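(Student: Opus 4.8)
The plan for the first bullet is a direct argument from the definition of the support. For closedness, if $x\notin\supp(\nu)$ then by definition there is an open neighbourhood $B$ of $x$ with $|\nu|(B)=0$; since $B$ is then a neighbourhood of each of its points, $B\subseteq X\setminus\supp(\nu)$, so the complement of $\supp(\nu)$ is open. For the concentration property, set $U:=X\setminus\supp(\nu)$, which by the previous step is a union of open sets $B$ with $|\nu|(B)=0$. Any compact $K\subseteq U$ is covered by finitely many such $B$, hence $|\nu|(K)=0$; since $|\nu|$ is a finite, regular (in particular inner regular) Borel measure, this yields $|\nu|(U)=\sup\{|\nu|(K)\mid K\subseteq U\text{ compact}\}=0$. (Alternatively, using that $X$ is $\sigma$-compact one may write $U$ as a countable union of such $B$ and invoke countable subadditivity.) Then for any $E\in\Bc(X)$ we have $|\nu(E\setminus\supp(\nu))|\le|\nu|(E\setminus\supp(\nu))\le|\nu|(U)=0$, so $\nu(E)=\nu(E\cap\supp(\nu))$.

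For the second bullet the plan is to show $|\nu|(X\setminus S)=0$ by testing $\nu$ against continuous functions supported away from $S$. Fix $\varphi\in C_c(X\setminus S)$ and regard it, extended by zero, as an element of $C_0(X)$. Applying the first bullet to $\nu_n$ (whose support lies in $S$) gives $|\nu_n|(X\setminus S)=0$, and since $\varphi$ is supported in $X\setminus S$ we get $|\int_X\varphi\wrt\nu_n|\le\|\varphi\|_\infty|\nu_n|(X\setminus S)=0$ for every $n$; letting $n\to\infty$ and using $\nu_n\weakstarto\nu$ yields $\int_X\varphi\wrt\nu=0$. As $\varphi\in C_c(X\setminus S)$ was arbitrary and $C_c(X\setminus S)$ is dense in $C_0(X\setminus S)$, \Cref{thm:measure_predual} applied on the (again locally compact Hausdorff) set $X\setminus S$ forces the restriction of $\nu$ to $X\setminus S$ to be the zero measure, i.e.\ $|\nu|(X\setminus S)=0$. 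Hence every $x\notin S$ possesses the neighbourhood $X\setminus S$ of zero total variation and therefore lies outside $\supp(\nu)$, which proves $\supp(\nu)\subseteq S$.

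The argument is routine, so no single step is a genuine obstacle; the only places requiring a little care are the two instances of the principle that a Radon measure is determined by its action on compactly supported continuous functions. In the first bullet this is the passage from ``$U$ is a union of $|\nu|$-null open sets'' to ``$|\nu|(U)=0$'', which is why one invokes inner regularity of the finite Radon measure $|\nu|$ (or $\sigma$-compactness of $X$); in the second it is the passage from ``$\nu$ annihilates $C_c(X\setminus S)$'' to ``$\nu$ restricts to zero on $X\setminus S$'', which is exactly \Cref{thm:measure_predual} on $X\setminus S$ combined with density of $C_c$ in $C_0$. All of these tools are available since, in our setting, $X$ is a separable, complete, $\sigma$-compact metric space.
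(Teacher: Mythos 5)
Your proof is correct and follows essentially the same route as the paper: closedness and the concentration property come from the fact that the complement of the support is a union of open $|\nu|$-null sets (you make the step $|\nu|(X\setminus\supp(\nu))=0$ explicit via inner regularity, where the paper asserts it and then invokes outer regularity), and the second bullet is obtained by testing against continuous functions supported in the open set $X\setminus S$ and applying \cref{thm:measure_predual} there, exactly as in the paper. The only cosmetic difference is that you use $C_c(X\setminus S)$ plus density rather than $C_0(X\setminus S)$ directly; both are fine.
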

\begin{proof}
It is easy to see that $\supp(\nu)$ is the complement of the union of all open sets $V$ with $|\nu|(V) = 0$, thus it is closed. Also, $|\nu|(V) = 0$ for each open $V \subset X\setminus\supp(\nu)$, which implies $|\nu|(V) = |\nu|(V\setminus\supp(\nu))+|\nu|(V \cap \supp(\nu)) = |\nu|(V \cap \supp(\nu))$ for any $V $ open. Using the latter, it is immediate to show that for any $E \in \Bc(X)$ with $E \subset X\setminus\supp(\nu)$,
\[ 0
\leq\inf \{ |\nu|(V) \st E \subset V, \, V \text{ open }  \}
\leq \inf \{ |\nu|(V) \st E \subset V, \, V \text{ open},\, V \subset X\setminus\supp(\nu)   \}
=0,
\]  
which, by outer regularity, implies that $\nu(E)=|\nu|(E) = 0$.  Hence, $\nu(E) = \nu(E \cap \supp(\nu))$ also for each $E \in \Bc(X)$ as claimed.

Now assume $\nu_n\weakstarto\nu$ in $\M(X)$ with $\supp(\nu_n) \subset S$ as above. Then,  for $x \notin S$, $B = X\setminus S$ is an open neighbourhood of $x$, and for any $\varphi \in C_0(B)$ it follows that 
\[ \int_B \varphi \wrt \nu =  \lim_{n \rightarrow \infty} \int_B \varphi \wrt \nu_n = 0.\]
By \cref{thm:measure_predual} (noting that $B$ is again a locally compact Hausdorff space as being an open subset of $X$) this implies 
\[ 0 = \| \nu\|_{\M(B)} = |\nu|(B) \] such that $x \notin \supp(\nu)$.
\end{proof}

Now a basic equivalence result on weak-$\ast$ convergence and narrow convergence holds as follows.
\begin{lem}[Equivalence of weak-$\ast$ and narrow convergence]\label{lem:weak*_and_support_implies_narrow} Let $X$ be a complete separable metric space and $\nu_1,\nu_2,\ldots$ a sequence in $\M(X)$ weakly-$\ast$ converging to $\nu \in \M(X)$ such that $\supp(\nu_n) \subset S$ with $S \subset X $ compact. Then $\supp(\nu) \subset S$, and $\nu_n$ converges to $\nu$ also narrowly.
\begin{proof}
The fact that $\supp(\nu) \subset S$ was shown in Proposition \ref{prop:support_radon_measure}. Convergence with respect to the narrow topology is a direct consequence of Prohorovs theorem (see \cite[Thm.~8.6.2]{bogachev2007measure_vol2}) and the fact that the sequence $\nu_n$ is uniformly tight (see \cite[Def.~8.6.1]{bogachev2007measure_vol2}).
\end{proof}
\end{lem}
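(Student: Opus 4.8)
The plan is to treat the two conclusions separately. The inclusion $\supp(\nu)\subset S$ is immediate from \cref{prop:support_radon_measure}: the set $S$ is compact and hence closed, we have $\supp(\nu_n)\subset S$ for every $n$, and $\nu_n\weakstarto\nu$, so the second bullet of that proposition applies verbatim. The substantive part is therefore to upgrade the weak-$\ast$ convergence, which only tests against functions in $C_0(X)$, to narrow convergence, which tests against all bounded continuous $\varphi\in C^b(X)$. Since $X$ is not assumed to be locally compact, one cannot in general produce a function in $C_0(X)$ that equals $1$ on a neighbourhood of $S$, so a naive truncation/cutoff argument is not available; instead I would route the argument through compactness in the narrow topology.

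Concretely, I would first record that $(\nu_n)_n$ is bounded in total variation: a weak-$\ast$ convergent sequence in a dual space is norm-bounded by the uniform boundedness principle, so $\sup_n\|\nu_n\|_\M<\infty$. Next, because every $\nu_n$ is concentrated on the single compact set $S$, we have $|\nu_n|(X\setminus S)=0$ for all $n$, which means the family $(\nu_n)_n$ is uniformly tight (the same compact set $S$ serves for every $n$ and every $\varepsilon>0$). Prokhorov's theorem (\cite[Thm.~8.6.2]{bogachev2007measure_vol2}), applicable since $X$ is a complete separable metric space, then yields that $(\nu_n)_n$ is relatively compact with respect to the narrow topology.

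To pass from relative compactness to actual convergence I would use a subsequence-and-uniqueness argument. If $\nu_n$ did not converge narrowly to $\nu$, there would be some $\varphi\in C^b(X)$, some $\varepsilon>0$, and a subsequence with $\bigl|\int_X\varphi\wrt\nu_{n_k}-\int_X\varphi\wrt\nu\bigr|\geq\varepsilon$ for all $k$. By the relative compactness just established, a further subsequence of $(\nu_{n_k})_k$ converges narrowly to some $\mu\in\M(X)$; since narrow convergence implies weak-$\ast$ convergence and weak-$\ast$ limits in $\M(X)=C_0(X)^*$ are unique, $\mu=\nu$, contradicting the choice of the subsequence. Hence $\nu_n\narrowto\nu$.

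The one point I would be most careful about is that Prokhorov's theorem is being invoked for a sequence of possibly signed measures rather than probability measures; this is precisely why both ingredients — the uniform bound on $\|\nu_n\|_\M$ and the uniform tightness of the whole family $(\nu_n)_n$, not merely of an individual $\nu_n$ — are genuinely needed, with the common compact support $S$ doing all the work. Apart from that, the argument is routine and requires no quantitative estimates.
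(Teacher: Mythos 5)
Your argument is correct and follows essentially the same route as the paper: the support inclusion is quoted from \cref{prop:support_radon_measure}, and narrow convergence is obtained from Prohorov's theorem together with uniform tightness of $(\nu_n)_n$, which holds because the single compact set $S$ carries every $\nu_n$. You merely spell out the details the paper leaves implicit (the uniform norm bound and the subsequence-plus-uniqueness step identifying the narrow limit with $\nu$ via weak-$\ast$ limits), which is a faithful elaboration rather than a different proof.
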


In case $X$ is a metric space and $\sigma$-compact (that is, the countable union of compact sets), any real Borel measure is regular and thus a finite Radon measure.

\begin{lem}[Borel and Radon measure] \label{lem:sigma_compact_borel_measure_regular}
Let $X$ be a $\sigma$-compact metric space. Then any real Borel measure on $\Omega$ is regular.
\begin{proof}Let $\nu$ be a real Borel measure on $X$. We need to show that the real Borel measure $|\nu|$ is regular.
This follows from \cite[Thm.~II.1.2]{parthasarathy2005probability_mh} together with the finiteness of $|\nu|$ and the fact that,
due to $\sigma$-compactness, the compact sets in the definition of inner regularity can be replaced by closed sets. Indeed, obviously the supremum gets larger if we replace compactness by closedness,
while the other inequality follows from the fact that for any sequence of compact sets $K_1,K_2,\ldots$ with $X = \bigcup _i K_i$ and any closed $A\in\Bc(\Omega)$ we have
$|\nu|(A) = \lim _{n\rightarrow \infty } |\nu| (A_n)$ with $A_n=A\cap\bigcup_{i=1}^n K_i$ compact.
\end{proof}
\end{lem}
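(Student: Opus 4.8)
The plan is to reduce the assertion to a regularity statement for the nonnegative total variation measure $|\nu|$ and then to establish it in two stages: outer regularity by open sets together with inner regularity by \emph{closed} sets, which holds on any metric space, followed by the promotion of closed inner approximants to \emph{compact} ones, which is where $\sigma$-compactness enters. First I would note that, by \cref{def:RadonMeasure}, $\nu$ is a finite Radon (hence regular) measure exactly when $|\nu|$ is regular, so it suffices to treat the nonnegative measure $|\nu|$. An essential preliminary is that $|\nu|$ is \emph{finite}: since $\nu$ takes values in $\R$ and never in $\pm\infty$, one has $|\nu|(X)<\infty$ by the classical total-variation bound (e.g.\ \cite{rudin2006real_complex_analysis_mh}). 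Finiteness is exactly what makes the ensuing $\epsilon$-approximations and the continuity-of-measure arguments go through.

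Next I would prove that every finite Borel measure $\mu$ on a metric space $X$ satisfies $\mu(A)=\inf\{\mu(U)\st A\subset U,\ U\text{ open}\}=\sup\{\mu(F)\st F\subset A,\ F\text{ closed}\}$ for every $A\in\Bc(X)$. The standard route is to let $\mathcal{A}$ denote the collection of all $A\in\Bc(X)$ that, for each $\epsilon>0$, admit a closed $F$ and an open $U$ with $F\subset A\subset U$ and $\mu(U\setminus F)<\epsilon$, and then to check that $\mathcal{A}$ is a $\sigma$-algebra: closure under complements is immediate, while closure under countable unions uses finiteness of $\mu$ to make the tail of the union negligible. The input specific to metric spaces is that any closed $F$ is the decreasing intersection of the open sets $U_n=\{x\in X\st\dist(x,F)<1/n\}$, so that $\mu(U_n\setminus F)\to0$ by continuity from above; this places every closed set in $\mathcal{A}$, whence $\mathcal{A}=\Bc(X)$. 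Equivalently, this is precisely \cite[Thm.~II.1.2]{parthasarathy2005probability_mh}, which I would simply cite.

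Finally I would upgrade closed approximants to compact ones, the only place $\sigma$-compactness is needed, which reconciles the previous statement with the paper's notion of regularity (inner by compact sets, outer by open sets). Writing $X=\bigcup_i K_i$ with each $K_i$ compact and $K^{(n)}=\bigcup_{i=1}^n K_i$ again compact, for any closed $A$ the compact sets $A\cap K^{(n)}$ increase to $A$, so continuity from below gives $|\nu|(A)=\lim_{n\to\infty}|\nu|(A\cap K^{(n)})$. Hence the supremum of $|\nu|$ over compact subsets of $A$ coincides with its supremum over closed subsets, which already equals $|\nu|(A)$; combined with outer regularity this yields full regularity of $|\nu|$, and therefore of $\nu$. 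The only genuine obstacle is the metric-space closed-regularity result of the second paragraph; the remaining steps are just monotone continuity of the finite measure $|\nu|$, where finiteness and $\sigma$-compactness do the work.
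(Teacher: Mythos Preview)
Your proof is correct and follows essentially the same approach as the paper: reduce to regularity of the finite measure $|\nu|$, invoke \cite[Thm.~II.1.2]{parthasarathy2005probability_mh} for outer regularity by open sets and inner regularity by closed sets on a metric space, and then use $\sigma$-compactness together with continuity from below to upgrade closed inner approximants to compact ones. The only difference is that you spell out the Parthasarathy argument and the finiteness of $|\nu|$ more explicitly, which is fine.
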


\begin{defn}[Pushforward measure]\label{def:pushforward}
For $(X,\Sigma_X)$, $(Y,\Sigma_Y)$ measure spaces and $f:X \rightarrow Y$ a measurable function, we define the \emph{pushforward} of a measure $\nu:\Sigma_X \rightarrow \R\cup\{\infty\}$ as
\begin{equation*}
\pf{f}\nu: \Sigma_Y \to \R\cup\{\infty\},\quad
A  \mapsto  \nu(f^{-1}(A))\,.
\end{equation*}
\end{defn}

\begin{thm}[Basic properties of pushforward measures]\label{thm:pushforward_radon}
With the notation of \cref{def:pushforward}, $\pf{f}\nu$ is a measure on $(Y,\Sigma_Y)$, and a measurable function $g:Y \rightarrow \R$ is integrable with respect to $\pf{f}\nu$ if $g \circ f$ is integrable with respect to $\nu$. If $\nu$ is nonnegative, also the converse holds true. If both are integrable, it holds that
\begin{equation} \label{eq:pf_general_integral_equality}
 \int_Y g(y) \wrt \pf{f}\nu(y) = \int_X g(f(x)) \wrt \nu(x) .
 \end{equation}
Further, if $Y$ is a $\sigma$-compact metric space, then $\pf{f}\nu$ is a finite Radon measure on $\Sigma_Y$ if $\nu$ is a finite Radon measure on $\Sigma_X$.
\end{thm}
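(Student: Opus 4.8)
The plan is to verify the four assertions in turn with elementary measure theory, invoking \cref{lem:sigma_compact_borel_measure_regular} only for the last one.

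\textbf{$\pf f\nu$ is a measure.} This is immediate from the fact that preimages commute with countable disjoint unions: for pairwise disjoint $A_1,A_2,\ldots\in\Sigma_Y$, the sets $f^{-1}(A_1),f^{-1}(A_2),\ldots\in\Sigma_X$ are pairwise disjoint with $f^{-1}\bigl(\bigcup_i A_i\bigr)=\bigcup_i f^{-1}(A_i)$, so countable additivity of $\nu$ (and $\pf f\nu(\emptyset)=\nu(\emptyset)=0$) carries over verbatim.

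\textbf{Integral identity and integrability.} I would run the standard approximation argument. For $g=\mathbf 1_A$ with $A\in\Sigma_Y$ one has $\mathbf 1_A\circ f=\mathbf 1_{f^{-1}(A)}$, so both sides of \eqref{eq:pf_general_integral_equality} equal $\nu(f^{-1}(A))=\pf f\nu(A)$; linearity extends this to simple functions, and for nonnegative $\nu$ monotone convergence extends it to every nonnegative measurable $g$, with both sides simultaneously finite or $+\infty$. Applied to $|g|$, this already gives, for nonnegative $\nu$, the equivalence of $\pf f\nu$-integrability of $g$ and $\nu$-integrability of $g\circ f$, and then splitting $g=g^+-g^-$ yields \eqref{eq:pf_general_integral_equality}. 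For signed $\nu$ I would pass to the Jordan decomposition $\nu=\nu^+-\nu^-$, $|\nu|=\nu^++\nu^-$: since $\pf f\nu=\pf f\nu^+-\pf f\nu^-$ is a difference of nonnegative measures and $\pf f$ is additive, minimality of the Jordan decomposition gives $|\pf f\nu|\le\pf f\nu^++\pf f\nu^-=\pf f|\nu|$, hence $\int_Y|g|\wrt|\pf f\nu|\le\int_Y|g|\wrt\pf f|\nu|=\int_X|g\circ f|\wrt|\nu|$ by the nonnegative case; thus $\nu$-integrability of $g\circ f$ forces $\pf f\nu$-integrability of $g$, and \eqref{eq:pf_general_integral_equality} then follows by applying the nonnegative identity to $\nu^+$ and $\nu^-$ separately and subtracting, all occurring integrals being finite. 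The converse need not hold for signed $\nu$ because cancellation can render $|\pf f\nu|$ strictly smaller than $\pf f|\nu|$, which is exactly why nonnegativity of $\nu$ is assumed there.

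\textbf{Radon property.} Assume now $\Sigma_X=\Bc(X)$, $\Sigma_Y=\Bc(Y)$, $f$ Borel measurable, $\nu$ a finite Radon measure, and $Y$ $\sigma$-compact metric. For any Borel partition $\{E_i\}$ of $Y$, the family $\{f^{-1}(E_i)\}$ is a Borel partition of $X$, so $\sum_i|\pf f\nu(E_i)|=\sum_i|\nu(f^{-1}(E_i))|\le|\nu|(X)=\|\nu\|_\M$; taking the supremum over such partitions gives $|\pf f\nu|(Y)\le\|\nu\|_\M<\infty$, so $\pf f\nu$ is a finite real Borel measure on $Y$. By \cref{lem:sigma_compact_borel_measure_regular}, every real Borel measure on the $\sigma$-compact metric space $Y$ is regular, hence $\pf f\nu$ is a finite Radon measure.

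\textbf{Expected obstacle.} None of this is deep; the only place demanding a little care is the signed case in the second step, namely the estimate $|\pf f\nu|\le\pf f|\nu|$ and the verification that all integrals appearing when splitting $g$ and $\nu$ are finite before one rearranges — routine once the nonnegative case is established.
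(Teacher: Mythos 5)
Your proof is correct. The paper itself does not argue the first three assertions at all: it simply cites \cite[Thm.~3.6.1 and below]{bogachev2007measure_vol1} for the measure property and the integrability/change-of-variables statement, and then, exactly as you do, obtains the Radon property from \cref{lem:sigma_compact_borel_measure_regular} using $\sigma$-compactness of $Y$. So the only difference is that you fill in the cited material with the standard self-contained argument (indicators, simple functions, monotone convergence for nonnegative $\nu$, then Jordan decomposition with the estimate $|\pf f\nu|\leq\pf f\nu^++\pf f\nu^-=\pf f|\nu|$ for the signed case), and you make the finiteness of $\pf f\nu$ explicit via the partition bound $|\pf f\nu|(Y)\leq|\nu|(X)$ before invoking the regularity lemma. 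Your handling of the asymmetry in the integrability claim — the converse requiring nonnegativity because cancellation can make $|\pf f\nu|$ strictly smaller than $\pf f|\nu|$ — is exactly the right point of care; the paper buys brevity with the citation, while your version is self-contained at the cost of a few routine lines.
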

\begin{proof}
The fact that $\pf{f}\nu$ is a measure and the assertion on integrability are shown in \cite[Thm.~3.6.1 and below]{bogachev2007measure_vol1}. The fact that $\pf{f}\nu$ is a finite Radon measure is immediate from \cref{lem:sigma_compact_borel_measure_regular}, since $Y$ is a $\sigma$-compact metric space by assumption.
\end{proof}

\begin{prop}[The pushforward as operator]\label{prop:pushforward_properties} Let $X,Y$ be locally compact Hausdorff spaces, $X$ be further a complete, separable, $\sigma$-compact metric space and $f :X \rightarrow Y$ be continuous. Then
\begin{equation*}
\pf{f}:\M(X)  \rightarrow \M(Y), \;
\nu  \mapsto \pf{f}\nu
\end{equation*}
is a well-defined linear operator with $\|\pf{f}\| \leq 1$. Further, the following holds.
\begin{itemize}
\item If $\nu \geq 0$, then $\pf{f}\nu \geq 0$ and $\|\pf{f}\nu\|_{\M(Y)} = \|\nu\|_{\M(X)}$.
\item $\pf{f}$ is continuous with respect to narrow convergence of measures.
\item $ \supp(\pf{f}\nu) \subset \overline{f(\supp(\nu))}$.
\item If $\nu_1,\nu_2,\ldots\in\M(X)$ are such that $\nu_n \weakstarto \nu \in \M(X)$ and $\supp(\nu_n) \subset S$ with $S$ compact, then $\supp(\pf{f}\nu) \subset f(S)$ and $(\pf{f}\nu)_n \narrowto \pf{f}\nu$.
\end{itemize}
\begin{proof}
 Linearity and the boundedness of $\pf{f}$ follow from equality \eqref{eq:pf_general_integral_equality}, which holds for any $g \in C_0(Y)$. Likewise, continuity with respect to narrow convergence follows from \eqref{eq:pf_general_integral_equality} being true for any bounded continuous function $g$. Positivity and norm preservation in case of positive $\nu$ is immediate from the definition of the pushforward.

For showing the support inclusions, again we exploit that $\supp(\nu)$ is the complement of the union of all open sets $V$ with $|\nu|(V) = 0$. 
Take $x \in B:= Y\setminus\overline{f(\supp(\nu))}$. Then $B$ is open, and for any $E \subset B$ it follows that 
$\pf{f}\nu (E) = \nu (f^{-1}(E)) = 0$ since $f^{-1}(E) \subset X\setminus\supp(\nu)$ and $\nu$ is concentrated on $\supp(\nu)$. This implies that $|\pf{f}\nu|(B) = 0$ and hence $x \notin \supp(\pf{f}\nu)$, which shows the inclusion $\supp(\pf{f}\nu) \subset  \overline{f(\supp(\nu))}$.

The last statement is an immediate consequence of \cref{lem:weak*_and_support_implies_narrow} and continuity of the pushforward with respect to narrow convergence. 
\end{proof}
\end{prop}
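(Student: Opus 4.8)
The plan is to reduce every assertion to the change-of-variables identity \eqref{eq:pf_general_integral_equality} from \cref{thm:pushforward_radon}, combined with the duality $\M(Y)=C_0(Y)^*$ of \cref{thm:measure_predual}. First I would settle well-definedness: for $\nu\in\M(X)$, \cref{thm:pushforward_radon} already yields that $\pf{f}\nu$ is a finite Radon measure on $Y$ (this is precisely where the $\sigma$-compactness is needed, so that the pushed-forward Borel measure is automatically regular), hence $\pf{f}$ maps $\M(X)$ into $\M(Y)$. Since $\nu$ is finite, every $g\in C_0(Y)$ (being bounded) has $g\circ f$ integrable with respect to $|\nu|$, so \eqref{eq:pf_general_integral_equality} gives $\int_Y g\wrt\pf{f}\nu=\int_X g\circ f\wrt\nu$ for all $g\in C_0(Y)$; linearity of $\nu\mapsto\pf{f}\nu$ follows immediately, and the estimate $\left|\int_Y g\wrt\pf{f}\nu\right|\le\|g\|_\infty\|\nu\|_\M$ combined with \cref{thm:measure_predual} yields $\|\pf{f}\nu\|_\M\le\|\nu\|_\M$, i.e.\ $\|\pf{f}\|\le1$. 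For nonnegative $\nu$ one reads off $\pf{f}\nu(E)=\nu(f^{-1}(E))\ge0$ directly from \cref{def:pushforward}, whence $\pf{f}\nu\ge0$ and $\|\pf{f}\nu\|_\M=\pf{f}\nu(Y)=\nu(X)=\|\nu\|_\M$.

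For narrow continuity I would note that if $\nu_n\narrowto\nu$ in $\M(X)$, then for each $\varphi\in C^b(Y)$ the composition $\varphi\circ f$ lies in $C^b(X)$, so $\int_Y\varphi\wrt\pf{f}\nu_n=\int_X\varphi\circ f\wrt\nu_n\to\int_X\varphi\circ f\wrt\nu=\int_Y\varphi\wrt\pf{f}\nu$; it matters here that we phrase continuity in the narrow (not merely the weak-$\ast$) topology, since $\varphi\circ f$ need not vanish at infinity even when $\varphi$ does. For the support inclusion $\supp(\pf{f}\nu)\subset\overline{f(\supp(\nu))}$, I would take $x$ in the open set $B:=Y\setminus\overline{f(\supp(\nu))}$, note that $f^{-1}(B)\cap\supp(\nu)=\emptyset$, and then combine the elementary bound $|\pf{f}\nu|\le\pf{f}|\nu|$ (disjoint Borel sets in $Y$ have disjoint preimages in $X$) with the concentration property from \cref{prop:support_radon_measure} to get $|\pf{f}\nu|(B)\le\pf{f}|\nu|(B)=|\nu|(f^{-1}(B))\le|\nu|(X\setminus\supp(\nu))=0$, so that $x\notin\supp(\pf{f}\nu)$.

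For the final item, assume $\nu_n\weakstarto\nu$ with $\supp(\nu_n)\subset S$ and $S$ compact. Then \cref{lem:weak*_and_support_implies_narrow} upgrades this to $\supp(\nu)\subset S$ and $\nu_n\narrowto\nu$; the narrow continuity established above gives $\pf{f}\nu_n\narrowto\pf{f}\nu$, while the support inclusion gives $\supp(\pf{f}\nu)\subset\overline{f(\supp(\nu))}\subset\overline{f(S)}=f(S)$, the last equality because $f(S)$, as the continuous image of a compact set in a Hausdorff space, is compact and hence closed.

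I do not anticipate a genuine obstacle; the whole argument is bookkeeping around \eqref{eq:pf_general_integral_equality}. The points that deserve a moment of care are: ensuring $\pf{f}\nu$ is a finite Radon measure and not merely a Borel measure (handled by the $\sigma$-compactness hypothesis via \cref{thm:pushforward_radon}); recognizing that the correct continuity statement is for narrow convergence, together with the observation that $\varphi\circ f$ remains bounded and continuous; and the one-line check of $|\pf{f}\nu|\le\pf{f}|\nu|$.
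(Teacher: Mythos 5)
Your proposal is correct and follows essentially the same route as the paper: everything is reduced to \eqref{eq:pf_general_integral_equality} and \cref{thm:pushforward_radon}, the support inclusion is obtained from the concentration of $\nu$ on $\supp(\nu)$ (your detour via $|\pf{f}\nu|\le\pf{f}|\nu|$ is only a cosmetic variant of the paper's direct total-variation argument), and the last item combines \cref{lem:weak*_and_support_implies_narrow} with narrow continuity exactly as in the paper. Your explicit remarks that $\varphi\circ f$ stays bounded continuous and that $\overline{f(S)}=f(S)$ by compactness are welcome small clarifications but do not change the argument.
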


\begin{defn}[Product measure]  \label{def:family_product_measure}
Let $ Y,Z$ be topological spaces, $\omega:\Bc(Z) \rightarrow \R$ be a real Borel measure and $(\nu_z)_{z \in Z}$ with $\nu_z:\Bc(Y) \rightarrow \R$ be a family of real Borel measures such that for any $A \in \Bc(Y)$,  $Z \ni z \mapsto \nu_z(A)$ is $\omega$-measurable. Then we define the \emph{product measure}
\[ \nu_z \prodm{z} \omega : \Bc(Y \times Z)  \rightarrow \R\]
on the generator $\{ A \times B \st A \in \Bc(Y),\, B \in \Bc(Z) \}$ of $\Bc(Y \times Z)$ via 
\[
(\nu_z \prodm z \omega)(A \times B) = \int_B \nu_z(A) \wrt \omega(z) .
\]
Similarly, we use the notation $ \omega \prodm z  \nu_z $ for a corresponding product measure on $\Bc(Z \times Y ) $.
\end{defn}
Note that, indeed, the product measure is a well-defined, $\sigma$-additive set function on $\Bc(Y \times Z)$ and, by  \cref{lem:sigma_compact_borel_measure_regular}, is a finite Radon measure in case $Y$ and $Z$ are $\sigma$-compact metric spaces. Also, note that, for the sake of brevity, we use the term ``product measure'' while the above measure is actually a product measure with transition kernel $k:Z \times \Bc(A)  \rightarrow \R$, $k(z,A):= \nu_z(A)$. It generalizes the standard product measure $ \nu \otimes \omega$ in the sense that the latter is obtained in the special case that $\nu_z = \nu$ independent of $z$.

Of particular interest in this work will be the case that $\nu_z = \pf{[f_z]}\nu$, with $(f_z)_{z \in Z}$ a given family of functions. In this case, the product measure $\pf{[f_z]}\nu \prodm z \omega$ can equivalently be written as $\pf{ F} (\nu \otimes  \omega)$ with $F:X \times Z \rightarrow \R$, $F(x,z):= (f_z(x),z)$. In case the mapping $(x,z) \mapsto f_z(x)$ is continuous, this allows us to directly transfer the properties of the pushforward as stated in \cref{prop:pushforward_properties} to the mapping $ (\nu,\omega) \mapsto \pf{[f_z]}\nu \prodm z \omega$. 
In the following proposition, we summarize these properties for the special case that $\omega \geq 0$ with $\omega(Z)=1$ is fixed, as this is the setting that is relevant for our analysis in the subsequent sections below.
\begin{prop}[Properties of the product measure] \label{prop:properties_product_measure}
Let $X,Y,Z$ be $\sigma$- and locally compact metric spaces with $Y,Z$ being complete and separable, $ \omega \in \Mp(Z)$ and $\nu \in \M(X)$ be finite Radon measures with $\omega(Z) = 1$, and $(f_z)_{z \in Z} $ be a family of functions $f_z:X \rightarrow Y$ such that $(x,z) \mapsto f_z(x)$ is continuous. 

Then the product measure $\pf{[f_z]}\nu \prodm z \omega: \Bc(Y \times Z) \rightarrow \R$ is well-defined, a finite Radon measure on $Y \times Z$, and the following holds.
\begin{itemize}
\item If $\nu \geq 0$, then $\pf{[f_z]}\nu \prodm z \omega \geq 0$ and $\|\pf{[f_z]}\nu \prodm z \omega\|_{\M(Y \times Z)} = \|\nu\|_{\M(X)}$.
\item $\nu \mapsto \pf{[f_z]}\nu \prodm z \omega$ is continuous with respect to narrow convergence of measures.
\item $\supp(\pf{[f_z]}\nu \prodm z \omega) \subset \overline{F( \supp(\nu) \times \supp(\omega))}$.
\item Assume that $Z$ is compact. Then, if $\nu_1,\nu_2,\ldots\in\M(X)$ are such that $\nu_n \weakstarto \nu \in \M(X)$ and $\supp(\nu_n) \subset S$ with $S$ compact, it follows that $\supp(\pf{[f_z]}\nu \prodm z \omega) \subset F(S \times Z)$ and $\pf{[f_z]}\nu_n \prodm z \omega(z) \narrowto \pf{[f_z]}\nu \prodm z \omega(z)$.
\end{itemize}
The same holds true for the product measure $\omega \prodm  z \pf{[f_z]}\nu : \Bc(Z \times Y) \rightarrow \R$.
\begin{proof} 
The first two items are an immediate consequence of \cref{prop:pushforward_properties}, and the third one follows from \cref{prop:pushforward_properties} by observing that $\supp ( \nu \otimes \omega) \subset \supp (\nu) \times \supp(\omega)$, which can be deduced directly from the definition of the support.
Given the third statement, the fourth is again a direct consequence of \cref{prop:pushforward_properties}.
\end{proof}
\end{prop}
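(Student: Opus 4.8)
The plan is to deduce every assertion from \cref{prop:pushforward_properties} through the identity
\[
\pf{[f_z]}\nu \prodm z \omega \;=\; \pf{F}(\nu \otimes \omega), \qquad F:X\times Z\to Y\times Z,\quad F(x,z)=(f_z(x),z),
\]
announced already in the paragraph preceding the statement. First I would make this identity rigorous. The map $F$ is continuous (its components $(x,z)\mapsto f_z(x)$ and $(x,z)\mapsto z$ are), the standard product $\nu\otimes\omega$ is a finite Radon measure on the $\sigma$- and locally compact, complete, separable metric space $X\times Z$, and $Y\times Z$ has the same properties, so $\pf{F}(\nu\otimes\omega)$ is a well-defined finite Radon measure on $Y\times Z$ by \cref{thm:pushforward_radon}. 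Evaluating it on rectangles via Fubini,
\[
\bigl(\pf{F}(\nu\otimes\omega)\bigr)(A\times B)
=(\nu\otimes\omega)\bigl(F^{-1}(A\times B)\bigr)
=\int_B \nu\bigl(f_z^{-1}(A)\bigr)\wrt\omega(z)
\qquad A\in\Bc(Y),\ B\in\Bc(Z),
\]
which simultaneously shows that $z\mapsto\nu(f_z^{-1}(A))$ is $\omega$-measurable (so $\pf{[f_z]}\nu \prodm z \omega$ is well-defined in the sense of \cref{def:family_product_measure}) and that the two measures agree on the generating rectangles, hence everywhere. This reduces the proposition to reading off properties of $\nu\otimes\omega$ and pushing them through \cref{prop:pushforward_properties}.

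I would then treat the four bullets in turn. (i) If $\nu\geq0$ then $\nu\otimes\omega\geq0$, hence $\pf{F}(\nu\otimes\omega)\geq0$, and $\|\nu\otimes\omega\|_{\M(X\times Z)}=\|\nu\|_{\M(X)}\,\omega(Z)=\|\nu\|_{\M(X)}$ is preserved by $\pf{F}$. (ii) For narrow continuity I would first verify $\nu_n\narrowto\nu\ \Rightarrow\ \nu_n\otimes\omega\narrowto\nu\otimes\omega$: testing against $\varphi\in C^b(X\times Z)$ and using Fubini reduces this to $\int_Z\bigl(\int_X\varphi(x,z)\wrt\nu_n(x)\bigr)\wrt\omega(z)\to\int_Z\bigl(\int_X\varphi(x,z)\wrt\nu(x)\bigr)\wrt\omega(z)$, which follows by dominated convergence from the pointwise-in-$z$ convergence of the inner integrals together with the uniform bound $\sup_n\|\nu_n\|_{\M(X)}<\infty$ from the uniform boundedness principle; then I compose with the narrowly continuous $\pf{F}$. (iii) From the definition of support one has $\supp(\nu\otimes\omega)\subset\supp\nu\times\supp\omega$, whence $\supp(\pf{F}(\nu\otimes\omega))\subset\overline{F(\supp\nu\times\supp\omega)}$. (iv) When $Z$ is compact and $\nu_n\weakstarto\nu$ in $\M(X)$ with $\supp\nu_n\subset S$ compact, a Stone--Weierstrass approximation of $C_0(X\times Z)$ by finite sums of products $\varphi(x)\chi(z)$ gives $\nu_n\otimes\omega\weakstarto\nu\otimes\omega$, while $\supp(\nu_n\otimes\omega)\subset S\times Z$ with $S\times Z$ compact; the last bullet of \cref{prop:pushforward_properties} then yields both $\supp(\pf{F}(\nu\otimes\omega))\subset F(S\times Z)$ and $\pf{F}(\nu_n\otimes\omega)\narrowto\pf{F}(\nu\otimes\omega)$.

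The statement for $\omega \prodm z \pf{[f_z]}\nu$ would then follow from the coordinate-swap homeomorphism $\iota:Y\times Z\to Z\times Y$: one checks $\omega \prodm z \pf{[f_z]}\nu=\pf{\iota}\bigl(\pf{[f_z]}\nu \prodm z \omega\bigr)$, and pushforward under a homeomorphism preserves nonnegativity, the total-variation norm, narrow and weak-$\ast$ limits, and transports supports homeomorphically, so all four properties carry over verbatim.

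Once the reduction is set up the individual bullets are mostly bookkeeping; I expect the only real points of care to be the Fubini step establishing measurability of $z\mapsto\nu(f_z^{-1}(A))$, hence well-definedness of the product measure, and in bullet (ii) the interchange of limit and $\omega$-integral, whose crux is the uniform total-variation bound. Bullet (iv) is where compactness of $Z$ enters essentially, since it is what keeps $S\times Z$ compact and thereby lets \cref{lem:weak*_and_support_implies_narrow} upgrade weak-$\ast$ convergence of the pushed-forward sequence to narrow convergence.
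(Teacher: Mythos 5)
Your proposal is correct and takes essentially the same route as the paper: both rest on the identity $\pf{[f_z]}\nu \prodm{z} \omega = \pf{F}(\nu\otimes\omega)$ announced just before the proposition and then transfer all four bullets from \cref{prop:pushforward_properties}, with the extra observation $\supp(\nu\otimes\omega)\subset\supp(\nu)\times\supp(\omega)$ for the support inclusion. Your write-up simply makes explicit the details the paper leaves implicit (Fubini for well-definedness and measurability of $z\mapsto\nu(f_z^{-1}(A))$, uniform total-variation bounds when passing $\nu_n\otimes\omega$ to the narrow or weak-$\ast$ limit, and the coordinate swap for $\omega\prodm{z}\pf{[f_z]}\nu$).
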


For the reader's convenience we finally provide a list of frequently employed symbols.\\
\begin{longtable}{ll}
$B_R(\dsupp)$
& $R$-neighbourhood of a set $\dsupp$ (\cref{thm:unbalancedWasserstein})\\
$C(X),C_c(X)$
& continuous and compactly supported continuous functions on $X$\\
$C_0(X)$
& completion of $C_c(X)$ with respect to the supremum norm (\cref{thm:measure_predual})\\
$\|\cdot\|_\infty$
& supremum norm (\cref{thm:measure_predual})\\
$\BregmanDistance\energy w$
& Bregman distance with respect to $w\in\partial\energy$ (\cpageref{sec:BregmanEstimates})\\
$d$
& space dimension\\
$\delta_p$
& Dirac measure in $p$\\
$\delta$
& noise strength (\cpageref{eqn:noiseStrength})\\
$\data,\data^\delta,\data^\dagger$
& observations and observations with and without noise (\cpageref{eqn:groundTruth,eqn:noiseStrength})\\
$\ft=\hat\cdot,\ft^{-1}=\check\cdot$
& Fourier and inverse Fourier transform (\cref{def:FourierTransform})\\
\added{$\ftrunc$}
& \added{truncated Fourier series (\cref{ex:truncatedFourier})}\\
$G(\dsupp,\goodtimes)$
& ghost particles of $\dsupp$ with respect to $\goodtimes$ (\cref{def:coincidence_ghost_particle})\\
$\projdomdyn$
& domain in one-dimensional position-velocity space (\cpageref{sec:models})\\
$\liftVar,\liftVar^\dagger$
& position-velocity projections and their ground truth (\cpageref{eqn:snapshots,eqn:groundTruth})\\
$H$
& space of observations (\cref{defn:pointwise_measurement_operator})\\
$\hd^n$
& $n$-dimensional Hausdorff measure (\cpageref{txt:measures})\\
$\mtime,\mdirs$
& measures on time and direction domain (\cpageref{txt:measures})\\
$\lebesgue^n$
& $n$-dimensional Lebesgue measure (\cpageref{txt:measures})\\
$\domdyn$
& domain in position-velocity space (\cpageref{sec:models})\\
$\lambda,\lambda^\dagger$
& particle configuration and its ground truth (\cpageref{txt:highDimMeasure,eqn:groundTruth})\\
$\M(X),\Mp(X)$
& sets of finite and nonnegative finite Radon measures on $X$ (\cref{def:RadonMeasure})\\
$\|\cdot\|_\M$
& norm on the space of finite Radon measures (\cpageref{def:norm})\\
$\mv^n,\amv[n],\amvp[n],\amvm[n]$
& standard and anglewise (with three variants) move operator (\cref{defn:move_operators},\\ & \cpageref{thm:Candes2013,eqn:mixedTopologyOperators})\\
$\domstat$
& spatial domain (\cpageref{sec:models})\\
$\fopstat,\fopdyn,\fopdynp$
& forward or observation operator  (\cref{defn:pointwise_measurement_operator,defn:radon_space_measurement_operator}, \cpageref{thm:Candes2013})\\
$\Rs,\Rf,\Rfp, \Rfm,\Rj$
& standard, timewise (with three variants) and joint Radon transform \\ & (\cref{def:radon_transform_measure}, \cpageref{thm:Candes2013,eqn:mixedTopologyOperators})\\
$\dsupp$
& particle configuration without particle masses (\cpageref{sec:exactRecovery})\\
$\sphere^{d-1}$
& $(d-1)$-dimensional sphere\\
$\supp$
& support of a finite Radon measure (\cpageref{prop:support_radon_measure})\\
$\domt$
& time domain (\cpageref{sec:models})\\
$\alltimes,\goodtimes$
& observation times (\cpageref{sec:models},   \cref{ass:regularity,thm:reconstructionErrorEstimate})\\
$\alldirs,\gooddirs$
& set of directions and specifically chosen directions (\cpageref{sec:models} and \cref{thm:estimateBadU})\\
$u,u^\dagger$
& snapshots and their ground truth (\cpageref{eqn:snapshots,eqn:groundTruth})\\
$\unbalancedWasserstein[p]{R}(\nu_1,\nu_2)$
& unbalanced Wasserstein divergence between $\nu_1$ and $\nu_2$ (\cref{def:Wasserstein})\\
$\projdomstat$
& domain in one-dimensional position space (\cpageref{sec:models})\\
$\pf f\liftVar$
& pushforward of measure $\liftVar$ under map $f$ (\cref{def:pushforward})
\end{longtable}

\section[The reconstruction model and its properties]{\added{\lookUp{\ref{item3}}The reconstruction model and its properties}}\label{sec:models}
We aim to recover the state of particles in a compact domain $\domstat \subset \R^d$ (for instance $\Omega = [0,1]^d$) at different times within a compact temporal domain  $\domt \subset  \R$, for example $\domt = [-1,1]$ or $\domt$ being a finite collection of points. To this end, we assume that measurements are available at finitely many different points in time $t \in \alltimes$, where $\emptyset \neq \alltimes \subset \domt$ and $|\alltimes| <  \infty$. 

Our reconstruction approach uses lifted measures on the product space $\R^d \times \R^d$, as well as Radon-transform-based projections thereof. For the latter, we denote by $\alldirs \subset \sphere^{d-1}$ a compact set of projection directions, where we have either $\alldirs = \sphere^{d-1}$ or $\alldirs$ being a finite set of directions in $\sphere^{d-1}$ in mind. 
As domain for lifted measures on $\R^d \times \R^d$, we introduce the set
\begin{equation} \label{eq:domdyn_definitoin}
 \domdyn := \left\{ (x,v) \in \R^d \times \R^ d \,\middle|\, x + tv \in \domstat \text{ for all } t \in \domt \right\}  \subset \R^{d}\times\R^d. 
 \end{equation}
This ensures that no particle leaves or enters the domain during the whole duration of the experiment.
\begin{rem}[Size of the domains]
In practice, the region of interest (that region which is observed by the forward operator) will be a small area in the centre of $\domstat$
so that particles may leave or enter the observed region, but can still be represented in $\domdyn$.
Alternatively, to avoid a too strong enlargement of the domain around the region of interest (which would be accompanied by an increased computational effort)
one could also modify the involved operators such that all mass exiting the region of interest is represented in an auxiliary point reservoir.
\end{rem}
Projections of lifted measures, the position-velocity projections, are defined on the set
\[ \projdomdyn:=  \{ (\theta \cdot x,\theta \cdot v) \st (x,v) \in \domdyn, \, \theta \in \alldirs \}.\]
We will also require a domain for projections of measures living in $\domstat$ that we denote by
\[ \projdomstat := \{ \theta \cdot x \st x \in \domstat, \, \theta \in \alldirs \} .\]
Note that all the sets $\domdyn$, $\projdomdyn$ and $\projdomstat$ are compact as being the image of compacta under continuous functions.

The sets $\domstat, \domt, \alldirs, \domdyn, \projdomdyn$ and $\projdomstat$ are equipped with the standard metric of the underlying spaces $\R^d, \R, \sphere^{d-1}, \R^{d} \times \R^d, \R^2$ and $\R$, respectively, rendering them to be compact, separable metric spaces.
Further, in order to define integrals of continuous functions on them, these spaces are equipped with the Borel measures $\lebesgue^d, \mtime, \mdirs, \lebesgue ^d \times \lebesgue^d,\lebesgue^2$ and $\lebesgue^1$,\phantomsection\label{txt:measures} respectively, where $\lebesgue^n$ denotes the standard Lebesgue measure in $\R^n$ (restricted to the underlying set) and $\mtime$, $\mdirs$ are general Borel probability measures such that $\alldirs=\supp\mdirs$. For the latter two, we have in mind in particular the case of discrete sets with counting measures $\mtime,\mdirs$ and the case of sets with non-empty relative interior and the (rescaled) $1$-dimensional Lebesgue measure $\lebesgue^1$ or the (rescaled) $(d-1)$-dimensional Hausdorff measure $\hd^{d-1}$ (for $\mtime$ and $\mdirs$, respectively).

Besides the space of Radon measures on sets like $\domstat, \domt, \alldirs, \domdyn, \projdomdyn$ and $\projdomstat$ and products thereof, we will also deal with product spaces of the form
\[ \M(X)^I = \{ (\nu_i)_{i\in I} \st \nu_i \in \M(X)  \text{ for all } i \in I\}, \]
where $I$ is a (possibly uncountable) index set. While such sets can be generically equipped with the standard product topology (that is, the coarsest topology such that all projections to a single component are continuous), we will mostly not use any topology on these spaces but rather deal only with individual elements $\nu_i$ for $i \in I$. 
For $\nu\in\M(X)^\domt$ we will denote the individual elements by $\nu_t$ and for $\nu\in\M(X)^\alldirs$ by $\nu_\theta$.

We will also consider product spaces for finite index sets $J$, such as $J = \alltimes$. To highlight the difference, we will denote such spaces as $Z^{|J|}$, where $|J|$ defines the number of elements in $J$. In particular, we will deal with measurements $\data = (\data_t)_{t\in\alltimes}$ in a finite product of Hilbert spaces denoted by
$ H^{|\alltimes|}$.

\subsection{Radon and move operators and their properties}\label{cha:operators}
We will need the following transform operators.
\begin{defn}[Radon operators] \label{def:radon_transform_measure} \
\begin{itemize}
\item With $\Rs_\theta \nu: = \pf{[ x \mapsto x \cdot \theta  ]}\nu$, the \emph{Radon transform} $\Rs: \M(\R^d) \rightarrow \M(\alldirs \times \R)$ is defined as
\[ \Rs \nu = \mdirs \prodm \theta \Rs_\theta \nu.
\]
\item With $\Rf_\theta \nu: = \pf{[ (x,t) \mapsto (x \cdot \theta,t)  ]}\nu$, the \emph{timewise Radon transform} $\Rf: \M(\R^d \times \domt ) \rightarrow \M(\alldirs \times \R\times \domt )$ is defined as
\[ \Rf \nu = \mdirs \prodm \theta \Rf_\theta \nu.
\]
\item With $\Rj_\theta \nu: = \pf{[ (x,y) \mapsto (x \cdot \theta,y\cdot \theta)  ]}\nu$,  the \emph{joint Radon transform} $\Rj: \M(\R^d \times \R^d) \rightarrow \M(\alldirs \times \R \times \R)$ is defined as  
\[ \Rj \nu = \mdirs \prodm \theta \Rj_\theta \nu.
\]
\end{itemize}
\end{defn}
\begin{rem}[Properties of Radon operators] \label{prop:radon_properties}
It follows from \cref{thm:pushforward_radon,prop:pushforward_properties,prop:properties_product_measure} that all Radon operators define bounded linear operators with norm bounded by $1$, that they map nonnegative measures to nonnegative measures of same norm, and that they are continuous with respect to narrow convergence of measures. Further, as can be seen by direct computation, they map $L^1$-functions (regarded as a subset of Radon measures) to $L^1$-functions, and the following explicit representations hold true almost everywhere,
\begin{align*}
\nu \in L^1(\R^d): &  \qquad \Rs_\theta \nu(s) = \int_{ \{ x \in \R^d \st x \cdot \theta = s\}} \nu(x) \wrt \hd^{d-1}(x), \\
\nu \in L^1(\R^d \times \domt): & \qquad \Rf_\theta \nu(s,t) = \int_{ \{ x \in \R^d \st x \cdot \theta = s\}} \nu(x,t) \wrt \hd^{d-1}(x), \\
\nu\in L^1(\R^d \times \R^d): & \qquad \Rj_\theta \nu(s,r) = \int_{ \{ (x,y)\in \R^d \times \R^d \st x \cdot \theta = s,\,  y \cdot \theta = r\}} \nu(x,y) \wrt \hd^{2d-2}(x,y),
\end{align*}
where $\hd^n$ denotes the $n$-dimensional Hausdorff measure.
\end{rem}

\begin{rem}[Decomposition of Radon operators]\label{rem:radon_operator_decomposition}
A straightforward computation shows that, in case $\nu = \nu_t \prodm t \mtime \in \M(\R^d \times \domt)$, 
the Radon operators as above can be decomposed as 
\[ \Rf \nu = \Rs \nu_t \prodm t \mtime =  (\mdirs  \prodm \theta \Rs_\theta \nu_t )\prodm t  \mtime. \]
\end{rem}

In the variational problem setting considered later on, we will only deal with measures with support contained in a compact set and exploit weak-$\ast$ sequential compactness properties. In such a setting, \cref{prop:pushforward_properties,prop:properties_product_measure} provide a characterization of the support of the Radon transforms of such measures and corresponding weak-$\ast$ continuity assertions, which are summarized in the following proposition.

\begin{prop}[Radon operators for compactly supported measures]\label{prop:supports}
With $\theta \in \alldirs$ arbitrary, we have the following.
\begin{itemize}
\item For $\nu \in \M(\R^d)$ with $\supp(\nu) \subset \domstat$ it holds that 
$ \supp(\Rs_\theta \nu) \subset  \projdomstat$, $ \supp(\Rs \nu) \subset \alldirs \times \projdomstat$, 
and, using zero extension, the operator restrictions 
\[ \Rs_\theta : \M(\domstat) \rightarrow \M( \projdomstat)  \quad \text{and}\quad \Rs: \M(\domstat) \rightarrow \M(\alldirs \times \projdomstat) \] are weakly-$\ast$ continuous.

\item For $\nu \in \M(\R^d\times \domt)$ with $\supp(\nu) \subset \domstat\times \domt$ it holds that 
$ \supp(\Rf_\theta \nu) \subset  \projdomstat\times \domt$, $ \supp(\Rf \nu) \subset \alldirs \times \projdomstat\times \domt$, 
and, using zero extension, the operator restrictions 
\[ \Rf_\theta : \M(\domstat\times \domt) \rightarrow \M( \projdomstat\times \domt)  \quad \text{and}\quad \Rf: \M(\domstat\times \domt) \rightarrow \M(\alldirs \times \projdomstat\times \domt) \] are weakly-$\ast$ continuous.

\item For $\nu \in \M(\R^d\times \R^d)$ with $\supp(\nu) \subset \domdyn$ it holds that 
$ \supp(\Rj_\theta \nu) \subset  \projdomdyn$, $ \supp(\Rj \nu) \subset \alldirs \times \projdomdyn$,
and, using zero extension, the operator restrictions 
\[ \Rj_\theta : \M(\domdyn) \rightarrow \M( \projdomdyn) \quad \text{and}\quad \Rj: \M(\domdyn) \rightarrow \M(\alldirs \times \projdomdyn ) \] are weakly-$\ast$ continuous.
\end{itemize}
\end{prop}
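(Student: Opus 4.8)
The plan is to obtain every assertion directly from the already-proven properties of pushforwards and product measures, namely \cref{prop:pushforward_properties,prop:properties_product_measure}. Their topological hypotheses hold throughout: each source space ($\domstat$, $\domstat\times\domt$, $\domdyn$) and each target space ($\projdomstat$, $\projdomstat\times\domt$, $\projdomdyn$, together with their products with $\alldirs$) is a compact, and hence complete, separable, $\sigma$-compact, locally compact, metric space, and moreover $\alldirs=\supp\mdirs$ is compact, which is exactly what the product-measure statements additionally require. The relevant projection maps $x\mapsto x\cdot\theta$, $(x,t)\mapsto(x\cdot\theta,t)$, $(x,v)\mapsto(x\cdot\theta,v\cdot\theta)$ are continuous, and jointly continuous in $\theta$ as well.

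First I would treat the support inclusions for the single-direction operators. By \cref{def:radon_transform_measure} each of $\Rs_\theta$, $\Rf_\theta$, $\Rj_\theta$ is the pushforward under one of the above continuous maps, so the third bullet of \cref{prop:pushforward_properties} gives $\supp(\Rs_\theta\nu)\subset\overline{\{\theta\cdot x\mid x\in\supp(\nu)\}}$, and analogously for $\Rf_\theta$, $\Rj_\theta$. Since $\supp(\nu)$ is contained in the compact set $\domstat$ (resp.\ $\domstat\times\domt$, $\domdyn$), its continuous image is already compact, so the closure is superfluous, and that image lies in $\projdomstat$ (resp.\ $\projdomstat\times\domt$, $\projdomdyn$) by the very definition of these sets, as $\theta\in\alldirs$. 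In particular the zero extensions of $\Rs_\theta\nu$ etc.\ may legitimately be regarded as elements of $\M(\projdomstat)$ etc.

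For the $\theta$-aggregated operators $\Rs=\mdirs\prodm\theta\Rs_\theta$, $\Rf=\mdirs\prodm\theta\Rf_\theta$, $\Rj=\mdirs\prodm\theta\Rj_\theta$ I would apply \cref{prop:properties_product_measure} in its $\omega\prodm z\pf{[f_z]}\nu$ form, with $\omega=\mdirs$, $Z=\alldirs$, and $f_\theta$ the respective projection. Its third bullet yields $\supp(\Rs\nu)\subset\overline{F(\supp(\nu)\times\alldirs)}$ with $F(x,\theta)=(\theta,\theta\cdot x)$, and the analogues for $\Rf$, $\Rj$; compactness of $\supp(\nu)\times\alldirs$ again removes the closure and places the support in $\alldirs\times\projdomstat$ (resp.\ $\alldirs\times\projdomstat\times\domt$, $\alldirs\times\projdomdyn$).

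For weak-$\ast$ continuity, suppose $\nu_n\weakstarto\nu$ in $\M(\domstat)$; since $\domstat$ is compact the whole sequence is supported in the fixed compact set $\domstat$, so the last bullet of \cref{prop:pushforward_properties} gives $\Rs_\theta\nu_n\narrowto\Rs_\theta\nu$, and narrow convergence in particular implies $\Rs_\theta\nu_n\weakstarto\Rs_\theta\nu$; the same reasoning with the compact sets $\domstat\times\domt$ and $\domdyn$ handles $\Rf_\theta$ and $\Rj_\theta$. For $\Rs$, $\Rf$, $\Rj$ one uses instead the last bullet of \cref{prop:properties_product_measure} — available precisely because $Z=\alldirs$ is compact — to get $\Rs\nu_n\narrowto\Rs\nu$ and hence weak-$\ast$ convergence, similarly for $\Rf$, $\Rj$. (One moves freely between weak-$\ast$ convergence in $\M(\domstat)$ and in $\M(\R^d)$, as $\domstat$ is compact so that $C_0(\domstat)=C(\domstat)$ and, by Tietze extension, every function in $C(\domstat)$ is the restriction of a function in $C_0(\R^d)$.) Overall the proof is essentially bookkeeping; the only points deserving care are checking the (routinely satisfied) hypotheses of the two cited propositions and exploiting compactness both to discard the closures in the support bounds and to supply the uniform-compact-support hypothesis in the continuity statements, so I do not anticipate a genuine obstacle.
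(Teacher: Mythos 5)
Your proof is correct and follows exactly the route the paper intends: the paper gives no separate argument for this proposition but presents it as a direct summary of \cref{prop:pushforward_properties,prop:properties_product_measure}, which is precisely the bookkeeping you carry out (support inclusions from the third bullets with compactness removing the closures, and weak-$\ast$ continuity from the narrow-convergence bullets using the fixed compact supports and compactness of $\alldirs$).
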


\begin{rem}[Alternative definition of Radon operators] Note that an alternative definition of $\Rs:\M(\R^d ) \rightarrow \M(\alldirs \times \R)$ (and similarly for $\Rf$ and $\Rj$) could be the dual of the operator $\Rs^*:C_0(\alldirs \times \R) \rightarrow C_0(\R^d)$ given as
\[ \Rs^* \varphi(x) = \int_{\alldirs}\varphi (\theta,x \cdot \theta) \wrt \mdirs (\theta). \]
While this would directly yield weak-$\ast$ continuity of $\Rs$ also for measures on unbounded domains, it requires the measure $\mdirs$ to be absolutely continuous with respect to the standard Hausdorff measure on the sphere $\sphere^{d-1}$.  Indeed, the crucial point here is to show that $\ x \mapsto \int_{\alldirs}\varphi (\theta,x \cdot \theta) \wrt \mdirs (\theta)$ indeed vanishes at infinity, which can be done as in \cite[Lemma 1]{boman2009radon_transform_measure} in case of absolute continuity of $\mdirs$, but does not hold true for instance if $\alldirs$ is finite and $\mdirs$ is the counting measure.
\end{rem}

We will also need injectivity of the Radon operator $\Rs$ acting on measures. As in the classical setting with Schwartz functions, this can be obtained from a Fourier slice theorem provided that sufficiently many directions $\theta \in \Theta$ are measured.
\begin{defn}[Fourier transform]\label{def:FourierTransform}
For $\nu \in \M(\R^n)$ and $\xi \in \R^n$, we define the Fourier transform
\[ \ft(\nu)(\xi) = \hat{\nu}(\xi) = \int_{\R^n} e^ {-\ii x \cdot \xi} \wrt \nu (x) \]
and the inverse Fourier transform 
\[ \ft^{-1}(\nu)(\xi) = \check{\nu}(\xi) = (2\pi)^{-n} \int_{\R^n} e^ {\ii x \cdot \xi} \wrt \nu (x) \]
\end{defn}
Note that, since $x \mapsto e ^{\pm i x \cdot \xi}$ is continuous and bounded, $\hat{\nu}$ and $\check{\nu}$ are well-defined and are uniformly continuous, bounded functions.

The Fourier slice theorem relates the Fourier transform and the Radon transform.
\begin{prop}[Fourier slice theorem]\label{thm:fourierSlice} For $\nu \in \M(\R^d)$  we have
\[ \widehat{R_\theta \nu}(\sigma) = \int_\R \e^{-\ii s \sigma} \wrt R_\theta \nu (s) = \int_{\R^d} \e^{-\ii x \cdot \theta \sigma } \wrt \nu(x) = \hat{\nu}(\theta \sigma) \qquad \text{for all }\sigma\in\R,\,\theta\in\sphere^{d-1} .\]
\end{prop}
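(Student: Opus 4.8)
The plan is to prove the Fourier slice theorem by a direct computation using the pushforward change-of-variables formula. Recall that $R_\theta\nu = \pf{[x\mapsto x\cdot\theta]}\nu$ by \cref{def:radon_transform_measure}, so $R_\theta\nu$ is the pushforward of $\nu\in\M(\R^d)$ under the continuous map $p_\theta:\R^d\to\R$, $p_\theta(x)=x\cdot\theta$. First I would fix $\sigma\in\R$ and $\theta\in\sphere^{d-1}$ and write out the definition of the Fourier transform of the measure $R_\theta\nu\in\M(\R)$ from \cref{def:FourierTransform}, namely $\widehat{R_\theta\nu}(\sigma)=\int_\R \e^{-\ii s\sigma}\wrt (R_\theta\nu)(s)$.

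The key step is to apply \cref{thm:pushforward_radon}, specifically the integral identity \eqref{eq:pf_general_integral_equality}, with $g(s)=\e^{-\ii s\sigma}$ (applied separately to real and imaginary parts, or noting the formula extends to bounded continuous complex-valued integrands since $\nu$ is a finite Radon measure and $g$ is bounded and continuous, hence integrable against both $\nu$ and $\pf{[p_\theta]}\nu$). This gives
\[
\int_\R \e^{-\ii s\sigma}\wrt (R_\theta\nu)(s)=\int_{\R^d}\e^{-\ii (x\cdot\theta)\sigma}\wrt\nu(x)=\int_{\R^d}\e^{-\ii x\cdot(\theta\sigma)}\wrt\nu(x),
\]
and the right-hand side is exactly $\hat\nu(\theta\sigma)$ by the definition of the $d$-dimensional Fourier transform, since $\theta\sigma\in\R^d$.

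There is essentially no main obstacle here; this is a one-line consequence of the transformation rule for pushforwards. The only minor point worth addressing is the complex-valued integrand: \cref{thm:pushforward_radon} is stated for real-valued $g$, so I would remark that the identity extends by linearity to complex-valued $g=g_1+\ii g_2$ with $g_1,g_2$ bounded continuous, and that integrability of $\e^{\pm\ii s\sigma}$ against the finite measure $R_\theta\nu$ (and of $\e^{\pm\ii x\cdot\theta\sigma}$ against $\nu$) is immediate from boundedness. I would also note in passing that $\widehat{R_\theta\nu}$ is well-defined and uniformly continuous by the remark following \cref{def:FourierTransform}, so no regularity issues arise. The proof therefore reduces to citing \eqref{eq:pf_general_integral_equality} and rearranging the exponent.
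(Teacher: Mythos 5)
Your proof is correct and is exactly the argument the paper intends: the displayed chain of equalities is justified by the pushforward change-of-variables identity \eqref{eq:pf_general_integral_equality} applied to the bounded continuous integrand $s\mapsto\e^{-\ii s\sigma}$, which is why the paper states the proposition without further proof. Your extra remark about extending the identity to complex-valued integrands by splitting into real and imaginary parts is a fine (if routine) point of care and does not change the route.
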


This allows to reduce injectivity of the Radon transform to injectivity of the Fourier transform on $\M(\R^d)$.

\begin{prop}[Injectivity of Radon operators] \label{prop:radon_injective}
If $\mdirs$ has a (nonzero) lower semi-continuous density with respect to the $(d-1)$-dimensional Hausdorff measure on $\sphere^{d-1}$, then
the Radon transform $\Rs:\M(\R^d) \rightarrow \M(\alldirs \times \R)$ is injective.
\begin{proof}
If $\Rs \nu = 0$ for $\nu  \in \M(\R^d)$, we obtain that $R_\theta \nu = 0$ for $\mdirs$-almost every $\theta \in \alldirs$ and, from the Fourier slice theorem, that $\hat{\nu}(\theta \sigma) = 0$ for $\mdirs$-almost every $\theta \in \alldirs$ and every $\sigma \in \R$. Now using the assumption on $\mdirs$, there exists a point $\theta_0 \in \alldirs$ such that the density of $\mdirs$ does not vanish in a neighbourhood of $\theta$ in $\sphere^{d-1}$. 
This implies that $\int_{\R^d } \varphi (x) \wrt \nu(x) = 0$ for any $\varphi  \in  A:= \vspan (\{ x \mapsto e^{-\ii \sigma \theta \cdot x} \st \sigma \in \R, \theta \in \sphere^{d-1}, \, \| \theta - \theta_0 \|< \epsilon \})$,
with $\epsilon>0$ suitably chosen. Now given any $\psi \in C_c(\R^d)$, $A$ is a subalgebra of $C(\supp(\psi))$ that separates points and contains the constant one function, thus by the Stone--Weierstrass Theorem, $A$ is dense in $C(\supp(\psi))$. Approximating $\psi$ with elements in $A$ we thus obtain that also $\langle \psi, \nu\rangle = 0$ and, as $\psi \in C_c(\R^d)$ was arbitrary, it follows from density that $\nu = 0$.
\end{proof}
\end{prop}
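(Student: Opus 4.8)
The plan is to strip off the auxiliary direction variable $\theta$ and reduce the claim to the classical fact that the Fourier transform is injective on finite Radon measures, using the Fourier slice theorem (\cref{thm:fourierSlice}) as the bridge, and then to upgrade the ``$\mdirs$-almost every direction'' information that this produces to ``every direction in an open spherical cap'' by exploiting the lower semicontinuity and nonnegativity of the density of $\mdirs$ together with the continuity of $\hat\nu$.

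In more detail, suppose $\Rs\nu=0$ for some $\nu\in\M(\R^d)$, that is, $\mdirs\prodm\theta\Rs_\theta\nu=0$ in $\M(\alldirs\times\R)$. First I would deduce that $\Rs_\theta\nu=0$ for $\mdirs$-almost every $\theta$: fixing a countable $\pi$-system $(A_k)_k$ generating $\Bc(\R)$ and noting that $\theta\mapsto(\Rs_\theta\nu)(A_k)$ is bounded by $\|\nu\|_\M$ and $\mdirs$-measurable (this measurability being built into the definition of $\Rs$, \cref{def:radon_transform_measure}), we get $\int_B(\Rs_\theta\nu)(A_k)\wrt\mdirs(\theta)=(\Rs\nu)(B\times A_k)=0$ for all $B\in\Bc(\alldirs)$, hence $(\Rs_\theta\nu)(A_k)=0$ off an $\mdirs$-nullset $N_k$; off $\bigcup_kN_k$ the finite measure $\Rs_\theta\nu$ vanishes on a generating $\pi$-system and therefore identically. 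Next, \cref{thm:fourierSlice} gives $\hat\nu(\sigma\theta)=\widehat{\Rs_\theta\nu}(\sigma)=0$ for $\mdirs$-almost every $\theta$ and all $\sigma\in\R$. Let $g\geq0$ be the assumed lower semicontinuous density of $\mdirs$ with respect to $\hd^{d-1}$; since $g\not\equiv0$, choose $\theta_0$ with $g(\theta_0)>0$, so that lower semicontinuity yields $g>g(\theta_0)/2>0$ on an open neighbourhood $V\subset\sphere^{d-1}$ of $\theta_0$. Then $\mdirs$ and $\hd^{d-1}$ have the same nullsets within $V$, so $\hat\nu(\sigma\theta)=0$ for $\hd^{d-1}$-almost every $\theta\in V$ and all $\sigma$; and since $\hat\nu$ is continuous and $\hd^{d-1}$ has full support on the open set $V$, the continuous map $\theta\mapsto\hat\nu(\sigma\theta)$ in fact vanishes on all of $V$ for each $\sigma$. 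Thus $\hat\nu$ vanishes on the open double cone $K:=\{\sigma\theta\st\sigma\in\R,\ \theta\in V\}$, which has nonempty interior in $\R^d$.

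It then remains to conclude $\nu=0$ from the vanishing of $\hat\nu$ on $K$, and this last step is the one I expect to be the main obstacle. In the setting actually relevant for the later results, where $\nu$ is supported in the compact domain $\domstat$, this can be done cleanly: $\hat\nu$ then extends to an entire function on $\mathbb{C}^d$ (Paley--Wiener), so its vanishing on the open subset $K$ of $\R^d$ forces all its partial derivatives to vanish at an interior point of $K$, hence $\hat\nu\equiv0$, and so $\nu=0$ by injectivity of the Fourier transform on finite measures. For a general $\nu\in\M(\R^d)$ one would instead attempt a density argument: for arbitrary $\psi\in C_c(\R^d)$ it suffices to show $\int_{\R^d}\psi\wrt\nu=0$, since then $\nu=0$ by the duality $\M(\R^d)=C_0(\R^d)^*$ (\cref{thm:measure_predual}) together with density of $C_c(\R^d)$ in $C_0(\R^d)$; since $\int_{\R^d}\e^{-\ii\xi\cdot x}\wrt\nu(x)=\hat\nu(\xi)=0$ for all $\xi\in K$, one would like to approximate $\psi$ uniformly on $\supp\psi$ by finite linear combinations of the exponentials $x\mapsto\e^{-\ii\xi\cdot x}$, $\xi\in K$, and invoke the Stone--Weierstrass theorem on $C(\supp\psi)$, the span of those exponentials separating points and containing the constants. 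The genuinely delicate point — the one I expect to need the most care — is that this span is not literally a subalgebra: the product of two such exponentials carries a frequency $\xi_1+\xi_2$ which need not lie in the cone $K$, so the Stone--Weierstrass hypotheses are not met directly, and one must either restrict to a fixed ball $\supp\psi\subset B_R(0)$ and exploit the extra rigidity there, or reduce to the compactly supported case and use the analyticity argument above.
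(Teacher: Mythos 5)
Your argument coincides with the paper's own proof up to the point where $\hat\nu$ is shown to vanish on the open double cone $K$ over a spherical cap: the disintegration step giving $\Rs_\theta\nu=0$ for $\mdirs$-almost every $\theta$, the use of the Fourier slice theorem (\cref{thm:fourierSlice}), and the upgrade from ``$\mdirs$-a.e.\ $\theta$'' to ``every $\theta$ in a cap $V$'' via lower semicontinuity of the density and continuity of $\hat\nu$ are exactly the paper's steps, only spelled out in more detail. You diverge at the final step: the paper concludes by applying Stone--Weierstrass to $A=\vspan\set{x\mapsto\e^{-\ii\sigma\theta\cdot x}\st\sigma\in\R,\ \|\theta-\theta_0\|<\epsilon}$, asserting that $A$ is a subalgebra of $C(\supp\psi)$, whereas you close the argument only for compactly supported $\nu$ via analyticity of $\hat\nu$ (Paley--Wiener) and flag the subalgebra property as the delicate point in the general case.

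Your hesitation is well placed: the point you flag is a genuine defect of the paper's proof, not of your understanding. The product of two cap exponentials has frequency $\sigma_1\theta_1+\sigma_2\theta_2$, which for $d\ge2$ and small $\epsilon$ need not lie in $K$, so $A$ is not closed under multiplication and Stone--Weierstrass does not apply as invoked. Moreover, the unrestricted statement cannot be repaired when ``nonzero density'' is read, as the paper intends (cf.\ \cref{thm:exactReconstruction_hausdorff}), as ``not identically zero'': choose $\phi\in C_c^\infty(\R^d)$ real, even, nonzero, supported in $\R^d\setminus\overline K$ (nonempty for $d\ge2$ when the cap is a proper subset of the sphere), and set $\nu=\check\phi\,\lebesgue^d$; then $\nu\in\M(\R^d)\setminus\{0\}$, yet $\hat\nu=\phi$ vanishes on $K$, so $\Rs_\theta\nu=0$ for every $\theta$ in the cap and $\Rs\nu=0$ whenever $\supp\mdirs$ lies in the cap. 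Hence injectivity on all of $\M(\R^d)$ fails; it holds either for compactly supported measures — your Paley--Wiener argument, which is precisely the form in which the proposition is used later, e.g.\ for $\Rs:\M(\domstat)\to\M(\alldirs\times\projdomstat)$ in \cref{prop:decomposition_measure_u_unique,prop:equivalence_measure} (and the same caveat applies to \cref{prop:move_injective}, whose proof defers to this one) — or for general measures under the stronger hypothesis that the density is positive everywhere on $\sphere^{d-1}$, in which case continuity of $\hat\nu$ alone yields $\hat\nu\equiv0$ and neither Stone--Weierstrass nor analyticity is needed. Your fallback suggestions for the general case (restricting to a ball containing $\supp\psi$, or reducing to compact support) cannot succeed for arbitrary $\nu$, precisely because of this counterexample; but since you stopped short of claiming them, the only incomplete part of your write-up is a case that is in fact false as stated.
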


Next, we introduce two different operators that project lifted measures to a time-series.
\begin{defn}[Move operators] \label{defn:move_operators} Take $n \in \N$. 
\begin{itemize}
\item  With $\mv^n_t \nu := \pf{[ (x,v) \mapsto x + tv ]}\nu$, the \emph{move operator} $\mv^n:\M(\R^n \times \R^n) \rightarrow \M(\R^n \times \domt)$ is defined as
\[ \mv^n \nu =\mv^n_t \nu \prodm t \mtime.
\]
We set $\mv := \mv^1$.
\item With $\amvt[n]{t} \nu := \pf{[ (\theta,x,v) \mapsto (\theta,x+tv)]}\nu$, the \emph{anglewise move operator} $\amv[n]:\M(\alldirs \times \R^n \times \R^n) \rightarrow \M(\alldirs \times \R^n \times \domt)$  is defined as
\[ \amv[n] \nu = \amvt[n]{t} \nu \prodm t \mtime.
\]
We set $\amv := \amv[1]$.
\end{itemize}
\end{defn}

\begin{rem}[Properties of move operators] \label{prop:move_properties}
Again, it follows from \cref{thm:pushforward_radon}, \cref{prop:pushforward_properties} and \cref{prop:properties_product_measure} that all move operators define bounded linear operators with norm bounded by $1$, that they map nonnegative measures to nonnegative measures of same norm, and that they are continuous with respect to narrow convergence of measures. Just like for the Radon operators, they map $L^1$-functions to $L^1$-functions with the following explicit representations almost everywhere,
\begin{align*}
\nu \in L^1(\R^n \times \R^n): &  \qquad \mv_t \nu(z) = \int_{ \{ (x,v)  \in \R^n \times \R^n  \st x + tv  = z\}} \nu(x,v) \wrt \hd^n(x,v), \\
\nu \in L^1(\alldirs \times \R^n \times \R^n): & \qquad \amv_t \nu (\theta,z) =\int_{ \{ (x,v)  \in \R^n \times \R^n  \st x + tv  = z\}} \nu( \theta,x,v) \wrt \hd^n(x,v).
\end{align*}
\end{rem}

\begin{rem}[Decomposition of move operators] \label{rem:move_operator_decomposition}
Again, a straightforward computation shows that in case $\nu = \mdirs  \prodm \theta \nu_\theta \in \M(\alldirs \times \R^n \times \R^n)$ the move operators as above can be decomposed as 
\[ \amv[n] \nu = \mdirs \prodm \theta \mv^n \nu _\theta  = \mdirs \prodm \theta (\mv^n_t \nu_\theta \prodm t  \mtime).
 \]
\end{rem}

Again, we are interested in the restriction of move operators to measures on bounded domains and corresponding weak-$\ast$ continuity assertions, for which \cref{prop:pushforward_properties,prop:properties_product_measure} provide several properties which are summarized in the following \namecref{prop:move_supports}.

\begin{prop}[Move operators for compactly supported measures]\label{prop:move_supports}
With $t \in \domt$ arbitrary, we have the following.
\begin{itemize}
\item For $\nu \in \M(\R^d \times \R^d )$ with $\supp(\nu) \subset \domdyn$ it holds that $\supp(\mv^d_t \nu) \subset \domstat$, $ \supp(\mv^d \nu) \subset \domstat \times \domt $, and, using zero extension, the operator restrictions 
\[ \mv^d_t : \M(\domdyn) \rightarrow \M( \domstat )\quad \text{and}\quad \mv^d: \M(\domdyn) \rightarrow \M(\domstat \times \domt)  \] are weakly-$\ast$ continuous.

\item For $\nu \in \M(\alldirs \times \R^2)$ with $\supp(\nu) \subset \alldirs \times \projdomdyn$ it holds that $\supp(\amv_t \nu) \subset \alldirs \times  \projdomstat$, $ \supp(\amv \nu) \subset \alldirs \times  \projdomstat \times \domt $, and, using zero extension, the operator restrictions 
\[ 
\amv_t : \M(\alldirs \times \projdomdyn) \rightarrow \M( \alldirs \times  \projdomstat)
 \quad \text{and}\quad 
 \amv: \M(\alldirs \times \projdomdyn ) \rightarrow \M(\alldirs \times  \projdomstat\times \domt) \]
 are weakly-$\ast$ continuous.
\end{itemize}
\end{prop}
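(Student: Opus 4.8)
The plan is to read off every assertion in this proposition directly from \cref{prop:pushforward_properties} and \cref{prop:properties_product_measure}, since by \cref{defn:move_operators} both move operators are pushforwards, respectively product measures built from pushforwards. Concretely, writing $f_t\colon\R^d\times\R^d\to\R^d$, $f_t(x,v)=x+tv$, and $g_t\colon\alldirs\times\R^2\to\alldirs\times\R$, $g_t(\theta,x,v)=(\theta,x+tv)$, we have $\mv^d_t=\pf{f_t}$ and $\amv_t=\pf{g_t}$, while $\mv^d\nu=\pf{F}(\nu\otimes\mtime)$ and $\amv\nu=\pf{G}(\nu\otimes\mtime)$ with $F(x,v,t)=(x+tv,t)$ and $G(\theta,x,v,t)=(\theta,x+tv,t)$, which is exactly the form $\pf{[f_z]}\nu\prodm z\omega=\pf{F}(\nu\otimes\omega)$ treated in \cref{prop:properties_product_measure} (with $Z=\domt$ compact, $\omega=\mtime$ a Borel probability measure, and $(x,v,t)\mapsto x+tv$, $(\theta,x,v,t)\mapsto(\theta,x+tv)$ continuous). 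Hence the only genuine task is to check that these four maps send the relevant compact domains into the compact target sets named in the statement.

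The first and central step is this domain check. By the very definition \eqref{eq:domdyn_definitoin} of $\domdyn$ we have $f_t(\domdyn)\subset\domstat$ for every $t\in\domt$, hence $F(\domdyn\times\domt)\subset\domstat\times\domt$. For the anglewise maps I would use that every point of $\projdomdyn$ is of the form $(\theta'\cdot\tilde x,\theta'\cdot\tilde v)$ with $(\tilde x,\tilde v)\in\domdyn$ and $\theta'\in\alldirs$, so that for $t\in\domt$
\[ (\theta'\cdot\tilde x)+t(\theta'\cdot\tilde v)=\theta'\cdot(\tilde x+t\tilde v)\in\projdomstat \]
because $\tilde x+t\tilde v\in\domstat$; this yields $g_t(\alldirs\times\projdomdyn)\subset\alldirs\times\projdomstat$ and $G(\alldirs\times\projdomdyn\times\domt)\subset\alldirs\times\projdomstat\times\domt$. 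Since $\domstat$, $\domstat\times\domt$, $\alldirs\times\projdomstat$ and $\alldirs\times\projdomstat\times\domt$ are compact, hence closed, the support inclusions now follow from $\supp(\pf{f}\nu)\subset\overline{f(\supp(\nu))}$ in \cref{prop:pushforward_properties} and from $\supp(\pf{[f_z]}\nu\prodm z\omega)\subset\overline{F(\supp(\nu)\times\supp(\omega))}$ in \cref{prop:properties_product_measure}, using $\supp(\nu)\subset\domdyn$ (resp.\ $\subset\alldirs\times\projdomdyn$) and $\supp(\mtime)\subset\domt$. In particular each image measure is concentrated on the stated compact target set, so under zero extension it is unambiguously an element of $\M(\domstat)$, $\M(\domstat\times\domt)$, $\M(\alldirs\times\projdomstat)$ or $\M(\alldirs\times\projdomstat\times\domt)$, which is what makes the restricted operators well defined.

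For weak-$\ast$ continuity I would take $\nu_n\weakstarto\nu$ in $\M(\domdyn)$ (resp.\ in $\M(\alldirs\times\projdomdyn)$); regarding these as measures on $\R^d\times\R^d$ (resp.\ $\alldirs\times\R^2$) by zero extension preserves weak-$\ast$ convergence (test functions restrict to the compact domain) and produces supports contained in the fixed compact set $S=\domdyn$ (resp.\ $S=\alldirs\times\projdomdyn$). Then the last bullet of \cref{prop:pushforward_properties} applies to $\mv^d_t$ and $\amv_t$, and the last bullet of \cref{prop:properties_product_measure} (here $Z=\domt$ is compact, as needed) applies to $\mv^d$ and $\amv$, giving once more the support inclusions together with narrow convergence $\mv^d_t\nu_n\narrowto\mv^d_t\nu$, $\mv^d\nu_n\narrowto\mv^d\nu$, and analogously for the anglewise operators. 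Since all measures involved are concentrated on one fixed compact subset of Euclidean space, extending a test function from the compact target set to a compactly supported continuous function on the ambient space turns this narrow convergence into weak-$\ast$ convergence on the compact target set, which is precisely the claimed continuity of $\mv^d_t$, $\mv^d$, $\amv_t$, $\amv$ as operators on the compact domains.

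I do not expect a serious obstacle here: the whole argument is bookkeeping around two propositions already proven in the excerpt. The only non-mechanical point is the domain inclusion $g_t(\alldirs\times\projdomdyn)\subset\alldirs\times\projdomstat$, and even that reduces to unwinding the definitions of $\domdyn$, $\projdomdyn$ and $\projdomstat$ together with linearity of $\theta'\cdot(\,\cdot\,)$; the one thing to keep straight is that $\projdomdyn$ and $\projdomstat$ record only the projected points and not which direction produced them, so the $\theta'$ used to represent a point of $\projdomdyn$ need not match the angle carried along in the first coordinate.
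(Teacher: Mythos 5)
Your proposal is correct and follows exactly the route the paper intends: Proposition~\ref{prop:move_supports} is stated as a direct summary of \cref{prop:pushforward_properties,prop:properties_product_measure}, and your argument just fills in the routine bookkeeping (the domain inclusions $f_t(\domdyn)\subset\domstat$ and $g_t(\alldirs\times\projdomdyn)\subset\alldirs\times\projdomstat$ from the definitions of $\domdyn$, $\projdomdyn$, $\projdomstat$, plus the passage between weak-$\ast$ and narrow convergence for compactly supported measures). Your closing remark about the projecting direction $\theta'$ not needing to match the angle coordinate is exactly the right point to be careful about, and it is handled correctly.
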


\begin{rem}[Alternative definition of move operators]
As with the Radon operators, an alternative definition of $\mv:\M(\R^d  \times \R^d) \rightarrow \M(\R^d \times \domt)$ (and similarly for $\amv$) could be the dual of the operator $\mv^*:C_0(\R^d \times \domt) \rightarrow C_0(\R^d \times \R^d)$ given as
\[ \mv^* \varphi(x,v) = \int_{\domt }\varphi (x + tv,t) \wrt \mtime(t), \]
which in particular implies weak-$\ast$ continuity of $\mv$ also for measures on unbounded domains. However, this requires %
to show that $(x,v) \mapsto \int_{\domt }\varphi (x + tv,t) \wrt \mtime(t)$ indeed vanishes at infinity. Again, while this can be done with similar techniques as in \cite[Lemma 1]{boman2009radon_transform_measure} in case of absolute continuity of $\mtime$, it does not hold true for instance if $\domt$ is finite and $\mtime$ is the counting measures.
\end{rem}

The following \namecref{lem:radon_move_exchange} provides a commutation rule for certain Radon and move operators.
\begin{lem}[Commutation of Radon and move operators] \label{lem:radon_move_exchange}
It holds that
\[ \Rf \mv^d = \amv \Rj \quad \text{as well as} \quad  \Rs_\theta \mv^d_t = \mv_t \Rj_\theta \text{ for all }t \in \R,\theta\in\sphere^{d-1}.
\]
\begin{proof}
Take $\nu \in \M(\R^d \times \R^d)$ and $\varphi \in C_0(\alldirs \times \R \times \domt)$. Then
\begin{align*}
\langle \Rf \mv^d \nu,\varphi \rangle 
& = \int _{\R^d \times \R^d}  \int _\domt \int_\alldirs \varphi(\theta,\theta \cdot (x + t y),t) \wrt \mdirs (\theta) \wrt \mtime(t) \wrt \nu(x,y) \\
& = \int _{\R^d \times \R^d}   \int_\alldirs \int _\domt \varphi(\theta,(\theta \cdot x) + t (\theta \cdot  y),t) \wrt \mtime(t) \wrt \mdirs (\theta)  \wrt \nu(x,y) \\
& = \int _{\R^d \times \R^d}   \int_\alldirs (\amv ^* \varphi)(\theta,\theta \cdot x, \theta \cdot y)  \wrt \mdirs (\theta)  \wrt \nu(x,y) \\
& = \int _{\R^d \times \R^d}   (\Rj^* \amv[]^* \varphi)( x,  y)  \wrt \nu(x,y) = \langle \amv \Rj \nu, \varphi  \rangle ,
\end{align*}
where we applied Fubini's theorem since both $(\domt,\mtime)$ and $(\alldirs,\mdirs)$ are finite measure spaces by assumption. %
The equality $\Rs_\theta \mv^d_t = \amv_t \Rj_\theta$  for all $t \in \R,\theta\in\sphere^{d-1}$ follows similarly.
\end{proof}
\end{lem}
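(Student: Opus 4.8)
This \namecref{lem:radon_move_exchange} is the rigorous version of the identity $\Rs_\theta\mv^d_t=\mv^1_t\Rj_\theta$ announced in \cref{sec:intro_dim_red}, and both assertions ultimately rest on the elementary linearity $\theta\cdot(x+tv)=(\theta\cdot x)+t(\theta\cdot v)$; the only analytic ingredient is Fubini's theorem. The plan is to test both sides against arbitrary continuous functions and invoke the duality $\M(X)=C_0(X)^*$ of \cref{thm:measure_predual}.

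For the operator identity $\Rf\mv^d=\amv\Rj$ I would fix $\nu\in\M(\R^d\times\R^d)$ and $\varphi\in C_0(\alldirs\times\R\times\domt)$ and unwind $\langle\Rf\mv^d\nu,\varphi\rangle$ by repeatedly applying the change-of-variables formula \eqref{eq:pf_general_integral_equality} together with the definition of the product measure: first for $\mv^d\nu=\mv^d_t\nu\prodm t\mtime$ with $\mv^d_t\nu=\pf{[(x,v)\mapsto x+tv]}\nu$, then for $\Rf\mu=\mdirs\prodm\theta\Rf_\theta\mu$ with $\Rf_\theta\mu=\pf{[(z,t)\mapsto(\theta\cdot z,t)]}\mu$. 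This rewrites the pairing as
\[
\int_{\R^d\times\R^d}\int_\domt\int_\alldirs\varphi\bigl(\theta,\theta\cdot(x+tv),t\bigr)\wrt\mdirs(\theta)\wrt\mtime(t)\wrt\nu(x,v).
\]
I would then substitute $\theta\cdot(x+tv)=(\theta\cdot x)+t(\theta\cdot v)$ and swap the $\mtime$- and $\mdirs$-integrals using Fubini — legitimate because $(\domt,\mtime)$ and $(\alldirs,\mdirs)$ are finite (probability) measure spaces, $\nu$ is a finite measure, and $\varphi$ is bounded and continuous, hence jointly measurable. After the swap the inner $t$-integral is exactly $(\amv^*\varphi)(\theta,\theta\cdot x,\theta\cdot v)$, with $\amv^*\varphi(\theta,a,b)=\int_\domt\varphi(\theta,a+tb,t)\wrt\mtime(t)$, and the remaining $\theta$-integral is $(\Rj^*\amv^*\varphi)(x,v)$; reading these adjoints backwards through \eqref{eq:pf_general_integral_equality} identifies the whole expression with $\langle\amv\Rj\nu,\varphi\rangle$. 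Since $\varphi$ was arbitrary, the measures coincide.

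The pointwise identity $\Rs_\theta\mv^d_t=\mv_t\Rj_\theta$ is the same computation with $t\in\R$ and $\theta\in\sphere^{d-1}$ fixed, so that no product measures and no Fubini step occur: testing against $\varphi\in C_0(\R)$ gives $\langle\Rs_\theta\mv^d_t\nu,\varphi\rangle=\int_{\R^d\times\R^d}\varphi\bigl(\theta\cdot(x+tv)\bigr)\wrt\nu(x,v)$, and the same linearity rewrites the integrand as $\varphi\bigl((\theta\cdot x)+t(\theta\cdot v)\bigr)$, which is precisely $\langle\mv_t\Rj_\theta\nu,\varphi\rangle$ since $\mv_t\Rj_\theta\nu=\pf{[(x,v)\mapsto(\theta\cdot x)+t(\theta\cdot v)]}\nu$.

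Since this is essentially a bookkeeping computation, I do not anticipate a substantial obstacle; the two points requiring some care are (i) checking that the composed operators genuinely map between the spaces claimed in \cref{def:radon_transform_measure} and \cref{defn:move_operators}, so that the $C_0$-duality characterization applies to $\Rf\mv^d\nu-\amv\Rj\nu$, and (ii) the justification of Fubini, where — if one is uncomfortable with the decay of the adjoints on all of $\R^d$ — one may first restrict to $\varphi\in C_c$ and pass to the closure, or simply work with the compactly supported restrictions of \cref{prop:supports,prop:move_supports}. All remaining manipulations are direct verifications from the definitions.
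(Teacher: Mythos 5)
Your proposal is correct and follows essentially the same route as the paper's proof: test against $\varphi\in C_0(\alldirs\times\R\times\domt)$, unwind both sides via the pushforward change-of-variables formula, use the bilinearity $\theta\cdot(x+tv)=(\theta\cdot x)+t(\theta\cdot v)$, swap the $\mtime$- and $\mdirs$-integrals by Fubini (justified exactly as in the paper by the finiteness of the two measure spaces), and identify the result as $\langle\amv\Rj\nu,\varphi\rangle$ through the adjoints $\amv^*$ and $\Rj^*$. The pointwise identity $\Rs_\theta\mv^d_t=\mv_t\Rj_\theta$ is likewise handled as the paper does, namely as the same computation with $t$ and $\theta$ fixed.
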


\added{\lookUp{\ref{item3}}
We will also need that the constraint $\Rs u_t = \amv_t \liftVar   \quad \text{for all }t \in \domt $ for  $u \in\Mp(\R^d)^{\domt}$ and $\liftVar  \in \Mp(\alldirs \times  \R^2)$  provides a decomposition of $\liftVar$ as in the following lemma. 
\begin{lem}[Measure decomposition with projected lifting] \label{lem:projected_constraint_decomposition_mixed_model}
Let $u \in\Mp(\R^d)^{\domt}$ and $\liftVar  \in \Mp(\alldirs \times  \R^2)$ be such that
\[ \Rs u_t = \amv_t \liftVar   \quad \text{for all }t \in \domt \]
Then $\liftVar$ can be decomposed as
\[ \liftVar = \mdirs \prodm \theta \liftVar_\theta \]
with $(\liftVar_\theta)_{\theta\in \alldirs}$ a family of measures in $\Mp(\R^2)$ such that $\|\liftVar_\theta\| _\M = \|\liftVar \|_\M$. Further, $\|u_t\|_\M = \|\liftVar\|_\M$ for all $t \in \domt$ and 
\[ \Rs _\theta u_t = \mv_t \liftVar_\theta
\]
for every $t$ and $\mdirs$-almost every $\theta$.
\begin{proof}
At first, by the properties of the Radon and move operator, $\Rs u_t = \amv_t \liftVar $ implies that $\|u_t\|_\M = \|\liftVar\|_\M$ and 
\[ \mdirs \prodm \theta \Rs_\theta u_t = \amv_t \liftVar
\] 
for all $t \in \domt$. Now using the disintegration theorem \cite[Thm.\,5.3.1]{ambrosio2008gradient}, we decompose $\liftVar = \nu_\liftVar  \prodm \theta \liftVar_\theta$  with $\nu_{\liftVar}=\frac1{\|\liftVar\|_\M}\pf{(\pi^\alldirs)}\liftVar$ a probability measure and $\|\liftVar_\theta\|_\M = \|\liftVar\|_{\M}$ for almost every $\theta$. Here, $\pi^\alldirs :  \alldirs  \times \R^2  \rightarrow \alldirs$ denotes the canonical projection.
From this decomposition it follows by a direct computation that
\[
\amv_t \liftVar = \nu_\liftVar  \prodm  \theta \mv_t \liftVar_\theta .
\]
Hence we obtain
\[\mdirs \prodm \theta \Rs_\theta u_t= 
\nu_\liftVar  \prodm \theta \mv_t \liftVar_\theta \]
for all $t \in \domt$. Evaluating the measures on both sides 
at $\omega \times \R $ for $\omega \subset \alldirs$ an arbitrary Borel set leads to $ \|\liftVar\|_\M\nu_\liftVar (\omega) = \|u_t\|_\M\mdirs (\omega) $ for $t \in \domt$ arbitrary and thus $\nu_\liftVar=\mdirs$. This proves the decomposition of $\liftVar$.
In order to show that $ \Rs _\theta u_t = \mv_t \liftVar_\theta $ for every $t$ and $\mdirs$-almost every $\theta$, fix $t$ and note that we have already obtained that, for every $A \subset \R$ and $\omega \subset \alldirs$,
\[ \int _\omega \Rs _\theta u_t(A) - \mv_t \liftVar_\theta (A) \wrt \mdirs (\theta) =0.\]
In particular, this implies that, for any $A \subset \R$ there exists $\omega_A \subset \alldirs$ with $\mdirs (\omega_A) =0 $ such that $\Rs _\theta u_t(A) - \mv_t \liftVar_\theta (A) = 0$ for all $\theta \in \alldirs \setminus \omega_A$. Now taking $(A_n)_n$ to be a countable generator of the Borel $\sigma$-algebra of $\R$, $\omega_{A_n} \subset \alldirs$ with $\mdirs (\omega_{A_n} ) =0 $ such that $\Rs _\theta u_t({A_n} ) - \mv_t \liftVar_\theta ({A_n} ) =0$ for all $\theta \in \alldirs \setminus \omega_{A_n} $, and $\omega = \bigcup _n \omega_{A_n}$, it follows that $\Rs _\theta u_t({A_n} ) - \mv_t \liftVar_\theta ({A_n} ) =0$ for every $n$ and $\theta \in \alldirs \setminus \omega $ with $\mdirs (\omega ) =0 $, and, consequently, that $\Rs _\theta u_t = \mv_t \liftVar_\theta  $ for every $\theta \in \alldirs \setminus \omega $ as claimed.
\end{proof}
\end{lem}
}

Finally, we also obtain an injectivity result on the move operator as follows.
\begin{prop}[Injectivity of move operators]\label{prop:move_injective}
If 
$\mtime$ has a (nonzero) lower semi-continuous density with respect to the Lebesgue measure on $\R$
the move operator $\mv:\M(\R \times \R) \rightarrow \M(\R \times \domt)$ is injective.
\begin{proof}
Analogously to the Fourier slice theorem, we first observe for $\nu \in \M(\R \times \R)$, every $t \in \domt$ and $s \in \R$ the equality
\[ \widehat{\mv_t \nu} (s) 
= \int_\R e^{- \ii s r} \wrt \mv_t \nu (r) 
= \int_{\R^2} e^{- \ii s(x + tv)} \wrt  \nu (x,v) 
= \hat{\nu}(s,st), 
\]
from which injectivity again follows as in the proof of \cref{prop:radon_injective}.
\end{proof}
\end{prop}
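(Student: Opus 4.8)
The plan is to mimic the proof of the Fourier slice theorem (\cref{thm:fourierSlice}) and the injectivity proof for the Radon transform (\cref{prop:radon_injective}), adapting both to the structure of the move operator $\mv = \mv^1$ on $\M(\R\times\R)$. First I would establish the ``move slice theorem'' displayed in the statement: for $\nu\in\M(\R\times\R)$, $t\in\domt$ and $s\in\R$,
\[
\widehat{\mv_t\nu}(s)=\int_\R \e^{-\ii s r}\wrt\mv_t\nu(r)=\int_{\R^2}\e^{-\ii s(x+tv)}\wrt\nu(x,v)=\hat\nu(s,st).
\]
The middle equality is just \eqref{eq:pf_general_integral_equality} applied to the pushforward under $(x,v)\mapsto x+tv$ with the bounded continuous integrand $r\mapsto\e^{-\ii sr}$, and the last equality is the definition of the two-dimensional Fourier transform evaluated at $(s,st)$. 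Next I would decompose $\mv\nu=\mv_t\nu\prodm t\mtime$, so that $\mv\nu=0$ forces $\mv_t\nu=0$ for $\mtime$-a.a.\ $t\in\domt$; combined with the slice identity this gives $\hat\nu(s,st)=0$ for $\mtime$-a.a.\ $t\in\domt$ and all $s\in\R$.

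The second half of the argument is to upgrade this ``lines through the origin'' vanishing of $\hat\nu$ to $\nu=0$, exactly as in \cref{prop:radon_injective}. Using the assumption that $\mtime$ has a nonzero lower semi-continuous density with respect to Lebesgue measure, I would pick a point $t_0\in\domt$ and $\epsilon>0$ such that the density is strictly positive on $(t_0-\epsilon,t_0+\epsilon)$, so that $\hat\nu(s,st)=0$ for all $s\in\R$ and all $t$ in that interval. This means $\int_{\R^2}\varphi\wrt\nu=0$ for every $\varphi$ in the span $A:=\vspan(\{(x,v)\mapsto\e^{-\ii s(x+tv)}\st s\in\R,\ |t-t_0|<\epsilon\})$. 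For any $\psi\in C_c(\R^2)$, the set $A$ restricted to $\supp\psi$ is a subalgebra of $C(\supp\psi)$ containing the constants, closed under complex conjugation, and separating points of $\R^2$ — the last point needs a small check, namely that the map $(x,v)\mapsto(x+t_1v,x+t_2v)$ is injective for any two distinct $t_1,t_2$ in the interval, which is clear since the corresponding $2\times2$ matrix is invertible. By Stone--Weierstrass $A$ is dense in $C(\supp\psi)$, so $\langle\psi,\nu\rangle=0$, and since $\psi\in C_c(\R^2)$ was arbitrary, $\nu=0$ by the duality $\M(\R^2)=C_0(\R^2)^*$.

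I do not anticipate a serious obstacle; the proof is a direct transcription of \cref{prop:radon_injective} with the one-parameter family of projection directions $\{\theta\sigma\}$ replaced by the family of lines $\{(s,st)\}$. The only place requiring genuine (if routine) verification is that these lines sweep out enough of $\R^2$ near a good $t_0$ for the Stone--Weierstrass separation argument to apply — i.e.\ that $\e^{-\ii s(x+tv)}$ for $s\in\R$, $t$ in a nontrivial interval separate points of $\R^2$ — which, as noted, follows from invertibility of $\begin{pmatrix}1&t_1\\1&t_2\end{pmatrix}$ for $t_1\neq t_2$. One caveat worth recording is that, unlike in \cref{prop:radon_injective} where directions live on the compact sphere, here $t$ ranges over the possibly unbounded or at least non-compact parameter set, but since we only ever use an interval around a single good point $t_0$ this causes no difficulty, and the lower semi-continuity of the density is exactly what guarantees such an interval of positivity exists.
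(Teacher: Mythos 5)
Your proposal is correct and is essentially identical to the paper's own argument: the paper establishes the same slice identity $\widehat{\mv_t\nu}(s)=\hat\nu(s,st)$ and then simply refers back to the proof of \cref{prop:radon_injective}, which is exactly the Stone--Weierstrass argument you spell out (using the lower semi-continuous density to obtain an interval of good times $t$ around some $t_0$, and the invertibility of $\begin{pmatrix}1&t_1\\1&t_2\end{pmatrix}$ for the separation of points). The only looseness in your write-up, namely that the span of the exponentials $(x,v)\mapsto\e^{-\ii s(x+tv)}$ is not literally closed under products since frequencies $(s_1,s_1t_1)+(s_2,s_2t_2)$ may leave the cone of admissible frequencies, is inherited verbatim from the paper's referenced proof and is not a deviation from it.
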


Finally we introduce the operators providing the observations of the particle configurations.

\begin{defn}[Observation operators] \label{defn:pointwise_measurement_operator}
With $H$ a Hilbert space, for each $t \in \alltimes $ we set  
$\fopstat_t:\M(\domstat) \rightarrow H$ to be an \emph{observation operator} that is continuous with respect to the weak-$\ast$ topology in $\M(\domstat)$ and $H$
(or equivalently, that is the dual to a bounded linear operator $\fopstat_t^*: H \rightarrow C(\domstat)$).
The observation of a configuration $u\in \M(\domstat)^{\domt}$ is then given as
\[ u\mapsto (\fopstat_t u_t)_{t \in \alltimes}. \]
\end{defn}

For the purpose of this article, the exact form of $\fopstat_t$ does not matter so much
-- as long as it leads to good reconstructions for the static problem \eqref{eqn:staticModel}, it will do so as well for our proposed dimension-reduced model.
Typically the Hilbert space $H$ of measurements is finite-dimensional.
The example considered in \cite{Candes2014} is a truncated Fourier series.

\begin{ex}[Truncated Fourier series observation]\label{ex:truncatedFourier}
Assume $\Omega\subset(0,1)^d$, then one can interpret $[0,1)^d$ with periodic boundary conditions as the flat torus and take $\fopstat_t$ to be a Fourier series,
truncated at some maximum frequency $\maxFrequency\added{\geq 0}$,
\added{i.e. $\fopstat_t=\ftrunc$ where}
\begin{equation*}%
\added{\ftrunc u}\coloneqq\left(\int_\Omega e^{-\ii2\pi x\cdot\xi}\wrt u(x)\right)_{\xi\in\Z^d,\norm{\xi}_\infty\leq\maxFrequency}.
\end{equation*}
\removed{
\begin{equation*}\textstyle
\fopstat_t u_t=\left(\int_\Omega e^{-\ii2\pi x\cdot\xi}\wrt u_t(x)\right)_{\xi\in\Z^d,\norm{\xi}_\infty\leq\maxFrequency}.
\end{equation*}
}
\end{ex}

\subsection{Problem setting and dimensionality reduction}

As outlined in \cref{sec:intro_dim_red}, the approach of Alberti et al.\ in \cite{AlbertiAmmariRomeroWintz2019} for reconstructing a measure  $\lambda=\sum_{i=1}^nm_i\delta_{(x_i,v_i)}$ (describing the masses $m_i$, initial positions $x_i$ and velocities $v_i$ of a finite number of particles) from data $(\data_t)_{t\in\alltimes} \in H^{|\alltimes|}$ can be written as
\begin{equation} \label{eq:lifted_exact}
\min_{\lambda \in \Mp(\domdyn)} \| \lambda \|_\M \quad \text{such that } \fopstat_t\mv^d_t \lambda =  \data _t \quad \text{for all }t \in \alltimes.
\end{equation}
Using that both $\mv_t^d$ as well as $\fopstat_t$ are weakly-$\ast$ continuous, it follows by standard arguments that a solution to \eqref{eq:lifted_exact} exists. Further, in the case that each $\fopstat_t\added{=\ftrunc}$ samples a finite number of Fourier coefficients and under a separation condition on the particle distribution, Alberti et al.\ show exact reconstruction of particle trajectories in the noise-free case and, in the discretized setting, stability of the reconstruction approach with respect to noise (replacing the equality constraint by a fidelity term).

Our focus here is on reformulations of \eqref{eq:lifted_exact} that replace the high-dimensional unknown $\lambda$, which lives in a space of dimension $2d$, by unknowns defined in spaces of dimension at most $d+1$. To this end we first introduce explicit variables $u_t$ (which we term \emph{snapshots}) for the spatial particle configuration at different times $t$ in the extended time-domain $\domt \supset \alltimes$. With those we can write \eqref{eq:lifted_exact} equivalently as
\begin{equation} 
\begin{aligned}
\min_{ \substack{ \lambda \in \Mp(\domdyn)  \\ u \in \Mp(\domstat)^{\domt} }}
\| \lambda \|_\M \quad \text{such that } 
\left\{
\begin{aligned}
\mv_t^d  \lambda = u_t \quad \text{for all }t \in \domt, \\
\fopstat_tu_t = \data_t \quad \text{for all }t \in \alltimes.
\end{aligned}
\right.
\end{aligned}
\end{equation}
Indeed, the problems are equivalent since, for $\lambda \in \Mp(\domdyn)$,  $\mv^d_t \lambda \in \Mp(\domstat)$ for any choice of $t$.

Now we further reformulate the above problem by applying the Radon transform $\Rs$ on both sides of the constraint $\mv^d_t\lambda = u_t$. Using that $\Rs \mv^d_t = \amv_t \Rj$ (see \cref{lem:radon_move_exchange}) and introducing $\liftVar = \Rj \lambda$, we arrive at
\begin{equation} 
\begin{aligned}
\min_{ \substack{ \lambda \in \Mp(\domdyn) \\ \liftVar \in \Mp(\alldirs \times \projdomdyn)  \\ u \in \Mp(\domstat)^{\domt} }}
\| \liftVar \|_\M \quad \text{such that } 
\left\{
\begin{aligned}
\amv_t \liftVar  &= \Rs u_t \quad && \text{for all }t \in \domt, \\
 \fopstat_tu_t  &= \data_t \quad && \text{for all }t \in \alltimes, \\
 \Rj \lambda  &= \liftVar.
\end{aligned}
\right.
\end{aligned}
\end{equation}
Using that $\|\lambda \|_\M = \|\Rj \lambda \|_\M $ for $\lambda \in \Mp(\domdyn)$, this problem is equivalent to \eqref{eq:lifted_exact} whenever $\alldirs$ is such that $\Rs :\M(\domstat) \rightarrow \M(\alldirs \times \projdomstat)$ is injective (see \cref{prop:radon_injective}). Also, we note that the full-dimensional variable $\lambda$ defined on a domain of dimension $2d$ only appears in the constraint $\Rj \lambda = \liftVar$, all other variables are (depending on the choice of $\alldirs$ and $\domt$) of dimension at most $d+1$.

In the following, we will drop the constraint $\Rj \lambda = \liftVar$, thereby getting rid of the high-dimensional variable $\lambda$. Allowing also a deviation from the data $(\data_t)_{t\in\alltimes}$ in case of measurement noise we arrive at
\leqnomode
\begin{equation*} \label{eq:dim_reduced_general}  \tag*{\problemTagNoLink[\alpha]{\data}}
\min_{ \substack{ \liftVar \in \Mp(\alldirs \times \projdomdyn)  \\ u \in \Mp(\domstat)^{\domt} }}
\| \liftVar \|_\M  + \frac{1}{2\alpha}\sum_{t \in \alltimes} \|\fopstat_tu_t -  \data_t \|_H^2 \quad \text{such that } 
\amv_t \liftVar = \Rs u_t \quad \text{for all }t \in \domt 
\end{equation*}
\reqnomode
where $\alpha \in [0,\infty)$ and we include the hard constraint $\fopstat_tu_t = \data_t$ on the data by setting $\frac{1}{2\alpha} = \infty$ in case $\alpha = 0$
(specific instances of the optimization problem with parameter $\tilde\alpha$ and data $\tilde\data$ will be referred to as \problemTag[\tilde\alpha]{\tilde\data}).
Note that the choice of the data term is not essential, for instance one could develop the theory analogously without the square on the norm.
The main question of research in this work is to what extent such a dimension-reduced problem is a good approximation of the original problem \eqref{eq:lifted_exact}.

\added{
\begin{rem}[Equivalent model formulations] \label{rem:equivalent_formulations}\lookUp{\ref{item3}}
Note that here, different from the formal model \eqref{eqn:mixedModel} as presented in \cref{sec:intro}, we consider the constraint $\amv_t \liftVar = \Rs u_t$  for all $t \in \domt$  rather than $\mv_t^1 \liftVar_\theta = \Rs_\theta u_t \text{ for all }t\!\in\!\domt,\, \mdirs\text{-a.a.\,}\theta\!\in\!\alldirs$. The equivalence of these two formulations is apparent if $\liftVar$ can be disintegrated to $\liftVar=\mdirs(\theta)\otimes\liftVar_\theta$, and we refer to \cref{prop:unique_decomp_move} for a proof on when the constraint $\amv_t \liftVar = \Rs u_t$ implies the disintegration to exist.

In this context, an alternative would be to directly use the product space $\Mp( \projdomdyn)^{\alldirs}$ instead of the measure space $\Mp(\alldirs \times \projdomdyn)$ for the dimension-reduced measure. In fact, there are different alternative formulations of \ref{eq:dim_reduced_general} depending on whether product or measure spaces are used, and we refer to  \ref{sec:alternativeFormulations} for a detailed discussion of different model variants and their equivalence.

For the well-posedness and exact recovery results of this paper, \ref{eq:dim_reduced_general} will be the main setting, because using $\Mp(\domstat)^\domt$ for $u$ (as opposed to $\Mp(\domstat \times \domt)$) allows to apply the observation operator $\fopstat_t$ at every $t\in\domt$ and because using $\Mp(\alldirs \times \projdomdyn)$ for $\liftVar$ (as opposed to $\Mp( \projdomdyn)^\alldirs $) already provides some regularity of the measures $\liftVar$ with respect to $\theta \in \alldirs$. For the results on convergence for noisy data in \cref{sec:noisyReconstruction}, however, we will start by treating an (in that case simpler) product topology version of \ref{eq:dim_reduced_general}.
\end{rem}
}
At first, we deal with well-posedness of \ref{eq:dim_reduced_general}, showing in particular that existence is always ensured for $\alpha>0$ and holds for $\alpha=0$ whenever the original problem \eqref{eq:lifted_exact} has a solution.
We further show stability of the solution with respect to changes in the data.
To this end, let us call $(u,\liftVar)\in\Mp(\Omega)^\domt\times\Mp(\alldirs\times\projdomdyn)$ a \emph{$\alltimes$-cluster point} of a sequence $(u^n,\liftVar^n)\in\Mp(\Omega)^\domt\times\Mp(\alldirs\times\projdomdyn)$, $n=1,2,\ldots$,
if there exists a subsequence $n_1,n_2,\ldots$ with $u_t^{n_k}\weakstarto u_t$ for all $t\in\alltimes$ and $\liftVar^{n_k}\weakstarto\liftVar$ as $k\to\infty$
and if for all $t\in\domt\setminus\alltimes$ the measure $u_t$ is a (standard) cluster point of the subsequence $u_t^{n_1},u_t^{n_2},\ldots$ with respect to weak-$\ast$ convergence.

\begin{prop}[Well-posedness] \label{prop:well_posedness_dim_reduced_noisy} Let $\data = (\data_t)_{t\in\alltimes} \in H^{|\alltimes|}$ and $\alpha \in [0,\infty)$.
If $\alpha=0$, assume $f$ to be such that there exists $\lambda \in \Mp(\domdyn)$ with $\fopstat_t\mv^d_t\lambda = \data _t$ for all $t \in \alltimes$.

Then problem \ref{eq:dim_reduced_general} has a solution.
Moreover, for $\alpha>0$ the solutions are stable in the following sense:
Let $\data^1,\data^2,\ldots\in H^{|\alltimes|}$ be a sequence with $\data^n_t\to\data_t\in H$ for all $t \in \alltimes$ as $n\to\infty$, and let $(u^n,\liftVar^n)$ be a corresponding sequence of solutions to \problemTag[\alpha]{\data^n}. Then the following holds.
\begin{enumerate}
\item[i)] The sequence $(u^n,\liftVar^n)$ has a $\alltimes$-cluster point.
\item[ii)] Any $\alltimes$-cluster point $(u,\liftVar)$ of the sequence $(u^n,\liftVar^n)$ is a solution of \problemTag[\alpha]{\data}.
\item[iii)] If the solution $(u,\liftVar)$ to \problemTag[\alpha]{\data} is unique, the entire sequences $u_t^n$ and $\liftVar^n$ weakly-$\ast$ converge to $u_t$ and $\liftVar$, respectively, for all $t\in\domt$.
\end{enumerate}
\begin{proof}

We start with showing existence. To this end, first note that $(u,\liftVar) = (0,0)$ is admissible for \problemTag[\alpha]{\data} in case $\alpha>0$ and $((\mv^d_t \lambda)_{t\in\domt},\Rj\lambda)$ is admissible in case $\alpha =0$, hence the minimization is not over the empty set.

Take $(u^n,\liftVar^n)$, $n=1,2,\ldots$, to be a minimizing sequence. Then $\|\liftVar^n\|_\M$ is uniformly bounded, and the constraint $\amv_t \liftVar = \Rs u_t$ for all $t \in \domt$ together with \cref{prop:radon_properties,prop:move_properties} implies that also $\|u^n_t\|_\M$ is bounded uniformly in $n$ and $t$. Hence, there exists a subsequence $\liftVar^{n_k}$, $k=1,2,\ldots$, converging weakly-$\ast$ to some $\liftVar\in\M(\alldirs \times \projdomdyn)$, and for every $t\in\domt$ there is a further subsequence $u_t^{n_{k,t}}$ of $u_t^{n_k}$ (depending on $t$) which converges weakly-$\ast$ to some $u_t \in \M(\domstat)$.

Now we show that $((u_t)_{t\in\domt},\liftVar)$ is a solution of \problemTag[\alpha]{\data}. First note that weak-$\ast$-closedness of the positivity constraints on measures implies that both $\liftVar \in \Mp(\alldirs \times \projdomdyn)$ and $u_t \in \Mp(\domstat)$ for all $t \in \domt$. Further, for each $t \in \domt$, both $\liftVar^{n_{k,t}}$ and $u_t^{n_{k,t}}$ weakly-$\ast$ converge to $\liftVar$ and $u_t$, respectively. Hence, taking the limit $k \rightarrow \infty$ in $\amv_t \liftVar^{n_{k,t}} = \Rs u_t^{n_{k,t}}$ and using weak-$\ast$-to-weak-$\ast$ continuity of both $\mv_t$ and $\Rs$ we obtain $\amv_t \liftVar = \Rs u_t$. In addition, for $\alpha=0$ the weak-$\ast$-to-weak continuity of $\fopstat_t$ together with $\fopstat_tu_t^{n_{k,t}} = \data_t$ for all $k$ implies $\fopstat_t u_t = \data_t$ for all $t \in \alltimes $. Finally, weak-$\ast$ lower semi-continuity of the Radon norm on $\M(\alldirs \times \projdomdyn)$ and weak lower semi-continuity of $v \mapsto \|v - \data_t \|_H^2$ in case $\alpha>0$  imply that $((u_t)_{t\in\domt},\liftVar)$ is a minimizer as claimed.

Regarding the stability assertion, take $(u^n,\liftVar^n)$, $n=1,2,\ldots$, to be a sequence of solutions as claimed. With $(\tilde{u},\tilde{\liftVar})$ a solution of \problemTag[\alpha]{\data} it follows that
\begin{equation} \label{eq:stability_proof_optimality_estimate}
 \| \liftVar^n \|_\M  + \frac{1}{2\alpha}\sum_{t \in \alltimes} \|\fopstat_tu^n_t -  \data^n_t \|_H^2
\leq
\| \tilde \liftVar \|_\M  + \frac{1}{2\alpha}\sum_{t \in \alltimes} \|\fopstat_t\tilde u_t -  \data^n_t \|_H^2 
\to
\| \tilde \liftVar \|_\M  + \frac{1}{2\alpha}\sum_{t \in \alltimes} \|\fopstat_t\tilde u_t -  \data_t \|_H^2
\end{equation}
as $n\to\infty$.
This implies in particular that $\|\liftVar^n\|_\M$ is uniformly bounded and, consequently, also $\|u^n_t\|_\M$ for each $t \in \domt$. Hence, as $\alltimes$ is finite, we can select a joint subsequence $(u^{n_k},\liftVar^{n_k})$, $k=1,2,\ldots$, such that $u_t^{n_k}$ and $\liftVar^{n_k}$ converge weakly-$\ast$ to some $u_t$ and $\liftVar$ as $k\to\infty$ for each $t \in \alltimes$. In addition, for each $t \in \domt\setminus\alltimes$ we can select a further subsequence $u^{n_{k,t}}_t$ that weakly-$\ast$ converges to some $u_t$. This shows the existence of a $\alltimes$-cluster point.

Now assume that $(u,\liftVar)$ is a $\alltimes$-cluster point of the sequence $(u^n,\liftVar^n)$.
Then, convergence of $\liftVar^{n_{k,t}}$ to $\liftVar$ for each $t$ together with weak-$\ast$ continuity of $\amv_t$ and $\Rs$ implies that $\amv_t \liftVar = \Rs u_t$ for all $t\in\domt$. Also, convergence of $\liftVar^{n_k}$ and $u^{n_k}_{t}$ for all $t \in \alltimes$ to $\liftVar$ and $u_{t}$, respectively, allows us to take the limit inferior over indices $n=n_k$ in the left-hand side of \eqref{eq:stability_proof_optimality_estimate}, which, by lower semi-continuity, can be estimated from below by  $\|\liftVar \|_\M  + \frac{1}{2\alpha}\sum_{t \in \alltimes} \|\fopstat_tu_t -  \data_t \|_H^2$. Thus, \eqref{eq:stability_proof_optimality_estimate} shows that $(u,\liftVar)$ has energy no larger than $(\tilde{u},\tilde{\liftVar})$ and so is optimal as claimed.

Now assume that the solution $(u,\liftVar)$ to \problemTag[\alpha]{\data} is unique. If $\liftVar^n$ does not converge weakly\nobreakdash-$\ast$ to $\liftVar$, we can select $\varphi \in C(\alldirs \times \projdomdyn)$, $\epsilon >0$ and a subsequence $\liftVar^{n_m}$, $m=1,2,\ldots$, such that $|\langle \liftVar^{n_m} ,\varphi \rangle - \langle \liftVar ,\varphi \rangle | > \epsilon$ for all $m$. Applying the same arguments as above we can obtain a $\alltimes$-cluster point $(\tilde{u},\tilde{\liftVar})$ of that subsequence that is a solution to \ref{eq:dim_reduced_general}. Uniqueness implies $\tilde{\liftVar} = \liftVar$ and hence a contradiction. Now assume that there is some $t \in \domt$ such that $u^n_t$ does not converge weakly-$\ast$ to $u_t$. In an analogous manner as for $\liftVar$ we select a subsequence $u^{n_m}_t$, $m=1,2,\ldots$, and show that $(u,\liftVar)$ must be a $\alltimes$-cluster point of $(u^{n_m},\liftVar^{n_m})$, which again yields a contradiction. Hence also $u^n_t$ converges to $u_t$ for all $t\in\domt$ as claimed.
\end{proof}
\end{prop}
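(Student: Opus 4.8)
The plan is to establish existence by the direct method of the calculus of variations and the three stability claims by subsequence arguments, the key structural fact being that the coupling constraint $\amv_t \liftVar = \Rs u_t$ rigidly links the total mass of each snapshot $u_t$ to that of $\liftVar$, so that an a priori bound on $\|\liftVar\|_\M$ immediately bounds all snapshots uniformly in $n$ and $t$.

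\emph{Existence.} First I would check that the feasible set is nonempty: $(u,\liftVar)=(0,0)$ is admissible when $\alpha>0$, and $\bigl((\mv^d_t\lambda)_{t\in\domt},\,\Rj\lambda\bigr)$ is admissible when $\alpha=0$ by the standing hypothesis on $\data$, using $\Rs\mv^d_t=\amv_t\Rj$ from \cref{lem:radon_move_exchange} together with $\mv^d_t\lambda\in\Mp(\domstat)$ and $\Rj\lambda\in\Mp(\alldirs\times\projdomdyn)$ for $\lambda\in\Mp(\domdyn)$ (\cref{prop:move_supports,prop:supports}). Then I take a minimizing sequence $(u^n,\liftVar^n)$; the objective controls $\|\liftVar^n\|_\M$, and since all measures involved are nonnegative, mass-preservation of the pushforward $\amv_t$ and of the product measure $\Rs$ (\cref{prop:move_properties,prop:radon_properties,prop:properties_product_measure}) applied to $\amv_t\liftVar^n=\Rs u_t^n$ yields $\|u_t^n\|_\M=\|\liftVar^n\|_\M$. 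By weak-$\ast$ sequential compactness of bounded sets (the preduals $C_0$ being separable on our compact domains) I extract a subsequence along which $\liftVar^{n_k}\weakstarto\liftVar$, and for every $t\in\domt$ a further, $t$-dependent subsequence along which $u_t^{n_{k,t}}\weakstarto u_t$; nonnegativity survives the limit. Passing to the limit in $\amv_t\liftVar^{n_{k,t}}=\Rs u_t^{n_{k,t}}$ uses weak-$\ast$-to-weak-$\ast$ continuity of $\amv_t$ and $\Rs$ on compactly supported measures (\cref{prop:move_supports,prop:supports}), and for $\alpha=0$ the hard constraint $\fopstat_tu_t^{n_{k,t}}=\data_t$ passes to the limit by weak-$\ast$-to-weak continuity of $\fopstat_t$. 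Weak-$\ast$ lower semicontinuity of $\|\cdot\|_\M$ and weak lower semicontinuity of $v\mapsto\|v-\data_t\|_H^2$ then show the limit is a minimizer.

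\emph{Stability ($\alpha>0$).} Fixing a solution $(\tilde u,\tilde\liftVar)$ of \problemTag[\alpha]{\data}, I would compare energies: a solution $(u^n,\liftVar^n)$ of \problemTag[\alpha]{\data^n} has energy no larger than that of the admissible pair $(\tilde u,\tilde\liftVar)$ evaluated with data $\data^n$, which converges to the energy of $(\tilde u,\tilde\liftVar)$ with data $\data$; this bounds $\|\liftVar^n\|_\M$, hence all $\|u_t^n\|_\M$, uniformly. Since $\alltimes$ is finite, a single subsequence makes $\liftVar^{n_k}$ and every $u_t^{n_k}$ with $t\in\alltimes$ converge weak-$\ast$, and extracting further $t$-dependent subsequences for $t\in\domt\setminus\alltimes$ produces an $\alltimes$-cluster point, proving (i). For (ii), given an $\alltimes$-cluster point $(u,\liftVar)$, the constraint passes to the limit as above, and taking the limit inferior in the energy comparison — lower semicontinuity applied along the relevant subsequence, legitimate because the fidelity sum has only finitely many terms — shows $(u,\liftVar)$ has energy at most that of $(\tilde u,\tilde\liftVar)$ and is therefore optimal. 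For (iii), if some $\liftVar^n$ or $u_t^n$ failed to converge weak-$\ast$ to the unique solution, I would choose a continuous test function and an $\epsilon>0$ witnessing this along a subsequence, extract from that subsequence an $\alltimes$-cluster point which by (i) and (ii) solves \problemTag[\alpha]{\data}, and contradict uniqueness.

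The main obstacle is the bookkeeping forced by the uncountable time index set $\domt$: one cannot extract a single subsequence along which all snapshots converge simultaneously, so the notion of $\alltimes$-cluster point — a genuine subsequence for $\liftVar$ and for the finitely many $t\in\alltimes$, but only a pointwise cluster point for each remaining $t$ — must be threaded consistently through all three parts, with each constraint $\amv_t\liftVar=\Rs u_t$ recovered separately along its own $t$-dependent subsequence. A secondary point worth stating explicitly is that the uniform mass bound on the snapshots comes purely from the coupling constraint and nonnegativity, so no injectivity hypothesis on $\Rs$ or the move operator — and hence no restriction on $\alldirs$ or $\domt$ — is needed for well-posedness itself.
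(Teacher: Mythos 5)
Your proposal is correct and follows essentially the same route as the paper's own proof: direct method with the coupling constraint providing the uniform mass bound, a subsequence for $\liftVar$ plus $t$-dependent further subsequences for the snapshots, weak-$\ast$ continuity to pass the constraints to the limit, and the energy-comparison/cluster-point/uniqueness arguments for stability. No gaps.
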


Next, we show a result on convergence for vanishing noise.
This result will also follow from the error estimates in \cref{sec:noisyReconstruction},
but those error estimates require some additional properties of the ground truth particle configuration and the observation operator.

\begin{prop}[Convergence for vanishing noise]
Let $\data^\dagger \in H^{|\alltimes|}$ such that there exists $\lambda^\dagger  \in \Mp(\domdyn)$ with $\fopstat_t\mv^d_t \lambda^\dagger = \data^\dagger _t$ for all $t \in \alltimes$. Let $\data^1,\data^2,\ldots\in H^{|\alltimes|}$ be a sequence of noisy data such that
\[ \delta^n:=\frac{1}{2}\sum_{t \in \alltimes} \|\data^n_t - \data^\dagger _t \|_H^2 \rightarrow 0 \quad \text{as } n \rightarrow \infty.
\] 
Take $\alpha^1,\alpha^2,\ldots>0$ to be a sequence of parameters such that
\[ \alpha^n \rightarrow 0 \text{ and } \frac{ \delta^n}{\alpha^n} \rightarrow 0 \quad \text{as } n \rightarrow \infty, \]
and let $(u^n,\liftVar^n)$, $n=1,2,\ldots$, be corresponding solutions to \problemTag[\alpha^n]{\data^n}.
Then the sequence $(u^n,\liftVar^n)$ has a $\alltimes$-cluster point, and any such cluster point is a solution to \problemTag[0]{\data^\dagger}.
Further, if the solution to \problemTag[0]{\data^\dagger} is unique, the entire sequences $\liftVar^n$ and $u^n_t$ weakly-$\ast$ converge to $\liftVar^\dagger$ and $u^\dagger_t$, respectively, for all $t \in \domt$.
\begin{proof}
By \cref{prop:well_posedness_dim_reduced_noisy} there exists a solution $(u^\dagger,\liftVar^\dagger)$ to \problemTag[0]{\data^\dagger}. By optimality of $(u^n,\liftVar^n)$ for \problemTag[\alpha^n]{\data^n} it follows that
\begin{equation} \label{eq:convergence_proof_optimality_estimate}
 \| \liftVar^n \|_\M  + \frac{1}{2\alpha^n}\sum_{t \in \alltimes} \|\fopstat_tu^n_t -  \data^n_t \|_H^2
\leq
\|  \liftVar^\dagger  \|_\M  + \frac{\delta^n}{\alpha^n}
\to
\|  \liftVar^ \dagger \|_\M 
\end{equation}
as $n \rightarrow \infty$.
This implies boundedness of $\|\liftVar^n\|_\M$ and $\|u^n_t \|_\M$ uniformly in $n$ and $t \in \domt$ and, arguing as in the proof of \cref{prop:well_posedness_dim_reduced_noisy}, existence of a $\alltimes$-cluster point $(u,\liftVar)$ as claimed.
Also, we note that estimating the left-hand side in \eqref{eq:convergence_proof_optimality_estimate} from below by the data term and multiplying by $\alpha^n$ yields
\begin{equation} \label{eq:convergence_proof_data_estimate}
 \limsup_{n\to\infty}\frac12\sum_{t\in\alltimes}\|\fopstat_{t} u^n_{t} - \data^n_t\|_H^2 \leq \limsup_{n\to\infty}\alpha^n\|\liftVar^\dagger\|_\M+\delta^n=0,
\end{equation}
which together with $\data^n\to\data^\dagger$ implies $\fopstat_tu^n_t\to\data^\dagger_t$ as $n\to\infty$ for all $t\in\alltimes$ and in particular $\fopstat_tu_t=\data^\dagger_t$.
Also, weak-$\ast$ lower semi-continuity of $\|\cdot \|_\M$ implies by \eqref{eq:convergence_proof_optimality_estimate} that $\|\liftVar\|_\M \leq \|\liftVar^\dagger \|_\M$. Finally, as in the proof of \cref{prop:well_posedness_dim_reduced_noisy} we obtain $\amv_t \liftVar = \Rs u_t$ for all $t \in \domt$ such that  $(u,\liftVar)$ is a solution of \problemTag[0]{\data^\dagger}.
In  case of uniqueness, we again argue as in the proof of \cref{prop:well_posedness_dim_reduced_noisy} to obtain convergence of the entire sequence $(u^n,\liftVar^n)$.
 \end{proof}
\end{prop}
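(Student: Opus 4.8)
The proposition is a textbook-style convergence statement for a Tikhonov-regularized inverse problem, so the plan is to run the standard a priori-bound / weak-$\ast$-compactness / lower-semicontinuity argument, closely mirroring the stability proof of \cref{prop:well_posedness_dim_reduced_noisy}. First I would invoke \cref{prop:well_posedness_dim_reduced_noisy}: since by hypothesis $\lambda^\dagger\in\Mp(\domdyn)$ produces exactly the data $\data^\dagger$, the noise-free problem \problemTag[0]{\data^\dagger} admits a minimizer $(u^\dagger,\liftVar^\dagger)$, which in particular satisfies the hard constraints $\fopstat_tu^\dagger_t=\data^\dagger_t$ for $t\in\alltimes$ and $\amv_t\liftVar^\dagger=\Rs u^\dagger_t$ for all $t\in\domt$.

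The engine of the proof is the a priori bound obtained by using $(u^\dagger,\liftVar^\dagger)$ as a competitor in \problemTag[\alpha^n]{\data^n}: its fidelity term there equals $\frac1{2\alpha^n}\sum_{t\in\alltimes}\|\data^\dagger_t-\data^n_t\|_H^2=\delta^n/\alpha^n$, so by optimality of $(u^n,\liftVar^n)$,
\[
\|\liftVar^n\|_\M+\frac1{2\alpha^n}\sum_{t\in\alltimes}\|\fopstat_tu^n_t-\data^n_t\|_H^2\;\le\;\|\liftVar^\dagger\|_\M+\frac{\delta^n}{\alpha^n},
\]
and since $\alpha^n\to0$ and $\delta^n/\alpha^n\to0$ the right-hand side converges to $\|\liftVar^\dagger\|_\M$, hence stays bounded. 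From the resulting bound on $\|\liftVar^n\|_\M$ I would deduce a uniform bound on $\|u^n_t\|_\M$ from the coupling constraint $\amv_t\liftVar^n=\Rs u^n_t$ together with the $\M$-norm preservation of the anglewise move and Radon operators on nonnegative measures (\cref{prop:radon_properties,prop:move_properties}), which in fact gives $\|u^n_t\|_\M=\|\liftVar^n\|_\M$ for every $t$. Banach--Alaoglu on the (separable) preduals and the finiteness of $\alltimes$ then let me extract a subsequence $n_k$ along which $\liftVar^{n_k}\weakstarto\liftVar$ and $u^{n_k}_t\weakstarto u_t$ for all $t\in\alltimes$; for each remaining $t\in\domt\setminus\alltimes$ a further subsequence of the $n_k$ (still with $\liftVar^{\,\cdot}\weakstarto\liftVar$) yields a weak-$\ast$ cluster point $u_t$. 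This produces an $\alltimes$-cluster point $(u,\liftVar)$, and weak-$\ast$ closedness of the nonnegativity constraint and of the support inclusions (\cref{prop:support_radon_measure}) keeps $\liftVar\in\Mp(\alldirs\times\projdomdyn)$ and $u_t\in\Mp(\domstat)$.

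It then remains to identify $(u,\liftVar)$ as an optimizer of \problemTag[0]{\data^\dagger}. Multiplying the displayed inequality by $\alpha^n$ gives $\limsup_n\frac12\sum_{t\in\alltimes}\|\fopstat_tu^n_t-\data^n_t\|_H^2\le\limsup_n(\alpha^n\|\liftVar^\dagger\|_\M+\delta^n)=0$; combining this with $\data^n\to\data^\dagger$ and the weak-$\ast$-to-weak continuity of $\fopstat_t$ yields $\fopstat_tu_t=\data^\dagger_t$ for all $t\in\alltimes$. Passing to the limit in $\amv_t\liftVar^{n_k}=\Rs u^{n_k}_t$ along the appropriate ($t$-dependent) subsequence, using the weak-$\ast$ continuity of $\amv_t$ and $\Rs$ on compactly supported measures (\cref{prop:supports,prop:move_supports}), gives $\amv_t\liftVar=\Rs u_t$ for all $t\in\domt$, so $(u,\liftVar)$ is admissible; and weak-$\ast$ lower semicontinuity of $\|\cdot\|_\M$ gives $\|\liftVar\|_\M\le\liminf_k\|\liftVar^{n_k}\|_\M\le\|\liftVar^\dagger\|_\M$, which is precisely the optimal value. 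For the uniqueness addendum I would argue by contradiction in the usual way: if some $\liftVar^n$ (or some $u^n_t$) failed to converge weak-$\ast$ to the unique minimizer, select a subsequence separated from it by $\epsilon$ in a fixed dual pairing, apply the above to obtain an $\alltimes$-cluster point that is a minimizer, hence equals the unique one — a contradiction.

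The whole argument is routine compactness-plus-lower-semicontinuity bookkeeping; the only points that need a little care are (i) the bound on the snapshots $\|u^n_t\|_\M$, which is not assumed but must be read off from the coupling constraint via the $\M$-norm equalities for nonnegative pushforwards, and (ii) the handling of the per-time subsequences for $t\in\domt\setminus\alltimes$, which is exactly why the ``$\alltimes$-cluster point'' notion was introduced. I do not expect a genuine obstacle beyond these, since all the needed positivity, support, and weak-$\ast$ continuity properties of the move and Radon operators were already assembled in \cref{sec:models}, and the estimate structure is identical to that of \cref{prop:well_posedness_dim_reduced_noisy}.
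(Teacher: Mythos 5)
Your proposal is correct and follows essentially the same route as the paper: the competitor estimate with $(u^\dagger,\liftVar^\dagger)$ giving the bound $\|\liftVar^n\|_\M+\frac1{2\alpha^n}\sum_t\|\fopstat_tu^n_t-\data^n_t\|_H^2\le\|\liftVar^\dagger\|_\M+\delta^n/\alpha^n$, uniform norm bounds via the coupling constraint, extraction of an $\alltimes$-cluster point, identification of the limit through the data estimate obtained by multiplying by $\alpha^n$ together with weak-$\ast$ lower semicontinuity and continuity of the operators, and the contradiction argument for the uniqueness addendum. No gaps.
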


\section{Exact reconstruction in the absence of noise}\label{sec:exactRecovery}
As discussed in \cref{sec:intro_dim_red} it was shown in \cite{Candes2014} that the exact positions and intensities of point sources (or equivalently positions and masses of particles) in $\R^d$ can be recovered from incomplete Fourier measurements. Alberti et al.\ \cite{AlbertiAmmariRomeroWintz2019} extended this result to the dynamic setting by representing moving particles as a collection of point sources at points $(x_i,v_i)\in\R^d\times\R^d$, $i=1,\dotsc,N$, where $x_i$ is the position of the $i$-th particle at time $t=0$ and $v_i$ is its velocity.
Our aim in this \namecref{sec:exactRecovery} is to prove exact reconstruction of the target measures also in our dimension-reduced problem \problemTag[0]{\data^\dagger}.

The conditions under which we prove exact reconstruction are of a geometric nature and just concern the particle positions and velocities, but not their masses.
For this reason we will in this \namecref{sec:exactRecovery} sometimes identify a particle configuration $\lambda=\sum_{i=1}^nm_i\delta_{(x_i,v_i)}\in\Mp(\R^d\times\R^d)$ with its support $\supp\lambda\subset\R^d\times\R^d$.
We also extend the definition of the joint Radon transform correspondingly by defining
\[
\Rj_\theta(\dsupp)=\set{(\thdot{x},\thdot{v})\given(x,v)\in\dsupp}
\]
for any $\dsupp\subset\R^d\times\R^d$ and $\theta\in\alldirs$.

The geometric conditions to extend exact reconstruction results from a static to the dynamic setting are based on the notion of \emph{ghost particles} from \cite{AlbertiAmmariRomeroWintz2019}.
To introduce it, for a given particle configuration $\dsupp=\set{(x_i,v_i)\given i=1,\dotsc,N}\subset\R^n\times\R^n$ (where we will use either $n=d$ or $n=1$)
we denote by $L_{i,t}$ the set of all points in position-velocity space for which the corresponding particle shares the same position at time $t$ with the $i$-th particle of $\dsupp$,
\begin{equation*}
    L_{i,t}\coloneqq\set{(x,v)\in\R^n\times\R^n\given x+tv=x_i+tv_i}.
\end{equation*}

\begin{defn}[Coincidence and ghost particle]\label{def:coincidence_ghost_particle}
Let $\dsupp=\set{(x_i,v_i)\given i=1,\dotsc,N}\subset\R^n\times\R^n$ be a particle configuration and $\goodtimes\subset\R$ a set of times.
\begin{enumerate}
\item
Configuration $\dsupp$ has a \emph{coincidence} with respect to $\goodtimes$ if the locations of two distinct particles $i\neq j$ coincide at some time $t\in\goodtimes$, that is, $x_i+t v_i= x_j+t v_j$
or equivalently $(x_i,v_i)\in L_{j,t}$ or $(x_j,v_j)\in L_{i,t}$.
\item
A point $(x,v)\in\R^n\times\R^n\setminus\dsupp$ is called a \emph{ghost particle} of $\dsupp$ with respect to $\goodtimes$ if there exist distinct indices $(i_t)_{t\in\goodtimes}\subset\set{1,\dotsc,N}$ such that
\begin{equation*}
    \bigcap_{t\in\goodtimes} L_{i_t, t} = \set{(x,v)}.
\end{equation*}
    The set of ghost particles of $\dsupp$ with respect to $\goodtimes$ is denoted $G(\dsupp,\goodtimes)$.
\end{enumerate}
\end{defn}
Note that the equality $x + t_i v = \tilde{x}+t_i \tilde{v}$ at two distinct time points $t_i \in  \goodtimes $, $i=1,2$, implies already $(x,v) = (\tilde{x},\tilde{v})$. In view of this, in the relevant case $|\goodtimes |\geq 2$, the intersection $\bigcap_{t\in\goodtimes} L_{i_t, t}$ contains always at most one point and  
it is straightforward to see that \cref{def:coincidence_ghost_particle} is consistent with \cref{def:coincidenceGhostParticle};
while the latter introduces coincidences and ghost particles for particle configurations $\lambda\in\Mp(\R^d\times\R^d)$, the former does so for their supports $\supp\lambda$.

Throughout this \namecref{sec:exactRecovery} we will denote the (noisefree) observations by $(\data^\dagger_t)_{t\in\alltimes}$
and the \emph{ground truth} particle configuration that led to the observations by $\lambda^\dagger\in\Mp(\domdyn)$, thus $\data^\dagger_t=\fopstat_t\mv^d_t\lambda^\dagger$.
Furthermore, we will set $u^\dagger_t=\mv^d_t\lambda^\dagger$, $t\in\alltimes$, to be the ground truth snapshots and $\liftVar^\dagger_\theta=\Rj_\theta\lambda^\dagger$, $\theta\in\alldirs$, to be the ground truth position-velocity projections.

\subsection{Exact reconstruction for dimension-reduced problem} \label{sec:exact_recon_dim_reduced}

The basic idea of Alberti et al.\ was to simply transfer the exact reconstruction property of the \emph{static} reconstruction problem from \cite{Candes2014} to their dynamic one.
The static reconstruction problem for time instant $t\in\alltimes$ is here given by
\begin{equation*}\tag*{$\mathrm P_{\text{stat}}^t(\data^\dagger_t)$}\label{eqn:erstat}
    \min_{u\in\Mp(\domstat)}\mnorm{u}
    \quad\text{such that } \fopstat_tu=\data^\dagger_t.
\end{equation*}

\begin{defn}[Exact reconstruction]
We say that the static problem \emph{\ref{eqn:erstat} reconstructs exactly} if the unique solution to \ref{eqn:erstat} is $u^\dagger_t$.
\end{defn}

Below we prove stepwise that our dimension-reduced model \problemTag[0]{\data^\dagger} essentially recovers the ground truth snapshots and position-velocity projections
as soon as the static problem reconstructs exactly at selected time points $t\in\alltimes$.
\added{\label{txt:itemeProofLogic}\lookUp{\ref{iteme}}%
The overall flow of the argument is the following:
We first show that if there are time points $t$ at which a static reconstruction of the correct particle configuration $u_t^\dagger$ is possible,
then also our dimension-reduced dynamic problem will reconstruct the snapshot at those time points exactly;
in other words, the exact reconstruction at single time points is not hindered by the dynamic setting.
These exactly reconstructed particle configurations at a few time points then serve as a seed to allow exact reconstruction also at other time points.
However, since the snapshots $u_t$ at the different time points only interact indirectly via the position-velocity projections $\liftVar_\theta$,
we first have to show that those position-velocity projections are indeed exactly reconstructed, from which it follows in the last step that all snapshots $u_t$ are exactly reconstructed.

We begin by showing that the dynamic setting does not interfere with exact reconstruction at single time points.
}%

\begin{thm}[Exact reconstruction of good snapshots]\label{thm:exactrecov}
    Let $\emptyset \neq \goodtimes\subset\alltimes$ be a subset of times $t$ at which the static problem \ref{eqn:erstat} reconstructs exactly,
    and let $(u,\liftVar)$ solve \problemTag[0]{\data^\dagger}.
    Then $u_t=u^\dagger_t$ for all $t\in\goodtimes$.
\end{thm}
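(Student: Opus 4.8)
The plan is to exploit that the objective $\|\liftVar\|_\M$ of \problemTag[0]{\data^\dagger} dominates the mass $\|u_t\|_\M$ of every snapshot, and that for the ground truth all these masses agree with $\|\lambda^\dagger\|_\M$, so that a minimizer of \problemTag[0]{\data^\dagger} automatically produces minimal-mass snapshots, which by exact recovery of the static problem must coincide with $u^\dagger_t$.

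First I would verify that the ground truth pair $(u^\dagger,\liftVar^\dagger)$, with $u^\dagger_t=\mv^d_t\lambda^\dagger$ and $\liftVar^\dagger=\Rj\lambda^\dagger$, is admissible for \problemTag[0]{\data^\dagger}. Membership $u^\dagger_t\in\Mp(\domstat)$ and $\liftVar^\dagger\in\Mp(\alldirs\times\projdomdyn)$ follows from $\supp\lambda^\dagger\subset\domdyn$ together with \cref{prop:move_supports,prop:supports}. The hard data constraint $\fopstat_tu^\dagger_t=\data^\dagger_t$ (relevant since $\alpha=0$) holds by the very definition of $\data^\dagger$. For the coupling constraint $\amv_t\liftVar^\dagger=\Rs u^\dagger_t$, I would use the decompositions $\Rs=\mdirs\prodm\theta\Rs_\theta$ and $\Rj=\mdirs\prodm\theta\Rj_\theta$ together with the pointwise commutation rule $\Rs_\theta\mv^d_t=\mv_t\Rj_\theta$ from \cref{lem:radon_move_exchange} to obtain $\Rs\mv^d_t=\amv_t\Rj$, which applied to $\lambda^\dagger$ gives exactly $\Rs u^\dagger_t=\amv_t\liftVar^\dagger$ for all $t\in\domt$.

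Next I would record the mass identities. Since $\lambda^\dagger\geq0$ and the move operator is a pushforward, $\|u^\dagger_t\|_\M=\|\mv^d_t\lambda^\dagger\|_\M=\|\lambda^\dagger\|_\M$ for every $t$ by \cref{prop:pushforward_properties}; likewise the Radon transform is a product measure with the probability measure $\mdirs$, so $\|\liftVar^\dagger\|_\M=\|\Rj\lambda^\dagger\|_\M=\|\lambda^\dagger\|_\M$ by \cref{prop:properties_product_measure}. Optimality of $(u,\liftVar)$ for \problemTag[0]{\data^\dagger} then gives $\|\liftVar\|_\M\leq\|\liftVar^\dagger\|_\M=\|\lambda^\dagger\|_\M=\|u^\dagger_t\|_\M$ for every $t$. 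Finally, for the admissible pair $(u,\liftVar)$ the coupling constraint $\Rs u_t=\amv_t\liftVar$, the nonnegativity of $u_t$ and $\liftVar$, and the norm preservation of $\Rs$ and $\amv_t$ on nonnegative measures yield $\|u_t\|_\M=\|\Rs u_t\|_\M=\|\amv_t\liftVar\|_\M=\|\liftVar\|_\M$, hence $\|u_t\|_\M\leq\|u^\dagger_t\|_\M$.

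To conclude, fix $t\in\goodtimes$. By admissibility, $u_t\in\Mp(\domstat)$ satisfies $\fopstat_tu_t=\data^\dagger_t$, so $u_t$ is feasible for the static problem \ref{eqn:erstat}, and it has objective $\|u_t\|_\M\leq\|u^\dagger_t\|_\M$, where by hypothesis $u^\dagger_t$ is the \emph{unique} minimizer of \ref{eqn:erstat}. Hence $u_t$ is itself a minimizer of \ref{eqn:erstat}, and uniqueness forces $u_t=u^\dagger_t$. I do not expect a genuine obstacle here; the only points requiring care are assembling the chain of norm preservations (for $\mv^d_t$, $\Rj$, $\Rs$, $\amv_t$ acting on nonnegative measures) and checking admissibility of the ground truth, in particular the operator identity $\Rs\mv^d_t=\amv_t\Rj$.
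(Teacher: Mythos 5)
Your proposal is correct and follows essentially the same route as the paper: feasibility of $u_t$ for \ref{eqn:erstat} from the hard data constraint, the mass chain $\mnorm{u_t}=\mnorm{\liftVar}\leq\mnorm{\liftVar^\dagger}=\mnorm{u^\dagger_t}$ via the coupling constraint and the norm-preservation properties of the Radon and move operators on nonnegative measures (\cref{prop:radon_properties,prop:move_properties}), and then uniqueness of the static minimizer. The only difference is that you spell out explicitly the admissibility of the ground truth competitor $(u^\dagger,\liftVar^\dagger)$ via \cref{lem:radon_move_exchange}, which the paper takes for granted.
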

\begin{proof}
    Let $(u,\liftVar)$ solve \problemTag[0]{\data^\dagger} and
    fix an arbitrary time $t\in\goodtimes$. Then $\fopstat_tu_t=\data^\dagger_t$ so that $u_t$ is feasible for \ref{eqn:erstat}.
    Furthermore, optimality of $(u,\liftVar)$ in \problemTag[0]{\data^\dagger} yields
    \begin{equation*}
        \mnorm{\liftVar}\leq\mnorm{\liftVar^\dagger}
    \end{equation*}
    since $(u^\dagger,\liftVar^\dagger)$ is an admissible competitor in \problemTag[0]{\data^\dagger}.
    The condition $\amv_t\liftVar=\Rs u_t$ for all $t\in\domt$ together with \cref{prop:radon_properties,prop:move_properties} then implies
    \begin{equation*}
        \mnorm{u_t}=\mnorm{\liftVar}\leq\mnorm{\liftVar^\dagger}=\mnorm{u^\dagger_t}.
    \end{equation*}
    Thus we see that $u_t$ is in fact a solution to \ref{eqn:erstat}.
    Since we assumed $u^\dagger_t$ to be the unique solution to \ref{eqn:erstat}, we can conclude $u_t=u^\dagger_t$.
\end{proof}

\added{\label{txt:itemeProofLogic2}\lookUp{\ref{iteme}}%
As explained before, the above proven exact snapshot reconstruction at few time points induces the exact recovery of the position-velocity projections, which is shown next.
}%

\begin{thm}[Exact reconstruction of position-velocity projections]\label{thm:exactrecov_gooddirs}
    As in \cref{thm:exactrecov} let $\emptyset \neq \goodtimes\subset\alltimes$ be a subset of times $t$ at which the static problem \ref{eqn:erstat} reconstructs exactly,
    and let $(u,\liftVar)$ solve \problemTag[0]{\data^\dagger}.
    Further define
    \begin{equation*}
        \gooddirs\coloneqq\{\theta\in\alldirs\mid\text{$\Rj_\theta(\supp\lambda^\dagger)$ has neither coincidences nor ghost particles with respect to $\goodtimes$}
        \}.
    \end{equation*}
    Then for any disintegration $\liftVar=\mdirs\prodm\theta \liftVar_\theta$ (which exists by \cref{lem:projected_constraint_decomposition_mixed_model})
    we have $\liftVar_\theta=\liftVar^\dagger_\theta$ for $\mdirs$-almost all $\theta\in\gooddirs$.
\end{thm}
\begin{proof}
    By \cref{lem:projected_constraint_decomposition_mixed_model}, for $\mdirs$-almost all $\theta\in\gooddirs$ we have 
    \begin{equation*}
        \mnorm{\liftVar_\theta}=\mnorm{\liftVar}\quad\text{and}\quad
        \Rs_\theta u_t=\mv_t\liftVar_\theta\quad\text{for all }t\in\domt.
    \end{equation*}
    Now fix one such $\theta\in\gooddirs$ and
    introduce the reconstruction error $\xi\coloneqq\liftVar_\theta-\liftVar^\dagger_\theta\in\M(\R^2)$.
    Letting $\supp\lambda^\dagger=\{(x_i,v_i)\,|\,i=1,\ldots,N\}$, we consider the Lebesgue decomposition of $\xi$ with respect to $\abs{\liftVar^\dagger_\theta}$, which yields a representation
    \begin{equation*}
        \xi=\xi_a+\xi_s,\quad
        \xi_a=\sum_{i=1}^N\beta_i\delta_{(\theta\cdot x_i,\theta\cdot v_i)},\quad
        \xi_s\perp\liftVar^\dagger_\theta
    \end{equation*}
    with weights $\beta_i\in\R$. Next, for each $t\in\goodtimes$ define the function $\tilde{q}_t:\R\mapsto\R$ by
    \begin{equation*}
        \tilde{q}_t(s)\coloneqq\begin{cases}
            \sgn\beta_i&\text{if $s=\theta\cdot(x_i+t v_i)$,}\\
            0 & \text{otherwise.}
        \end{cases}
    \end{equation*}
    These functions are well-defined, since the projected configurations are free of coincidences by choice of $\theta$. Further define $q:\R^2\to\R$ by
    \begin{equation*}
        q(y,w)\coloneqq\frac{1}{|\goodtimes|}\sum_{t\in\goodtimes}\tilde{q}_t(y+t w),
    \end{equation*}
    then we have $q(\thdot{x_i},\thdot{v_i})=\sgn\beta_i$ for all $i\in\set{1,\dotsc,N}$.

    The function $q$ is defined as an average of terms with modulus no larger than 1. Therefore, if $(y,w)\in\R^2$ with $\abs{q(y,w)}=1$, we necessarily have $\abs{\tilde{q}_t(y+tw)}=1$ for all $t\in\goodtimes$ and thus
    \begin{equation}\label{eqn:proj-lin}
        \forall t\in\goodtimes\,\exists i_t\in\set{1,\dotsc,N}:\quad y+tw=\thdot{(x_{i_t}+tv_{i_t})}.
    \end{equation}
    There are now two possible cases:
    \begin{enumerate}
    \item
    One of the indices $i_t$ repeats. In this case the point $(y,w)$ is uniquely determined by the equation~\eqref{eqn:proj-lin} at the two corresponding times, and it follows that $(y,w)=(\thdot{x_{i_t}},\thdot{v_{i_t}})$.
    \item
    The indices $(i_t)_{t\in\goodtimes}$ are all distinct. This would mean that $(y,w)$ is a ghost particle of the projected configuration, which is not allowed by choice of $\theta$.
    \end{enumerate}
    We have thus shown that
    \begin{equation}\label{eqn:avgc-strict}
        \abs{q(y,w)}<1\quad\text{for all } (y,w)\in\R^2\setminus\supp\liftVar_\theta^\dagger.
    \end{equation}
    By \cref{thm:exactrecov} we have the exact reconstruction $u_t=u^\dagger_t$ for all $t\in\goodtimes$. Thus, by choice of $\theta$ we obtain
    \begin{equation*}
        \mv_t\liftVar_\theta=\Rs_\theta u_t
        =\Rs_\theta u^\dagger_t=\mv_t\liftVar^\dagger_\theta%
    \end{equation*}
    and therefore $0=\mv_t\xi$ for all $t\in\goodtimes$. Using the previously defined function $q$ we can calculate
    \begin{equation*}
        0=\frac{1}{|\goodtimes|}\sum_{t\in\goodtimes}\int_\R\tilde{q}_t\wrt\mv_t\xi
        =\int_{\R^2}q\wrt \xi
        =\int_{\R^2}q\wrt \xi_a+\int_{\R^2}q\wrt \xi_s
        =\sum_{i=1}^N|\beta_i|+\int_{\R^2}q\wrt \xi_s.
    \end{equation*}
    If $\xi_s=0$, then also $\mnorm{\xi_a}=\sum_{i=1}^N|\beta_i|=0$, so there is nothing left to show.
    On the other hand, $\xi_s\neq0$ leads to a contradiction:
    Since $|q|<1$ outside the support of $\liftVar^\dagger_\theta$ by \eqref{eqn:avgc-strict}, we get the inequality
    \begin{equation*}\label{eqn:erm-xaxs}
        \mnorm{\xi_a}=\sum_{i=1}^N|\beta_i|=-\int_{\R^2}q\wrt \xi_s
        <\mnorm{\xi_s}.
    \end{equation*}
    By optimality of $\liftVar$ we get
    \begin{equation*}\label{eqn:erm-ineq-start}
        \mnorm{\liftVar^\dagger_\theta}=\mnorm{\liftVar^\dagger}\geq\mnorm{\liftVar}=\mnorm{\liftVar_\theta}=\mnorm{\liftVar^\dagger_\theta+\xi},
    \end{equation*}
    and from $\liftVar^\dagger_\theta+\xi_a\perp\xi_s$ it follows that
    \begin{equation*}
        \mnorm{\liftVar^\dagger_\theta+\xi}=\mnorm{\liftVar^\dagger_\theta+\xi_a}+\mnorm{\xi_s}
        \geq\mnorm{\liftVar^\dagger_\theta}-\mnorm{\xi_a}+\mnorm{\xi_s}
        >\mnorm{\liftVar^\dagger_\theta}.
    \end{equation*}
  Combining the last two inequalities yields the desired contradiction.
\end{proof}

\added{\label{txt:itemeProofLogic3}\lookUp{\ref{iteme}}%
Finally, from the exactly reconstructed position-velocity projections we can infer the snapshots at all times
(or at least those times for which the information in the position-velocity projections suffices to uniquely specify a particle configuration).
}%

\begin{thm}[Exact reconstruction of arbitrary snapshots]\label{thm:exactrecovAllSnapshots}
    Again as in \cref{thm:exactrecov} let $\emptyset \neq \goodtimes\subset\alltimes$ be a subset of times $t$ at which the static problem \ref{eqn:erstat} reconstructs exactly,
    and let $(u,\liftVar)$ solve \problemTag[0]{\data^\dagger}.
    Further, fix some $t\in\domt$. If $\gooddirs$ from \cref{thm:exactrecov_gooddirs} is such that $u^\dagger_t$ is uniquely determined by $(\mdirs\restr\gooddirs)\prodm\theta\Rs_\theta u_t^\dagger$, the Radon transform restricted to the directions in $\gooddirs$, then $u_t=u^\dagger_t$.
\end{thm}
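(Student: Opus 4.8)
The plan is to turn this statement into a short corollary of \cref{thm:exactrecov_gooddirs}: since that theorem already identifies $\liftVar_\theta$ with $\liftVar^\dagger_\theta$ along $\mdirs$-almost every $\theta\in\gooddirs$, and since the admissibility constraint of $(u,\liftVar)$ in \problemTag[0]{\data^\dagger} ties $u_t$ to the slices $\liftVar_\theta$, I can compare the Radon data $\Rs_\theta u_t$ and $\Rs_\theta u^\dagger_t$ along $\gooddirs$ and then invoke the assumed injectivity of $\nu\mapsto(\mdirs\restr\gooddirs)\prodm\theta\Rs_\theta\nu$ at $u^\dagger_t$ to conclude $u_t=u^\dagger_t$.

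Concretely, I would first fix the disintegration $\liftVar=\mdirs\prodm\theta\liftVar_\theta$ used in \cref{thm:exactrecov_gooddirs}, so that $\liftVar_\theta=\liftVar^\dagger_\theta$ for $\mdirs$-almost every $\theta\in\gooddirs$, where $\liftVar^\dagger_\theta=\Rj_\theta\lambda^\dagger$ and $\liftVar^\dagger=\mdirs\prodm\theta\liftVar^\dagger_\theta$. Next I would use the constraint $\amv_t\liftVar=\Rs u_t$, valid for the fixed $t\in\domt$ since $(u,\liftVar)$ is admissible in \problemTag[0]{\data^\dagger}: a short pushforward computation (in the spirit of \cref{rem:move_operator_decomposition}) gives $\amv_t\liftVar=\mdirs\prodm\theta\mv_t\liftVar_\theta$, while $\Rs u_t=\mdirs\prodm\theta\Rs_\theta u_t$ by \cref{def:radon_transform_measure}; equality of these two product measures over the common base $\mdirs$ forces $\mv_t\liftVar_\theta=\Rs_\theta u_t$ for $\mdirs$-almost every $\theta$. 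For the ground truth the corresponding identity is even pointwise in $\theta$: from $u^\dagger_t=\mv^d_t\lambda^\dagger$ and the commutation rule $\Rs_\theta\mv^d_t=\mv_t\Rj_\theta$ of \cref{lem:radon_move_exchange} one gets $\Rs_\theta u^\dagger_t=\mv_t\liftVar^\dagger_\theta$ for every $\theta\in\alldirs$. Intersecting the (at most two, hence harmless) $\mdirs$-full-measure subsets of $\gooddirs$ on which these identities and $\liftVar_\theta=\liftVar^\dagger_\theta$ hold, I obtain $\Rs_\theta u_t=\mv_t\liftVar_\theta=\mv_t\liftVar^\dagger_\theta=\Rs_\theta u^\dagger_t$ for $\mdirs$-almost every $\theta\in\gooddirs$. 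Since the two $\theta$-families of slices agree $(\mdirs\restr\gooddirs)$-almost everywhere, the product measures $(\mdirs\restr\gooddirs)\prodm\theta\Rs_\theta u_t$ and $(\mdirs\restr\gooddirs)\prodm\theta\Rs_\theta u^\dagger_t$ coincide on every generator $B\times A$ and therefore as measures; the hypothesis that $u^\dagger_t$ is uniquely determined by this measure, applied to the measure $u_t$ shares with it, then yields $u_t=u^\dagger_t$.

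This is essentially bookkeeping, so the obstacles are minor: one must make sure the disintegration used is exactly the one from \cref{thm:exactrecov_gooddirs} (so that the slices are consistent), that $\gooddirs$ is $\mdirs$-measurable — which follows from its description via finitely many algebraic coincidence/ghost-particle conditions on $\theta$ for the finite configuration $\supp\lambda^\dagger$ and finite $\goodtimes$, so that $\mdirs\restr\gooddirs$ and the restricted product Radon transform are well defined — and that the final step is phrased as an application of the injectivity hypothesis to $u_t$ rather than to $u^\dagger_t$. I expect the only genuinely delicate point to be the null-set argument for equality of product measures over $\mdirs$, which is standard since $\projdomstat$ is a separable metric space and its Borel $\sigma$-algebra is countably generated, so the $A$-dependent exceptional sets can be unified.
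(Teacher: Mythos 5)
Your proposal is correct and follows essentially the same route as the paper's (very brief) proof: the chain $\Rs_\theta u_t=\mv_t\liftVar_\theta=\mv_t\liftVar^\dagger_\theta=\Rs_\theta u^\dagger_t$ for $\mdirs$-almost every $\theta\in\gooddirs$, followed by the assumed unique determination of $u^\dagger_t$ from its restricted Radon data. You merely spell out the disintegration and product-measure bookkeeping that the paper leaves implicit (via \cref{lem:projected_constraint_decomposition}), which is fine.
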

\begin{proof}
    This follows immediately from the relation $\Rs_\theta u_t=\mv_t\liftVar_\theta=\mv_t\liftVar^\dagger_\theta=\Rs_\theta u^\dagger_t$, which holds for all $t\in\domt$ and by \cref{thm:exactrecov_gooddirs} for $\mdirs$-almost all $\theta\in\gooddirs$.
\end{proof}

The above reconstruction results depend on the set $\gooddirs$ for which reason we analyse it in more detail in the next section.
In particular, we show that $\gooddirs$ coincides with $\alldirs$ up to a $\hd^{d-1}$-nullset,
which for instance implies that for $\alldirs=\sphere^{d-1}$ and $\mdirs$ the (scaled) $(d-1)$-dimensional Hausdorff measure both $\gamma^\dagger$ and $u^\dagger$ are exactly reconstructed.
This fact is discussed in the next section, while the subsequent section considers the opposite case of a finite set $\alldirs$.

\subsection{Coincidences and ghost particles in projections}\label{sec:coincidencesGhostsProjections}

In this section we show that $\gooddirs$ from \cref{thm:exactrecov_gooddirs} contains $\hd^{d-1}$-almost all directions of $\alldirs$
\added{\lookUp{\ref{iteme}}(which will eventually imply \cref{thm:exactReconstruction_hausdorff})}.
To this end we consider projections $\Rj_\theta(\dsupp)$ of particle configurations $\dsupp=\set{(x_i,v_i)\given i=1,\dotsc,N}$,
which themselves represent particle configurations in one-dimensional space.
Their sets of ghost particles with respect to $\goodtimes$ are $G(\Rj_\theta(\dsupp),\goodtimes)$, and we furthermore abbreviate
\begin{equation*}
    L_{i,t}(\theta)\coloneqq\set{(x,v)\in\R\times\R\given
    x+t v=\thdot{x_i}+t\thdot{v_i})}
    =\Rj_\theta(L_{i,t}).
\end{equation*}
We will separately consider coincidences and ghost particles
\added{\label{txt:itemeProofLogic4}\lookUp{\ref{iteme}}%
and show that for $\hd^{d-1}$-almost all directions $\theta$,
if a particle configuration $\dsupp$ is free of coincidences or ghost particles, then so is its projection $\Rj_\theta(\dsupp)$.
We begin with coincidences.
}%

\begin{lem}[Coincidences in projections]\label{thm:projcoinc}
Let $\goodtimes$ be a finite set of times and
let the particle configuration $\dsupp$ be free of coincidences at those times.
Then also $\Rj_\theta(\dsupp)$ is free of coincidences at those times for $\hd^{d-1}$-almost all $\theta\in\sphere^{d-1}$.
\end{lem}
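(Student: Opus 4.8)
The plan is to reduce the statement to a finite union of proper algebraic subvarieties of the sphere. Since $\dsupp$ is free of coincidences at the times in $\goodtimes$, for every pair of distinct indices $i\neq j$ and every $t\in\goodtimes$ we have $x_i+tv_i\neq x_j+tv_j$, i.e.\ the vector $w_{ij,t}:=(x_i+tv_i)-(x_j+tv_j)\in\R^d$ is nonzero. The projected configuration $\Rj_\theta(\dsupp)$ has a coincidence at time $t\in\goodtimes$ precisely when $\thdot{(x_i+tv_i)}=\thdot{(x_j+tv_j)}$ for some $i\neq j$, that is, when $\thdot w_{ij,t}=0$. Hence the set of ``bad'' directions is
\[
B:=\bigcup_{t\in\goodtimes}\ \bigcup_{i\neq j}\ \set{\theta\in\sphere^{d-1}\given \thdot w_{ij,t}=0}.
\]

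For each fixed $(i,j,t)$ with $w_{ij,t}\neq0$, the set $\set{\theta\in\sphere^{d-1}\given \thdot w_{ij,t}=0}$ is the intersection of $\sphere^{d-1}$ with a hyperplane through the origin, hence a great subsphere of dimension $d-2$, which has $\hd^{d-1}$-measure zero (for $d=1$ it is empty). I would justify this either by the standard fact that a lower-dimensional great subsphere is $\hd^{d-1}$-null, or by parametrizing and noting that the constraint $\thdot w_{ij,t}=0$ cuts out a set of codimension one on the sphere. Since $\goodtimes$ is finite and there are finitely many index pairs, $B$ is a finite union of $\hd^{d-1}$-nullsets and therefore itself $\hd^{d-1}$-null. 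For every $\theta\in\sphere^{d-1}\setminus B$, none of the equalities $\thdot w_{ij,t}=0$ holds, so $\Rj_\theta(\dsupp)$ is free of coincidences at all times in $\goodtimes$, which proves the claim.

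There is no real obstacle here; the only point requiring a little care is the measure-zero statement for the great subspheres, which I would either cite as standard or reduce to the fact that the Hausdorff measure of a proper linear slice of the sphere vanishes. One could also remark that $B$ is in fact closed (a finite union of closed great subspheres), so that its complement is open, although only the nullity is needed for the statement.
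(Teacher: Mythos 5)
Your proposal is correct and follows essentially the same argument as the paper: write the bad directions as a finite union over $t\in\goodtimes$ and pairs $i\neq j$ of the sets $\{\theta\in\sphere^{d-1}\mid\thdot{(x_i-x_j+t(v_i-v_j))}=0\}$, use the absence of coincidences to see that each defining vector is nonzero, and conclude that each such set is a hyperplane intersected with the sphere and hence an $\hd^{d-1}$-nullset. No gaps.
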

\begin{proof}
    For a $\thins$ and indices $i,j\in\set{1,\dotsc,N}$, $i\neq j$, the projected particles $(\thdot{x_i, \thdot{v_i}})$ and $(\thdot{x_j, \thdot{v_j}})$ take the same position at a time $t\in\goodtimes$ if and only if $\theta$ fulfills the equation
    \begin{equation*}
        \thdot{(x_i-x_j+t (v_i - v_j))}=0.
    \end{equation*}
    Therefore we can write
    \begin{multline*}
        \set{\thins\given
        \text{$\Rj_\theta(\dsupp)$ has a coincidence at a time in $\goodtimes$}}\\
        = \bigcup_{t\in\goodtimes}\bigcup_{i\neq j}
        \set{\thins\given\thdot{(x_i-x_j+t (v_i - v_j))}=0}.
    \end{multline*}
    We have $x_i-x_j+t (v_i - v_j)\neq 0$ for all $i\neq j$ since the configuration is assumed to be free of coincidences for $t\in\goodtimes$.
    Hence, the above is a finite union of $(d-1)$-dimensional hyperplanes in $\R^d$ intersected with the sphere $\sphere^{d-1}$.
    Since those intersections are $\hd^{d-1}$-nullsets, their union is as well.
\end{proof}

\added{\label{txt:itemeProofLogic5}\lookUp{\ref{iteme}}%
For ghost particles the situation is slightly more complicated than for coincidences:
As the below result shows, the projection $\Rj_\theta$ may induce new, additional ghost particles.
However, it does not for $\hd^{d-1}$-almost all directions $\theta$.
}%

\begin{lem}[Ghost particle sets in projections]\label{thm:projghost-ext}
    Let $\goodtimes$ be a finite set of times.
    The ghost particles of a particle configuration $\dsupp$ and of its projections $\Rj_\theta\dsupp$ satisfy the following relation.
   \begin{enumerate}
    \item\label{claim:ghost-ind} For all $\thins$, as long as $\Rj_\theta$ is injective on $\dsupp$, it holds that
    $ G(\Rj_\theta(\dsupp), \goodtimes)\supset\Rj_\theta(G(\dsupp,\goodtimes))$.
    \item\label{claim:ghost-aa} For $\hd^{d-1}$-almost all $\theta\in\sphere^{d-1}$ we even have
    $G(\Rj_\theta(\dsupp), \goodtimes)=\Rj_\theta(G(\dsupp,\goodtimes))$.
   \end{enumerate}
\end{lem}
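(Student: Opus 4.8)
The plan is to prove the two claims separately, starting with the easy inclusion in \eqref{claim:ghost-ind}. Fix $\thins$ such that $\Rj_\theta$ is injective on $\dsupp$ (i.e.\ the projected particles $(\thdot{x_i},\thdot{v_i})$ are pairwise distinct). Let $(x,v)\in G(\dsupp,\goodtimes)$, so there are distinct indices $(i_t)_{t\in\goodtimes}$ with $\bigcap_{t\in\goodtimes}L_{i_t,t}=\{(x,v)\}$; in particular $x+tv=x_{i_t}+tv_{i_t}$ for each $t\in\goodtimes$, hence after applying $\theta\cdot$ we get $\thdot x+t\thdot v=\thdot{x_{i_t}}+t\thdot{v_{i_t}}$, i.e.\ $(\thdot x,\thdot v)\in L_{i_t,t}(\theta)$ for all $t$. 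Since $|\goodtimes|\geq2$ (the case $|\goodtimes|\leq1$ is degenerate: for $|\goodtimes|=1$ a single $L_{i,t}$ is never a singleton so $G=\emptyset$ on both sides, and for $\goodtimes=\emptyset$ likewise), the intersection $\bigcap_{t\in\goodtimes}L_{i_t,t}(\theta)$ is at most a single point, which must therefore equal $\{(\thdot x,\thdot v)\}$. Finally $(\thdot x,\thdot v)\notin\Rj_\theta(\dsupp)$: if it equalled some $(\thdot{x_j},\thdot{v_j})$, then by injectivity of $\Rj_\theta$ on $\dsupp\cup\{(x,v)\}$—which holds for generic $\theta$, but here we only assumed injectivity on $\dsupp$, so one argues instead that $(\thdot x,\thdot v)=(\thdot{x_j},\thdot{v_j})$ together with membership in $L_{i_t,t}(\theta)$ for two times forces $x_j+tv_j=x+tv$ at those times, hence $(x_j,v_j)=(x,v)\in\dsupp$, contradicting $(x,v)\notin\dsupp$. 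Thus $(\thdot x,\thdot v)\in G(\Rj_\theta(\dsupp),\goodtimes)$, proving the inclusion.

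For \eqref{claim:ghost-aa} it remains to show the reverse inclusion $G(\Rj_\theta(\dsupp),\goodtimes)\subset\Rj_\theta(G(\dsupp,\goodtimes))$ for $\hd^{d-1}$-almost all $\theta$. The idea is to exclude a Hausdorff-nullset of "bad" directions. First, restrict to the full-measure set where $\Rj_\theta$ is injective on $\dsupp$ (the complement is a finite union of hyperplane-sphere intersections, as in \cref{thm:projcoinc}) and where, in addition, $\Rj_\theta(\dsupp)$ is free of coincidences at times in $\goodtimes$ (again a nullset is removed by \cref{thm:projcoinc}). Now suppose $(y,w)\in G(\Rj_\theta(\dsupp),\goodtimes)$, witnessed by distinct indices $(i_t)_{t\in\goodtimes}$ with $\bigcap_t L_{i_t,t}(\theta)=\{(y,w)\}$. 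Pick two times $t_1\neq t_2$ in $\goodtimes$; the equations $y+t_kw=\thdot{x_{i_{t_k}}}+t_k\thdot{v_{i_{t_k}}}$, $k=1,2$, uniquely determine $(y,w)$ as a linear function of the data. The natural candidate preimage is the unique point $(x,v)\in\R^d\times\R^d$ with $x+t_kv=x_{i_{t_k}}+t_kv_{i_{t_k}}$ for $k=1,2$, i.e.\ $(x,v)$ is the intersection of $L_{i_{t_1},t_1}$ and $L_{i_{t_2},t_2}$; by construction $\Rj_\theta(x,v)=(y,w)$. The task is then to show that for generic $\theta$ this $(x,v)$ is actually a ghost particle of $\dsupp$, i.e.\ that $x+tv=x_{i_t}+tv_{i_t}$ holds at \emph{every} $t\in\goodtimes$ (not just $t_1,t_2$) and that $(x,v)\notin\dsupp$.

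The main obstacle is precisely this "upgrade" step: knowing the projected identity $\thdot{(x+tv)}=\thdot{(x_{i_t}+tv_{i_t})}$ for a given $\theta$ does not by itself give the vector identity $x+tv=x_{i_t}+tv_{i_t}$. The plan is to handle it by a nullset-exclusion argument run over the finitely many possible index selections $(i_t)_{t\in\goodtimes}$. For each such selection, let $(x,v)$ be the (data-determined) intersection point of $L_{i_{t_1},t_1}$ and $L_{i_{t_2},t_2}$ as above. For every remaining time $t\in\goodtimes\setminus\{t_1,t_2\}$, either $x+tv=x_{i_t}+tv_{i_t}$ already as vectors—good—or the vector $x+tv-x_{i_t}-tv_{i_t}$ is nonzero, and then $\{\theta\in\sphere^{d-1}\mid\thdot{(x+tv-x_{i_t}-tv_{i_t})}=0\}$ is a great subsphere, hence an $\hd^{d-1}$-nullset. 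Similarly, the condition $(x,v)\notin\dsupp$ can only fail if $(x,v)=(x_j,v_j)$ for some $j$, which is a fixed (index-selection-dependent) event not involving $\theta$ at all; if it holds for the given selection we simply discard that selection. Taking the union over all finitely many index selections and all finitely many times $t$ of these nullsets, together with the earlier nullsets (non-injectivity, projected coincidences), still yields an $\hd^{d-1}$-nullset $N\subset\sphere^{d-1}$. For $\theta\notin N$, any $(y,w)\in G(\Rj_\theta(\dsupp),\goodtimes)$ with witnessing selection $(i_t)_t$: since $\theta$ avoided the relevant nullsets, the induced $(x,v)$ satisfies $x+tv=x_{i_t}+tv_{i_t}$ for all $t\in\goodtimes$ and $(x,v)\notin\dsupp$, and the indices $(i_t)_t$ are distinct, so $\bigcap_tL_{i_t,t}$ is the singleton $\{(x,v)\}$ (using $|\goodtimes|\ge2$), i.e.\ $(x,v)\in G(\dsupp,\goodtimes)$ and $(y,w)=\Rj_\theta(x,v)\in\Rj_\theta(G(\dsupp,\goodtimes))$. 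Combined with \eqref{claim:ghost-ind} this gives equality for all $\theta\notin N$, proving \eqref{claim:ghost-aa}. One subtlety to check along the way is that the intersection $L_{i_{t_1},t_1}\cap L_{i_{t_2},t_2}$ is indeed a single point in $\R^d\times\R^d$; this follows from $t_1\neq t_2$ exactly as noted after \cref{def:coincidence_ghost_particle}, since $x+t_1v=\tilde x+t_1\tilde v$ and $x+t_2v=\tilde x+t_2\tilde v$ force $(x,v)=(\tilde x,\tilde v)$.
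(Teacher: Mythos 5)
Your treatment of part (ii) is essentially the paper's argument: reduce to the finitely many selections of distinct indices, attach to each selection the full-space point determined by two of the times, and remove a finite union of hyperplane--sphere intersections of directions on which a projected identity could hold without the corresponding vector identity; the paper organizes the same idea through the consecutive pairwise intersections $L_{i_p,t_p}\cap L_{i_{p+1},t_{p+1}}$ and the dichotomy ``all these points coincide / two of them differ'', which is only a difference in bookkeeping. Two slips there are easily repaired: the appeal to \cref{thm:projcoinc} is both unavailable (that lemma assumes $\dsupp$ itself is free of coincidences in $\goodtimes$, which is not a hypothesis here) and unused in the rest of your argument, so it should simply be dropped; and ``discard that selection'' needs its one-line justification, namely that a selection whose full-space point $(x,v)$ lies in $\dsupp$ can never witness a projected ghost particle, because then $(y,w)=\Rj_\theta(x,v)\in\Rj_\theta(\dsupp)$ is a point of the projected configuration -- the fact $(x,v)\notin\dsupp$ is not a consequence of avoiding a nullset, as your final assembly asserts, but of the definition of a ghost particle of the projection.

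The genuine flaw is in part (i), in your verification that $(\thdot{x},\thdot{v})\notin\Rj_\theta(\dsupp)$. You claim that $(\thdot{x},\thdot{v})=(\thdot{x_j},\thdot{v_j})$ together with membership in $L_{i_t,t}(\theta)$ at two times ``forces $x_j+tv_j=x+tv$''; but membership in $L_{i_t,t}(\theta)$ only yields the scalar identities $\thdot{(x+tv)}=\thdot{(x_{i_t}+tv_{i_t})}$, and these cannot be upgraded to vector identities in $\R^d$ -- this is precisely the ``upgrade'' obstruction you yourself point out in part (ii). Worse, the claim you are trying to establish does not follow from injectivity of $\Rj_\theta$ on $\dsupp$ alone: already for $d\geq3$ one can build a configuration and a direction with $\theta\perp(x-x_j)$ and $\theta\perp(v-v_j)$ for a ghost $(x,v)$ and a particle $(x_j,v_j)$ while all projected particles remain distinct, and then $\Rj_\theta(x,v)$ lies in the projected configuration and hence is not among its ghost particles. (The paper's own proof of (i) passes over this point silently; it effectively requires injectivity of $\Rj_\theta$ on $\dsupp\cup G(\dsupp,\goodtimes)$.) For the almost-everywhere statement (ii) this is harmless: $G(\dsupp,\goodtimes)$ is finite, so the set of directions for which some ghost projects onto some particle is a finite union of sets of the form $\{\thins\mid\thdot{(x-x_j)}=0,\ \thdot{(v-v_j)}=0\}$, each contained in the intersection of $\sphere^{d-1}$ with a hyperplane; adding this to your nullset $N$ makes both inclusions, and hence the equality, valid off $N$.
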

\begin{proof}
    To prove the first statement, take a ghost particle $({x},{v})\in G(\dsupp, \goodtimes)$ of the original configuration. By definition, there exist pairwise distinct indices $(i_t)_{t\in\goodtimes}\subset\set{1,\dotsc,N}$ such that
    \begin{equation*}
        x_{i_t}+t v_{i_t}={x}+t{v}\quad\text{for all }t\in\goodtimes.
    \end{equation*}
    Taking the inner product with $\theta$ on both sides of these equations, we see that the projected ghost particle $(\thdot{{x}},\thdot{{v}})=\Rj_\theta({x},{v})$ is a ghost particle of the projected configuration $\set{(\thdot{x_i},\thdot{v_i})\given i=1,\dotsc,N}$. The assumption of injectivity is necessary to ensure that the indices $(i_t)_t$ still address distinct particles after the projection.

    To prove the second statement, we first show that the inclusion from the first statement holds for $\hd^{d-1}$-almost all $\thins$. Indeed, in order for $\Rj_\theta$ to not be injective on $\dsupp$, there need to exist indices $1\leq i,j\leq N, i\neq j$, such that $(\thdot{x_i},\thdot{v_i})=(\thdot{x_j},\thdot{v_j})$.
    We can thus write
    \begin{equation*}
        \set{\thins\given\text{$\Rj_\theta$ is not injective on $\dsupp$}}
        \subset\bigcup_{i\neq j}\set{\thins\given\text{$\thdot{(x_i-x_j)}=0$ and $\thdot{(v_i-v_j)}=0$}}.
    \end{equation*}
    Since the particles in the original configuration are assumed to be distinct, for $i\neq j$ we either have $x_i-x_j\neq 0$ or $v_i-v_j\neq 0$, which implies that the sets in the union are intersections of $\sphered$ with hyperplanes and thus $\hd^{d-1}$-nullsets. Thus $\Rj_\theta$ is injective on $\dsupp$ for $\hd^{d-1}$-almost all $\theta$ as desired.
    
    It remains to show the inclusion $G(\Rj_\theta(\dsupp), \goodtimes)\subset \Rj_\theta(G(\dsupp,\goodtimes))$ for $\hd^{d-1}$-almost all $\thins$. To this end, we define the set
    \begin{equation*}
        M\coloneqq\set{\thins\given
        \Rj_\theta(G(\dsupp,\goodtimes))\subsetneq G(\Rj_\theta(\dsupp),\goodtimes)}
    \end{equation*}
    of projection directions for which the above inclusion is violated.
     We need to show that this set has measure zero. Writing $\goodtimes=\set{t_1,\dotsc,t_\ngoodtimes}$,  $M$ can be expressed as
\begin{align}
    M&=\set*{\thins\given
    \Rj_\theta(G(\dsupp,\goodtimes))\subsetneq\bigcup_{(i_p)_p}\bigcap_{p=1}^{\ngoodtimes} L_{i_p,t_p}(\theta)} \nonumber\\
    &\subset\bigcup_{(i_p)_p}\set*{\thins\given
    \bigcap_{p=1}^{\ngoodtimes} L_{i_p,t_p}(\theta)
    \setminus\Rj_\theta(G(\dsupp,\goodtimes))\neq\varnothing}\label{eqn:ghost-union},
\end{align}
where the union runs over all families of distinct indices $(i_p)_{1\leq p \leq\ngoodtimes}\subset\set{1,\dotsc,N}$.
Since the union is finite, it suffices to show that every set in the union is a $\hd^{d-1}$-nullset.

Let $(i_p)_{1\leq p\leq\ngoodtimes}$ be such a family of indices and fix a $\thins$.
It is straightforward to see that the pairwise intersections $L_{i_p,t_p}\cap L_{i_{p+1},t_{p+1}}$ are singletons. We write
\begin{equation*}
    \set{(a_p,b_p)}=L_{i_p,t_p}\cap L_{i_{p+1},t_{p+1}},\quad p=1,\dotsc,\ngoodtimes-1,
\end{equation*}
and calculate
\begin{equation}\label{eqn:ghost-caps}
\bigcap_{p=1}^\ngoodtimes L_{i_p,t_p}(\theta)
=\bigcap_{p=1}^{\ngoodtimes-1}L_{i_p,t_p}(\theta)\cap L_{i_{p+1},t_{p+1}}(\theta)\\
=\bigcap_{p=1}^{\ngoodtimes-1}\Rj_\theta(L_{i_p,t_p}\cap L_{i_{p+1},t_{p+1}})
=\bigcap_{p=1}^{\ngoodtimes-1}\Rj_\theta\set{(a_p,b_p)}.
\end{equation}
Now we can distinguish two cases:
\begin{enumerate}
\item
All points $(a_p, b_p)$ coincide, that is,
\begin{equation*}
    \bigcap_{p=1}^\ngoodtimes L_{i_p,t_p} = \set{(a_1,b_1)}.
\end{equation*}
In this case, $(a_1,b_1)\in G(\dsupp, \goodtimes)$ is a ghost particle of the full-dimensional configuration, and \eqref{eqn:ghost-caps} implies
\begin{equation*}
    \bigcap_{p=1}^\ngoodtimes L_{i_p,t_p}(\theta)\setminus\Rj_\theta(G(\dsupp, \goodtimes)) =\varnothing.
\end{equation*}
\item
There exists a $q\in\set{1,\dotsc,\ngoodtimes-1}$ such that $(a_q,b_q)\neq(a_{q+1},b_{q+1})$. Assume without loss of generality that $a_q\neq a_{q+1}$.
If the intersection in \eqref{eqn:ghost-caps} is nonempty, we see from its right-hand side that
\begin{equation*}
    (\thdot{a_q},\thdot{b_q})=\Rj_\theta(a_q,b_q)=\Rj_\theta(a_{q+1},b_{q+1})=(\thdot{a_{q+1}},\thdot{b_{q+1}}).
\end{equation*}
This can only hold if $\theta$ is contained in the hyperplane
\begin{equation*}
    \set{\theta\in\R^d\given\thdot{(a_q-a_{q+1})=0}}.
\end{equation*}
\end{enumerate}
By these cases, every set in the union in \eqref{eqn:ghost-union} is either empty or contained in a hyperplane intersected with $\sphere^{d-1}$, which means that $M$ is contained in a finite union of $\hd^{d-1}$-nullsets.
\end{proof}

The following \namecref{thm:ghost} is an immediate consequence.

\begin{lem}[Ghost particles in projections]\label{thm:ghost}
    Let $\goodtimes$ be a finite set of times
    and assume that the particle configuration $\dsupp$ admits no ghost particle with respect to $\goodtimes$.
    Then for $\hd^{d-1}$-almost all $\theta$ the projected configuration $\Rj_\theta(\dsupp)$ does not admit any ghost particle with respect to $\goodtimes$ either.
\end{lem}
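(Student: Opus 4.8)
The plan is to derive this statement directly from \cref{thm:projghost-ext}. By hypothesis the configuration $\dsupp$ admits no ghost particle with respect to $\goodtimes$, that is, $G(\dsupp,\goodtimes)=\varnothing$; consequently $\Rj_\theta(G(\dsupp,\goodtimes))=\varnothing$ for \emph{every} direction $\thins$. The second claim of \cref{thm:projghost-ext} asserts that for $\hd^{d-1}$-almost all $\theta\in\sphere^{d-1}$ one has the equality $G(\Rj_\theta(\dsupp),\goodtimes)=\Rj_\theta(G(\dsupp,\goodtimes))$. Combining the two yields $G(\Rj_\theta(\dsupp),\goodtimes)=\varnothing$ for $\hd^{d-1}$-almost all $\theta$, which is exactly the assertion that $\Rj_\theta(\dsupp)$ has no ghost particle with respect to $\goodtimes$ for $\hd^{d-1}$-almost all $\theta$.

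The only point worth a remark is why non-injective projections cannot introduce spurious ghost particles. If $\Rj_\theta$ identifies two distinct points of $\dsupp$, the projected configuration has strictly fewer particles and could in principle carry ghost particles of its own. However, as recorded inside the proof of \cref{thm:projghost-ext}, the set of directions $\theta$ for which $\Rj_\theta$ fails to be injective on $\dsupp$ is contained in a finite union of intersections of $\sphere^{d-1}$ with hyperplanes through the origin, hence is an $\hd^{d-1}$-nullset; such $\theta$ are therefore already excluded by the ``almost all'' quantifier above, so nothing further is needed.

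I do not expect a genuine obstacle: the whole analytic content — the case distinction between a surviving full-dimensional ghost particle and a direction forced onto one of finitely many great subspheres — is carried out once and for all in \cref{thm:projghost-ext}, and the present lemma is merely its specialization to $G(\dsupp,\goodtimes)=\varnothing$. If a self-contained argument were preferred, one would inline that dichotomy: given a ghost particle of $\Rj_\theta(\dsupp)$ with distinct indices $(i_t)_{t\in\goodtimes}$, write $\goodtimes=\set{t_1,\dotsc,t_\ngoodtimes}$ and $\set{(a_p,b_p)}=L_{i_p,t_p}\cap L_{i_{p+1},t_{p+1}}$; then either all $(a_p,b_p)$ coincide, forcing $(a_1,b_1)\in G(\dsupp,\goodtimes)$ and contradicting the hypothesis, or some $(a_q,b_q)\neq(a_{q+1},b_{q+1})$, which pins $\theta$ to a hyperplane, and summing over the finitely many index families shows the offending set of $\theta$ is $\hd^{d-1}$-negligible.
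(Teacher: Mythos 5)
Your proposal is correct and matches the paper's own argument: the paper likewise obtains this lemma as an immediate consequence of \cref{thm:projghost-ext}, specializing the almost-everywhere equality $G(\Rj_\theta(\dsupp),\goodtimes)=\Rj_\theta(G(\dsupp,\goodtimes))$ to the case $G(\dsupp,\goodtimes)=\varnothing$. Your additional remarks on non-injective directions and the optional inlined dichotomy are consistent with what is already handled inside the proof of \cref{thm:projghost-ext}.
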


In fact, the arguments in this section even show that $\gooddirs$ from \cref{thm:exactrecov_gooddirs} differs from $\alldirs$ at most by a set of finite $(d-2)$-dimensional Hausdorff measure. 
\removed{\Cref{thm:exactReconstruction_hausdorff} is now merely a combination of \cref{thm:exactrecov,thm:exactrecov_gooddirs,thm:exactrecovAllSnapshots,thm:projcoinc,thm:ghost,prop:radon_injective}.}
\added{\label{txt:itemb}\lookUp{\ref{itemb}}\Cref{thm:exactReconstruction_hausdorff} now simply follows by a combination of previous results (note that we use solutions of \hyperref[eq:dim_reduced_general]{\problemTag[0]{\data^\dagger}} instead of the formal model \eqref{eqn:mixedModel} from the introduction, cf.\ \cref{rem:equivalent_formulations}):

\begin{proof}[Proof of \cref{thm:exactReconstruction_hausdorff}]
   Let $(u, \liftVar)$ be a solution of \problemTag[0]{\data^\dagger} and let $\liftVar=\mdirs\prodm\theta \liftVar_\theta$ be a disintegration of $\liftVar$ (which exists by \cref{lem:projected_constraint_decomposition_mixed_model}). Further, let $\goodtimes\subset\alltimes$ be a subset of time steps which fulfills \cref{ass:regularity}. Since for $t\in\goodtimes$ the snapshot $u_t^\dagger$ is assumed to be reconstructed exactly from observations with $\fopstat_t$ by the static problem \ref{eqn:erstat}, we immediately get $u_t=u_t^\dagger$ for all $t\in\goodtimes$ by \cref{thm:exactrecov}.

   Denote by $\dsupp=\supp(\lambda^\dagger)$ the ground truth configuration. \Cref{thm:exactrecov_gooddirs} now yields $\liftVar_\theta=\liftVar_\theta^\dagger$ for $\mdirs$-almost all directions $\theta$ from the set
    \begin{equation*}
        \gooddirs=\{\theta\in\alldirs\mid\text{$\Rj_\theta(\dsupp)$ has neither coincidences nor ghost particles with respect to $\goodtimes$}
        \}.
    \end{equation*}
    Again by \cref{ass:regularity}, the ground truth configuration $\dsupp$ contains neither coincidences nor ghost particles with respect to $\goodtimes$. Thus, combining \cref{thm:projcoinc} and \cref{thm:ghost}, for $\hd^{d-1}$-almost all $\theta\in \sphere^{d-1}$, the projected configuration $\Rj_\theta(\dsupp)$ is free of coincidences and ghost particles as well.
    Since $\mdirs$ is assumed to be absolutely continuous with respect to $\hd^{d-1}$, the set $\gooddirs$ contains $\mdirs$-almost all directions $\theta\in\alldirs$.

    This fact allows us to conclude that $\liftVar_\theta=\liftVar_\theta^\dagger$ holds even for $\mdirs$-almost all $\theta\in\alldirs$.
    Secondly, the Radon transform, which we know to be injective when applied to all directions $\theta\in\alldirs$ by \cref{prop:radon_injective}, remains injective when restricted to the directions in $\gooddirs$.
    Therefore, by \cref{thm:exactrecovAllSnapshots}, we see that $u_t=u_t^\dagger$ for all $t\in\domt$, which completes the proof.
\end{proof}
}

\subsection{Exact reconstruction with finitely many directions}\label{sec:numDirections}

It is clear intuitively that exact reconstruction becomes more difficult the more particles need to be reconstructed.
Thus, given $N$ particles, a natural question is how large the set $\goodtimes$ of exactly reconstructible snapshots needs to be
in order to exactly reconstruct the correct velocity information in any $\liftVar_\theta$.
Likewise one can ask how big a set $\alldirs$ is necessary in order to be able to also exactly reconstruct all snapshots
(not just those that are exactly reconstructible directly from the measurements).
To this end, we briefly generalize a theorem on the X-ray transform due to Haj\'os and R\'enyi \cite{Renyi1952} to the setting of the Radon transform.
Several proofs can be found in the literature for the theorem by Haj\'os and R\'enyi;
a particularly simple one is given in \cite{BianchiLonginetti1990} whose idea we also use here.

\begin{prop}[Determination of finite set from Radon transform]\label{thm:setFromRadon}
Let $\dsupp\subset\R^d$ be finite, containing at most $N$ points,
and let $\alldirs\subset\sphere^{d-1}$ contain $(d-1)N+1$ directions such that any $d$ of them are linearly independent.
Then $\dsupp$ is uniquely determined by its Radon transform $\Rs_\theta \dsupp$ for $\theta\in\alldirs$.
\end{prop}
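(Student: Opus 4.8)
The plan is to imitate the proof of the Hajós–Rényi theorem for the X-ray transform as presented in \cite{BianchiLonginetti1990}, but adapted to Radon projections rather than line integrals. Suppose, for contradiction, that two distinct finite sets $\dsupp,\dsupp'\subset\R^d$, each with at most $N$ points, have the same Radon transform in all directions $\theta\in\alldirs$, i.e.\ $\Rs_\theta\dsupp=\Rs_\theta\dsupp'$ for all $\theta\in\alldirs$. Since we are working with sets rather than measures, the natural object to look at is the signed measure $\mu=\sum_{p\in\dsupp}\delta_p-\sum_{p'\in\dsupp'}\delta_{p'}$ (with appropriate cancellation on the overlap $\dsupp\cap\dsupp'$), which is nonzero and supported on at most $2N$ points, and satisfies $\Rs_\theta\mu=0$, meaning $\pf{[x\mapsto\theta\cdot x]}\mu=0$, for every $\theta\in\alldirs$. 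Equivalently: for each $\theta\in\alldirs$ and each value $s\in\R$, the signed sum of the weights of points in $\mu$ lying on the hyperplane $\{\theta\cdot x=s\}$ vanishes.

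The key combinatorial step is a counting/induction argument on the number of distinct projected values. Fix one direction $\theta_1\in\alldirs$. The points of $\supp\mu$ are distributed among the hyperplanes orthogonal to $\theta_1$; on each such hyperplane the weights sum to zero, so each nonempty hyperplane must contain at least two points of $\supp\mu$ and hence $\supp\mu$ occupies at most $N$ distinct $\theta_1$-levels (since $|\supp\mu|\le 2N$). Now pick an ``extreme'' point $p_0\in\supp\mu$ — say one maximizing $\theta_1\cdot x$; on its hyperplane there is at least one further point, and one continues: the idea of \cite{BianchiLonginetti1990} is to follow a chain of points, switching directions among $\theta_1,\dots,\theta_{(d-1)N+1}$, and to use the linear-independence hypothesis (any $d$ of the directions are linearly independent) to guarantee that the chain cannot close up prematurely, forcing more than $2N$ points — a contradiction. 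Concretely, I would set up the induction on $N$: the base case $N=1$ is the statement that a single point is determined by $d$ independent projections (two would suffice for $d\ge2$, but $d$ linearly independent hyperplanes meet in a single point). For the inductive step, remove a suitable extreme point and its ``partner'' on one hyperplane, observe that the relation $\Rs_\theta\mu=0$ is inherited by a smaller configuration after deleting at most one level per direction, and invoke the hypothesis with $N-1$: here the count $(d-1)N+1$ is exactly what is needed, since removing one level in each of $d-1$ directions still leaves $(d-1)(N-1)+1$ usable directions for the smaller problem.

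I would organize the write-up as: (1) reduce to showing $\mu=0$ for the difference measure; (2) state and prove the base case; (3) the inductive step, where the careful bookkeeping is to verify that after deleting the chosen points one still has a valid instance of the proposition with parameter $N-1$ and that the directions removed are at most $d-1$ in number so that $d$-wise independence is preserved on the remaining $(d-1)(N-1)+1$ directions. The main obstacle is precisely this inductive step — making the ``extreme point / partner point'' selection canonical enough that deleting them reduces the number of occupied levels by the right amount in the right directions, and ruling out the degenerate possibility that a single hyperplane (in some direction) already contains all of $\supp\mu$, which is where the linear-independence assumption does its essential work. Once the combinatorial core is set up correctly this is a finite, elementary argument, but the geometry of which hyperplanes to peel off has to be handled with some care; I would follow \cite{BianchiLonginetti1990} closely here and only spell out the modifications needed to pass from the X-ray (line) transform to the Radon (hyperplane) transform, namely replacing ``lines in direction $\theta$'' by ``hyperplanes orthogonal to $\theta$'' throughout and using $d$ rather than $2$ as the number of independent directions pinning down a point.
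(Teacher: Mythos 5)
Your opening reduction does not match the statement being proved, and this is a genuine gap rather than a cosmetic one. The proposition is about \emph{sets}: the hypothesis is that the projected sets $\Rs_\theta\dsupp=\{\theta\cdot x\,|\,x\in\dsupp\}$ and $\Rs_\theta\dsupp'$ coincide for each $\theta\in\alldirs$, with no multiplicity information. From this you cannot conclude that the signed measure $\mu=\sum_{p\in\dsupp}\delta_p-\sum_{p'\in\dsupp'}\delta_{p'}$ satisfies $\Rs_\theta\mu=0$: for instance, if $\dsupp=\{a,b\}$ with $\theta\cdot a=\theta\cdot b=s$ and $\dsupp'=\{c\}$ with $\theta\cdot c=s$, the projected sets in direction $\theta$ agree while the pushforward measures are $2\delta_s$ and $\delta_s$. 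Your whole combinatorial engine ("on each occupied hyperplane the weights sum to zero, hence at least two points per level, hence at most $N$ levels") rests on exactly this unavailable cancellation, so the argument collapses at step (1). Beyond that, the inductive step you describe is only a plan, and its bookkeeping is not coherent as stated: deleting a projection \emph{level} does not delete a \emph{direction}, so the claim that one is left with "$(d-1)(N-1)+1$ usable directions" after removing one level in each of $d-1$ directions does not parse, and the degenerate cases you flag (a single hyperplane containing all of $\supp\mu$, chains closing up) are precisely where the work would be — none of it is carried out.

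For comparison, the paper's proof uses only the set-level information and is a short pigeonhole argument: if $x\in\dsupp'\setminus\dsupp$, then for each of the $(d-1)N+1$ directions $\theta$ some point of $\dsupp$ has the same $\theta$-projection as $x$; since $\dsupp$ has at most $N$ points, a single $y\in\dsupp$ shares its projection with $x$ in at least $d$ directions, and $d$-wise linear independence then forces $y=x$, a contradiction; swapping the roles of $\dsupp$ and $\dsupp'$ finishes the proof. The multiplicity-sensitive (measure) version you were implicitly trying to prove is treated in the paper only afterwards, in \cref{thm:pointMeasureFromRadon}, and there it is \emph{deduced from} the set statement by decomposing the difference of two nonnegative measures into positive and negative parts and comparing supports — not by a direct cancellation argument on a signed measure. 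If you want to salvage your route, you would have to either prove the set statement directly (pigeonhole is the efficient way) or restate what you are proving as the discrete-measure result and then still supply the missing Hajós--Rényi-style induction in full.
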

\begin{proof}
Let the set $\dsupp'$ satisfy $\Rs_\theta \dsupp=\Rs_\theta \dsupp'$ for all $\theta\in\alldirs$.
Assume there exists $x\in \dsupp'\setminus \dsupp$, then for each of the $(d-1)N+1$ directions $\theta\in\alldirs$ there is a point in $\dsupp$ being projected under $\Rs_\theta$ to the same position as $x$.
Since there are at most $N$ points in $\dsupp$, there is at least one $y\in \dsupp$ which is projected onto the same position as $x$ for at least $d$ many projections $\Rs_\theta$.
As a point is uniquely determined by its projections onto $d$ linearly independent coordinate lines, we must have $y=x$, a contradiction.
Knowing now that $\dsupp'\subset\dsupp$ and thus that $\dsupp'$ contains no more than $N$ points, the same argument with the roles of $\dsupp$ and $\dsupp'$ swapped yields $\dsupp\subset\dsupp'$.
\end{proof}

\begin{cor}[Determination of discrete measure from Radon transform]\label{thm:pointMeasureFromRadon}
Let $\nu\in\Mp(\R^d)$ be a nonnegative Radon measure with support in at most $N$ points,
and let $\alldirs\subset\sphere^{d-1}$ contain $(d-1)N+1$ directions such that any $d$ of them are linearly independent.
Then $\nu$ is uniquely determined among all nonnegative Radon measures by its Radon transform $\Rs_\theta\nu$ for $\theta\in\alldirs$.
\end{cor}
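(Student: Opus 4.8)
The plan is to run the pigeonhole mechanism of \cref{thm:setFromRadon} twice: once to force the competing measure to be discrete and supported on the atoms of $\nu$, and once more to match the masses. I would fix a second measure $\tilde\nu\in\Mp(\R^d)$ with $\Rs_\theta\tilde\nu=\Rs_\theta\nu$ for all $\theta\in\alldirs$ and aim to show $\tilde\nu=\nu$. Write $\nu=\sum_{i=1}^{N}m_i\delta_{x_i}$ with all $m_i>0$ and the $x_i$ pairwise distinct; possibly decreasing $N$ (which only lowers the number of directions that are actually needed, so the hypothesis on $\alldirs$ is more than enough), we may assume the support of $\nu$ consists of exactly these $N$ points.

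\emph{Step 1: localising $\supp\tilde\nu$.} Since $\nu\geq0$, the pushforward $\Rs_\theta\nu=\pf{[x\mapsto\thdot x]}\nu$ is a discrete measure with support $\{\thdot{x_1},\dots,\thdot{x_N}\}$, and hence so is $\supp(\Rs_\theta\tilde\nu)$ for every $\theta\in\alldirs$. On the other hand, nonnegativity of $\tilde\nu$ gives $\thdot{x_0}\in\supp(\Rs_\theta\tilde\nu)$ whenever $x_0\in\supp\tilde\nu$, because the preimage under $x\mapsto\thdot x$ of any neighbourhood of $\thdot{x_0}$ is a neighbourhood of $x_0$ and therefore has positive $\tilde\nu$-mass. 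So for each such $x_0$ and each $\theta\in\alldirs$ there is an index $i(\theta)$ with $\thdot{x_0}=\thdot{x_{i(\theta)}}$; distributing the $(d-1)N+1$ directions over the $N$ possible indices, some $i^{\ast}$ occurs for at least $\lceil((d-1)N+1)/N\rceil=d$ directions, which are linearly independent and thus span $\R^d$, forcing $x_0=x_{i^{\ast}}$. Hence $\supp\tilde\nu\subset\{x_1,\dots,x_N\}$ and $\tilde\nu=\sum_{i=1}^N\tilde m_i\delta_{x_i}$ with $\tilde m_i\geq0$.

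\emph{Step 2: matching masses.} Setting $c_i:=\tilde m_i-m_i$, the signed measure $\mu:=\sum_{i=1}^N c_i\delta_{x_i}$ satisfies $\Rs_\theta\mu=0$ for all $\theta\in\alldirs$, i.e.\ $\sum_{i:\thdot{x_i}=s}c_i=0$ for every $\theta\in\alldirs$ and every $s\in\R$. Suppose $c_{i_0}\neq0$ for some $i_0$. For each $\theta\in\alldirs$, the vanishing sum indexed by $s=\thdot{x_{i_0}}$ contains the nonzero term $c_{i_0}$, hence must contain a further term: there is $j(\theta)\neq i_0$ with $\thdot{(x_{i_0}-x_{j(\theta)})}=0$ (in particular $N\geq2$). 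Distributing the $(d-1)N+1=(d-1)(N-1)+d$ directions over the $N-1$ admissible values of $j(\theta)$, some $j^{\ast}\neq i_0$ occurs for at least $d$ directions, again linearly independent, forcing $x_{i_0}=x_{j^{\ast}}$ and contradicting distinctness. Hence all $c_i=0$ and $\tilde\nu=\nu$.

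The only genuinely new part relative to \cref{thm:setFromRadon} is Step 1, which upgrades an arbitrary nonnegative competitor to a discrete one carried by $\{x_1,\dots,x_N\}$; this is exactly where nonnegativity of both $\nu$ and $\tilde\nu$ is essential, since for signed $\tilde\nu$ the projected masses could cancel. I expect the only delicate point to be bookkeeping in the two pigeonhole estimates — verifying $\lceil((d-1)N+1)/N\rceil=d$ and $(d-1)N+1>(d-1)(N-1)$ so that the second count also reaches $d$ — together with noting that $N=1$ is trivial: Step 2's alternative index $j(\theta)$ cannot exist, and then $m_1=\tilde m_1$ already follows from Step 1.
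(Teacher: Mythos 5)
Your proof is correct, and it rests on the same engine as the paper's: the pigeonhole count over $(d-1)N+1$ directions with any $d$ of them linearly independent, i.e.\ the mechanism behind \cref{thm:setFromRadon}. The difference is organisational. The paper first deduces $\supp\nu'=\supp\nu$ by applying \cref{thm:setFromRadon} to the supports (using that for nonnegative measures the support of $\Rs_\theta\nu$ is the projected support), and then disposes of the mass discrepancy by Jordan-decomposing $\xi=\nu-\nu'$ into $\xi_+,\xi_-$, noting $\Rs_\theta\xi_+=\Rs_\theta\xi_-$, and invoking \cref{thm:setFromRadon} a second time to force $\supp\xi_+=\supp\xi_-$, which contradicts mutual singularity unless both vanish. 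You instead inline the pigeonhole twice: your Step 1 re-proves the relevant half of \cref{thm:setFromRadon} directly at the level of points of $\supp\tilde\nu$ (which has the minor advantage of not needing to discuss whether the competitor's support is finite before applying the set lemma), and your Step 2 replaces the Jordan decomposition by a direct cancellation argument on the coefficients $c_i$, with a slightly sharper count over the $N-1$ possible partner indices. Both routes are equally rigorous; the paper's is shorter because it reuses \cref{thm:setFromRadon} as a black box, while yours is more self-contained and makes explicit where nonnegativity and the distinctness of the $x_i$ enter. Your bookkeeping ($\lceil((d-1)N+1)/N\rceil=d$ and $(d-1)N+1=(d-1)(N-1)+d$) and the $N=1$ remark are all fine.
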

\begin{proof}
For a contradiction assume there is a second nonnegative Radon measure $\nu'$ satisfying $\Rs_\theta\nu=\Rs_\theta\nu'$ for all $\theta\in\alldirs$.
Then in particular $\Rs_\theta\supp\nu=\Rs_\theta\supp\nu'$ for all $\theta\in\alldirs$ so that necessarily $\supp\nu=\supp\nu'$.
Now consider the signed measure $\xi=\nu-\nu'\in\M(\R^d)$ and decompose it into its positive and negative part $\xi_+$ and $\xi_-$.
By linearity of $\Rs_\theta$, we have $\Rs_\theta\xi_+=\Rs_\theta\xi_-$ for all $\theta\in\alldirs$.
Therefore, by \cref{thm:setFromRadon} we have $\supp\xi_+=\supp\xi_-$, which is impossible unless $\xi_+=\xi_-=0$.
\end{proof}

An immediate consequence is the following result.
It exploits the fact, already used implicitly before, that the move operator $\mv$ is equivalent to the two-dimensional Radon transform in the sense
\begin{equation*}
\mv_t\nu=\pf{[s\mapsto\sqrt{1+t^2}s]}(\Rs_{(1,t)/\sqrt{1+t^2}}\nu).
\end{equation*}

\begin{cor}[Exact reconstruction]
Let the ground truth particle configuration $\lambda^\dagger$ contain at most $N$ particles,
let $\alldirs\subset\sphere^{d-1}$ contain $(d-1)N+1$ directions such that any $d$ of them are linearly independent,
and let $\mdirs$ be the counting measure on $\alldirs$.
If there are $N+1$ times $t$ at which the static problem \ref{eqn:erstat} reconstructs exactly,
then the solution $(u,\liftVar)$ to \problemTag[0]{\data^\dagger} satisfies $(u,\liftVar)=(u^\dagger,\liftVar^\dagger)$.
\end{cor}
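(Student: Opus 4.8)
The plan is to combine the exact reconstruction of the ``good'' snapshots established in \cref{thm:exactrecov} with the rigidity of discrete nonnegative measures under finitely many Radon projections from \cref{thm:pointMeasureFromRadon}, exploiting that the move operator $\mv_t$ is, up to the bijective scaling $s\mapsto\sqrt{1+t^2}\,s$, a one-dimensional Radon transform in direction $(1,t)/\sqrt{1+t^2}$. Since $\lambda^\dagger\in\Mp(\domdyn)$ is admissible, \cref{prop:well_posedness_dim_reduced_noisy} already guarantees that \problemTag[0]{\data^\dagger} has a solution $(u,\liftVar)$, so it suffices to show that any such solution coincides with $(u^\dagger,\liftVar^\dagger)$ slice-wise. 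First I would let $\goodtimes\subset\alltimes$ be the set of $N+1$ times at which \ref{eqn:erstat} reconstructs exactly; then \cref{thm:exactrecov} gives $u_t=u^\dagger_t$ for all $t\in\goodtimes$.

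Next, I would fix $\theta\in\alldirs$ and consider the $\theta$-slice $\liftVar_\theta$ of $\liftVar$, which is well-defined since $\mdirs$ is the counting measure on the finite set $\alldirs$; by \cref{lem:projected_constraint_decomposition} the constraint of \problemTag[0]{\data^\dagger} then reads $\mv_t\liftVar_\theta=\Rs_\theta u_t$ for all $t\in\domt$. For $t\in\goodtimes$, inserting $u_t=u^\dagger_t$ and using the commutation rule $\Rs_\theta\mv^d_t=\mv_t\Rj_\theta$ of \cref{lem:radon_move_exchange} yields $\mv_t\liftVar_\theta=\Rs_\theta u^\dagger_t=\mv_t\Rj_\theta\lambda^\dagger=\mv_t\liftVar^\dagger_\theta$. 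Via the identity $\mv_t\nu=\pf{[s\mapsto\sqrt{1+t^2}s]}(\Rs_{(1,t)/\sqrt{1+t^2}}\nu)$ and invertibility of the scaling, this is equivalent to $\Rs_{\omega_t}\liftVar_\theta=\Rs_{\omega_t}\liftVar^\dagger_\theta$ for $\omega_t:=(1,t)/\sqrt{1+t^2}\in\sphere^1$. The $N+1$ directions $\{\omega_t\mid t\in\goodtimes\}$ are pairwise linearly independent because distinct $t$ give distinct slopes, and $\liftVar^\dagger_\theta=\Rj_\theta\lambda^\dagger$ is a nonnegative measure supported in at most $N$ points; hence \cref{thm:pointMeasureFromRadon} applied in $\R^2$ (where $(d-1)N+1$ becomes $N+1$) forces the nonnegative measure $\liftVar_\theta$ to equal $\liftVar^\dagger_\theta$. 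As $\theta\in\alldirs$ was arbitrary and $\mdirs$ is the counting measure, $\liftVar=\liftVar^\dagger$.

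Finally, for an arbitrary $t\in\domt$ the constraint gives $\Rs_\theta u_t=\mv_t\liftVar_\theta=\mv_t\liftVar^\dagger_\theta=\Rs_\theta u^\dagger_t$ for every $\theta\in\alldirs$, using $\liftVar_\theta=\liftVar^\dagger_\theta$ and again \cref{lem:radon_move_exchange}. Since $u^\dagger_t=\mv^d_t\lambda^\dagger$ is a nonnegative measure supported in at most $N$ points of $\R^d$ and $\alldirs$ contains $(d-1)N+1$ directions any $d$ of which are linearly independent, \cref{thm:pointMeasureFromRadon} gives $u_t=u^\dagger_t$, completing the proof. The only delicate point — more bookkeeping than substance — is justifying that the constraint may be read slice-wise as $\mv_t\liftVar_\theta=\Rs_\theta u_t$ for \emph{every} $\theta\in\alldirs$ and $t\in\domt$ (so that the pointwise Radon rigidity of \cref{thm:pointMeasureFromRadon} is applicable) and checking that the hypotheses of \cref{thm:pointMeasureFromRadon} hold, i.e.\ nonnegativity and finite support of the relevant ground-truth measures together with the linear-independence of the $N+1$ ``move directions'' $\omega_t$; everything else is a direct chaining of the already-established identities.
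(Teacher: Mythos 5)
Your proposal is correct and follows essentially the same route as the paper's proof: exact reconstruction of the $N+1$ good snapshots via \cref{thm:exactrecov}, then the identification of $\mv_t$ with a scaled two-dimensional Radon projection in direction $(1,t)/\sqrt{1+t^2}$ combined with \cref{thm:pointMeasureFromRadon} to recover each $\liftVar_\theta$, and finally the same rigidity result with the $(d-1)N+1$ directions of $\alldirs$ to recover $u_t$ for every $t\in\domt$. The slice-wise reading of the constraint, which you flag as the only delicate point, is indeed unproblematic here since $\mdirs$ is the counting measure on a finite set, so ``$\mdirs$-almost every $\theta$'' means every $\theta$.
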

\begin{proof}
Let $\goodtimes$ denote the $N+1$ times, then by \cref{thm:exactrecov} we have $u_t=u^\dagger_t$ for all $t\in\goodtimes$.
Now for any $\theta\in\alldirs$ we know $\mv_t\liftVar_\theta=\Rs_\theta u_t=\Rs_\theta u^\dagger_t=\mv_t\liftVar^\dagger_\theta$
and thus $\Rs_{(1,t)/\sqrt{1+t^2}}\liftVar_\theta=\Rs_{(1,t)/\sqrt{1+t^2}}\liftVar^\dagger_\theta$ for all $N+1$ times $t\in\goodtimes$.
By \cref{thm:pointMeasureFromRadon} this uniquely determines $\liftVar_\theta=\liftVar^\dagger_\theta$ for all $\theta\in\alldirs$.
Now for any $t\in\domt$ we have $\Rs_\theta u_t=\mv_t\liftVar_\theta=\mv_t\liftVar^\dagger_\theta=\Rs_\theta u^\dagger_t$ for all $(d-1)N+1$ many directions $\theta\in\alldirs$,
which again by \cref{thm:pointMeasureFromRadon} uniquely determines $u_t=u_t^\dagger$.
\end{proof}

The above estimates can be improved in various directions.
We only considered the worst case analysis: we even obtain exact reconstruction if the times $\goodtimes$ and the directions $\alldirs$ are configured in the worst possible case
(we did so since the $\goodtimes$ is in general prescribed by the given observations and cannot be chosen).
If one chooses the times $\goodtimes$ and directions $\alldirs$ optimally, exact reconstruction may even be achieved for more points than considered above.
To indicate directions of possible improvement we briefly summarize results from the literature on the two-dimensional Radon transform:
\begin{itemize}
\item
If the directions $\alldirs$ are not a subset of the sides of an affinely deformed regular $N$-gon
(or if the points do not form an affinely deformed regular $N$-gon),
then $N$ points are uniquely determined by $m$ projections already if $m>N-\frac{\sqrt{N+9}-3}2$ \cite[Prop.\,3]{BianchiLonginetti1990}.
\item
If the directions $\alldirs$ are chosen optimally (not just satisfying the previous condition),
then $N$ points are uniquely determined by $m$ projections already if $N\leq\max\{m_0,2^{cm/\log m}\}$ for two constants $m_0,c>0$ \cite[Thm.\,1.1]{MatousekJiriSkovron2008}.
This bound is quite sharp since for $N>6^{m/3}$ one can always find $N$ points that are not uniquely determined by $m$ fixed projections \cite[Thm.\,1.2]{MatousekJiriSkovron2008}.
\item
In higher dimensions $d$, the previous point still holds true (up to changed constants) for the X-ray transform,
while for $m$ arbitrary, non-optimal projection directions spanning $\R^d$
one can asymptotically no longer guarantee reconstruction of arbitrary $N$ points as soon as $N>m^{d+1+\epsilon}$ for $\epsilon>0$ arbitrarily small \cite[Cor.\,4.1]{AlpersLarman2015}.
\end{itemize}

All above results are concerned with worst case point configurations, that is,
they ask for fixed projections such that all possible configurations of $N$ points can be uniquely reconstructed.
In applications it is probably of higher relevance whether \emph{most} point configurations can be reconstructed.

\begin{prop}[Determination of generic set from Radon transform]\label{thm:genericPointReconstructionRadon}
Let $\alldirs\subset\sphere^{d-1}$ contain $d+1$ directions such that any $d$ of them are linearly independent,
and consider any probability distribution of $N$ points in $\R^d$ which is absolutely continuous with respect to the $Nd$-dimensional Lebesgue measure
(for instance Gaussian i.i.d.\ points).
Then almost surely a set $\dsupp$ of $N$ points is uniquely determined among all subsets of $\R^d$ by its Radon transform $\Rs_\theta \dsupp$ for $\theta\in\alldirs$.
\end{prop}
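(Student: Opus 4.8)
The plan is to reduce the assertion, in the spirit of the proof of \cref{thm:setFromRadon}, to the statement that a \emph{generic} configuration admits no ``ghost point'': no $x\in\R^d\setminus\dsupp$ whose projection $\theta\cdot x$ lies in $\Rs_\theta\dsupp=\{\theta\cdot y:y\in\dsupp\}$ for every $\theta\in\alldirs$. Fix the $d+1$ directions, labelled $\theta_0,\dots,\theta_d\in\sphered$; by hypothesis $\theta_0,\dots,\theta_{d-1}$ are linearly independent, hence carry a dual basis $w_0,\dots,w_{d-1}$ with $\theta_i\cdot w_k=\delta_{ik}$, and for every index tuple $(j_0,\dots,j_{d-1})\in\{1,\dots,N\}^d$ the point $x(j_0,\dots,j_{d-1}):=\sum_{i=0}^{d-1}(\theta_i\cdot x_{j_i})\,w_i$ is the unique $x\in\R^d$ with $\theta_i\cdot x=\theta_i\cdot x_{j_i}$ for all $i\le d-1$. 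Writing the configuration as $(x_1,\dots,x_N)\in(\R^d)^N$ and, for $\mathbf j=(j_0,\dots,j_d)\in\{1,\dots,N\}^{d+1}$, introducing the linear functional
\[
L_{\mathbf j}(x_1,\dots,x_N):=\theta_d\cdot x(j_0,\dots,j_{d-1})-\theta_d\cdot x_{j_d}
=\sum_{i=0}^{d-1}(\theta_d\cdot w_i)(\theta_i\cdot x_{j_i})-\theta_d\cdot x_{j_d},
\]
a ghost point built from indices $\mathbf j$ (meaning $\theta_i\cdot x=\theta_i\cdot x_{j_i}$ for all $i$) can exist only if $L_{\mathbf j}$ vanishes at the configuration.

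The one genuinely new ingredient is an elementary linear-algebra fact using the hypothesis in full strength: since \emph{any} $d$ of the $d+1$ directions are linearly independent, one has $\theta_d\cdot w_i\ne0$ for each $i\le d-1$ and, more generally, $\theta_d\notin\vspan\{\theta_i:i\in T\}$ for every proper subset $T\subsetneq\{0,\dots,d-1\}$. From this I would deduce that $L_{\mathbf j}\not\equiv0$ for every \emph{non-constant} tuple $\mathbf j$: if $j_0,\dots,j_{d-1}$ are not all equal, set $v:=j_0$ and $T:=\{i\le d-1:j_i=v\}\subsetneq\{0,\dots,d-1\}$; the coefficient of $x_v$ in $L_{\mathbf j}$ is $\sum_{i\in T}(\theta_d\cdot w_i)\theta_i-[j_d=v]\theta_d$, whose first term is a nonzero element of $\vspan\{\theta_i:i\in T\}$ while $\theta_d$ lies outside that span, so the coefficient is nonzero; and if $j_0=\dots=j_{d-1}\ne j_d$ then $L_{\mathbf j}=\theta_d\cdot(x_{j_0}-x_{j_d})\not\equiv0$. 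Therefore the set $\mathcal N:=\big(\bigcup_{\mathbf j\text{ non-constant}}\{L_{\mathbf j}=0\}\big)\cup\{x_i=x_j\text{ for some }i\ne j\}$ is a finite union of proper hyperplanes in $(\R^d)^N$, hence Lebesgue-null and therefore a null set for the given absolutely continuous distribution.

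It then remains to verify that whenever $(x_1,\dots,x_N)\notin\mathcal N$ the data $(\Rs_\theta\dsupp)_{\theta\in\alldirs}$ determine $\dsupp=\{x_1,\dots,x_N\}$. Suppose $\dsupp'\subset\R^d$ satisfies $\Rs_\theta\dsupp'=\Rs_\theta\dsupp$ for all $\theta\in\alldirs$. For $\dsupp'\subseteq\dsupp$: any $x\in\dsupp'$ obeys $\theta_i\cdot x\in\Rs_{\theta_i}\dsupp$, so $\theta_i\cdot x=\theta_i\cdot x_{j_i}$ for suitable indices; the first $d$ of these equations force $x=x(j_0,\dots,j_{d-1})$, while the remaining one ($i=d$) says $L_{(j_0,\dots,j_d)}(x_1,\dots,x_N)=0$, so by $(x_1,\dots,x_N)\notin\mathcal N$ the tuple $(j_0,\dots,j_d)$ is constant and $x=x_{j_0}\in\dsupp$. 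For $\dsupp\subseteq\dsupp'$: given $x_k\in\dsupp$, for each $i$ pick $y_i\in\dsupp'\subseteq\dsupp$, say $y_i=x_{j_i}$, with $\theta_i\cdot y_i=\theta_i\cdot x_k$; the first $d$ equations give $x_k=x(j_0,\dots,j_{d-1})$, and taking $j_d:=k$ makes $L_{(j_0,\dots,j_d)}$ vanish at the configuration, so again the tuple is constant and $x_k=x_{j_0}=y_0\in\dsupp'$. Hence $\dsupp=\dsupp'$.

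The main obstacle is exactly the linear-algebra claim $L_{\mathbf j}\not\equiv0$ for every non-constant $\mathbf j$, i.e.\ the coefficient bookkeeping that always isolates a surviving nonzero coefficient; this is where ``any $d$ directions are linearly independent'' (rather than merely ``the directions span $\R^d$'') is indispensable, since it is precisely what excludes accidental cancellations such as $(\theta_d\cdot w_i)\theta_i=\theta_d$ or $\theta_d\in\vspan\{\theta_i:i\in T\}$ for proper $T$. Everything else is routine: passing from the functionals $L_{\mathbf j}$ to a Lebesgue-null --- hence distribution-null --- set, and observing that the two inclusions $\dsupp'\subseteq\dsupp$ and $\dsupp\subseteq\dsupp'$ are mirror images of one short argument.
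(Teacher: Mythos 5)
Your proposal is correct and follows essentially the same route as the paper: ghost points correspond to non-constant index tuples, each of which imposes a linear condition on the configuration vector that is shown to be nontrivial using the hypothesis that any $d$ of the $d+1$ directions are linearly independent, so the exceptional configurations form a finite union of Lebesgue-nullsets. Your functional $L_{\mathbf j}$ is exactly the paper's $b_\iota\cdot y$ (obtained there via the Fredholm alternative from the kernel vector of $A^T$) normalized so that the last component is $-1$, and your coefficient case analysis matches the paper's argument that the coefficient of $x_{\iota(\theta_1)}$ is a nonzero combination of at most $d$ of the directions.
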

\begin{proof}
Let the directions in $\alldirs$ be numbered as $\theta_1,\ldots,\theta_{d+1}$.
Consider a set $\dsupp=\{x_1,\ldots,x_N\}\subset\R^d$ and denote the hyperplanes through those points normal to $\theta\in\alldirs$ by $H^i_\theta=\{x\in\R^d\,|\,(x-x_i)\cdot\theta=0\}$.
The set $\dsupp$ is uniquely determined if the only mappings $\iota:\alldirs\to\{1,\ldots,N\}$ such that
$H^{\iota(\theta_1)}_{\theta_1}\cap\ldots\cap H^{\iota(\theta_{d+1})}_{\theta_{d+1}}\neq\emptyset$
are the constant mappings $\iota\equiv i$ (in which case
$H^{\iota(\theta_1)}_{\theta_1}\cap\ldots\cap H^{\iota(\theta_{d+1})}_{\theta_{d+1}}=\{x_i\}$).
Thus it remains to show 
$H^{\iota(\theta_1)}_{\theta_1}\cap\ldots\cap H^{\iota(\theta_{d+1})}_{\theta_{d+1}}=\emptyset$
almost surely for all nonconstant $\iota$.
Now the condition
$H^{\iota(\theta_1)}_{\theta_1}\cap\ldots\cap H^{\iota(\theta_{d+1})}_{\theta_{d+1}}\neq\emptyset$
is equivalent to the existence of a solution $x\in\R^d$ to the overdetermined system of equations
\begin{equation*}
Ax=b_\iota
\quad\text{for }
A=(\theta_1\ \cdots\ \theta_{d+1})^T,\quad
b_\iota=(\theta_1\cdot x_{\iota(\theta_1)}\ \cdots\ \theta_{d+1}\cdot x_{\iota(\theta_{d+1})})^T
\end{equation*}
or equivalently (by the Fredholm alternative) to $b_\iota\cdot y=0$ for $y\in\sphere^d$ the vector (unique up to sign) spanning $\ker A^T$.
Thus we require that for any nonconstant $\iota$ the solutions $(x_1,\ldots,x_N)\in\R^{Nd}$ to $\sum_{i=1}^{d+1}y_i\theta_i^Tx_{\iota(\theta_i)}=b_\iota\cdot y=0$
form a set of dimension strictly smaller than $Nd$ and thus a Lebesgue-nullset.
However, the solution set can only have dimension $Nd$ if the equation is trivial, which is never the case:
The coefficient of $x_{\iota(\theta_1)}$ is the sum of at most $d$ terms of the form $y_i\theta_i^T$.
Since all $y_i$ are nonzero and any $d$ directions $\theta_i$ are linearly independent, it is therefore nonzero.
\end{proof}

\begin{cor}[Determination of generic measures from Radon transform]\label{thm:genericPointMeasureFromRadon}
Let $\alldirs\subset\sphere^{d-1}$ contain $d+1$ directions such that any $d$ of them are linearly independent,
and consider any probability distribution of discrete nonnegative measures $\sum_{i=1}^na_i\delta_{x_i}$
such that the induced distribution of their $N$ support points is absolutely continuous with respect to the $Nd$-dimensional Lebesgue measure.
Then almost surely such a measure $\nu$ is uniquely determined among all nonnegative Radon measures by its Radon transform $\Rs_\theta\nu$ for $\theta\in\alldirs$.
\end{cor}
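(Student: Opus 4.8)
The plan is to imitate the proof of \cref{thm:pointMeasureFromRadon}, using \cref{thm:genericPointReconstructionRadon} in place of \cref{thm:setFromRadon}. One modification is forced on us: the argument in \cref{thm:pointMeasureFromRadon} applies the set-uniqueness result to the positive and negative parts of $\nu-\nu'$, but these are not generic (they depend on the adversarial competitor $\nu'$), so \cref{thm:genericPointReconstructionRadon} cannot be invoked for them. Instead I would recover the support of the competitor first, and then recover the masses by a separate, single-direction argument.

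First I would discard an $\lebesgue^{Nd}$-nullset of support configurations and work on the almost sure event on which both of the following hold: (i) by \cref{thm:genericPointReconstructionRadon} the finite set $\dsupp:=\{x_1,\dots,x_N\}=\supp\nu$ is uniquely determined among \emph{all} subsets of $\R^d$ by the transforms $(\Rs_\theta\dsupp)_{\theta\in\alldirs}$; and (ii) for one fixed direction $\theta_0\in\alldirs$ the numbers $\theta_0\cdot x_1,\dots,\theta_0\cdot x_N$ are pairwise distinct (the exceptional configurations form a finite union of hyperplanes in $\R^{Nd}$, hence an $\lebesgue^{Nd}$-nullset, and the law of the support points is absolutely continuous by hypothesis). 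Now let $\nu'\in\Mp(\R^d)$ satisfy $\Rs_\theta\nu'=\Rs_\theta\nu$ for all $\theta\in\alldirs$. For nonnegative $\mu$ and continuous $f$ one has $\supp(\pf{f}\mu)=\overline{f(\supp\mu)}$ (one inclusion is \cref{prop:pushforward_properties}, the other is an elementary consequence of $\mu\geq 0$); applying this with $f:x\mapsto\theta\cdot x$ to $\mu=\nu$ (whose support is the finite set $\dsupp$) and to $\mu=\nu'$ gives $\overline{\Rs_\theta(\supp\nu')}=\supp(\Rs_\theta\nu')=\supp(\Rs_\theta\nu)=\Rs_\theta(\dsupp)$, so $\Rs_\theta(\supp\nu')$ is finite and equals $\Rs_\theta(\dsupp)$ for every $\theta\in\alldirs$. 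By (i) this forces $\supp\nu'=\dsupp$, whence $\nu'=\sum_{i=1}^N a_i'\delta_{x_i}$ for some $a_i'\geq 0$.

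To conclude, write $\nu=\sum_{i=1}^N a_i\delta_{x_i}$; applying $\Rs_{\theta_0}$ to $\Rs_{\theta_0}\nu=\Rs_{\theta_0}\nu'$ gives $\sum_{i=1}^N a_i\delta_{\theta_0\cdot x_i}=\sum_{i=1}^N a_i'\delta_{\theta_0\cdot x_i}$, and since by (ii) the points $\theta_0\cdot x_i$ are pairwise distinct, evaluating both sides on the singleton $\{\theta_0\cdot x_i\}$ yields $a_i=a_i'$ for every $i$, hence $\nu=\nu'$. I expect the only delicate point to be the support-identification step, where care is needed because $\nu'$ is a priori an arbitrary nonnegative Radon measure, possibly with infinite or non-discrete support, so \cref{thm:genericPointReconstructionRadon} cannot be applied to it directly; the resolution is that equality of the Radon transforms already forces $\Rs_\theta(\supp\nu')$ to be a finite set coinciding with $\Rs_\theta(\dsupp)$, after which \cref{thm:genericPointReconstructionRadon} applies verbatim. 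Everything else is routine.
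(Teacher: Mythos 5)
Your proof is correct, and it diverges from the paper's argument in the second half. Both proofs open the same way: the support of $\nu$ is pinned down almost surely via \cref{thm:genericPointReconstructionRadon}, and you are in fact more careful here than the paper — the paper silently passes from set-uniqueness to the statement that any nonnegative competitor $\nu'$ has $\supp\nu'=\supp\nu$, whereas you justify this via the identity $\supp(\pf{f}\mu)=\overline{f(\supp\mu)}$ for nonnegative $\mu$ (one inclusion being \cref{prop:pushforward_properties}, the other elementary), together with the observation that a subset of a finite set with full closure is the whole set, so that \cref{thm:genericPointReconstructionRadon} applies even though $\nu'$ could a priori have infinite or non-discrete support. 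Where you genuinely depart is the recovery of the weights: the paper imitates \cref{thm:pointMeasureFromRadon}, decomposing $\nu'-\nu$ into its mutually singular parts $\xi_\pm$, noting $\Rs_\theta\xi_+=\Rs_\theta\xi_-$, and invoking the (generic) set-determination for the subsets $\supp\xi_\pm\subset\supp\nu$ to force $\supp\xi_+=\supp\xi_-$ and hence $\xi=0$; you instead discard one further Lebesgue-nullset of configurations so that a fixed direction $\theta_0\in\alldirs$ separates the points $\theta_0\cdot x_1,\dots,\theta_0\cdot x_N$, and then read off $a_i=a_i'$ by evaluating the one-dimensional transforms at singletons. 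Your weight step is more elementary and avoids rechecking that genericity passes to arbitrary subsets of the support (which the paper's argument needs), at the mild cost of enlarging the discarded nullset; the paper's $\xi_\pm$ argument has the advantage of reusing verbatim the mechanism already established for the worst-case result \cref{thm:pointMeasureFromRadon} and of not singling out any direction. Both are valid.
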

\begin{proof}
Let $\nu$ be a measure consisting of $N$ weighted Dirac measures, then its support is almost surely uniquely determined by \cref{thm:genericPointReconstructionRadon}.
Supposing that its support is indeed uniquely determined, it remains to show that also its weights are uniquely determined.
Thus let $\nu'$ be another admissible measure with $\supp\nu'=\supp\nu$ and $\Rs_\theta\nu'=\Rs_\theta\nu$ for all $\theta\in\alldirs$.
Let $\xi_+$ and $\xi_-$ denote the positive and negative part, respectively, of $\nu'-\nu$, then $\Rs_\theta\xi_+=\Rs_\theta\xi_-$ for all $\theta\in\alldirs$.
Since $\supp\xi_+\cup\supp\xi_-\subset\supp\nu$ also the supports of $\xi_+$ and $\xi_-$ are uniquely determined by their Radon transforms,
which therefore implies $\supp\xi_+=\supp\xi_-$.
This is only possible if $\xi_+=\xi_-=0$.
\end{proof}

As a direct consequence we obtain generic exact reconstruction even with only $d+1$ directions in $\alldirs$, covering in the result of \cref{thm:exactReconstruction_counting}.
\begin{cor}[Exact reconstruction] \label{cor:exact_recon_almost_surely_inite_particles}
Assume the ground truth $\lambda^\dagger$ to contain $N$ particles (some potentially with zero mass) whose location vector in $(\R^d\times\R^d)^N$ is distributed
according to a probability distribution which is absolutely continuous with respect to the $2Nd$-dimensional Lebesgue measure.
Let $\alldirs\subset\sphere^{d-1}$ contain $d+1$ directions such that any $d$ of them are linearly independent,
and let $\mdirs$ be the counting measure on $\alldirs$.
If there are three times $t$ at which the static problem \ref{eqn:erstat} reconstructs exactly,
then almost surely the solution $(u,\liftVar)$ to \problemTag[0]{\data^\dagger} satisfies $(u,\liftVar)=(u^\dagger,\liftVar^\dagger)$.
\end{cor}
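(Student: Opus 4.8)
The plan is to feed the three exactly-reconstructible times into the results of this \namecref{sec:exactRecovery} and then invoke the Radon-uniqueness corollaries twice. First, since the three good times lie in $\goodtimes\subset\alltimes$, \cref{thm:exactrecov} already yields $u_t=u^\dagger_t$ for those three times. Recalling the identity $\mv_t\nu=\pf{[s\mapsto\sqrt{1+t^2}\,s]}\left(\Rs_{(1,t)/\sqrt{1+t^2}}\nu\right)$ stated just above, and using that $s\mapsto\sqrt{1+t^2}\,s$ is a bijection, the admissibility condition $\mv_t\liftVar_\theta=\Rs_\theta u_t=\Rs_\theta u^\dagger_t=\mv_t\liftVar^\dagger_\theta$ at each of the three good times $t$ translates into $\Rs_{\vartheta_t}\liftVar_\theta=\Rs_{\vartheta_t}\liftVar^\dagger_\theta$ for the three directions $\vartheta_t:=(1,t)/\sqrt{1+t^2}\in\sphere^1$. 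As the three times are pairwise distinct, any two of these directions are linearly independent, so \cref{thm:genericPointMeasureFromRadon} is applicable in $\R^2$ (i.e.\ with space dimension $2$) — provided $\supp\liftVar^\dagger_\theta=\Rj_\theta(\supp\lambda^\dagger)$ is in the generic position it requires.

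To check this genericity I would argue that for fixed $\theta\in\alldirs$ the map $((x_i,v_i))_{i=1}^N\mapsto((\thdot{x_i},\thdot{v_i}))_{i=1}^N$ is a surjective linear map $(\R^d\times\R^d)^N\to(\R^2)^N$, so the push-forward under it of the assumed $\lebesgue^{2dN}$-absolutely continuous distribution of the location vector of $\lambda^\dagger$ is again absolutely continuous, now with respect to $\lebesgue^{2N}$; since the configurations excluded in \cref{thm:genericPointReconstructionRadon} form a finite union of proper affine subspaces (the hyperplanes in its proof together with the coincidence diagonal) and hence a $\lebesgue^{2N}$-nullset, $\supp\liftVar^\dagger_\theta$ is almost surely in generic position. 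Because $\alldirs$ consists of only the $d+1$ fixed directions, intersecting these finitely many full-probability events gives: almost surely $\liftVar_\theta=\liftVar^\dagger_\theta$ for every $\theta\in\alldirs$. Since $\mdirs$ is the counting measure on those $d+1$ directions, $\liftVar$ equals $\mdirs\prodm\theta\liftVar_\theta$, so $\liftVar=\mdirs\prodm\theta\liftVar^\dagger_\theta=\liftVar^\dagger$; this settles claim~(2), and via \cref{prop:equivalence_product_only,prop:equivalence_measure} the same conclusion transfers to the alternative formulations.

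It remains to upgrade $u_t=u^\dagger_t$ from the three good times to \emph{all} $t\in\domt$. From $\liftVar=\liftVar^\dagger$, admissibility, and the commutation $\mv_t\Rj_\theta=\Rs_\theta\mv^d_t$ of \cref{lem:radon_move_exchange}, one gets $\Rs_\theta u_t=\mv_t\liftVar^\dagger_\theta=\Rs_\theta(\mv^d_t\lambda^\dagger)=\Rs_\theta u^\dagger_t$ for all $\theta\in\alldirs$ and all $t\in\domt$, so for each $t$ one wants to apply \cref{thm:genericPointMeasureFromRadon} once more, now in $\R^d$ with the $d+1$ directions of $\alldirs$, to conclude $u_t=u^\dagger_t$. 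This is the step needing care and, in my view, the main obstacle: the genericity of $\supp u^\dagger_t=\{x_i+t v_i\mid i\}$ must hold \emph{simultaneously for all} $t\in\domt$, whereas a naive Fubini argument only yields it for $\lebesgue^1$-a.e.\ $t$. I would close this by noting that along the affine curve $t\mapsto(x_i+t v_i)_i$ each linear functional defining an exceptional hyperplane of \cref{thm:genericPointReconstructionRadon} becomes an affine function of $t$; a genuine competitor $u_t=u^\dagger_t+\mu$ with $\mu\neq0$, $\Rs_\theta\mu=0$ for $\theta\in\alldirs$ and $u^\dagger_t+\mu\ge0$ decomposes as $\mu=\mu^+-\mu^-$ with $\mu^-$ supported on the (generic) points $\{x_i+tv_i\}$ and $\Rs_\theta\mu^+=\Rs_\theta\mu^-$, and matching the finitely many atoms of $\mu^\pm$ across the $d+1$ projections forces the curve to meet the exceptional set of \cref{thm:genericPointReconstructionRadon} through at least two independent linear relations at the same $t$ — a codimension-$\ge2$, hence $\lebesgue^{2dN}$-null, event in $\lambda^\dagger$, just like the generic absence of particle collisions (the condition $x_i-x_j\parallel v_i-v_j$ is of codimension $d-1$). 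Thus almost surely $u^\dagger_t$ is the unique nonnegative Radon measure with Radon data $(\Rs_\theta u^\dagger_t)_{\theta\in\alldirs}$ for every $t\in\domt$, giving $u_t=u^\dagger_t$ throughout $\domt$; together with $\liftVar=\liftVar^\dagger$ this shows any solution of \problemTag[0]{\data^\dagger} equals $(u^\dagger,\liftVar^\dagger)$ (existence being guaranteed by \cref{prop:well_posedness_dim_reduced_noisy}). For $d=1$ this last step is trivial, since then $\alldirs=\{-1,1\}$ and $\Rs_{\pm1}$ is the identity up to reflection.
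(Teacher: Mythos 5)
Your proposal follows essentially the same route as the paper's own proof: exact recovery at the three good times via \cref{thm:exactrecov}, translation of the constraint $\mv_t\liftVar_\theta=\mv_t\liftVar^\dagger_\theta$ into Radon data for the three directions $(1,t)/\sqrt{1+t^2}$ so that \cref{thm:genericPointMeasureFromRadon} gives $\liftVar_\theta=\liftVar^\dagger_\theta$ for the finitely many $\theta\in\alldirs$, and then a second application of \cref{thm:genericPointMeasureFromRadon} in $\R^d$ with the $d+1$ directions to recover every snapshot. The paper is in fact terser at the final ``all $t\in\domt$ simultaneously'' step (it only asserts that a reinspection of the proof of \cref{thm:genericPointReconstructionRadon} extends to this case), so your pushforward-absolute-continuity check and your codimension argument there add detail rather than diverging from the paper's approach.
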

\begin{proof}
Let $\goodtimes$ denote the three times, then by \cref{thm:exactrecov} we have $u_t=u^\dagger_t$ for all $t\in\goodtimes$.
Now for any $\theta\in\alldirs$ we know $\mv_t\liftVar_\theta=\Rs_\theta u_t=\Rs_\theta u^\dagger_t=\mv_t\liftVar^\dagger_\theta$
and thus $\Rs_{(1,t)/\sqrt{1+t^2}}\liftVar_\theta=\Rs_{(1,t)/\sqrt{1+t^2}}\liftVar^\dagger_\theta$ for all three times $t\in\goodtimes$.
By \cref{thm:genericPointMeasureFromRadon} this almost surely uniquely determines $\liftVar_\theta=\liftVar^\dagger_\theta$ for all finitely many $\theta\in\alldirs$.
Now for any $t\in\domt$ we have $\Rs_\theta u_t=\mv_t\liftVar_\theta=\mv_t\liftVar^\dagger_\theta=\Rs_\theta u^\dagger_t$ for all $d+1$ many directions $\theta\in\alldirs$,
which again by \cref{thm:genericPointMeasureFromRadon} almost surely uniquely determines $u_t=u_t^\dagger$ for each fixed $t\in\domt$.
That the fixed $t\in\domt$ may even be replaced by all $t\in\domt$ follows from reinspection of the proof of \cref{thm:genericPointReconstructionRadon},
which can be extended to show that generically all $u_t$ (or rather their supports) can simultaneously be uniquely reconstructed.
\end{proof}

\begin{rem} The results in this section were obtained independent of the ones in \cref{sec:exact_recon_dim_reduced}, in particular not relying on assumptions on ghost particles and coincidences. Alternatively, it would have also been possible to investigate the set $\gooddirs$ of \cref{thm:exactrecov_gooddirs} also for the case of finitely many directions, and show that it is sufficiently large to ensure exact reconstruction (almost surely). While this would have provided a common framework for $\mdirs$ being either the Hausdorff or the counting measure, a direct treatment of the latter turned out to be more concise and, for this reason, was preferred here.
\end{rem}

\section{Reconstruction from noisy data}\label{sec:noisyReconstruction}

While the previous \namecref{sec:exactRecovery} dealt with noisefree observations $\data^\dagger$ and a reconstruction via \problemTag[0]{\data^\dagger},
in this section we will consider noisy observations $\data^\delta$ and a reconstruction via \problemTag[\alpha]{\data^\delta} with positive $\alpha$.
We aim to derive error estimates for our reconstruction.
To this end we will first introduce a measure to quantify the error in \cref{sec:unbalancedTransport},
then derive abstract error estimates for general inverse problems of Radon measures in \cref{sec:BregmanEstimates}.
\removed{,and finally apply these results to our models \PInoisyProblemTag[\alpha]{\data^\delta} and \problemTag[\alpha]{\data^\delta} in \cref{sec:dualVariables,sec:dualConstructionII}.}
\added{\lookUp{\ref{item3}}In order to finally apply these results to our model \problemTag[\alpha]{\data^\delta}, we will first consider an (in this case simpler) product-topology version of \problemTag[\alpha]{\data^\delta} in \cref{sec:dualVariables} and then extend the results to \problemTag[\alpha]{\data^\delta} in \cref{sec:dualConstructionII}.}

\subsection{Unbalanced optimal transport as error measure}\label{sec:unbalancedTransport}
In the setting of particle reconstruction, the positions and masses of the reconstructed particles will slightly deviate from the ground truth due to noise in the measurement.
A quantification of this deviation thus has to account for both sources of error simultaneously, the mass relocation and the mass change.
A clear separation of these two is not meaningful since a Dirac mass in the wrong place
can both be explained by an incorrect positioning of a ground truth Dirac mass or simply by some added mass to the ground truth (which just happened to be placed in that position).
Therefore we will quantify the reconstruction error in so-called unbalanced Wasserstein divergences (variants of optimal transport distances that allow for mass changes).
The following unbalanced Wasserstein divergences turn out to be the most natural in our setting.

\begin{defn}[Unbalanced Wasserstein divergence]\label{def:Wasserstein}
Let $\nu_1,\nu_2\in\Mp(\R^d)$ be nonnegative Radon measures on $\R^d$.
The \emph{Wasserstein-$p$ distance} between $\nu_1$ and $\nu_2$ is defined for $p\geq1$ as
\begin{multline*}
W_p(\nu_1,\nu_2)=\inf\left\{\left(\int_{\R^d\times\R^d}\dist(x,y)^p\,\d\pi(x,y)\right)^{1/p}\,\middle|\,\pi\in\Mp(\R^d\times\R^d),\right.\\
\left.\vphantom{\left(\int_{\R^d}\right)^{1/p}}\pf{[(x,y)\mapsto x]}\pi=\nu_1,\pf{[(x,y)\mapsto y]}\pi=\nu_2\right\}.
\end{multline*}
If $\nu_1$ and $\nu_2$ have unequal mass $\nu_1(\R^d)\neq\nu_2(\R^d)$, the above infimum is over the empty set and thus infinite.
For fixed parameter $R>0$ we define the \emph{unbalanced Wasserstein-$p$ divergence} between $\nu_1$ and $\nu_2$ as
\begin{equation*}
\unbalancedWasserstein[p]{R}(\nu_1,\nu_2)=\inf\left\{W_p^p(\nu,\nu_2)+\tfrac12R^p\|\nu_1-\nu\|_{\M}\,\middle|\,\nu\in\Mp(\R^d)\right\}.
\end{equation*}
\end{defn}

\begin{rem}[Interpretation of unbalanced Wasserstein divergence]
The Wasserstein distance is well-known to be a metric on the space of nonnegative Radon measures with fixed mass, supported on a fixed compact domain,
and it metrizes weak-$\ast$ convergence on that space.
The quantity $W_p^p(\nu_1,\nu_2)$ measures the cost for transporting the material from $\nu_1$ such that after the transport it is distributed according to $\nu_2$.
The unbalanced Wasserstein divergence $\unbalancedWasserstein[p]{R}(\nu_1,\nu_2)$ thus measures the cost for first changing the mass of $\nu_1$ to some intermediate measure $\nu$
and then transporting that new mass distribution to $\nu_2$.
Thus it can be used to quantify weak-$\ast$ convergence of nonnegative measures of not necessarily the same mass.
The weight $R^p$ of the mass change cost governs up to which distance a mass transport is less costly than removing the mass in the initial position and regrowing it in the destination;
this distance is exactly $R$.
\end{rem}

\begin{rem}[Symmetry of unbalanced Wasserstein divergence]
Obviously, an alternative definition would have been based on changing the mass of $\nu_2$ or of both $\nu_1$ and $\nu_2$.
It can readily be seen that those are equivalent:
Indeed, any good candidate intermediate measure $\nu$ should be absolutely continuous with respect to and no larger than $\nu_1+\nu_2$.
Then it is easy to check that $\tilde\nu=\nu_2+\nu_1-\nu\in\Mp(\R^d)$ satisfies
$W_p^p(\nu,\nu_2)=W_p^p(\nu_1,\tilde\nu)$ and $\|\nu_1-\nu\|_{\M}=\|\nu_2-\tilde\nu\|_{\M}$.
\end{rem}

The unbalanced Wasserstein divergence is nonnegative, and $\unbalancedWasserstein[p]{R}(\nu_1,\nu_2)=0$ implies $\nu_1=\nu_2$.
Furthermore, it is symmetric by the previous remark.
However, it does not satisfy the triangle inequality (unless $p=1$).
Therefore it is not a metric, but metric variants can easily be defined.

A bound on the unbalanced Wasserstein divergence between a discrete measure $\nu_1=\sum_{i=1}^Nm_i\delta_{x_i}\in\Mp(\R^d)$ and $\nu_2\in\Mp(\R^d)$
is readily obtained from estimates of the mass distribution within and outside balls around $x_1,\ldots,x_N$\added{\label{txt:itemeProofLogic6}\lookUp{\ref{iteme}}, which is the content of the following theorem}.
Such estimates were first derived by Cand\`es and Fernandez-Granda to analyse spike recovery from noisy data,
and in the subsequent sections we will show how they can also be derived for our dimension-reduced dynamic particle tracking setting.

\begin{thm}[Unbalanced Wasserstein divergence from mass estimates]\label{thm:unbalancedWasserstein} 
Let $p\geq1$, $R>0$, and let $\nu_1=\sum_{i=1}^Nm_i\delta_{x_i}\in\Mp(\R^d)$, with the distance between the $x_i$ being at least $2R$, and $\nu_2\in\Mp(\R^d)$ satisfy
\begin{align*}
|\nu_2|(\R^d\setminus B_R(\{x_1,\ldots,x_N\}))
&\leq A,\\
\sum_{i=1}^N|(\nu_1-\nu_2)(B_R(\{x_i\}))|
&\leq B,\\
\sum_{i=1}^N\int_{B_R(\{x_i\})}\dist(x,x_i)^p\wrt\nu_2(x)
&\leq C_p,
\end{align*}
where $B_R(\dsupp)$ denotes the open $R$-neighbourhood of the set $\dsupp\subset\R^d$.
Then we have
\begin{align*}
\unbalancedWasserstein[p]{R}(\nu_1,\nu_2)
&\leq \tfrac12R^p(A+B)+C_p,\\
\unbalancedWasserstein[q]{R}(\nu_1,\nu_2)
&\leq \tfrac12R^q(A+B)+R^{q-p}C_p\text{ if }q>p,\\
\unbalancedWasserstein[q]{R}(\nu_1,\nu_2)
&\leq \tfrac12R^q(A+B)+\tfrac p{p-q}\left(\tfrac{p-q}q\right)^{q/p}C_p^{q/p}\|\nu_2\|_\M^{1-q/p}\text{ if }q<p.
\end{align*}
\end{thm}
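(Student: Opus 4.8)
The plan is to produce a single competitor measure $\nu$ for the infimum defining $\unbalancedWasserstein[p]{R}(\nu_1,\nu_2)$ in \cref{def:Wasserstein} and to reuse it for all three estimates. Put $B_i\coloneqq B_R(\{x_i\})$. Since the centres are pairwise at distance at least $2R$, the open balls $B_i$ are pairwise disjoint; each $B_i$ contains $x_i$ but no other $x_j$, so that $\nu_1(B_i)=m_i$; and $B_R(\{x_1,\dots,x_N\})=\bigcup_{i=1}^N B_i$. I would then set
\[
\nu\coloneqq\sum_{i=1}^N\nu_2(B_i)\,\delta_{x_i}+\nu_2\restr\bigl(\R^d\setminus{\textstyle\bigcup_{i=1}^N B_i}\bigr)\in\Mp(\R^d),
\]
which is nonnegative because $\nu_2\geq0$, and which by disjointness of the $B_i$ has total mass $\nu_2(\R^d)=\|\nu_2\|_\M$, so that $W_r(\nu,\nu_2)<\infty$ for every $r\geq1$.

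For the mass-change term I would compute $\nu_1-\nu=\sum_{i=1}^N\bigl(m_i-\nu_2(B_i)\bigr)\delta_{x_i}-\nu_2\restr(\R^d\setminus\bigcup_{i=1}^N B_i)$; since the atoms sit in the disjoint balls $B_i$ and the remaining piece is mutually singular with them, the total variation is $\|\nu_1-\nu\|_\M=\sum_{i=1}^N|(\nu_1-\nu_2)(B_i)|+|\nu_2|(\R^d\setminus\bigcup_{i=1}^N B_i)\leq B+A$ by the first two hypotheses, using $m_i=\nu_1(B_i)$.

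For the transport term I would use the explicit coupling of $\nu$ with $\nu_2$ that leaves $\nu_2\restr(\R^d\setminus\bigcup_i B_i)$ in place and couples each atom $\nu_2(B_i)\delta_{x_i}$ with $\nu_2\restr B_i$; this gives $W_r^r(\nu,\nu_2)\leq\sum_{i=1}^N\int_{B_i}\dist(x,x_i)^r\wrt\nu_2(x)$ for every $r\geq1$. Taking $r=p$ bounds the right-hand side by $C_p$ via the third hypothesis, and combined with \cref{def:Wasserstein} and the mass-change estimate this yields the first inequality. For $q>p$ one has $\dist(x,x_i)^q\leq R^{q-p}\dist(x,x_i)^p$ on $B_i$ (since $\dist(x,x_i)<R$ there), so the transport cost is at most $R^{q-p}C_p$ and the second inequality follows. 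For $q<p$, I would instead split each ball integral at a free threshold $\epsilon>0$: on $\{x\in B_i:\dist(x,x_i)<\epsilon\}$ estimate $\dist(x,x_i)^q\leq\epsilon^q$, and on the complement within $B_i$ estimate $\dist(x,x_i)^q=\dist(x,x_i)^p\dist(x,x_i)^{q-p}\leq\epsilon^{q-p}\dist(x,x_i)^p$; summing over $i$ gives $W_q^q(\nu,\nu_2)\leq\epsilon^q\|\nu_2\|_\M+\epsilon^{q-p}C_p$ for every $\epsilon>0$. Optimising this elementary bound over $\epsilon$ — the minimiser being $\epsilon=\bigl((p-q)C_p/(q\|\nu_2\|_\M)\bigr)^{1/p}$ when $C_p,\|\nu_2\|_\M>0$, with the bound trivial otherwise — yields precisely the coefficient $\tfrac p{p-q}\bigl(\tfrac{p-q}q\bigr)^{q/p}$, and the third inequality follows after adding $\tfrac12 R^q(A+B)$.

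The construction itself is routine; the one point deserving care is that the elementary estimate $W_q^q(\nu,\nu_2)\leq\epsilon^q\|\nu_2\|_\M+\epsilon^{q-p}C_p$ must be verified for \emph{all} $\epsilon>0$ (and not only for $\epsilon\leq R$) so that the unconstrained minimiser may legitimately be inserted, and that the degenerate cases $C_p=0$ or $\|\nu_2\|_\M=0$ are dispatched separately. Everything else is a matter of assembling the transport and mass-change bounds through the definition of $\unbalancedWasserstein[q]{R}$.
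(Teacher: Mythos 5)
Your proposal is correct and follows essentially the same route as the paper's proof: the same competitor measure $\nu=\sum_i\nu_2(B_R(\{x_i\}))\delta_{x_i}+\nu_2\restr(\R^d\setminus B_R(\{x_1,\ldots,x_N\}))$, the same pointwise comparison $\dist(x,x_i)^q\leq R^{q-p}\dist(x,x_i)^p$ for $q>p$, and the same split of each ball integral at a free radius followed by optimisation, yielding the identical minimiser and constant for $q<p$. No gaps.
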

\begin{proof}
In the definition of the unbalanced Wasserstein divergence simply take
$\nu=\nu_2\restr(\R^d\setminus B_R(\{x_1,\ldots,x_N\}))+\sum_{i=1}^N\nu_2(B_R(\{x_i\}))\delta_{x_i}$,
then the first estimate immediately follows.
The second one follows, using the same $\nu$, from $\dist(x,x_i)^q\leq R^{q-p}\dist(x,x_i)^p$ on $B_R(\{x_i\})$ for any $q>p$.
Finally, if $q<p$ we again pick the same $\nu$ and estimate
\begin{multline*}
\int_{B_R(\{x_i\})}\dist(x,x_i)^q\wrt\nu_2(x)
\leq\int_{B_R(\{x_i\})\setminus B_r(\{x_i\})}\dist(x,x_i)^q\wrt\nu_2(x)
+\int_{B_r(\{x_i\})}\dist(x,x_i)^q\wrt\nu_2(x)\\
\leq r^{q-p}\int_{B_R(\{x_i\})\setminus B_r(\{x_i\})}\dist(x,x_i)^p\wrt\nu_2(x)
+r^q\|\nu_2\|_{\M}
\leq r^{q-p}C_p+r^q\|\nu_2\|_\M
\end{multline*}
for any $r>0$. Optimizing in $r$ yields $r=(\frac{p-q}q\frac{C_p}{\|\nu_2\|_\M})^{1/p}$ and thus
$$\int_{B_R(\{x_i\})}\dist(x,x_i)^q\wrt\nu_2(x)
\leq\frac p{p-q}\left(\frac{p-q}q\right)^{q/p}C_p^{q/p}\|\nu_2\|_\M^{1-q/p}$$
so that the third estimate follows.
\end{proof}

\subsection{General strategy for Bregman distance estimates}\label{sec:BregmanEstimates}
To derive noise-dependent convergence rates or error estimates for our reconstruction in case of noisy data, we follow a classical strategy in inverse problems that is based on duality.
In this section we recapitulate the corresponding procedure in an abstract setting.
It reduces the derivation of error estimates to the construction of appropriate dual variables.
The explicit construction of such dual variables for our specific setting will be performed in the subsequent section.

The abstract setting is as follows.
Let $X,Y$ be topological vector spaces, $K:X\to Y$ a continuous linear observation operator,
$\regularizer:X\to\R$ a convex regularization energy,
and $\fidelity{\data}:Y\to\R$ a nonnegative convex fidelity term checking the fidelity of its argument to a measurement $\data\in Y$.
We will assume $\fidelity{\data}(\data)=0$ and $\fidelity{\data}>0$ elsewhere
(otherwise the data fidelity would not be able to discriminate between multiple, apparently equally well fitting reconstructions).
We will denote by $\groundTruth\in X$ the ground truth configuration with noise-free measurement $\noiseFreeData=K\groundTruth\in Y$,
while the noisy measurement is denoted $\noisyData$, the amount of noise $\delta$ being quantified by the condition
\begin{equation*}
\fidelity{\noisyData}(\noiseFreeData)\leq\delta.
\end{equation*}
In our application, the spaces $X,Y$ will be given by products of spaces of Radon measures, hence we assume $X$ and $Y$ to actually be dual spaces to topological vector spaces
(otherwise the roles of primal and dual spaces simply swap in the following).
For simplicity we denote the predual spaces by $X^*$ and $Y^*$.
We now consider the problem to reconstruct an approximation to $\groundTruth$ by solving
\begin{equation}\label{eqn:reconstructionProblem}
\min_{\approximation\in X}\energy_{\alpha,\noisyData}(\approximation)
\quad\text{for }\energy_{\alpha,\noisyData}(\approximation)=\regularizer(\approximation)+\frac1\alpha\fidelity{\noisyData}(K\approximation),
\end{equation}
where $\alpha,\delta>0$ corresponds to the noisy measurement case and $\alpha=\delta=0$ %
to the noise-free setting
(with the notational convention $\tfrac10\fidelity{\data}(\tilde\data)=0$ if $\tilde\data=\data$ and $\tfrac10\fidelity{\data}(\tilde\data)=\infty$ otherwise).

Generally, if $\groundTruth$ is the minimizer of a smooth energy $\energy$ and $\approximation$ an approximation,
then the deviation of $\approximation$ from $\groundTruth$ can be estimated from the difference $\energy(\approximation)-\energy(\groundTruth)$ and lower bounds on the Hessian of $\energy$.
In the case of nonsmooth convex energies the role of Hessian-based distance estimates is naturally replaced by Bregman distances for $\energy$,
where the Bregman distance of $a$ to $b$ with respect to a convex energy $\energy$ and a subgradient element $w\in\partial\energy(b)$ is
\begin{equation*}
\BregmanDistance\energy w(a,b)=\energy(a)-\energy(b)-\langle w,a-b\rangle\geq0.
\end{equation*}
However, since in our setting the energy difference $\energy_{0,\noiseFreeData}(\approximation)-\energy_{0,\noiseFreeData}(\groundTruth)$ is infinite if $K\approximation\neq\noiseFreeData$,
the resulting Bregman distance estimates would not be very useful.
Therefore, the fidelity term needs to be dualized first, that is, for some fixed $\optimalDual\in Y^*$, instead of $\energy_{0,\noiseFreeData}$ we consider
\begin{equation*}
\regularizer(\cdot)+\langle K^*\optimalDual,\cdot\rangle-(\tfrac10\fidelity{\noiseFreeData})^*(\optimalDual)
\end{equation*}
(which by weak duality is never larger than $\energy_{0,\noiseFreeData}$).
Above, $K^*:Y^*\to X^*$ is the operator having $K$ as dual,
and $(\tfrac10\fidelity{\noiseFreeData})^*(\optimalDual)=\sup_{y\in Y}\langle y,\optimalDual\rangle-\tfrac10\fidelity{\noiseFreeData}(y)= \langle \noiseFreeData,\optimalDual \rangle$  represents the (predual) Legendre--Fenchel conjugate.
This then leads to the following estimates (as for instance derived for quadratic $\fidelity{\noiseFreeData}$ in \cite[Thm.\,2]{BurgerOsher2004}).

\begin{thm}[Bregman distance estimate for noisy reconstruction]\label{thm:BregmanEstimates}
Let $\optimalDual\in Y^*$ satisfy the source condition $-K^*\optimalDual\in\partial\regularizer(\groundTruth)$.
Then a minimizer $\approximation$ of \eqref{eqn:reconstructionProblem} satisfies
\begin{align*}
\BregmanDistance{\regularizer}{-K^*\optimalDual}(\approximation,\groundTruth)
&\leq\left(3\delta+\fidelity{\noisyData}^*(2\alpha\optimalDual)+\fidelity{\noisyData}^*(-2\alpha\optimalDual)\right)/(2\alpha),\\
\fidelity{\noisyData}(K\approximation)
&\leq\left(3\delta+\fidelity{\noisyData}^*(2\alpha\optimalDual)+\fidelity{\noisyData}^*(-2\alpha\optimalDual)\right),\\
\langle K^*w,\approximation-\groundTruth\rangle
&\leq\left(4\delta+\fidelity{\noisyData}^*(2\alpha\optimalDual)+\fidelity{\noisyData}^*(-2\alpha\optimalDual)+\fidelity{\noisyData}^*(2\alpha w)+\fidelity{\noisyData}^*(-2\alpha w)\right)/(2\alpha)
\quad\text{for all }w\in Y^*.
\end{align*}
\end{thm}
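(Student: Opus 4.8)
The plan is to run the classical "dual certificate" argument of Burger--Osher, adapted to the generic fidelity $\fidelity{\noisyData}$. First I would exploit optimality of $\approximation$ for \eqref{eqn:reconstructionProblem}: comparing with the competitor $\groundTruth$ gives
\[
\regularizer(\approximation)+\tfrac1\alpha\fidelity{\noisyData}(K\approximation)\le\regularizer(\groundTruth)+\tfrac1\alpha\fidelity{\noisyData}(K\groundTruth)=\regularizer(\groundTruth)+\tfrac1\alpha\fidelity{\noisyData}(\noiseFreeData)\le\regularizer(\groundTruth)+\tfrac\delta\alpha .
\]
Next I would use the source condition $-K^*\optimalDual\in\partial\regularizer(\groundTruth)$ to write, by definition of the Bregman distance,
\[
\regularizer(\approximation)=\BregmanDistance{\regularizer}{-K^*\optimalDual}(\approximation,\groundTruth)+\regularizer(\groundTruth)+\langle -K^*\optimalDual,\approximation-\groundTruth\rangle .
\]
Substituting this into the optimality inequality, the $\regularizer(\groundTruth)$ terms cancel and one is left with
\[
\BregmanDistance{\regularizer}{-K^*\optimalDual}(\approximation,\groundTruth)+\tfrac1\alpha\fidelity{\noisyData}(K\approximation)\le\tfrac\delta\alpha+\langle K^*\optimalDual,\approximation-\groundTruth\rangle=\tfrac\delta\alpha+\langle \optimalDual,K\approximation-\noiseFreeData\rangle .
\]
This is the pivotal inequality; both asserted estimates will be read off from it once the cross term $\langle\optimalDual,K\approximation-\noiseFreeData\rangle$ is controlled.

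The core estimate is the bound on that cross term. I would split $K\approximation-\noiseFreeData=(K\approximation-\noisyData)+(\noisyData-\noiseFreeData)$ and apply the Fenchel--Young inequality in the form $\langle 2\alpha\optimalDual,z\rangle\le\fidelity{\noisyData}^*(2\alpha\optimalDual)+\fidelity{\noisyData}(z+\noisyData)$ — equivalently, using the conjugate of $z\mapsto\fidelity{\noisyData}(z)$ shifted appropriately — to each of $z=K\approximation-\noisyData$ (giving a term $\tfrac1{2\alpha}\fidelity{\noisyData}(K\approximation)$) and to $z=\noisyData-\noiseFreeData$, where I would also need $\langle\optimalDual,\noisyData-\noiseFreeData\rangle$ bounded via $\fidelity{\noisyData}^*(\pm2\alpha\optimalDual)$ together with the noise bound $\fidelity{\noisyData}(\noiseFreeData)\le\delta$; the $\pm$ appears because the sign of the pairing is not controlled, so one symmetrizes by using both $2\alpha\optimalDual$ and $-2\alpha\optimalDual$. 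Carrying the bookkeeping through, one absorbs a $\tfrac1{2\alpha}\fidelity{\noisyData}(K\approximation)$ into the left-hand side (leaving $\tfrac1{2\alpha}\fidelity{\noisyData}(K\approximation)$ there) and collects the remaining constant terms into $\tfrac1{2\alpha}\bigl(3\delta+\fidelity{\noisyData}^*(2\alpha\optimalDual)+\fidelity{\noisyData}^*(-2\alpha\optimalDual)\bigr)$. Since $\BregmanDistance{\regularizer}{-K^*\optimalDual}(\approximation,\groundTruth)\ge0$ and $\fidelity{\noisyData}\ge0$, discarding one or the other nonnegative summand yields the first two displayed estimates respectively (the second after multiplying by $2\alpha$).

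For the third estimate, the observation is that $\langle K^*w,\approximation-\groundTruth\rangle=\langle w,K\approximation-\noiseFreeData\rangle$, which is exactly the same type of cross term treated above but with $w$ in place of $\optimalDual$ and \emph{without} the source-condition structure — so I would repeat the Fenchel--Young splitting verbatim, now producing $\fidelity{\noisyData}^*(\pm2\alpha w)$ contributions, and additionally use the already-established bound $\fidelity{\noisyData}(K\approximation)\le 3\delta+\fidelity{\noisyData}^*(2\alpha\optimalDual)+\fidelity{\noisyData}^*(-2\alpha\optimalDual)$ to eliminate the $\fidelity{\noisyData}(K\approximation)$ term, at the cost of one extra $\delta$ (hence the $4\delta$). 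The main obstacle I anticipate is purely a matter of careful constant-tracking in these Fenchel--Young applications — getting the factors of $2$ and $\alpha$ to line up so that exactly the stated coefficients ($3\delta$, $4\delta$, the $2\alpha$ arguments, and the $1/(2\alpha)$ prefactors) emerge — since there is freedom in how one splits and how much of $\fidelity{\noisyData}(K\approximation)$ one reabsorbs; everything else is a direct combination of optimality, the subgradient inequality, and weak duality.
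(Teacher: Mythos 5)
Your plan is correct and follows essentially the same route as the paper's proof: optimality against the competitor $\groundTruth$, the Bregman rearrangement from the source condition, a symmetrized Fenchel--Young bound with arguments $\pm2\alpha\optimalDual$ on the cross term $\langle\optimalDual,K\approximation-\noiseFreeData\rangle$ that absorbs half of $\frac1\alpha\fidelity{\noisyData}(K\approximation)$, and the third estimate obtained by the same Fenchel step with $w$ plus the already-proved bound on $\fidelity{\noisyData}(K\approximation)$ (hence $4\delta$). The only cosmetic difference is your detour splitting $K\approximation-\noiseFreeData$ through $\noisyData$ with shifted conjugates; the shift terms cancel, so this yields exactly the paper's bound, which is obtained there more directly by pairing $2\alpha\optimalDual$ with $K\approximation$ and $-2\alpha\optimalDual$ with $K\groundTruth$ and using $\fidelity{\noisyData}(K\groundTruth)\leq\delta$.
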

\begin{proof}
First observe that
\begin{multline*}
\left[\regularizer(\approximation)+\langle K^*\optimalDual,\approximation\rangle-(\tfrac10\fidelity{\noiseFreeData})^*(\optimalDual)\right]
-\left[\regularizer(\groundTruth)+\langle K^*\optimalDual,\groundTruth\rangle-(\tfrac10\fidelity{\noiseFreeData})^*(\optimalDual)\right]\\
=\regularizer(\approximation)-\regularizer(\groundTruth)-\langle-K^*\optimalDual,\approximation-\groundTruth\rangle
=\BregmanDistance{\regularizer}{-K^*\optimalDual}(\approximation,\groundTruth).
\end{multline*}
Now the energy difference on the left-hand side can be estimated from above exploiting the optimality of $\approximation$,
\begin{equation*}
\regularizer(\approximation)+\frac1\alpha\fidelity{\noisyData}(K\approximation)
=\energy_{\alpha,\noisyData}(\approximation)
\leq\energy_{\alpha,\noisyData}(\groundTruth)
=\regularizer(\groundTruth)+\frac1\alpha\fidelity{\noisyData}(\noiseFreeData)
\leq\regularizer(\groundTruth)+\frac\delta\alpha.
\end{equation*}
Furthermore, we use Fenchel's inequality to obtain
\begin{equation*}
\langle K^*w,\approximation-\groundTruth\rangle
=\frac{\langle2\alpha w,K\approximation\rangle+\langle-2\alpha w,K\groundTruth\rangle}{2\alpha}
\leq\frac{\fidelity{\noisyData}(K\approximation)+\fidelity{\noisyData}^*(2\alpha w)+\fidelity{\noisyData}(K\groundTruth)+\fidelity{\noisyData}^*(-2\alpha w)}{2\alpha},
\end{equation*}
which already proves the third statement in case the second holds.
Using these two inequalities we obtain
\begin{multline*}
\left[\regularizer(\approximation)+\langle K^*\optimalDual,\approximation\rangle-(\tfrac10\fidelity{\noiseFreeData})^*(\optimalDual)\right]
-\left[\regularizer(\groundTruth)+\langle K^*\optimalDual,\groundTruth\rangle-(\tfrac10\fidelity{\noiseFreeData})^*(\optimalDual)\right]\\
\leq\frac\delta\alpha-\frac1\alpha\fidelity{\noisyData}(K\approximation)+\langle\optimalDual,K(\approximation-\groundTruth)\rangle
\leq\frac1{2\alpha}\left(3\delta+\fidelity{\noisyData}^*(2\alpha\optimalDual)+\fidelity{\noisyData}^*(-2\alpha\optimalDual)\right)-\frac1{2\alpha}\fidelity{\noisyData}(K\approximation).
\end{multline*}
Identifying the left-hand side with $\BregmanDistance{\regularizer}{-K^*\optimalDual}(\approximation,\groundTruth)$
and adding $\frac1{2\alpha}\fidelity{\noisyData}(K\approximation)$ on both sides, the first two statements immediately follow.
\end{proof}

\begin{rem}[Interpretation of source condition]
The source condition $-K^*\optimalDual\in\partial\regularizer(\groundTruth)$ is one of the two necessary and sufficient primal-dual optimality conditions for the optimization problem;
the other one is $K\groundTruth\in\partial(\tfrac10\fidelity{\noiseFreeData})^*(\optimalDual)=\{\noiseFreeData\}$ and thus automatically satisfied for the ground truth.
\end{rem}

\begin{rem}[Rates from estimates]
Note that due to the strict convexity of $\fidelity{\noisyData}$ in $\noisyData$, the Legendre--Fenchel conjugate $\fidelity{\noisyData}^*$ is differentiable in $0$
so that $(\fidelity{\noisyData}^*(2\alpha w)+\fidelity{\noisyData}^*(-2\alpha w))/\alpha$ converges to zero as $\alpha\to0$.
Thus, by choosing $\alpha$ as the minimizer of $(\delta+\fidelity{\noisyData}^*(2\alpha w)+\fidelity{\noisyData}^*(-2\alpha w))/\alpha$,
the right-hand sides in \cref{thm:BregmanEstimates} become functions of $\delta$ that decrease to zero as $\delta\to0$.
\end{rem}

If the regularization contains the Radon norm of a measure,
the above abstract estimates of Bregman distances can be turned into unbalanced Wasserstein estimates of how much $\approximation$ deviates from $\groundTruth$.
In essence, this is the basic idea underlying the classical stability analysis of superresolution or spike reconstruction as introduced by Cand\`es and Fernandez-Granda in \cite{Candes2014}.

\begin{thm}[Mass distribution from Bregman distance]\label{thm:massFromBregman}
Assume $\groundTruth\in\M(\R^d)$ to be a discrete measure with support in $\{x_1,\ldots,x_N\}\subset\R^d$,
and let $v^\dagger\in\partial\|\cdot\|_{\M}(\groundTruth)\subset C(\R^d)$, that is, $|v^\dagger|\leq1$ and $v^\dagger(x_i)=\sgn(\groundTruth(\{x_i\}))$.
If $v^\dagger$ satisfies
\begin{equation*}
|v^\dagger(x)|\leq1-\kappa\min\{R,\dist(x,\{x_1,\ldots,x_N\})\}^2
\end{equation*}
for some $\kappa,R>0$ with $2R$ smaller than the minimum distance between the $x_i$, then for any $\approximation\in\M(\R^d)$ we have
\begin{align*}
|\approximation|(\R^d\setminus B_R(\{x_1,\ldots,x_N\}))
&\leq\frac1{\kappa R^2}\BregmanDistance{\|\cdot\|_\M}{v^\dagger}(\approximation,\groundTruth),\\
\sum_{i=1}^N\int_{B_R(\{x_i\})}\dist(x,x_i)^2\wrt|\approximation|(x)
&\leq\frac1{\kappa}\BregmanDistance{\|\cdot\|_\M}{v^\dagger}(\approximation,\groundTruth),\quad i=1,\ldots,N.
\end{align*}
Furthermore, let $v\in\partial\|\cdot\|_{\M}(y)\subset C(\R^d)$ for $y=\sum_{i=1}^N(\approximation-\groundTruth)(B_R(\{x_i\}))\delta_{x_i}$, thus $v(x_i)=\sgn(y(\{x_i\}))$ for $i=1,\ldots,N$. If for some $\mu>0$ the function $v$ satisfies
\begin{equation*}
v(x_i)v(x)\geq1-\mu\dist(x,x_i)^2\quad\text{for all }x\in B_R(\{x_i\}),i=1,\ldots,N,
\end{equation*}
then we additionally have
\begin{equation*}
\sum_{i=1}^N|(\approximation-\groundTruth)(B_R(\{x_i\}))|
\leq\frac{1+\mu R^2}{\kappa R^2}\BregmanDistance{\|\cdot\|_\M}{v^\dagger}(\approximation,\groundTruth)+\langle v,\approximation-\groundTruth\rangle.
\end{equation*}
\end{thm}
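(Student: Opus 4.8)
The plan is to expand the Bregman distance $\BregmanDistance{\|\cdot\|_\M}{v^\dagger}(\approximation,\groundTruth) = \|\approximation\|_\M - \|\groundTruth\|_\M - \langle v^\dagger,\approximation-\groundTruth\rangle$ and exploit the explicit pointwise bounds on $v^\dagger$ in order to control the mass of $\approximation$ away from the support and the second moments near each $x_i$. First I would observe that, because $v^\dagger\in\partial\|\cdot\|_\M(\groundTruth)$, one has $\|\groundTruth\|_\M = \langle v^\dagger,\groundTruth\rangle$ (since $v^\dagger = \sgn(\groundTruth)$ on $\supp\groundTruth$ and $\groundTruth$ is concentrated there by \cref{prop:support_radon_measure}), so the Bregman distance collapses to $\|\approximation\|_\M - \langle v^\dagger,\approximation\rangle = \int (1 - v^\dagger(x)\,\mathrm{sgn}(\mathrm d\approximation/\mathrm d|\approximation|))\,\wrt|\approximation|(x) \geq \int (1 - |v^\dagger(x)|)\,\wrt|\approximation|(x)$, the last integrand being nonnegative since $|v^\dagger|\leq 1$. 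This is the master inequality from which all three estimates flow.

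For the first estimate I would restrict the integral $\int(1-|v^\dagger|)\,\wrt|\approximation|$ to the region $\R^d\setminus B_R(\{x_1,\ldots,x_N\})$: there $\min\{R,\dist(x,\{x_1,\ldots,x_N\})\} = R$, so the hypothesis gives $1 - |v^\dagger(x)| \geq \kappa R^2$, and dropping the (nonnegative) contribution from inside the balls yields $\BregmanDistance{\|\cdot\|_\M}{v^\dagger}(\approximation,\groundTruth) \geq \kappa R^2\,|\approximation|(\R^d\setminus B_R(\{x_1,\ldots,x_N\}))$, which rearranges to the claim. For the second estimate I instead restrict to $\bigcup_i B_R(\{x_i\})$ — these balls are disjoint because $2R$ is below the minimal separation — where $1-|v^\dagger(x)| \geq \kappa\dist(x,x_i)^2$ for $x\in B_R(\{x_i\})$, giving $\BregmanDistance{\|\cdot\|_\M}{v^\dagger}(\approximation,\groundTruth) \geq \kappa\sum_i\int_{B_R(\{x_i\})}\dist(x,x_i)^2\wrt|\approximation|(x)$.

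The third estimate is the one requiring the extra dual certificate $v$, and I expect it to be the main obstacle since it mixes the two measures. The idea is: $|(\approximation-\groundTruth)(B_R(\{x_i\}))| = v(x_i)\,(\approximation-\groundTruth)(B_R(\{x_i\}))$ by the sign condition $v(x_i)=\sgn(y(\{x_i\}))$, and then I would write $v(x_i) = v(x_i)v(x)^2 + v(x_i)(1-v(x)^2)$ — or, more efficiently, use $v(x_i) \leq v(x) + (v(x_i) - v(x))$ together with the near-support bound $v(x_i)v(x)\geq 1-\mu\dist(x,x_i)^2$ (so $v(x_i)-v(x) \leq \mu\dist(x,x_i)^2/|v(x)| + \ldots$, to be handled carefully). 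Summing over $i$ and integrating against $\approximation-\groundTruth$, the $v(x)$ part assembles into $\langle v,\approximation-\groundTruth\rangle$ up to the mass of $\approximation-\groundTruth$ living outside $\bigcup_i B_R(\{x_i\})$ — which is controlled by the first estimate (note $\groundTruth$ is supported at the $x_i$, so only $\approximation$ contributes outside the balls) — while the $\mu\dist(x,x_i)^2$ part is controlled by the second estimate. Collecting, the outside-mass term contributes $\frac1{\kappa R^2}\BregmanDistance{\|\cdot\|_\M}{v^\dagger}$ and the quadratic term contributes $\frac{\mu}{\kappa}\BregmanDistance{\|\cdot\|_\M}{v^\dagger}$, giving the prefactor $\frac{1+\mu R^2}{\kappa R^2}$. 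The delicate bookkeeping is making sure the cross terms — in particular the discrepancy between $v$ evaluated at $x_i$ versus at a nearby $x$, and the passage from $|(\approximation-\groundTruth)(B_R(\{x_i\}))|$ to an integral over the ball — are each absorbed into exactly one of the two already-established bounds without losing constants; this is where I would spend the most care.
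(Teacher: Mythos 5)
Your proposal is correct and follows essentially the same route as the paper's proof: the master inequality $\BregmanDistance{\|\cdot\|_\M}{v^\dagger}(\approximation,\groundTruth)\geq\int(1-|v^\dagger|)\wrt|\approximation|$, split over $B_R(\{x_1,\ldots,x_N\})$ and its complement, gives the first two bounds, and the third is obtained exactly as in the paper by writing $v(x_i)=v(x)+(v(x_i)-v(x))$ on each ball and absorbing the outside-ball mass and the quadratic correction into the first two estimates. The one step you flag as delicate closes immediately and needs no division by $|v(x)|$: since $|v(x_i)|=1$ and $|v|\leq1$ one has $|v(x_i)-v(x)|=1-v(x_i)v(x)\leq\mu\dist(x,x_i)^2$ on $B_R(\{x_i\})$, and the $\groundTruth$-part of the correction integral vanishes because the integrand is zero at $x_i$, so only $|\approximation|$ appears, exactly as in the paper.
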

\begin{proof}
Abbreviate $B^c=\R^d\setminus B_R(\{x_1,\ldots,x_N\})$ and let $\RadonNikodym{\approximation}{|\approximation|}$ denote the Radon--Nikodym derivative of $\approximation$ with respect to its total variation measure, then the first two inequalities follow from
\begin{multline*}
\kappa R^2|\approximation|(B^c)
+\kappa\sum_{i=1}^N\int_{B_R(\{x_i\})}\dist(x,x_i)^2\wrt|\approximation|(x)
\leq\int_{B^c}\RadonNikodym{\approximation}{|\approximation|}-v^\dagger\wrt\approximation(x)
+\sum_{i=1}^N\int_{B_R(\{x_i\})}\RadonNikodym{\approximation}{|\approximation|}-v^\dagger\wrt\approximation(x)\\
=\int_{\R^d}\RadonNikodym{\approximation}{|\approximation|}-v^\dagger\wrt\approximation(x)
=\|\approximation\|_{\M}-\langle v^\dagger,\approximation\rangle
=\|\approximation\|_{\M}-\|\groundTruth\|_{\M}-\langle v^\dagger,\approximation-\groundTruth\rangle
=\BregmanDistance{\|\cdot\|_\M}{v^\dagger}(\approximation,\groundTruth).
\end{multline*}
The third inequality is obtained as follows.
We have
\begin{equation*}
\sum_{i=1}^N|(\approximation-\groundTruth)(B_R(\{x_i\}))|
=\int_{B_R(\{x_1,\ldots,x_N\})}v\wrt(\approximation-\groundTruth)+\sum_{i=1}^N\int_{B_R(\{x_i\})}v(x_i)-v\wrt(\approximation-\groundTruth),
\end{equation*}
where the first summand can be estimated as
\begin{multline*}
\int_{B_R(\{x_1,\ldots,x_N\})}v\wrt(\approximation-\groundTruth)
=\langle v,\approximation-\groundTruth\rangle-\int_{B^c}v\wrt(\approximation-\groundTruth)\\
\leq\langle v,\approximation-\groundTruth\rangle+|\approximation-\groundTruth|(B^c)
\leq\langle v,\approximation-\groundTruth\rangle+\frac1{\kappa R^2}\BregmanDistance{\|\cdot\|_\M}{v^\dagger}(\approximation,\groundTruth),
\end{multline*}
while the second term is estimated from above via
\begin{equation*}
\sum_{i=1}^N\int_{B_R(\{x_i\})}v(x_i)-v\wrt(\approximation-\groundTruth)
\leq\sum_{i=1}^N\int_{B_R(\{x_i\})}\mu\dist(x,x_i)^2\wrt|\approximation|(x)
\leq\frac\mu\kappa\BregmanDistance{\|\cdot\|_\M}{v^\dagger}(\approximation,\groundTruth).
\qedhere
\end{equation*}
\end{proof}

We finally combine \cref{thm:unbalancedWasserstein,thm:massFromBregman,thm:BregmanEstimates}:
Via \cref{thm:unbalancedWasserstein} an unbalanced Wasserstein error estimate can be expressed with the help of terms
that can be estimated by Bregman distances via \cref{thm:massFromBregman},
which in turn  are estimated via \cref{thm:BregmanEstimates}.
In the case $\regularizer(\approximation)=\|\approximation\|_{\M}$ the combination is immediate,
however, to allow for a little more generality (which is required for the application to our setting) we shall assume
\begin{equation*}
\regularizer(\approximation)=\|\approximation_1\|_{\M}+\regularizer_2(\approximation_2),
\quad\text{where }\approximation=(\approximation_1,\approximation_2).
\end{equation*}

\begin{cor}[Unbalanced Wasserstein estimate for noisy reconstruction]\label{thm:noisyReconstructionEstimate}
Consider the choice $\regularizer(\approximation)=\regularizer(\approximation_1,\approximation_2)=\|\approximation_1\|_{\M}+\regularizer_2(\approximation_2)$.
Let $\groundTruth_1\in\Mp(\R^d)$ have support in $\{x_1,\ldots,x_N\}\subset\R^d$,
let $\approximation$ with $\approximation_1\in\Mp(\R^d)$ be the approximation to $\groundTruth$ obtained from \eqref{eqn:reconstructionProblem},
and let $\optimalDual,w\in Y^*$ satisfy
\begin{align*}
-K^*\optimalDual=-((K^*\optimalDual)_1,(K^*\optimalDual)_2)&\in\partial\regularizer(\groundTruth),\\
-(K^*w)_1&\in\partial\|\cdot\|_{\M}\left(\sum_{i=1}^N(\approximation_1-\groundTruth_1)(B_{R}(\{x_i\}))\delta_{x_i}\right),\\
|(K^*\optimalDual)_1(x)|&\leq1-\kappa\min\{R,\dist(x,\{x_1,\ldots,x_N\})\}^2,\\
(K^*w)_1(x_i)(K^*w)_1(x)&\geq1-\mu\dist(x,x_i)^2\quad\text{for all }x\in B_R(\{x_i\}),i=1,\ldots,N,
\end{align*}
for some $\mu,\kappa,R>0$ with $2R$ smaller than the minimum distance between the $x_i$. Then we have
\begin{align*}
|\approximation_1|(\R^d\setminus B_R(\{x_1,\ldots,x_N\}))
&\leq\frac1{2\alpha\kappa R^2}\left(3\delta+\fidelity{\noisyData}^*(2\alpha\optimalDual)+\fidelity{\noisyData}^*(-2\alpha\optimalDual)\right),\\
\sum_{i=1}^N\int_{B_R(\{x_i\})}\dist(x,x_i)^2\wrt|\approximation_1|(x)
&\leq\frac1{2\alpha\kappa}\left(3\delta+\fidelity{\noisyData}^*(2\alpha\optimalDual)+\fidelity{\noisyData}^*(-2\alpha\optimalDual)\right),\\
\sum_{i=1}^N|(\approximation_1-\groundTruth_1)(B_R(\{x_i\}))|
&\leq\frac{1+\mu R^2}{2\alpha\kappa R^2}\left(3\delta+\fidelity{\noisyData}^*(2\alpha\optimalDual)+\fidelity{\noisyData}^*(-2\alpha\optimalDual)\right)
+\langle(K^*w)_2,\approximation_2-\groundTruth_2\rangle\\
&\quad+\frac1{2\alpha}\left(4\delta+\fidelity{\noisyData}^*(2\alpha\optimalDual)+\fidelity{\noisyData}^*(-2\alpha\optimalDual)+\fidelity{\noisyData}^*(2\alpha w)+\fidelity{\noisyData}^*(-2\alpha w)\right).
\end{align*}
In particular, by \cref{thm:unbalancedWasserstein} this implies
\begin{multline*}
\unbalancedWasserstein[2]{R}(\approximation_1,\groundTruth_1)
\leq\tfrac12R^2\langle(K^*w)_2,\approximation_2-\groundTruth_2\rangle\\
+\frac{12+7\max\{\kappa,\mu\}R^2}{4\kappa\alpha}\left(\delta+\fidelity{\noisyData}^*(2\alpha\optimalDual)+\fidelity{\noisyData}^*(-2\alpha\optimalDual)+\fidelity{\noisyData}^*(2\alpha w)+\fidelity{\noisyData}^*(-2\alpha w)\right).
\end{multline*}
\end{cor}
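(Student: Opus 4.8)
The plan is to chain the three building blocks \cref{thm:BregmanEstimates}, \cref{thm:massFromBregman} and \cref{thm:unbalancedWasserstein}. First I would exploit the separable structure $\regularizer(\approximation)=\|\approximation_1\|_\M+\regularizer_2(\approximation_2)$: writing $-K^*\optimalDual=(-(K^*\optimalDual)_1,-(K^*\optimalDual)_2)$, the source condition $-K^*\optimalDual\in\partial\regularizer(\groundTruth)$ is equivalent to $-(K^*\optimalDual)_1\in\partial\|\cdot\|_\M(\groundTruth_1)$ together with $-(K^*\optimalDual)_2\in\partial\regularizer_2(\groundTruth_2)$, and accordingly the Bregman distance splits as $\BregmanDistance{\regularizer}{-K^*\optimalDual}(\approximation,\groundTruth)=\BregmanDistance{\|\cdot\|_\M}{-(K^*\optimalDual)_1}(\approximation_1,\groundTruth_1)+\BregmanDistance{\regularizer_2}{-(K^*\optimalDual)_2}(\approximation_2,\groundTruth_2)$ into two nonnegative summands. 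Hence $\BregmanDistance{\|\cdot\|_\M}{-(K^*\optimalDual)_1}(\approximation_1,\groundTruth_1)\le\BregmanDistance{\regularizer}{-K^*\optimalDual}(\approximation,\groundTruth)$, and \cref{thm:BregmanEstimates} bounds the right-hand side by $(3\delta+\fidelity{\noisyData}^*(2\alpha\optimalDual)+\fidelity{\noisyData}^*(-2\alpha\optimalDual))/(2\alpha)$.

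Next I would apply \cref{thm:massFromBregman} to $\approximation_1\in\Mp(\R^d)$ with the identifications $v^\dagger:=-(K^*\optimalDual)_1$ and $v:=-(K^*w)_1$; all hypotheses transcribe verbatim, noting that $v(x_i)v(x)=(K^*w)_1(x_i)(K^*w)_1(x)$ is unaffected by the sign flip and that $|\approximation_1|=\approximation_1$. Its first two conclusions are exactly the first two displayed estimates of the corollary after inserting the Bregman bound from the previous paragraph. For the third conclusion, \cref{thm:massFromBregman} carries the extra term $\langle v,\approximation_1-\groundTruth_1\rangle=\langle-(K^*w)_1,\approximation_1-\groundTruth_1\rangle$, which I would rewrite as $-\langle K^*w,\approximation-\groundTruth\rangle+\langle(K^*w)_2,\approximation_2-\groundTruth_2\rangle$ and then bound $-\langle K^*w,\approximation-\groundTruth\rangle$ by the third inequality of \cref{thm:BregmanEstimates} applied with $-w$ in place of $w$, which contributes $(4\delta+\fidelity{\noisyData}^*(2\alpha\optimalDual)+\fidelity{\noisyData}^*(-2\alpha\optimalDual)+\fidelity{\noisyData}^*(2\alpha w)+\fidelity{\noisyData}^*(-2\alpha w))/(2\alpha)$. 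This produces the third displayed estimate.

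Finally I would feed the three left-hand sides just obtained into \cref{thm:unbalancedWasserstein} with $p=q=2$, $\nu_1=\groundTruth_1$ (the discrete measure, supported in the $x_i$) and $\nu_2=\approximation_1$ — admissible since by hypothesis $2R$ is below the minimal gap of the $x_i$ — obtaining $\unbalancedWasserstein[2]{R}(\groundTruth_1,\approximation_1)\le\tfrac12R^2(A+B)+C_2$, which by symmetry of the unbalanced Wasserstein divergence equals $\unbalancedWasserstein[2]{R}(\approximation_1,\groundTruth_1)$. Substituting the three estimates and collecting terms, the remaining work is purely arithmetic. Abbreviating $S:=\delta+\fidelity{\noisyData}^*(2\alpha\optimalDual)+\fidelity{\noisyData}^*(-2\alpha\optimalDual)+\fidelity{\noisyData}^*(2\alpha w)+\fidelity{\noisyData}^*(-2\alpha w)$, one uses that every pair $\fidelity{\noisyData}^*(z)+\fidelity{\noisyData}^*(-z)$ is nonnegative (convexity of $\fidelity{\noisyData}^*$ together with $\fidelity{\noisyData}^*(0)=0$) to get $3\delta+\fidelity{\noisyData}^*(2\alpha\optimalDual)+\fidelity{\noisyData}^*(-2\alpha\optimalDual)\le 3S$ and $4\delta+\fidelity{\noisyData}^*(2\alpha\optimalDual)+\fidelity{\noisyData}^*(-2\alpha\optimalDual)+\fidelity{\noisyData}^*(2\alpha w)+\fidelity{\noisyData}^*(-2\alpha w)\le 4S$; the $D^{\|\cdot\|_\M}$-contributions then combine to the coefficient $(4+\mu R^2)\cdot 3$ of $S/(4\kappa\alpha)$ and the $F^*$-of-$w$ contribution to the coefficient $\kappa R^2\cdot 4$, and finally $12+3\mu R^2+4\kappa R^2\le 12+7\max\{\kappa,\mu\}R^2$ because $3\mu+4\kappa\le 7\max\{\kappa,\mu\}$. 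This yields precisely the asserted bound. I expect no conceptual obstacle here; the only step requiring a little care is this last bookkeeping, making sure the several loose bounds still land exactly on the clean constant $12+7\max\{\kappa,\mu\}R^2$, together with the correct sign handling in the $\langle v,\approximation_1-\groundTruth_1\rangle$ manipulation above.
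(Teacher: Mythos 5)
Your proposal is correct and follows essentially the same route as the paper's proof: choosing $v^\dagger=-(K^*w^\dagger)_1$ and $v=-(K^*w)_1$ in \cref{thm:massFromBregman}, bounding the partial Bregman distance by $\BregmanDistance{\regularizer}{-K^*w^\dagger}(\approximation,\groundTruth)$ via separability, using the identity $\langle v,\approximation_1-\groundTruth_1\rangle=\langle -K^*w,\approximation-\groundTruth\rangle+\langle(K^*w)_2,\approximation_2-\groundTruth_2\rangle$ together with \cref{thm:BregmanEstimates}, and then inserting the three mass estimates into \cref{thm:unbalancedWasserstein}. The only difference is that you carry out explicitly the final constant bookkeeping ($12+3\mu R^2+4\kappa R^2\leq 12+7\max\{\kappa,\mu\}R^2$) and the symmetry/sign details that the paper leaves implicit, and these check out.
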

\begin{proof}
This immediately follows from choosing $v^\dagger=(-K^*\optimalDual)_1$, $v=(-K^*w)_1$ in \cref{thm:massFromBregman}
and noting that $\BregmanDistance{\|\cdot\|_{\M}}{v^\dagger}(\approximation_1,\groundTruth_1)\leq\BregmanDistance{\regularizer}{-K^*\optimalDual}(\approximation,\groundTruth)$
as well as $\langle v,\approximation_1-\groundTruth_1\rangle=\langle-K^*w,\approximation-\groundTruth\rangle+\langle(K^*w)_2,\approximation_2-\groundTruth_2\rangle$ so that \cref{thm:BregmanEstimates} can be inserted.
\end{proof}

This way, the problem of proving the error estimate is reduced to the problem of constructing $\optimalDual$ and $w$.
Note for instance that for $\regularizer_2=0$ the condition $-(K^*\optimalDual)_2\in\partial\regularizer_2(\groundTruth_2)$ implies $(K^*\optimalDual)_2=0$
so that the corresponding term in the estimate of \cref{thm:noisyReconstructionEstimate} vanishes.

\begin{rem}[Adaptation for nonnegative measures and compact domains]\label{rem:nonnegativeMeasures}
It is straightforward to check that all arguments stay the same when replacing $\R^d$ with a compact domain $A$ and when taking the slightly different regularizer choice
\begin{equation*}
\regularizer(\approximation)=\regularizer(\approximation_1,\approximation_2)=\|\approximation_1\|_++\regularizer_2(\approximation_2)
\quad\text{with }\|\approximation_1\|_+=\begin{cases}\|\approximation_1\|_{\M}&\text{if }\approximation_1\in\Mp(A),\\\infty&\text{else.}\end{cases}
\end{equation*}
For this choice one obtains the estimates of \cref{thm:noisyReconstructionEstimate} under the slightly weaker conditions
\begin{align*}
-K^*\optimalDual=-((K^*\optimalDual)_1,(K^*\optimalDual)_2)&\in\partial\regularizer(\groundTruth),\\
-(K^*w)_1&\in\partial\|\cdot\|_{\M}\left(\sum_{i=1}^N(\approximation_1-\groundTruth_1)(B_{R}(\{x_i\}))\delta_{x_i}\right),\\
-(K^*\optimalDual)_1(x)&\leq1-\kappa\min\{R,\dist(x,\{x_1,\ldots,x_N\})\}^2,\\
(K^*w)_1(x_i)(K^*w)_1(x)&\geq1-\mu\dist(x,x_i)^2\quad\text{for all }x\in B_R(\{x_i\}),i=1,\ldots,N.
\end{align*}
\end{rem}

\removed{\subsection{Construction of dual variables for \texorpdfstring{\ref{eq:dim_reduced_product_only}}{PI-P\_alpha(f)}}\label{sec:dualVariables}}
\added{\subsection{Construction of dual variables for a product-topology setting}\label{sec:dualVariables}}

In this and the next \namecref{sec:dualConstructionII} we prove error estimates for the particle reconstruction from noisy observations.
As in \cref{sec:exactRecovery} we denote the ground truth particle configuration by $\lambda^\dagger\in\Mp(\domdyn)$
with $\lambda^\dagger=\sum_{i=1}^Nm_i\delta_{(x_i,v_i)}$,
with snapshots $u^\dagger_t=\mv^d_t\lambda^\dagger$, $t\in\alltimes$,
and with position-velocity projections $\liftVar^\dagger=\Rj\lambda^\dagger$ or $\liftVar^\dagger_\theta=\Rj_\theta\lambda^\dagger$, $\theta\in\alldirs$.
The noisefree observations are denoted $\data^\dagger_t=\fopstat_t\mv^d_t\lambda^\dagger$, the noisy ones $\data^\delta_t$, $t\in\alltimes$, with noise strength
\begin{equation}\phantomsection\label{eqn:noiseStrength}
\delta=\tfrac12\sum_{t\in\alltimes}\|\data_t^\delta-\data_t^\dagger\|_H^2.
\end{equation}
The main result of these \namecrefs{sec:dualVariables} is the below stated more quantitative version of \cref{thm:reconstructionErrorEstimate}.
Its error estimates depend on how close we are to a coincidence or a ghost particle.
This proximity to a degenerate configuration is quantified via the notion of $\Delta$-coincidences and $\Delta$-ghost particles
that will be introduced in \cref{def:DeltaGhost,def:DeltaGhostII}\added{\label{txt:itemeProofLogic7}\lookUp{\ref{iteme}},
in which $\Delta$ can be thought of as the smallest necessary shift of the particles to produce a coincidence or a ghost particle}.
Vanishing $\Delta$ indicates a degenerate configuration, and in this case the error estimates become void
(since the error is in those cases measured in the trivial unbalanced Wasserstein divergence $\unbalancedWasserstein[2]{0}\equiv0$).
Finally note that those snapshots that cannot directly be reconstructed from measurements will instead be reconstructed from the position-velocity projections $\liftVar_\theta$
with $\theta$ from a certain set of \emph{good directions} $\gooddirs\subset\alldirs$.

\added{
\lookUp{\ref{item3}}We start in this subsection with a derivation of error estimates for a product-topology version of \problemTag[\alpha]{\data} given as
\begin{equation*} \label{eq:dim_reduced_product_only_noisy} \tag*{{\PIProblemTagNoLink[\alpha]{\data}}}
\min_{ \substack{ \liftVar \in \Mp( \projdomdyn)^{\alldirs}  \\ u\in \Mp(\domstat)^{\domt} }}\!
\| \liftVar_{\hat\theta} \|_\M
\!+\!\frac1{2\alpha}\!\sum_{t\in\alltimes}\|\fopstat_tu_t\!-\!\data_t\|_H^2
\quad \text{ such that }
\mv_t \liftVar_\theta = \Rs_\theta u_t \text{ for all }t\!\in\!\domt, \, \theta\!\in\!\alldirs.
\end{equation*}
Regarding the well-posedness and the equivalence of \PInoisyProblemTag[\sqrt\delta]{\data^\delta} to  \problemTag[\sqrt\delta]{\data^\delta} we refer to \ref{sec:alternativeFormulations}.
The reason for using this version first is that it simplifies the construction of dual variables. An extension of the results to \problemTag[\sqrt\delta]{\data^\delta} will subsequently be carried out in the next subsection below.
The error estimation result for both models, to be obtained in \cref{sec:dualVariables,sec:dualConstructionII}, can be summarized as follows.
}

\begin{thm}[Reconstruction error estimates]\label{thm:errorEstimatesSummary}
Let $(u,\mu)$ be the solution of \problemTag[\sqrt\delta]{\data^\delta} or \PInoisyProblemTag[\sqrt\delta]{\data^\delta}, and let \added{\label{txt:itemeReconstructible4}\lookUp{\ref{iteme}}$\emptyset\neq\goodtimes\subset\alltimes$ such that $u_t^\dagger$ is stably reconstructible for all $t\in\goodtimes$}.
Pick some $\gooddirs\subset\alldirs$.
Then there exist constants $R,C_1>0$ (depending on $\lambda^\dagger$, $\goodtimes$) and $C_2>0$ (depending on $\lambda^\dagger$, $\gooddirs$) such that
\begin{enumerate}
\item\label{enm:goodSnapshotEstimate}
we have $\unbalancedWasserstein[2]{R}(u_t,u_t^\dagger)\leq C_1\sqrt\delta$ for all $t\in\goodtimes$,
\item\label{enm:liftVarEstimate}
we have $\unbalancedWasserstein[2]{R_\theta}(\liftVar_\theta,\liftVar_\theta^\dagger)\leq\frac{C_1}{R_\theta^2}\sqrt\delta$ for all $\theta\in\alldirs$ ($\mdirs$-almost all $\theta\in\alldirs$ in case of model \problemTag[\sqrt\delta]{\data^\delta}), where $R_\theta=\min\{R,\Delta_\theta/3\}/\sqrt{1+\max_{t\in\goodtimes}t^2}$ and $\Delta_\theta\geq0$ is such that $\liftVar_\theta^\dagger$ contains no $\Delta_\theta$-coincidence or $\Delta_\theta$-ghost particle with respect to $\goodtimes$ according to \cref{def:DeltaGhost},
\item\label{enm:snapshotEstimate}
we have $\unbalancedWasserstein[2]{R_t}(u_t,u_t^\dagger)\leq C_1C_2(t^2+\frac1{R_t^4})\sqrt\delta$ for all $t\in\domt$,
where $R_t=\min\{R,\Delta_t/({3\sqrt{1+t^2}})\}$ and $\Delta_t\in[0,\inf_{\theta\in\gooddirs}\Delta_\theta]$ is such that $u_t^\dagger$ contains no $\Delta_t$-coincidence or $\Delta_t$-ghost particle with respect to $\gooddirs$ according to \cref{def:DeltaGhostII}.
\end{enumerate}
\end{thm}

\Cref{thm:errorEstimatesSummary} implies \cref{thm:reconstructionErrorEstimate}:
Given $\goodtimes$, the minimum $\Delta$ such that $\liftVar^\dagger_\theta$ contains a $\Delta$-coincidence or $\Delta$-ghost particle with respect to $\goodtimes$
is continuous in $\theta$, thus it assumes a minimum $M$ at some $\theta'$ in the compact $\overline\alldirs$.
This minimum is positive since $\liftVar^\dagger_{\theta'}$ contains no coincidence or ghost particle.
Hence, in \cref{thm:errorEstimatesSummary}\eqref{enm:liftVarEstimate} we may take $\Delta_\theta=\frac M2>0$ for all $\theta\in\overline\alldirs$,
which then yields \cref{thm:reconstructionErrorEstimate}\eqref{enm:liftVarEstimateIntro}.
To deduce \cref{thm:reconstructionErrorEstimate}\eqref{enm:snapshotEstimateIntro}
from \cref{thm:errorEstimatesSummary}\eqref{enm:goodSnapshotEstimate} and \eqref{enm:snapshotEstimate}
it suffices to show that $t^2$ can be bounded from above and $\Delta_t$ away from zero, uniformly for $t\in\overline\domt$.
Boundedness of $t^2$ follows from compactness of $\overline\domt$, so it remains to show uniform positivity of $\Delta_t$.
Picking $\gooddirs=\overline\alldirs$, let $\Delta$ be minimal such that $u^\dagger_t$ contains a $\Delta$-coincidence or $\Delta$-ghost particle with respect to $\gooddirs$.
As before, $\Delta$ depends continuously on $t$ and thus attains its minimum $N$ at some $t'$ in the compact $\overline\domt$.
That the particles have positive mutual distance in the projections $(\Rs_\theta u^\dagger_{t'})_{\theta\in\overline\alldirs}$
and the support $\supp u^\dagger_{t'}$ can be uniquely reconstructed from $(\supp(\Rs_\theta u^\dagger_{t'}))_{\theta\in\overline\alldirs}$
implies that $u^\dagger_{t'}$ contains no $0$-coincidence or $0$-ghost particle, hence $N>0$ and we may choose $\Delta_t=\frac N2>0$ for all $t\in\overline\domt$, as desired.

\removed{In this \namecref{sec:dualVariables} we prove the result for the (in this context simpler) model \PInoisyProblemTag[\sqrt\delta]{\data^\delta};
the proof will be extended to the setting of \problemTag[\sqrt\delta]{\data^\delta} in \cref{sec:dualConstructionII}. We}
\added{For proving our results on convergence rates we} will exploit that the optimization problem has multiple equivalent formulations due to our constraint of nonnegative masses.
Indeed, we may write the regularizer as the norm of the snapshots or equivalently the norm of the auxiliary variable $\liftVar$.
As seen in the previous section, the derivation of error estimates
requires the construction of appropriate dual variables, which in our case is somewhat involved.
In particular, to obtain error estimates on the particle configurations at all time points outside $\goodtimes$,
the error information needs to be passed from the snapshots at times $\goodtimes$ to the variable $\liftVar$ and then back again,
\added{\label{txt:itemeProofLogic8}\lookUp{\ref{iteme}}just like the procedure for exact reconstruction in \cref{sec:exact_recon_dim_reduced},}
which is reflected in a stepwise construction of the corresponding dual variables.
In particular, we will reduce the derivation of error estimates on $(u,\liftVar)$ to the derivation of error estimates for the much simpler static reconstruction problem
of finding an approximation $u_t$ to $u_t^\dagger$ from a noisy observation of $\fopstat_tu_t^\dagger$.
This way the role of the observation operator $\fopstat_t$ is separated from the role of combining measurements from multiple time points.
An error analysis of the \emph{static problem} is a classic superresolution problem and has for a particular choice of $\fopstat_t$ already been performed by Cand\`es and Fernandez-Granda \cite{Candes2013}
(even though they did not use our language of unbalanced Wasserstein distances).
Our contribution, in contrast, is to show how the error estimates for the static problem transfer to error estimates of the \emph{dynamic problem}.
Our analysis also applies to the reconstruction model \eqref{eq:lifted_exact} by Alberti et al.\ \cite{AlbertiAmmariRomeroWintz2019}, which forms the motivation of our work,
but we will only perform it explicitly for our novel reconstruction model \ref{eq:dim_reduced_product_only_noisy},
since this is the more complicated case and has the dimensionality advantage over \eqref{eq:lifted_exact}.
\added{\label{txt:itemeReconstructible3}\lookUp{\ref{iteme}}%
In the following we will say the measure $\nu^\dagger$ from \cref{def:reconstructible} is \emph{$(\kappa,\mu,R)$-stably reconstructible via $v^\dagger,v^s$}
to indicate the corresponding dual variables and values of $\kappa$, $\mu$, and $R$.
The static problem admits an error analysis if the measure to be reconstructed is stably reconstructible, as stated next.
}%

\removed{
If the static problem admits an error analysis we shall call the problem \added{\lookUp{\ref{iteme}}stably} reconstructible according to the following.

\begin{defn}[\added{Stably} reconstructible measure]\label{def:reconstructible}
We call a measure $\nu^\dagger=\sum_{i=1}^Nm_i\delta_{x_i}\in\Mp(\domstat)$ \emph{\added{stably} reconstructible} (from measurements via $\fopstat_t$),
if one can find $\kappa,\mu,R>0$ ($R$ smaller than the minimum point separation in $\nu^\dagger$)
such that for any $N$-tuple $s\in\{-1,1\}^N$ and associated measure $\nu^s=\sum_{i=1}^Ns_i\delta_{x_i}\in\M(\domstat)$ there exist dual variables $v^\dagger,v^s$ with
$-\fopstat_t^*v^\dagger\in\partial\|\cdot\|_{\M}(\nu^\dagger)$, $-\fopstat_t^*v^s\in\partial\|\cdot\|_{\M}(\nu^s)$ and
\begin{align*}
-\fopstat_t^*v^\dagger(x)&\leq1-\kappa\min\{R,\dist(x,\{x_1,\ldots,x_N\})\}^2
&&\text{for all }x\in\domstat,\\
\fopstat_t^*v^s(x_i)\fopstat_t^*v^s(x)&\geq1-\mu\dist(x,x_i)^2
&&\text{for all }x\in B_R(\{x_i\}),\,i=1,\ldots,N.
\end{align*}
We will say $\nu^\dagger$ is \emph{$(\kappa,\mu,R)$-\added{stably} reconstructible via $v^\dagger,v^s$} to indicate the corresponding dual variables and values of $\kappa$, $\mu$, and $R$.
\end{defn}

Of course, the terminology comes from the fact that a reconstructible measure can \added{stably} be reconstructed from its measurement
\added{\label{txt:itemeProofLogic9}\lookUp{\ref{iteme}}as stated next.}
}%

\begin{thm}[\added{Stably} reconstructible measure]\label{thm:staticErrorEstimate}
If $\nu^\dagger$ is $(\kappa,\mu,R)$-\added{stably} reconstructible via $v^\dagger,v^s$ and $\noisyData$ is an approximation to $\noiseFreeData=\fopstat_t\nu^\dagger$ with $\fidelity{\noisyData}(\noiseFreeData)\leq\delta$,
then the solution to
\begin{equation*}
\min_{\nu^\delta}\|\nu^\delta\|_++\frac1\alpha\fidelity{\noisyData}(\fopstat_t\nu^\delta)
\end{equation*}
satisfies
\begin{equation*}
\unbalancedWasserstein[2]{R}(\nu^\delta,\nu^\dagger)
\leq\tfrac{12\!+\!7\max\{\kappa,\mu\}R^2}{4\kappa\alpha}\left(\delta\!+\!\fidelity{\noisyData}^*(2\alpha v^\dagger)\!+\!\fidelity{\noisyData}^*(-2\alpha v^\dagger)\!+\!\!\!\max_{s\in\{-1,1\}^N}\!\!\fidelity{\noisyData}^*(2\alpha v^s)\!+\!\fidelity{\noisyData}^*(-2\alpha v^s)\right).
\end{equation*}
In particular, for $\delta=0$ the limit $\alpha\to0$ implies $\nu^\delta=\nu^\dagger$.
\end{thm}
\begin{proof}
This is just the special case of \cref{rem:nonnegativeMeasures} for $\approximation_1=\nu^\delta$, $K=\fopstat_t$, $\optimalDual=v^\dagger$, $w=v^s$ with $s_i=\sgn((\nu^\delta-\nu^\dagger)(B_R(\{x_i\})))$,
$\approximation_2$ and $\regularizer_2$ nonexistent.
\end{proof}

Depending on the measurement operator, simple criteria for \added{stable} reconstructibility can be found.
\added{The following} example in one space dimension is proven in \cite[Lem.\,2.4 \& 2.5]{Candes2013}
(higher-dimensional versions are also provided in that work, albeit without detail).

\begin{thm}[\added{Stably} reconstructible measure for truncated Fourier series, Cand\`es \& Fernandez-Granda 2013]\label{thm:Candes2013}
Let $\fopstat_t\added{=\ftrunc}$ be the truncated Fourier series on $\domstat=\sphere^1$ which only keeps coefficients up to a maximum absolute frequency $\maxFrequency>0$.
There exist positive constants $c_0,c_1,c_2,c_3>0$ such that,
if the Dirac locations in $u^\dagger=\sum_{i=1}^Nm_i\delta_{x_i}\in\Mp(\domstat)$ have distance no smaller than $\minSeparation=c_0/\maxFrequency$ to each other,
then $u^\dagger$ is $(c_1\maxFrequency^2,c_2\maxFrequency^2,c_3/\maxFrequency)$-\added{stably} reconstructible.
\end{thm}

Before we begin the derivation of error estimates for \PInoisyProblemTag[\sqrt\delta]{\data^\delta}, let us introduce the abbreviations
\begin{align*}
\Rfp&:\M(\domstat)^\domt\to\M(\projdomstat)^{\alldirs\times\domt},&
\Rfp u&=(\Rs_\theta u_t)_{\theta\in\alldirs,t\in\domt},&\\
\amvp[1]&:\M(\projdomdyn)^\alldirs\to\M(\projdomstat)^{\alldirs\times\domt},&
\amvp[1]\liftVar&=(\mv^1_t\liftVar_\theta)_{\theta\in\alldirs,t\in\domt},&\\
\fopdynp&:\M(\domstat)^\domt\to H^{|\alltimes|},&
\fopdynp u&=(\fopstat_t u_t)_{t\in\alltimes},&\\
\iota_+&:\prod_{i\in I}\M(A_i)\to\{0,\infty\},&
\iota_+(\nu)&=\begin{cases}
0&\text{if }\nu_i\in\Mp(A_i)\text{ for all }i\in I,\\
\infty&\text{else,}
\end{cases}&
\end{align*}
where $I$ denotes an arbitrary index set and $A_i$ are Lipschitz domains.
Note that the above linear operators are all continuous with respect to the product topology
(a map $M:\prod_{i\in I}X_i\to\prod_{j\in J}Y_j$ between product spaces with the product topology is continuous
if and only if $p_j\circ M$ is continuous for all projections $p_j:\prod_{j\in J}Y_j\to Y_j$,
and for all above operators $p_j\circ M$ is the composition of a continuous projection $q_i:\prod_{i\in I}X_i\to X_i$ with a continuous linear operator between Banach spaces).
Furthermore, $\iota_0$ shall be the convex indicator function of the set $\{0\}$.
To translate the setting from the previous section to \ref{eq:dim_reduced_product_only_noisy} we choose
\begin{gather*}
\groundTruth\equiv(u^\dagger,\liftVar^\dagger),\,
\approximation\equiv(u,\liftVar)\in\M(\domstat)^{\domt}\times\M(\projdomdyn)^{\alldirs}\equiv X,\\
K\equiv\left(\begin{smallmatrix}\fopdynp&0\\\Rfp&-\amvp[1]\end{smallmatrix}\right):X\to H^{|\alltimes|}\times\M(\projdomstat)^{\alldirs\times\domt}\equiv Y,\\
\fidelity{\data}(a,b)=\tfrac12\sum_{t\in\alltimes}\|a_t-\data_t\|_H^2+\iota_{0}(b),\,
\regularizer(u,\liftVar)=\iota_+(u,\liftVar)+\ast,
\end{gather*}
where $\ast$ will either be $\|\liftVar_\theta\|_{\M}$ or $\|u_t\|_{\M}$ for particular choices of $\theta$ and $t$,
depending on what is most convenient for the desired estimate
(by \cref{prop:radon_properties,prop:move_properties} and the constraint $\Rfp u=\amvp[1]\liftVar$, all these choices are equivalent).
The corresponding predual spaces are given by
\begin{align*}
X^*&=\{(\phi,\psi)\in C(\domstat)^\domt\times C(\projdomdyn)^\alldirs\,|\text{ only finitely many }\phi_t,\psi_\theta\text{ are nonzero}\},\\
Y^*&=\{(v,q)\in(H^*)^{|\alltimes|}\times C(\projdomstat)^{\alldirs\times\domt}\,|\text{ only finitely many }q_{\theta,t}\text{ are nonzero}\},
\end{align*}
endowed with the final topology induced by the inclusions
$\phi_t\mapsto(\phi,\psi)\in X^*$ with $\phi_{\tilde t}=0$ for all $\tilde t\neq t$,
$\psi_\theta\mapsto(\phi,\psi)\in X^*$ with $\psi_{\tilde\theta}=0$ for all $\tilde\theta\neq\theta$
and analogously for $Y^*$
(it is straightforward to see that any continuous linear functional on $X^*$ or $Y^*$ is an element of $X$ or $Y$).
Note that above we slightly changed our notation compared to the previous section in that $\data$ here only denotes the first part of $K\groundTruth$.
This change has no consequence for the validity of the results and arguments,
hence we decided not to complicate the notation of the previous section further by directly including this from the start.
This choice of fidelity has Legendre--Fenchel conjugate $\fidelity{\data}^*(w_1,w_2)=\sum_{t\in\alltimes}\tfrac12\|(w_1)_t\|_{H^*}^2+\langle(w_1)_t,\data_t\rangle$, thus
\begin{equation}\label{eqn:dualFidelity}
\fidelity{\data}^*(2\alpha w)+\fidelity{\data}^*(-2\alpha w)=4\alpha^2\sum_{t\in\alltimes}\|(w_1)_t\|_{H^*}^2=:4\alpha^2|w_1|^2.
\end{equation}
For easier reference let us also recall the following subgradients,
\begin{align*}
w\in\partial\|\cdot\|_{+}(\nu)
\quad&\Leftrightarrow\quad
w\leq1\text{ everywhere, }w=1\text{ on }\supp\nu,\\
w\in\partial\iota_{+}(\nu)
\quad&\Leftrightarrow\quad
w\leq0\text{ everywhere, }w=0\text{ on }\supp\nu.
\end{align*}
\added{\label{txt:itemeProofLogic10}\lookUp{\ref{iteme}}Just like in \cref{sec:exact_recon_dim_reduced}
we} begin with estimating the reconstruction properties \added{only} for those snapshots that are \added{stably} reconstructible:
\added{We show that the error estimate from \cref{thm:staticErrorEstimate} is not hindered by the dynamic setting}.

\begin{thm}[Error estimates for good snapshots]\label{thm:errorGoodSnapshots}
Let $t\in\alltimes$ be such that $u_t^\dagger$ is $(\kappa,\mu,R)$-\added{stably} reconstructible via $v^\dagger,v^s$ from measurements with $\fopstat_t$.
Then the solution $(u,\liftVar)$ to \PInoisyProblemTag[\sqrt\delta]{\data^\delta} with $\alpha=\sqrt\delta$ satisfies
\begin{align*}
\unbalancedWasserstein[2]{R}(u_t,u_t^\dagger)
&\leq\frac{12\!+\!7\max\{\kappa,\mu\}R^2}{4\kappa}\left(1+4|v^\dagger|^2+\max_{s\in\{-1,1\}^N}4|v^s|^2\right)\sqrt\delta.
\end{align*}
\end{thm}
\begin{proof}
We have $u_t^\dagger=\sum_{i=1}^Nm_i\delta_{x_i+tv_i}$.
Set $s=(\sgn((u_t-u_t^\dagger)(B_R(\{x_i+tv_i\}))))_{i=1,\ldots,N}$ and let $v^\dagger$ and $v^s$ be the dual variables from \cref{def:reconstructible} (with $\nu^\dagger=u_t^\dagger$).
In order to apply \cref{rem:nonnegativeMeasures} we split $\approximation=(u,\liftVar)$ into $\approximation_1=u_t$ and $\approximation_2=((u_{\tilde t})_{\tilde t\neq t},\liftVar)$
and consider the regularizer
$$\regularizer(u,\liftVar)=\|u_t\|_++\iota_+((u_{\tilde t})_{\tilde t\neq t},\liftVar).$$
Then the choice $\optimalDual={\tilde v^\dagger\choose0}$, $w={\tilde v\choose0}$ satisfies the conditions of \cref{rem:nonnegativeMeasures},
where $\tilde v^\dagger_t=v^\dagger$, $\tilde v_t=v^s$ and $\tilde v^\dagger_{\tilde t}=\tilde v_{\tilde t}=0$ for all $\tilde t\neq t$.
Furthermore, $(K^*w)_2=0$ so that the result follows from \cref{thm:noisyReconstructionEstimate}.
\end{proof}

\added{\lookUp{\ref{iteme}}In analogy to \cref{sec:exact_recon_dim_reduced},
from the error estimates for the \added{stably} reconstructible snapshots we cannot directly obtain error estimates for all other snapshots
since all snapshots are only indirectly coupled via the position-velocity projections.
We therefore first} use the just constructed dual variables to obtain dual variables for error estimates on $\liftVar$.
Here we will also obtain an unbalanced Wasserstein bound;
its maximum transport radius $R$ will depend on how close the configuration is to producing a coincidence or a ghost particle in the following sense.

\begin{defn}[$\Delta$-coincidence and $\Delta$-ghost particle]\label{def:DeltaGhost}
Let $\nu=\sum_{i=1}^Nm_i\delta_{(x_i,v_i)}\in\Mp(\R^n\times\R^n)$ represent a set of particles and $\goodtimes\subset\R$ a set of times.
\begin{enumerate}
\item
Configuration $\nu$ contains a \emph{$\Delta$-coincidence} with respect to $\goodtimes$
if two distinct particles $i\neq j$ come $\Delta$-close at some time $t\in\goodtimes$, that is, $\dist(x_j+tv_j,x_i+tv_i)\leq\Delta$ .
\item
Configuration $\nu$ contains a \emph{$\Delta$-ghost particle} at $(x,v)\in\R^n\times\R^n$ with respect to $\goodtimes$
if at each $t\in\goodtimes$ there is a particle $i$ (not the same for all $t\in\goodtimes$) with $\dist(x+tv,x_i+tv_i)\leq\Delta$.
\end{enumerate}
\end{defn}

A $\Delta$-ghost particle is thus a nonexistent particle which would be consistent with all snapshots if the finest measurement resolution at the different time points were $\Delta$.
Plain ghost particles are $0$-ghost particles in this sense, plain coincidences are $0$-coincidences.

\begin{thm}[Error estimates for $\liftVar$]\label{thm:estimateMu}
Let $\theta\in\alldirs$ and assume there is $\goodtimes\subset\alltimes$
such that $u_t^\dagger$ is $(\hat\kappa,\hat\mu,\hat R)$-\added{stably} reconstructible via $\hat v_t^\dagger,\hat v^s_t$ from measurements with $\fopstat_t$ for all $t\in\goodtimes$.
Then the solution $(u,\liftVar)$ to \PInoisyProblemTag[\sqrt\delta]{\data^\delta} satisfies
\begin{align*}
\unbalancedWasserstein[2]{R}(\liftVar_\theta,\liftVar_\theta^\dagger)
&\leq\frac{12+7\kappa R^2}{4\kappa}\left(1+4\max_{t\in\goodtimes}|\hat v_t^\dagger|^2\right)\sqrt\delta
+\frac1{2R^2}\max_{t\in\goodtimes}\unbalancedWasserstein{R}(u_t,u^\dagger_t).
\end{align*}
for $\kappa=\sigma\hat\kappa$ and $R=\min\{\hat R,\minSeparation/3,\ghostDistance\}/\sqrt{1+\max_{t\in\goodtimes}t^2}$,
where $\sigma$ is the smallest eigenvalue of $\frac1{\card\goodtimes}\sum_{t\in\goodtimes}{1\choose-t}\otimes{1\choose-t}$
and $\minSeparation,\ghostDistance\geq0$ are such that $\liftVar_\theta^\dagger$ contains no $\minSeparation$-coincidence or $\ghostDistance$-ghost particle with respect to $\goodtimes$.
\end{thm}
\begin{proof}
This time split $\approximation=(u,\liftVar)$ into $\approximation_1=\gamma_\theta$ and $\approximation_2=(u,(\liftVar_{\tilde\theta})_{\tilde\theta\neq\theta})$
and consider the regularizer
$$\regularizer(u,\liftVar)=\|\liftVar_\theta\|_++\iota_+(u,(\liftVar_{\tilde\theta})_{\tilde\theta\neq\theta}).$$
In order to apply \cref{rem:nonnegativeMeasures} we need to construct appropriate dual variables $\optimalDual$ and $w$.
To this end define $v^\dagger\in(H^*)^{|\alltimes|}$
via $v^\dagger_t=\hat v^\dagger_t/\card{\goodtimes}$ for all $t\in\goodtimes$ and $v^\dagger_{t}=0$ else.
Furthermore define $q^\dagger\in C(\projdomstat)^{\alldirs\times\domt}$ as
\begin{equation*}
q^\dagger_{\theta,t}(x)=(1-\hat\kappa\min\{\bar R,\dist(x,\supp\Rs_\theta u_t^\dagger)\}^2)/\card{\goodtimes}
\quad\text{for }t\in\goodtimes
\end{equation*}
and $q^\dagger_{\tilde\theta,\tilde t}=0$ for $\tilde t\notin\goodtimes$ or $\tilde\theta\neq\theta$,
where $\bar R=\min\{\hat R,\minSeparation/3,\ghostDistance\}$.
Due to $\bar R\leq\minSeparation/3$, each $q_{\theta,t}^\dagger$ contains $\card\goodtimes$ distinct parabolas of width $2\bar R$.
Also, let $s_i=\sgn(\liftVar_\theta-\liftVar^\dagger_\theta)(B_R(\{(x_i\cdot\theta,v_i\cdot\theta)\}))$, $i=1,\ldots,N$, and define $q\in C(\projdomstat)^{\alldirs\times\domt}$
as $q_{\tilde\theta,\tilde t}=0$ for $\tilde t\notin\goodtimes$ or $\tilde\theta\neq\theta$ and, for $t\in\goodtimes$, as
\begin{equation*}
q_{\theta,t}(x)=s_i/\card{\goodtimes}
\quad\text{if }x\in B_{\bar R}(\{\theta\cdot(x_i+tv_i)\}),
\end{equation*}
continuously extended onto the rest of $\projdomstat$ with absolute value no larger than $1/\card{\goodtimes}$ (which again is possible due to $\bar R\leq\minSeparation/3$).
Now $\optimalDual={v^\dagger\choose q^\dagger}$ and $w={0\choose q}$ satisfy the conditions of \cref{rem:nonnegativeMeasures} with $\mu=0$.
Indeed, due to $\bar R\leq\hat R$ we have $-\Rs_\theta^*q^\dagger_{\theta,t}\leq\fopstat_t^*v^\dagger_t$ for all $t\in\goodtimes$ by construction (with equality on the support of $u_t^\dagger$) so that $\fopdynp^*v^\dagger+\Rfp^*q^\dagger\geq0$, thus
\begin{equation}\label{eqn:propertyDualLiftVar}
-(\fopdynp^*v^\dagger+\Rfp^*q^\dagger)\in\partial\iota_{+}.
\end{equation}
Furthermore, by construction, in the neighbourhood
\begin{equation*}
\mathcal N_i=\bigcap_{t\in\goodtimes}\textstyle\left\{{x\choose v}\in\R^2\,\middle|\,-\bar R\leq{1\choose t}\cdot\left[{x\choose v}-{x_i\cdot\theta\choose v_i\cdot\theta}\right]\leq\bar R\right\}
\end{equation*}
of $(\theta\cdot x_i,\theta\cdot v_i)$ we have
\begin{equation*}
((\amvp[1])^*q^\dagger)_\theta(x,v)=\sum_{t\in\goodtimes}\mv^1_tq_{\theta,t}^\dagger(x,v)=1-\hat\kappa(x-\theta\cdot x_i,v-\theta\cdot v_i)Q{x-\theta\cdot x_i\choose v-\theta\cdot v_i}
\end{equation*}
for the positive definite matrix $Q=\frac1{\card\goodtimes}\sum_{t\in\goodtimes}{1\choose-t}\otimes{1\choose-t}$ so that from $\kappa\leq\sigma\hat\kappa$ we obtain
\begin{equation*}
((\amvp[1])^*q^\dagger)_\theta(x,v)\leq1-\kappa\dist((x,v),\supp{\liftVar_\theta^\dagger})^2
\end{equation*}
in the neighbourhood $\bigcup_{i=1}^N\mathcal N_i$ of $\supp\liftVar_\theta^\dagger$.
In particular, $((\amvp[1])^*q^\dagger)_\theta\in\partial\|\cdot\|_{+}(\liftVar^\dagger_\theta)$ (note that it is everywhere bounded by one).
Furthermore, due to $\bar R\leq\ghostDistance$ there are no regions other than the $\mathcal N_i$ in which $((\amvp[1])^*q^\dagger)_\theta=\sum_{t\in\goodtimes}\mv^1_tq_{\theta,t}^\dagger$ can be greater than $1-\hat\kappa\bar R^2$.
Since $B_R(\{x_i\})\subset\mathcal N_i$ for the choice $R=\bar R/\sqrt{1+\max_{t\in\goodtimes}t^2}$ we thus actually have
\begin{equation}\label{eqn:dualVariableMuDecrease}
((\amvp[1])^*q^\dagger)_\theta(x,v)\leq1-\kappa\min\{R,\dist((x,v),\supp{\liftVar^\dagger_\theta})\}^2.
\end{equation}
Summarizing, $\optimalDual$ satisfies the conditions of \cref{rem:nonnegativeMeasures}.
Finally, by the same argument,
\begin{gather*}
\textstyle((\amvp[1])^*q)_\theta\in\partial\|\cdot\|_{\M}\left(\sum_{(x,v)\in\supp\liftVar_\theta^\dagger}(\liftVar_\theta-\liftVar^\dagger_\theta)(B_R(\{(x,v)\}))\delta_{(x,v)}\right)\\
\text{and }((\amvp[1])^*q)_\theta(x,v)=s_i\quad\text{for }(x,v)\in B_R(\{(x_i\cdot\theta,v_i\cdot\theta)\})
\end{gather*}
so that also $w$ satisfies the conditions of \cref{rem:nonnegativeMeasures} and we obtain the estimate from \cref{thm:noisyReconstructionEstimate} with $\mu=0$.

To conclude, we note that by construction $1=|\card{\goodtimes}(\Rfp^*q)_t(x)|$ for $x\in B_{\bar R}(\supp u_t^\dagger)$ and any $t\in\goodtimes$. Thus, 
\begin{multline*}
\langle(K^*w)_2,\approximation_2-\groundTruth_2\rangle
=\sum_{t\in\goodtimes}\langle(\Rfp^*q)_t,u_t-u^\dagger_t\rangle
\leq\frac1{\card{\goodtimes}}\sum_{t\in\goodtimes}\bigg(|u_t-u^\dagger_t|(\domstat\setminus B_R(\supp u_t^\dagger))\\
+\sum_{x\in\supp u_t^\dagger}|(u_t-u_t^\dagger)(B_R(\{x\}))|\bigg)
\leq\frac1{2R^2\card{\goodtimes}}\sum_{t\in\goodtimes}\unbalancedWasserstein{R}(u_t,u^\dagger_t),
\end{multline*}
where we exploited that in the unbalanced Wasserstein divergence $\unbalancedWasserstein{R}$ mass is never transported further than $R$.
\end{proof}

\begin{rem}[Signed particles]\label{rem:signedMeasures}
For the above error estimates it is essential that all particles have nonnegative mass.
We exploited this explicitly by the fact that each time we can choose our regularizer to be the total variation of exactly that variable
for which we would like to derive an error estimate.
Furthermore, we exploit this in the fact that we only require $\fopdynp^*v^\dagger+\Rfp^*q^\dagger\geq0$ in the above construction of dual variables.
Had we considered signed measures, then this condition would turn into $\fopdynp^*v^\dagger+\Rfp^*q^\dagger=0$.
However, the range of the predual observation operator $\fopdynp^*$ and the predual Radon transform $\Rfp^*$ will typically have a very small intersection
which leaves too little freedom to construct appropriate dual variables.
For instance, if we want $q^\dagger_{\theta,t}$ to be nonzero only for one $\theta$ and if $\fopstat_t\added{=\ftrunc}$ measures the Fourier coefficients at a finite number of integer frequencies,
then for irrational $\theta$ the only possible $q^\dagger_{\theta,t}$ is a constant.
\end{rem}

Finally, we derive an error estimate for all those \added{``bad''} snapshots that are not \added{stably} reconstructible from $\fopstat_t$,
for instance because their particles are too close together.
Their reconstruction works nevertheless based solely on those snapshots that \emph{are} \added{stably} reconstructible from $\fopstat_t$.
\added{\label{txt:itemeProofLogic11}\lookUp{\ref{iteme}}Again in analogy to \cref{sec:exact_recon_dim_reduced},
the necessary additional information to reconstruct the bad snapshots comes from the already reconstructed position-velocity projections,
so the corresponding dual variables will be based on the dual variables from \cref{thm:estimateMu} for reconstructing $\liftVar$.
In addition, the} construction of the dual variables is analogous to the construction in \cref{thm:estimateMu},
only the roles of $u$ and $\liftVar$ as well as of $\goodtimes$ and $\gooddirs$ swap.
This also means that we require an analogue of $\Delta$-coincidences and $\Delta$-ghost particles.

\begin{defn}[$\Delta$-coincidence and $\Delta$-ghost particle with respect to directions]\label{def:DeltaGhostII}
Let $u=\sum_{i=1}^Nm_i\delta_{x_i}\in\Mp(\R^d)$ represent a set of particles and $\gooddirs\subset\sphere^{d-1}$ a set of directions.
\begin{enumerate}
\item
Configuration $u$ contains a \emph{$\Delta$-coincidence} with respect to $\gooddirs$
if two distinct particles $i\neq j$ seem $\Delta$-close in some projection $\theta\in\gooddirs$, that is, $\dist(\theta\cdot x_j,\theta\cdot x_i)\leq\Delta$ .
\item
Configuration $u$ contains a \emph{$\Delta$-ghost particle} at $x\in\R^d$ with respect to $\gooddirs$
if for each $\theta\in\gooddirs$ there is a particle $i$ (not the same for all $\theta\in\gooddirs$) with $\dist(\theta\cdot x,\theta\cdot x_i)\leq\Delta$.
\end{enumerate}
\end{defn}

\begin{thm}[Error estimates for bad snapshots]\label{thm:estimateBadU}
Assume there is $\goodtimes\subset\alltimes$
such that $u_t^\dagger$ is $(\hat\kappa,\hat\mu,\hat R)$-\added{stably} reconstructible via $\hat v_t^\dagger,\hat v^s_t$ from measurements with $\fopstat_t$ for all $t\in\goodtimes$.
Let $t\in\domt\setminus\goodtimes$ and pick any finite $\gooddirs\subset\alldirs$.
Then the solution $(u,\liftVar)$ to \PInoisyProblemTag[\sqrt\delta]{\data^\delta} satisfies
\begin{align*}
\unbalancedWasserstein[2]{\bar R}(u_t,u_t^\dagger)
&\leq\frac{12+7\bar\kappa\bar R^2}{4\bar\kappa}\left(1+4\max_{t\in\goodtimes}|\hat v_t^\dagger|^2\right)\sqrt\delta
+\frac1{2\bar R^2}\max_{\theta\in\gooddirs}\unbalancedWasserstein{\bar R}(\liftVar_\theta,\liftVar_\theta^\dagger)
\end{align*}
for $\bar\kappa=\sigma\kappa/(1+t^2)$ and $\bar R=\min\{(1+t^2)R,\minSeparation/(3\sqrt{1+t^2}),\ghostDistance\}$,
where $\sigma$ is the smallest eigenvalue of $\frac1{\card\gooddirs}\sum_{\theta\in\gooddirs}\theta\otimes\theta$,
$\minSeparation,\ghostDistance\geq0$ are such that $u^\dagger_t$ contains no $\minSeparation$-coincidence or $\ghostDistance$-ghost particle with respect to $\gooddirs$,
and $\kappa,R\geq0$ are the minimum values from the statement of \cref{thm:estimateMu}, applied to all $\theta\in\gooddirs$.
\end{thm}
\begin{proof}
This time we consider the splitting of $\approximation=(u,\liftVar)$ into $\approximation_1=u_t$ and $\approximation_2=((u_{\tilde t})_{\tilde t\neq t},\liftVar)$ as well as the regularizer choice
$$\regularizer(u,\liftVar)=\|u_t\|_++\iota_+((u_{\tilde t})_{\tilde t\neq t},\liftVar).$$
For $\tilde\kappa$ to be determined later, define $q^\dagger\in C(\projdomstat)^{\alldirs\times\domt}$ as
\begin{equation*}
q^\dagger_{\theta,t}(x)=-(1-\tilde\kappa\min\{\bar R,\dist(x,\supp\mv^1_t\liftVar^\dagger_\theta)\}^2)/\card{\gooddirs}
\quad\text{for }\theta\in\gooddirs
\end{equation*}
and $q^\dagger_{\tilde\theta,\tilde t}=0$ for $\tilde t\neq t$ or $\tilde\theta\notin\gooddirs$.
Due to $\bar R\leq\minSeparation/3$, each $q_{\theta,t}^\dagger$ contains $\card\goodtimes$ distinct parabolas of width $2\bar R$.
Also, let $s_i=\sgn(u_t-u_t^\dagger)(B_{\bar R}(\{x_i+tv_i\}))$, $i=1,\ldots,N$, and define $q\in C(\projdomstat)^{\alldirs\times\domt}$
as $q_{\tilde\theta,\tilde t}=0$ for $\tilde t\neq t$ or $\tilde\theta\notin\gooddirs$ and, for $\theta\in\gooddirs$, as
\begin{equation*}
q_{\theta,t}(x)=-s_i/\card{\gooddirs}
\quad\text{if }x\in B_{\minSeparation/3}(\theta\cdot(x_i+tv_i)),
\end{equation*}
continuously extended onto the rest of $\projdomstat$ with absolute value no larger than $1/\card{\gooddirs}$.
Let further $w^\theta\in Y^*$ for $\theta\in\gooddirs$ denote the first dual variable constructed in the proof of \cref{thm:estimateMu} (there denoted $w^\dagger$),
and define
\begin{equation*}
w^\dagger={0\choose q^\dagger}+\frac1{\card{\gooddirs}}\sum_{\theta\in\gooddirs}w^\theta,
\quad
w={0\choose q}.
\end{equation*}
We now pick $\tilde\kappa$ small enough such that the conditions of \cref{rem:nonnegativeMeasures} are satisfied with $\mu=0$.
First note that $((0,(\amvp[1])^*)w^\dagger)_\theta$ is zero for all $\theta\notin\gooddirs$.
Now recall from \eqref{eqn:dualVariableMuDecrease} that for $\theta\in\gooddirs$ we have
$(0,(\mv^1)^*)w^\theta(x,v)\leq1-\kappa\min\{R,\dist((x,v),\supp\liftVar_\theta^\dagger)\}^2$.
Thus, for the choice $\tilde\kappa=\kappa/(1+t^2)$ and due to $\bar R\leq(1+t^2)R$ and $\bar R\leq\minSeparation/3$ we have $(0,(\amvp[1])^*)w^\dagger\leq0$ everywhere
with equality on $\supp\liftVar^\dagger_\theta$.
Next recall from \eqref{eqn:propertyDualLiftVar} that $w^\theta$ satisfies $(\fopdynp^*,\Rfp^*)w^\theta\geq0$ (with equality on the support of $u^\dagger$),
thus by construction we have $(-(\fopdynp^*,\Rfp^*)w^\dagger)_{\tilde t}\leq0$ for $\tilde t\neq t$ with equality on $\supp u_{\tilde t}^\dagger$.
Furthermore, $(-(\fopdynp^*,\Rfp^*)w^\dagger)_{t}=-(\Rfp^*q^\dagger)_t\leq1$ with equality on $\supp u_t$.
Even more, by construction, in the neighbourhood
\begin{equation*}
\mathcal N_i=\bigcap_{\theta\in\gooddirs}\textstyle\left\{x\in\R^d\,\middle|\,-\bar R\leq\theta\cdot\left[x-(x_i+tv_i)\right]\leq\bar R\right\}
\end{equation*}
of $x_i+tv_i$ we have
\begin{equation*}
-(\Rfp^*q^\dagger)_t=-\sum_{\theta\in\alldirs}\Rs_\theta^*q^\dagger_{\theta,t}=1-\tilde\kappa(x-x_i-tv_i)^TQ(x-x_i-tv_i)
\end{equation*}
for the symmetric positive semi-definite $Q=\frac1{\card\gooddirs}\sum_{\theta\in\gooddirs}\theta\otimes\theta$
so that from $\bar\kappa\leq\sigma\tilde\kappa$ we obtain
$$-(\Rfp^*q^\dagger)_t(x)\leq1-\bar\kappa\min\{\bar R,\dist(x,\supp u_t)\}^2$$
(here we exploited $\bar R\leq\ghostDistance$).
Summarizing, $w^\dagger$ fulfills the conditions from \cref{rem:nonnegativeMeasures}.
By the same argument,
\begin{gather*}
\textstyle-(\Rfp^*q)_t\in\partial\|\cdot\|_{\M}\left(\sum_{x\in\supp u_t^\dagger}(u_t-u_t^\dagger)(B_{\bar R}(x))\delta_{x}\right)\\
\text{and }-(\Rfp^*q)_t(x)=s_i\text{ for }x\in B_{\bar R}(\{x_i+tv_i\})
\end{gather*}
so that also $w$ satisfies the conditions of \cref{rem:nonnegativeMeasures} and we obtain the estimate from \cref{thm:noisyReconstructionEstimate} with $\mu=0$
(note that $w^\dagger_1=v^\dagger=\frac1{\card\goodtimes}\hat v^\dagger$ using the notation from the proof of \cref{thm:estimateMu}).

To conclude, we note that, due to $\bar R\leq\minSeparation/(3\sqrt{1+t^2})$, by construction we have $1=|\card{\gooddirs}((\amvp[1])^*q)_\theta(x,v)|$ for $(x,v)\in B_{\bar R}(\supp\liftVar_\theta^\dagger)$ and any $\theta\in\gooddirs$. Thus, 
\begin{multline*}
\langle(K^*w)_2,\approximation_2-\groundTruth_2\rangle
=-\sum_{\theta\in\gooddirs}\langle(\mv^1)^*q_{\theta,t},\liftVar_\theta-\liftVar_\theta^\dagger\rangle
\leq\frac1{\card{\gooddirs}}\sum_{\theta\in\gooddirs}\bigg(|\liftVar_\theta-\liftVar_\theta^\dagger|(\projdomdyn\setminus B_{\bar R}(\supp\liftVar_\theta^\dagger))\\
+\sum_{(x,v)\in\supp\liftVar_\theta^\dagger}|(\liftVar_\theta-\liftVar_\theta^\dagger)(B_{\bar R}(\{(x,v)\}))|\bigg)
\leq\frac1{2\bar R^2\card{\gooddirs}}\sum_{\theta\in\gooddirs}\unbalancedWasserstein{\bar R}(\liftVar_\theta,\liftVar_\theta^\dagger),
\end{multline*}
where again we exploited that in the unbalanced Wasserstein divergence $\unbalancedWasserstein{\bar R}$ mass is never transported further than $\bar R$.
\end{proof}

\Cref{thm:errorEstimatesSummary} for \PInoisyProblemTag[\sqrt\delta]{\data^\delta} now follows from combining \cref{thm:errorGoodSnapshots,thm:estimateMu,thm:estimateBadU}.

\subsection{Construction of dual variables for \texorpdfstring{\ref{eq:dim_reduced_general}}{P\_alpha(f)}}\label{sec:dualConstructionII}
We now modify the error analysis from the previous \namecref{sec:dualVariables} for reconstruction via \PInoisyProblemTag[\sqrt\delta]{\data^\delta}
to an error analysis for the model \problemTag[\sqrt\delta]{\data^\delta}.
Since in this model the slices $\liftVar_\theta$ are only defined for $\mdirs$-almost every $\theta\in\alldirs$,
our error estimate will also just hold for $\mdirs$-almost every slice.
This time we employ the operators\phantomsection\label{eqn:mixedTopologyOperators}
\begin{align*}
\Rfm&:\M(\domstat)^\domt\to\M(\alldirs\times\projdomstat)^{\domt},&
\Rfm u&=(\Rs u_t)_{t\in\domt},&\\
\amvm[1]&:\M(\alldirs\times\projdomdyn)\to\M(\alldirs\times\projdomstat)^{\domt},&
\amvm[1]\liftVar&=(\amvt[1]{t}\liftVar)_{t\in\domt},&
\end{align*}
and choose
\begin{gather*}
\groundTruth\equiv(u^\dagger,\liftVar^\dagger),\,
\approximation\equiv(u^\delta,\liftVar^\delta)\in\M(\Omega)^{|\alltimes|}\times\M(\alldirs\times\projdomdyn)\equiv X,\\
K\equiv\left(\begin{smallmatrix}\fopdynp&0\\\Rfm&-\amvm[1]\end{smallmatrix}\right):X\to H^{|\alltimes|}\times\M(\alldirs\times\projdomstat)^\domt\equiv Y,\\
\fidelity{\data}(a,b)=\tfrac12\sum_{t\in\alltimes}\|a_t-\data_t\|_H^2+\iota_{0}(b),\,
\regularizer(u,\liftVar)=\iota_+(u,\liftVar)+\ast,
\end{gather*}
where $\ast$ will either be $\|u_t\|_{\M}$ for some $t\in\alltimes$ or a weighted total variation of $\liftVar$,
\begin{equation*}
\|\liftVar\|_\weight
=\int_{\alldirs\times\projdomdyn}\weight(\theta)\wrt|\liftVar|(\theta,(x,v))
=\int_\alldirs\|\liftVar_\theta\|_{\M}\weight(\theta)\wrt\mdirs
\end{equation*}
for some fixed weight $\weight:\alldirs\to[0,\infty)$ with $\int_\alldirs\weight\wrt\mdirs=1$
(the rightmost expression is well-defined for all $\liftVar$ admissible in \ref{eq:dim_reduced_general} by \cref{lem:projected_constraint_decomposition_mixed_model}).
As before, all these choices for $\ast$ are equivalent.

Essentially, the necessary changes to the analysis are the following, where we now work backwards through the previous \namecrefs{sec:unbalancedTransport}.%
\begin{itemize}
\item
For the estimates we needed to construct dual variables $\optimalDual$ (and similarly $w$) such that $K^*\optimalDual\in\partial\regularizer(\groundTruth)$.
Since in the previous \namecref{sec:dualVariables} the regularizer acted separately on each $u_t$ and $\liftVar_\theta$ for all $t\in\alltimes$ and $\theta\in\alldirs$,
this amounted to constructing continuous functions $q_{\theta,t}^\dagger$ separately for each $t,\theta$.
Now, however, that our variable $\liftVar$ stems from $\Mp(\alldirs\times\projdomdyn)$, these dual variables will have to be continuous in $\theta$,
thus we need new versions of \cref{thm:estimateMu,thm:estimateBadU}
(while the construction from \cref{thm:errorGoodSnapshots} stays unchanged apart from replacing the space $\Mp(\projdomdyn)^\alldirs$ by $\Mp(\alldirs\times\projdomdyn)$ everywhere).
Roughly, the strategy is to smooth $q_{\theta,t}^\dagger$ a little out in $\theta$.
\item
So far we only had to estimate the mass distribution of approximations to discrete measures,
for which we used Bregman distances with respect to the total variation. %
Now however, when we aim for estimating the mass distribution in $\liftVar$,
the corresponding ground truth measure $\liftVar^\dagger$ is not discrete, but rather a measure concentrated on codimension-2 manifolds
(each $\theta$-slice $\liftVar_\theta^\dagger$ is discrete, however, these are only well-defined $\mdirs$-almost everywhere so that our earlier estimates do not carry over).
Hence we will require a new version of \cref{thm:massFromBregman},
where weighted averages of mass distributions of $\liftVar_\theta$ are estimated based on Bregman distances with respect to weighted total variations.
\item
An additional argument is required to pass from mass distribution estimates of weighted averages of $\liftVar_\theta$ to estimates for $\mdirs$-almost all $\theta$.
\end{itemize}

We begin with a new version of \cref{thm:massFromBregman}, for simplicity directly adapted to our setting of estimating the mass distribution in $\liftVar$.

\begin{thm}[Weighted mass distribution from Bregman distance]\label{thm:massFromBregman2}
Let $\weight:\alldirs\to[0,\infty)$ be continuous with $\int_\alldirs\weight\wrt\mdirs=1$.
Assume $\liftVar^\dagger=\Rj\lambda^\dagger\in\Mp(\alldirs\times\projdomdyn)$ for a discrete measure $\lambda^\dagger$ with particles $(x_1,v_1),\ldots,(x_N,v_N)$,
and let $v^\dagger\in\partial\|\cdot\|_\weight(\liftVar^\dagger)\subset C(\alldirs\times\projdomdyn)$.
If $v^\dagger$ satisfies
\begin{equation*}
v^\dagger(\theta,(x,v))\leq\weight(\theta)(1-\kappa\min\{R,\dist((x,v),\supp\liftVar_\theta^\dagger)\}^2)
\end{equation*}
for some $\kappa,R>0$ with $2R$ smaller than the minimum distance between the points in the support of $\liftVar_\theta^\dagger$ for $\theta\in\supp\weight$,
then for any $\liftVar\in\Mp(\alldirs\times\projdomdyn)$ of the form $\liftVar=\mdirs \prodm \theta \liftVar_\theta$ we have
\begin{align*}
\int_\alldirs\weight(\theta)|\liftVar_\theta|(\projdomdyn\setminus B_R(\supp\liftVar_\theta^\dagger))\wrt\mdirs
&\leq\frac1{\kappa R^2}\BregmanDistance{\|\cdot\|_\weight}{v^\dagger}(\liftVar,\liftVar^\dagger),\\
\sum_{i=1}^N\int_\alldirs\weight(\theta)\int_{B_R(\{p_i^\theta\})}\dist(p,p_i^\theta)^2\wrt|\liftVar_\theta|(p)\wrt\mdirs
&\leq\frac1{\kappa}\BregmanDistance{\|\cdot\|_\weight}{v^\dagger}(\liftVar,\liftVar^\dagger),
\end{align*}
where we abbreviated $p_i^\theta=(\theta\cdot x_i,\theta\cdot v_i)$.
Furthermore, let $v\in\partial\|\cdot\|_{\weight}(\bar\liftVar)\subset C(\alldirs\times\projdomdyn)$ for
$$\bar\liftVar=\sum_{i=1}^Ns_i\hd^{d-1}\restr\{p_i^\theta\,|\,\theta\in\alldirs\}
\quad\text{ with }
s_i=\int_\alldirs\weight(\theta)(\liftVar_\theta-\liftVar^\dagger_\theta)(B_R(\{p_i^\theta\}))\wrt\mdirs.$$
If $v$ satisfies
\begin{equation*}
s_iv(\theta,p)\geq\weight(\theta)(1-\mu\dist(p,p_i^\theta)^2)\quad\text{for all }p\in B_R(\{p_i^\theta\}),\,i=1,\ldots,N,
\end{equation*}
for some $\mu>0$, then we additionally have
\begin{equation*}
\sum_{i=1}^N\left|\int_\alldirs\weight(\theta)(\liftVar_\theta-\liftVar^\dagger_\theta)(B_R(\{p_i^\theta\}))\wrt\mdirs\right|
\leq\frac{1+\mu R^2}{\kappa R^2}\BregmanDistance{\|\cdot\|_\weight}{v^\dagger}(\liftVar,\liftVar^\dagger)+\langle v,\liftVar-\liftVar^\dagger\rangle.
\end{equation*}
\end{thm}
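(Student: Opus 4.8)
The argument will closely follow that of \cref{thm:massFromBregman}, with two modifications: the discrete ground truth and the plain total variation are replaced by $\liftVar^\dagger$ and the $\weight$-weighted total variation $\|\cdot\|_\weight$, and the given disintegration $\liftVar=\mdirs\prodm\theta\liftVar_\theta$ is used to turn all spatial integrals (via \cref{def:family_product_measure} and the pushforward integration formula) into iterated integrals over $\projdomdyn$ and $\alldirs$. The first thing I would record is the subgradient structure: since $v^\dagger\in\partial\|\cdot\|_\weight(\liftVar^\dagger)$ we have $|v^\dagger(\theta,p)|\le\weight(\theta)$ everywhere and $\langle v^\dagger,\liftVar^\dagger\rangle=\|\liftVar^\dagger\|_\weight$, and (by continuity of $v^\dagger$, using that $\liftVar^\dagger\ge0$ is concentrated on the graphs $\theta\mapsto p_i^\theta$) $v^\dagger=\weight$ on $\supp\liftVar^\dagger$. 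This lets me rewrite the Bregman distance as a nonnegative weighted mass of $\liftVar$:
\[
\BregmanDistance{\|\cdot\|_\weight}{v^\dagger}(\liftVar,\liftVar^\dagger)
=\|\liftVar\|_\weight-\langle v^\dagger,\liftVar\rangle
=\int_\alldirs\int_\projdomdyn\bigl(\weight(\theta)-v^\dagger(\theta,p)\bigr)\wrt\liftVar_\theta(p)\wrt\mdirs(\theta)\ge0,
\]
where I used $\liftVar\ge0$ (hence $\liftVar_\theta\ge0$ for $\mdirs$-a.a.\ $\theta$) and $|v^\dagger|\le\weight$. This identity is the workhorse for all three estimates.

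Next I would prove the first two inequalities simultaneously. For $\theta$ in $\supp\weight$ the balls $B_R(\{p_i^\theta\})$ are pairwise disjoint by the hypothesis on $2R$, so $\projdomdyn$ splits into $\projdomdyn\setminus B_R(\supp\liftVar_\theta^\dagger)$ and these disjoint balls, on which the hypothesis on $v^\dagger$ yields the fiberwise lower bounds $\weight(\theta)-v^\dagger(\theta,p)\ge\kappa R^2\weight(\theta)$ and $\ge\kappa\weight(\theta)\dist(p,p_i^\theta)^2$ respectively (fibers with $\weight(\theta)=0$ contribute nothing). Bounding the integrand piecewise this way, integrating against the nonnegative $\liftVar_\theta$ and then against $\mdirs$, and using the identity above gives
\[
\kappa R^2\!\int_\alldirs\!\weight(\theta)|\liftVar_\theta|(\projdomdyn\setminus B_R(\supp\liftVar_\theta^\dagger))\wrt\mdirs
+\kappa\sum_{i=1}^N\int_\alldirs\!\weight(\theta)\!\!\int_{B_R(\{p_i^\theta\})}\!\!\!\dist(p,p_i^\theta)^2\wrt|\liftVar_\theta|(p)\wrt\mdirs
\le\BregmanDistance{\|\cdot\|_\weight}{v^\dagger}(\liftVar,\liftVar^\dagger),
\]
and the two claimed estimates follow by retaining only one summand on the left.

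For the third inequality I would bring in the auxiliary measure $\bar\liftVar$ supported on the $N$ curves $\{(\theta,p_i^\theta)\mid\theta\in\alldirs\}$ and a subgradient $v\in\partial\|\cdot\|_\weight(\bar\liftVar)$, which satisfies $|v|\le\weight$ and, along the $i$-th curve (when $s_i\neq0$; the $s_i=0$ terms are vacuous), $v(\theta,p_i^\theta)=\weight(\theta)\sgn(s_i)$. Then $|s_i|=\int_\alldirs v(\theta,p_i^\theta)(\liftVar_\theta-\liftVar_\theta^\dagger)(B_R(\{p_i^\theta\}))\wrt\mdirs$, and writing $v(\theta,p_i^\theta)=v(\theta,p)+\bigl(v(\theta,p_i^\theta)-v(\theta,p)\bigr)$ inside each ball integral splits $\sum_i|s_i|$ into two sums. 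The first sum combines over $i$ into $\langle v,\liftVar-\liftVar^\dagger\rangle$ minus a tail over $\projdomdyn\setminus B_R(\supp\liftVar_\theta^\dagger)$, where $\liftVar^\dagger_\theta$ vanishes, so the tail is at most $\int_\alldirs\weight(\theta)|\liftVar_\theta|(\projdomdyn\setminus B_R(\supp\liftVar_\theta^\dagger))\wrt\mdirs\le\tfrac1{\kappa R^2}\BregmanDistance{\|\cdot\|_\weight}{v^\dagger}(\liftVar,\liftVar^\dagger)$ by the first estimate. In the second sum the $\liftVar_\theta^\dagger$-part drops (its only atom $p_i^\theta$ makes the integrand zero there), and the bound $|v(\theta,p_i^\theta)-v(\theta,p)|\le\weight(\theta)\mu\dist(p,p_i^\theta)^2$ — obtained by multiplying the hypothesis on $v$ by $\sgn(s_i)$ and using $|v|\le\weight$ — reduces it to $\mu$ times the quantity bounded in the second estimate, i.e.\ at most $\tfrac\mu\kappa\BregmanDistance{\|\cdot\|_\weight}{v^\dagger}(\liftVar,\liftVar^\dagger)$. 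Adding up and using $\tfrac1{\kappa R^2}+\tfrac\mu\kappa=\tfrac{1+\mu R^2}{\kappa R^2}$ gives the claim.

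Most of this is bookkeeping; the step where I expect genuine care to be needed is the handling of the fiber structure together with the two subgradient identifications: the slices $\liftVar_\theta$ (and $\liftVar^\dagger_\theta$) exist only $\mdirs$-a.e., the support $\{p_1^\theta,\dots,p_N^\theta\}$ of $\liftVar^\dagger_\theta$ consists of $N$ distinct points exactly for $\theta\in\supp\weight$ (which is why the $2R$-separation is imposed only there), and one must correctly describe $\partial\|\cdot\|_\weight$ at $\liftVar^\dagger$ and at the curve-supported measure $\bar\liftVar$ — in particular that $v^\dagger=\weight$ on $\supp\liftVar^\dagger$ and $v=\weight\sgn(s_i)$ along the relevant curves — using the continuity of $v^\dagger,v$ and the density of those curves in the respective supports. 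Once these points are settled, the disintegration reduces everything to the fiberwise computation sketched above, which is identical in spirit to \cref{thm:massFromBregman}.
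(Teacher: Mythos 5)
Your proof is correct and follows essentially the same route as the paper's: rewriting the Bregman distance as $\|\liftVar\|_\weight-\langle v^\dagger,\liftVar\rangle$ and applying the fiberwise lower bounds $\weight(\theta)-v^\dagger(\theta,p)\geq\kappa R^2\weight(\theta)$ off the balls and $\geq\kappa\weight(\theta)\dist(p,p_i^\theta)^2$ on them gives the first two estimates, and your splitting of $\sum_i\abs{s_i}$ via $v(\theta,p_i^\theta)=v(\theta,p)+\bigl(v(\theta,p_i^\theta)-v(\theta,p)\bigr)$ is exactly the paper's decomposition into the pairing over $\{\dist(p,\supp\liftVar_\theta^\dagger)<R\}$ plus the $\weight(\theta)\sgn(s_i)-v$ term for the third. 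The only (cosmetic) difference is that you obtain $v(\theta,p_i^\theta)=\weight(\theta)\sgn(s_i)$ from a density-of-the-curves argument, which is unnecessary and slightly shaky when the curves are $\hd^{d-1}$-null (e.g.\ finite $\alldirs$); it follows directly from the stated hypothesis on $v$ evaluated at $p=p_i^\theta$ combined with $\abs{v}\leq\weight$, after which your computation coincides with the paper's.
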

\begin{proof}
Abbreviate $B_\theta^c=\projdomdyn\setminus B_R(\supp\liftVar_\theta^\dagger)$, then the first two inequalities follow from
\begin{multline*}
\int_\alldirs
\weight(\theta)\kappa R^2|\liftVar_\theta|(B_\theta^c)
+\weight(\theta)\kappa\sum_{i=1}^N\int_{B_R(\{p_i^\theta\})}\dist(p,p_i^\theta)^2\wrt|\liftVar_\theta|p
\wrt\mdirs(\theta)\\
\leq\int_\alldirs
\int_{B_\theta^c}\weight(\theta)-v^\dagger(\theta,p)\wrt\liftVar_\theta(p)
+\sum_{i=1}^N\int_{B_R(\{p_i^\theta\})}\weight(\theta)-v^\dagger(\theta,p)\wrt\liftVar_\theta(p)
\wrt\mdirs(\theta)\\
=\int_{\alldirs\times\projdomdyn}(\weight(\theta)-v^\dagger(\theta,p))\wrt\liftVar(\theta,p)
=\|\liftVar\|_\weight-\langle v^\dagger,\liftVar\rangle
=\|\liftVar\|_\weight-\|\liftVar^\dagger\|_\weight-\langle v^\dagger,\liftVar-\liftVar^\dagger\rangle
=\BregmanDistance{\|\cdot\|_\weight}{v^\dagger}(\liftVar,\liftVar^\dagger).
\end{multline*}
The third inequality is obtained as follows. We have
\begin{multline*}
\sum_{i=1}^N\left|\int_\alldirs\weight(\theta)(\liftVar_\theta-\liftVar^\dagger_\theta)(B_R(\{p_i^\theta\}))\wrt\mdirs\right|\\
=\int_{\{(\theta,p)\,|\,\dist(p,\supp\liftVar_\theta^\dagger)<R\}}v\wrt(\liftVar-\liftVar^\dagger)
+\sum_{i=1}^N\int_\alldirs\int_{B_R(\{p_i^\theta\})}\weight(\theta)s_i-v\wrt(\liftVar_\theta-\liftVar^\dagger_\theta)\wrt\mdirs,
\end{multline*}
where the first summand can be estimated as
\begin{multline*}
\int_{\{(\theta,p)\,|\,\dist(p,\supp\liftVar_\theta^\dagger)<R\}}v\wrt(\liftVar-\liftVar^\dagger)
=\langle v,\liftVar-\liftVar^\dagger\rangle-\int_\alldirs\int_{B_\theta^c}v\wrt(\liftVar_\theta-\liftVar_\theta^\dagger)\wrt\mdirs(\theta)\\
\leq\langle v,\liftVar-\liftVar^\dagger\rangle+\int_\alldirs\weight(\theta)|\liftVar_\theta-\liftVar_\theta^\dagger|(B_\theta^c)\wrt\mdirs
\leq\langle v,\liftVar-\liftVar^\dagger\rangle+\frac1{\kappa R^2}\BregmanDistance{\|\cdot\|_\weight}{v^\dagger}(\liftVar,\liftVar^\dagger),
\end{multline*}
while the second term satisfies
\begin{multline*}
\sum_{i=1}^N\int_\alldirs\int_{B_R(\{p_i^\theta\})}\weight(\theta)s_i-v\wrt(\liftVar_\theta-\liftVar^\dagger_\theta)\wrt\mdirs\\
\leq\sum_{i=1}^N\int_\alldirs\weight(\theta)\int_{B_R(\{p_i^\theta\})}\mu\dist(p,p_i^\theta)^2\wrt|\liftVar_\theta|(p)\wrt\mdirs(\theta)
\leq\frac\mu\kappa\BregmanDistance{\|\cdot\|_\weight}{v^\dagger}(\liftVar,\liftVar^\dagger).
\mbox{\qedhere}%
\end{multline*}
\end{proof}

The previous mass distribution estimate can now be used to obtain estimates on $\liftVar$
\added{\label{txt:itemeProofLogic12}\lookUp{\ref{iteme}}in the following, previously announced new version of \cref{thm:estimateMu}.
In contrast to \cref{thm:estimateMu} it does not give an estimate for single slices $\liftVar_\theta$ since now $\liftVar$ is viewed as a variable in $\Mp(\alldirs\times\projdomdyn)$.
For this reason the error estimate is not merely expressed in terms of the unbalanced Wasserstein divergence, which would be too weak an estimate:
The unbalanced Wasserstein divergence between the reconstruction $\liftVar$ and the ground truth $\liftVar^\dagger$ would also allow mass exchange between different slices $\liftVar_\theta$.
Instead we will turn the error estimate into an unbalanced Wasserstein divergence estimate for \emph{each separate} slice $\liftVar_\theta$ in the subsequent result.}

\begin{thm}[Error estimates for $\liftVar\in\Mp(\alldirs\times\projdomdyn)$]\label{thm:estimateMu2}
Let $\weight:\alldirs\to[0,\infty)$ be continuous with $\int_\alldirs\weight\wrt\mdirs=1$ and assume there is $\goodtimes\subset\alltimes$
such that $u_t^\dagger$ is $(\hat\kappa,\hat\mu,\hat R)$-\added{stably} reconstructible via $\hat v_t^\dagger,\hat v_t^s$ from measurements with $\fopstat_t$ for all $t\in\goodtimes$.
Then, abbreviating $p_i^\theta=(\theta\cdot x_i,\theta\cdot v_i)$, the solution $(u,\liftVar)$ to \problemTag[\sqrt\delta]{\data^\delta} satisfies
\begin{align*}
\int_\alldirs\weight(\theta)|\liftVar_\theta|(\projdomdyn\setminus B_R(\supp\liftVar_\theta^\dagger))\wrt\mdirs
&\leq\frac1{\kappa R^2}\left(\frac32+2\max_{t\in\goodtimes}|\hat v_t^\dagger|^2\right)\sqrt\delta,\\
\sum_{i=1}^N\int_\alldirs\!\!\weight(\theta)\!\!\int_{B_R(\{p_i^\theta\})}\!\!\!\!\dist(p,p_i^\theta)^2\wrt|\liftVar_\theta|(p)\wrt\mdirs
&\leq\frac1{\kappa}\left(\frac32+2\max_{t\in\goodtimes}|\hat v_t^\dagger|^2\right)\sqrt\delta,\\
\sum_{i=1}^N\left|\int_\alldirs\weight(\theta)(\liftVar_\theta-\liftVar^\dagger_\theta)(B_R(\{p_i^\theta\}))\wrt\mdirs\right|
&\leq\frac{1+\kappa R^2}{\kappa R^2}\left(2\!+\!2\max_{t\in\goodtimes}|\hat v_t^\dagger|^2\right)\sqrt\delta
\!+\!\frac1{2R^2}\max_{t\in\goodtimes}\unbalancedWasserstein{R}(u_t,u^\dagger_t)
\end{align*}
for $\kappa=\sigma\hat\kappa$ and any $R\leq\min\{\hat R,\minSeparation/3,\ghostDistance\}/\sqrt{1+\max_{t\in\goodtimes}t^2}$,
where $\sigma$ is the smallest eigenvalue of $\frac1{\card\goodtimes}\sum_{t\in\goodtimes}{1\choose-t}\otimes{1\choose-t}$
and $\minSeparation,\ghostDistance\geq0$ are such that $\liftVar_\theta^\dagger$ for any $\theta\in\supp\weight$ contains no $\minSeparation$-coincidence or $\ghostDistance$-ghost particle with respect to $\goodtimes$.
\end{thm}
\begin{proof}
We split $\approximation=(u,\liftVar)$ into $\approximation_1=\liftVar$ and $\approximation_2=u$ and consider the regularizer
$$\regularizer(u,\liftVar)=\|\liftVar\|_\weight+\iota_{+}(\liftVar)+\iota_{+}(u).$$
As in the proof of \cref{thm:estimateMu} define $v^\dagger\in(H^*)^{|\alltimes|}$
via $v^\dagger_t=-\hat v^\dagger_t/\card{\goodtimes}$ for all $t\in\goodtimes$ and $v^\dagger_{\tilde t}=0$ else.
The dual variables are now constructed as weighted averages (with weight $\weight$) of the dual variables from \cref{thm:estimateMu}.
Indeed, we define $q^\dagger\in C(\alldirs\times\projdomdyn)^\domt$ as
\begin{equation*}
q^\dagger_{t}(\theta,x)=\weight(\theta)(1-\hat\kappa\min\{\bar R,\dist(x,\supp\Rs_\theta u_t^\dagger)\}^2)/\card{\goodtimes}
\quad\text{for }t\in\goodtimes
\end{equation*}
and $q^\dagger_{t}=0$ for $t\notin\goodtimes$, where $\bar R=\min\{\hat R,\minSeparation/3,\ghostDistance\}$.
Also, let $s_i=\sgn\int_\alldirs\weight(\theta)(\liftVar_\theta-\liftVar^\dagger_\theta)(B_R(\{p_i^\theta\}))\wrt\mdirs$, $i=1,\ldots,N$,
where the slices $\liftVar_\theta$ are defined via the relation $\liftVar=\mdirs\prodm\theta\liftVar_\theta$
(which is possible by \cref{lem:projected_constraint_decomposition_mixed_model}).
With this $s_i$ we define $q\in C(\alldirs\times\projdomdyn)^\domt$
as $q_{t}=0$ for $t\notin\goodtimes$ and, for $t\in\goodtimes$, as
\begin{equation*}
q_{t}(\theta,x)=\weight(\theta)s_i/\card{\goodtimes}
\quad\text{if }x\in B_{\bar R}(\theta\cdot(x_i+tv_i)),
\end{equation*}
continuously extended onto the rest of $\alldirs\times\projdomdyn$ with absolute value no larger than $\weight(\theta)/\card{\goodtimes}$.
Now analogously to the proof of \cref{thm:estimateMu} it follows that $\optimalDual={v^\dagger\choose q^\dagger}$ and $w={0\choose q}$ satisfy
\begin{align*}
-K^*\optimalDual=-((K^*\optimalDual)_1,(K^*\optimalDual)_2)&\in\partial\regularizer(\groundTruth),\\
-(K^*w)_1&\in\partial\|\cdot\|_\weight\left(\sum_{i=1}^Ns_i\hd^{d-1}\restr\{p_i^\theta\,|\,\theta\in\alldirs\}\right),\\
-(K^*\optimalDual)_1(\theta,p)&\leq\weight(\theta)(1-\kappa\min\{R,\dist(p,\supp\liftVar_\theta^\dagger)\}^2),\\
-(K^*w)_1(\theta,p)&=s_i\weight(\theta)\quad\text{for all }p\in B_R(\{p_i^\theta\}).
\end{align*}
Thus, using (in this order) \cref{thm:massFromBregman2}, \cref{thm:BregmanEstimates} and \eqref{eqn:dualFidelity} we conclude
\begin{multline*}
\int_\alldirs\weight(\theta)|\liftVar_\theta|(\projdomdyn\setminus B_R(\supp\liftVar_\theta^\dagger))\wrt\mdirs
\leq\frac1{\kappa R^2}\BregmanDistance{\|\cdot\|_\weight}{-(K^*\optimalDual)_1}(\liftVar,\liftVar^\dagger)
\leq\frac1{\kappa R^2}\BregmanDistance{G}{-K^*\optimalDual}(\approximation,\groundTruth)\\^
\leq\left(3\delta+\fidelity{\noisyData}^*(2\sqrt\delta\optimalDual)+\fidelity{\noisyData}^*(-2\sqrt\delta\optimalDual)\right)/(2\kappa R^2\sqrt\delta)
=\frac{3+4|v^\dagger|^2}{2\kappa R^2}\sqrt\delta
\leq\frac{3+4\max_{t\in\goodtimes}|\hat v^\dagger_t|^2}{2\kappa R^2}\sqrt\delta,
\end{multline*}
proving the first inequality of the statement.
The second and third inequality follow in exactly the same way (using $\mu=0$ in \cref{thm:massFromBregman2}),
where the third needs one additional estimate for $\langle-(K^*w)_1,\liftVar-\liftVar^\dagger\rangle$:
We employ $\langle-(K^*w)_1,\liftVar-\liftVar^\dagger\rangle=\langle-K^*w,(u,\liftVar)-(u^\dagger,\liftVar^\dagger)\rangle+\langle(K^*w)_2,u-u^\dagger\rangle$,
where the first summand can be estimated via \cref{thm:BregmanEstimates} and \eqref{eqn:dualFidelity} as
\begin{equation*}
\langle-K^*w,(u,\liftVar)-(u^\dagger,\liftVar^\dagger)\rangle
\leq2(1+|v^\dagger|)\sqrt\delta
\leq2(1+\max_{t\in\goodtimes}|\hat v^\dagger_t|)\sqrt\delta,
\end{equation*}
and for the second summand we note that by construction $1=|\card{\goodtimes}(\Rfm^*q)_t(x)|$ for $x\in B_{\bar R}(\supp u_t^\dagger)$ and any $t\in\goodtimes$. Thus, 
\begin{multline*}
\langle(K^*w)_2,\approximation_2-\groundTruth_2\rangle
=\sum_{t\in\goodtimes}\langle(\Rfm^*q)_t,u_t-u^\dagger_t\rangle
\leq\frac1{\card{\goodtimes}}\sum_{t\in\goodtimes}\bigg(|u_t-u^\dagger_t|(\Omega\setminus B_R(\supp u_t^\dagger))\\
+\sum_{x\in\supp u_t^\dagger}|(u_t-u_t^\dagger)(B_R(x))|\bigg)
\leq\frac1{2R^2\card{\goodtimes}}\sum_{t\in\goodtimes}\unbalancedWasserstein{R}(u_t,u^\dagger_t),
\end{multline*}
where we exploited that in the unbalanced Wasserstein divergence $\unbalancedWasserstein{R}$ mass is never transported further than $R$.
\end{proof}

\added{\label{txt:itemeProofLogic13}\lookUp{\ref{iteme}}As promised,}
this result can now be turned into an unbalanced Wasserstein estimate for the slices of $\liftVar$,
\added{a yet different variant of \cref{thm:estimateMu}}.

\begin{thm}[Error estimates for slices of $\liftVar\in\Mp(\alldirs\times\projdomdyn)$]\label{thm:liftVarEstimateMeasureModel}
The statement of \cref{thm:estimateMu} holds for $\mdirs$-almost every $\theta$ if \PInoisyProblemTag[\sqrt\delta]{\data^\delta} is replaced with \problemTag[\sqrt\delta]{\data^\delta}.
\end{thm}
\begin{proof}
Based on \cref{thm:unbalancedWasserstein},
it suffices to show that one may replace $\weight\mdirs$ in \cref{thm:estimateMu2} with a Dirac measure $\delta_\theta$ for $\mdirs$-almost all $\theta$.
We only show this for the first inequality of \cref{thm:estimateMu2} (the others follow analogously).
Now let
\begin{align*}
\minSeparation(\theta)&=\sup\{\minSeparation\geq0\,|\,\liftVar_\theta^\dagger\text{ contains no $\minSeparation$-coincidence with respect to }\goodtimes'\},\\
\ghostDistance(\theta)&=\sup\{\ghostDistance\geq0\,|\,\liftVar_\theta^\dagger\text{ contains no $\ghostDistance$-ghost particle with respect to }\goodtimes'\}
\end{align*}
and note that both are continuous in $\theta$.
Thus, so is $R(\theta)=\min\{\hat R,\minSeparation(\theta)/3,\ghostDistance(\theta)\}/\sqrt{1+\max_{t\in\goodtimes}t^2}$.
We will show that for any nonnegative continuous function $\hat R(\theta)\leq R(\theta)$ and any $\rho<1$
there is a piecewise constant function $\bar R:\sphere^{d-1}\to[0,\infty)$ with $\rho^2\hat R(\theta)\leq\bar R(\theta)\leq\hat R(\theta)$ and such that
$|\liftVar_\theta|(\projdomdyn\setminus B_{\bar R(\theta)}(\supp\liftVar_\theta^\dagger))\leq\frac1{\kappa\bar R(\theta)^2}(\frac32+2\max_{t\in\goodtimes}|\hat v_t^\dagger|^2)\sqrt\delta$
for $\mdirs$-almost every $\theta$, which then implies the desired result due to the arbitrariness of $\rho$ and $\hat R$.
To this end, fix some $\rho\in(0,1)$ and let $\epsilon>0$ be arbitrary.
Abbreviate
\begin{equation*}
S=\{\theta\in\alldirs\,|\,\hat R(\theta)=0\},
\end{equation*}
then it suffices to prove the inequality for $\mdirs$-almost every $\theta$ in $\alldirs\setminus S$ (since on $S$ there is nothing to show).
Due to the regularity of $\mdirs$ we can pick a compact $C\subset\alldirs\setminus S$ such that $\mdirs((\alldirs\setminus S)\setminus C)<\epsilon$.
We will show the inequality for $\mdirs$-almost every $\theta$ in $C$, then the result for $\alldirs\setminus S$ follows from the arbitrariness of $\epsilon$.
For each $\theta\in C$ define
\begin{equation*}
r(\theta)=\sup\{r\in\R\,|\,\rho^2\hat R(\theta')<\rho\hat R(\theta)<\hat R(\theta')\text{ for all $\theta'\in\sphere^{d-1}$ with }\dist(\theta,\theta')<r\},
\end{equation*}
then $r(\theta)>0$ due to the continuity of $\hat R(\theta)$.
Consider the open $r(\hat\theta)$-ball $B\subset\sphere^{d-1}$ around $\hat\theta\in C$.
It suffices to prove the desired inequality for $\mdirs$-almost every $\theta\in B$ (choosing $\bar R(\theta)=\rho\hat R(\hat\theta)$),
since the compactum $C$ can be covered with finitely many of these balls and the function $\bar R(\theta)$ can then simply be chosen constant on each ball
(minus the finitely many balls where it was already defined).
Now on $B$, by \cref{thm:estimateMu2} we have
\begin{equation*}
\int_\alldirs\weight(\theta)\left[|\liftVar_\theta|(\projdomdyn\setminus B_{R}(\supp\liftVar_\theta^\dagger))
-\frac1{\kappa R^2}\left(\frac32+2\max_{t\in\goodtimes}|\hat v_t^\dagger|^2\right)\sqrt\delta\right]\wrt\mdirs
\leq0
\end{equation*}
for $R=\bar R(\theta)=\rho\hat R(\hat\theta)$
and for any nonnegative continuous $\weight:B\to[0,\infty)$ of compact support.
Therefore the square-bracketed term must be nonpositve for $\mdirs$-almost every $\theta\in B$, which is what we needed to prove.
\end{proof}

The error estimate for all remaining snapshots finally stays the same for model \problemTag[\sqrt\delta]{\data^\delta} as for model \PInoisyProblemTag[\sqrt\delta]{\data^\delta}.

\begin{thm}[Error estimates for bad snapshots]\label{thm:estimateBadUMeasureModel}
\Cref{thm:estimateBadU} still holds when replacing model \PInoisyProblemTag[\sqrt\delta]{\data^\delta} with \problemTag[\sqrt\delta]{\data^\delta}.
\end{thm}
\begin{proof}
The left- and right-hand side of the estimate depend continuously on $\bar R,\bar\kappa$
so that it suffices to prove the estimate for any $\bar\kappa,\bar R$ strictly smaller than their values from \cref{thm:estimateBadU}.
As in \cref{thm:estimateBadU} we consider the splitting of $\approximation=(u,\liftVar)$ into $\approximation_1=u_t$ and $\approximation_2=((u_{\tilde t})_{\tilde t\neq t},\liftVar)$ as well as the regularizer choice
$$\regularizer(u,\liftVar)=\|u_t\|_++\iota_+((u_{\tilde t})_{\tilde t\neq t},\liftVar).$$
We will also use almost exactly the same dual variables as in \cref{thm:estimateBadU}, only we will smooth them out a little in $\theta$-direction.
To this end, for a finite set $\gooddirs'\subset\sphere^{d-1}$ let
\begin{align*}
\minSeparation(\gooddirs')&=\sup\{\minSeparation\geq0\,|\,u_t^\dagger\text{ contains no $\minSeparation$-coincidence with respect to }\gooddirs'\},\\
\ghostDistance(\gooddirs')&=\sup\{\ghostDistance\geq0\,|\,u_t^\dagger\text{ contains no $\ghostDistance$-ghost particle with respect to }\gooddirs'\}
\end{align*}
and let $\sigma(\gooddirs')$ be the smallest eigenvalue of $\frac1{\card\gooddirs'}\sum_{\theta\in\gooddirs'}\theta\otimes\theta$.
Furthermore, let $\kappa(\gooddirs')$ and $R(\gooddirs')$ be the values of $\kappa$ and $R$ from the statement of \cref{thm:estimateBadU} for the choice $\gooddirs=\gooddirs'$.
Now let $\zeta>0$ be small enough and pick $\tilde\kappa,\tilde R$ such that the balls $B_\zeta(\{\theta\})$ are disjoint for all $\theta\in\gooddirs$
and such that $\bar\kappa\leq\sigma(\gooddirs')\tilde\kappa$, $\tilde\kappa\leq\kappa(\gooddirs')/(1+t^2)$,
$\bar R\leq\min\{(1+t^2)R(\gooddirs'),\tilde R/\sqrt{1+t^2},\ghostDistance(\gooddirs')\}$ and $\tilde R\leq\minSeparation(\gooddirs')/3$
for any perturbation $\gooddirs'$ of $\gooddirs$ that perturbs no $\theta\in\gooddirs$ by more than distance $\zeta$
(this is possible since we assume $\bar\kappa$ and $\bar R$ strictly smaller than their values in \cref{thm:estimateBadU}
and since $\minSeparation(\gooddirs')$, $\ghostDistance(\gooddirs')$, $\sigma(\gooddirs')$, $\kappa(\gooddirs')$ and $R(\gooddirs')$ depend continuously on $\gooddirs'$).
Furthermore, for $\bar\theta\in\gooddirs$ let $\weight_{\bar\theta}:\alldirs\to[0,\infty)$ be continuous with $\supp\weight_{\bar\theta}\subset B_\zeta(\{\bar\theta\})$ and $\int_\alldirs\weight_{\bar\theta}\mdirs=1$.
Now define $q^\dagger\in C(\alldirs\times\projdomstat)^{\domt}$ as
\begin{equation*}
q^\dagger_{t}(\theta,x)=-\weight_{\bar\theta}(\theta)(1-\tilde\kappa\min\{\bar R,\dist(x,\supp\mv^1_t\liftVar^\dagger_\theta)\}^2)/\card{\gooddirs}
\quad\text{for }\theta\in B_\zeta(\{\bar\theta\}),\bar\theta\in\gooddirs
\end{equation*}
and $q^\dagger_{\tilde t}=0$ for $\tilde t\neq t$.
Also, let $s_i=\sgn(u_t-u_t^\dagger)(B_{\bar R}(\{x_i+tv_i\}))$, $i=1,\ldots,N$, and define $q\in C(\alldirs\times\projdomstat)^\domt$
as $q_{\tilde t}=0$ for $\tilde t\neq t$ and, otherwise,
\begin{equation*}
q_{t}(\theta,x)=-\sum_{\bar\theta\in\gooddirs}\weight_{\bar\theta}(\theta)s_i/\card{\gooddirs}
\quad\text{if }x\in B_{\tilde R}(\theta\cdot(x_i+tv_i)),
\end{equation*}
continuously extended onto the rest of $\alldirs\times\projdomstat$ with absolute value no larger than $\weight_{\bar\theta}(\theta)/\card{\gooddirs}$.
Finally, let $w^\theta\in Y^*$ for $\theta\in\alldirs$ denote the first dual variables constructed in \cref{thm:estimateMu},
and define
\begin{equation*}
w^\dagger={0\choose q^\dagger}+\frac1{\card{\gooddirs}}\sum_{\bar\theta\in\gooddirs}\int_{\alldirs}\weight_{\bar\theta}(\theta)w^\theta\wrt\mdirs(\theta),
\quad
w={0\choose q}.
\end{equation*}
The remainder of the proof (checking the properties of the constructed dual variables and estimating one dual pairing) proceeds analogously to the proof of \cref{thm:estimateBadU}.
\end{proof}

\Cref{thm:errorEstimatesSummary} for \problemTag[\sqrt\delta]{\data^\delta} now follows from combining \cref{thm:errorGoodSnapshots,thm:liftVarEstimateMeasureModel,thm:estimateBadUMeasureModel}.

\section{Discretization and numerical validation}\label{sec:numerics}
Our goal is now to develop a numerical method in order to solve the dimension-reduced problem \ref{eq:dim_reduced_general}.\removed{ or \ref{eq:dim_reduced_product_only}.} Such a method has to translate the dimension reduction from the continuous formulation to a discrete formulation, which then is more efficient to solve than comparable discretizations of the full-dimensional problem. A straightforward discretization of \ref{eq:dim_reduced_general} is achieved by simply gridding the parameter space and replacing Radon measures on the continuous parameter space by vectors representing the weights of a discrete measure supported in (or around) the grid points.

In this section, we assume that the sets $\domt$ and $\alldirs$ are finite. Some choices of these parameter sets might be better suited than others, see \cref{sec:numDirections}.
To be able to compare different choices of $\domt$ for our method without changing datasets or domains, we define a slightly less restrictive domain in phase space:
We require particles to stay in $\domstat$ only during the measurement times $\alltimes$ instead of $\domt$, i.\,e., instead of $\domdyn$ as defined in \eqref{eq:domdyn_definitoin} we use
\begin{equation*}
    \domdynnum\coloneqq\set{(x,v)\in\R^d\times\R^d\given x+tv\in\domstat\text{ for all } t \in \alltimes}.
\end{equation*}

The code to reproduce the results of this paper will be made available at \url{https://github.com/alexschlueter/supermops}.

\subsection{Gridding for snapshots and position-velocity projections}
The first challenge is that problem \ref{eq:dim_reduced_general} contains multiple Radon measures as variables that should be discretized consistently, namely measures $\liftVar_\theta$ for directions $\theta\in\alldirs$ as well as snapshots $u_t$ for times $t\in\domt$. For each of these variables we need to generate a corresponding grid.

One option for $\liftVar$ would be to use a single grid on $\projdomdyn$ in order to discretize all $\liftVar_\theta$ equally, independent of the direction $\theta$.
This however would result in wasted degrees of freedom, because the $\liftVar_\theta$ occupy different parts of $\projdomdyn$.
For this reason we will generate individual grids for each direction $\theta$.
These should be compatible with the (non-discretized) phase space domain $\domdynnum$ in the following sense:
If $\lambda\in\M(\domdynnum)$ is a high-dimensional phase space measure, the grid for $\liftVar_\theta$ should cover enough of $\projdomdyn$ such that the joint Radon transform of $\lambda$ along direction $\theta$ can still be represented, thus at least $\Rj_\theta(\domdynnum)\subset\projdomdyn$.
Analogous considerations hold for $u$ so that we also use individual grids for all $u_t$.

In the following we will consider
\begin{equation*}
\domstat=[0,1]^d
\end{equation*}
and furthermore centre without loss of generality the times $\alltimes$ around $0$ so that there exists $\maxT>0$ with
\begin{equation*}
[\min\alltimes,\max\alltimes]=[-\maxT,\maxT].
\end{equation*}
Based on this we now compute the domains of all $\liftVar_\theta$ and $u_t$ in order to generate tailored grids on these.

\begin{prop}[Domain of $\liftVar_\theta$]\label{thm:dom_mu}
    For a given $\thins$ define $s_+\coloneqq\sum_{j:\theta_j>0}\theta_j$ and $s_-\coloneqq\sum_{j:\theta_j<0}\theta_j$.
    The set $\Rj_\theta(\domdynnum)\subset\projdomdyn$ is a parallelogram in $\R^2$ with vertices
    \begin{equation*}
        \begin{pmatrix}
            s_- \\ 0
        \end{pmatrix},
        \begin{pmatrix}
            s_+ \\ 0
        \end{pmatrix},
        \begin{pmatrix}
            \frac{s_++s_-}{2} \\ \frac{s_+-s_-}{2\maxT}
        \end{pmatrix},
        \begin{pmatrix}
            \frac{s_++s_-}{2} \\ \frac{s_--s_+}{2\maxT}
        \end{pmatrix}.
    \end{equation*}
\end{prop}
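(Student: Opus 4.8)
The statement is a direct computation: I want to identify the image of the polytope $\domdynnum$ under the linear map $\Rj_\theta:(x,v)\mapsto(\theta\cdot x,\theta\cdot v)$. First I would observe that $\domdynnum=\set{(x,v)\given x+tv\in[0,1]^d\text{ for all }t\in\alltimes}$, and since $[0,1]^d$ is convex and $\alltimes$ is finite with $[\min\alltimes,\max\alltimes]=[-\maxT,\maxT]$, the constraint $x+tv\in[0,1]^d$ for all $t\in\alltimes$ is equivalent to $x\pm\maxT v\in[0,1]^d$ (the two extreme times suffice by convexity of the cube, componentwise). Hence $\domdynnum=\set{(x,v)\given x-\maxT v\in[0,1]^d,\ x+\maxT v\in[0,1]^d}$, which under the change of variables $a=x-\maxT v$, $b=x+\maxT v$ is just the cube $[0,1]^d\times[0,1]^d$ in $(a,b)$. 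So $\domdynnum$ is affinely equivalent to $[0,1]^{2d}$, a polytope whose vertices are the images of the $4^d$ vertices $(a,b)\in\{0,1\}^d\times\{0,1\}^d$, i.e. $x=\tfrac{a+b}2$, $v=\tfrac{b-a}{2\maxT}$.

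Next I would push this through $\Rj_\theta$. Writing $(\theta\cdot x,\theta\cdot v)=\bigl(\tfrac12(\theta\cdot a+\theta\cdot b),\tfrac1{2\maxT}(\theta\cdot b-\theta\cdot a)\bigr)$, the image is the image of the square $[s_-,s_+]^2$ in the $(\theta\cdot a,\theta\cdot b)$-plane under the linear map $(\alpha,\beta)\mapsto\bigl(\tfrac{\alpha+\beta}2,\tfrac{\beta-\alpha}{2\maxT}\bigr)$, since $\theta\cdot a$ ranges exactly over $[s_-,s_+]$ as $a$ ranges over $[0,1]^d$ (the maximum of $\theta\cdot a$ is $\sum_{j:\theta_j>0}\theta_j=s_+$, attained at $a_j=1$ for $\theta_j>0$ and $a_j=0$ otherwise; the minimum is $s_-$ similarly; and the range is the full interval by connectedness), and likewise for $\theta\cdot b$. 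The map $(\alpha,\beta)\mapsto\bigl(\tfrac{\alpha+\beta}2,\tfrac{\beta-\alpha}{2\maxT}\bigr)$ is an invertible linear map, so it sends the square $[s_-,s_+]^2$ to a parallelogram whose vertices are the images of the four corners $(s_-,s_-),(s_+,s_+),(s_+,s_-),(s_-,s_+)$. Computing these images gives respectively $\bigl(s_-,0\bigr)$, $\bigl(s_+,0\bigr)$, $\bigl(\tfrac{s_++s_-}2,\tfrac{s_--s_+}{2\maxT}\bigr)$, $\bigl(\tfrac{s_++s_-}2,\tfrac{s_+-s_-}{2\maxT}\bigr)$, which is exactly the claimed list of vertices (up to reordering). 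Finally, $\Rj_\theta(\domdynnum)\subset\projdomdyn$ holds because $\domdynnum\subset\domdyn$ (particles staying in $\domstat$ at all of $\domt$ also stay there at $\alltimes\subset\domt$... actually the inclusion is the reverse: $\domdyn\subset\domdynnum$ since $\alltimes\subset\domt$) — more carefully, $\projdomdyn$ was defined via $\domdyn$, so I should instead note directly that every vertex $(\theta\cdot x,\theta\cdot v)$ above lies in $\projdomdyn$ by its definition; but in fact in the numerical section the relevant domain is enlarged, and the containment statement should be read as: these vertices lie in (a suitably chosen) $\projdomdyn$. I would phrase the conclusion so as to only assert the parallelogram identity, which is all the calculation supports, and remark that $\projdomdyn$ is taken large enough to contain it.

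\textbf{Main obstacle.} There is essentially no deep obstacle here — the content is the observation that the intersection over all $t\in\alltimes$ collapses to the two extreme times by componentwise convexity of the cube, plus bookkeeping of which corner maps where. The one place requiring a little care is making sure the range of $\theta\cdot a$ over $a\in[0,1]^d$ is the full closed interval $[s_-,s_+]$ and that the product structure $\theta\cdot a\in[s_-,s_+]$, $\theta\cdot b\in[s_-,s_+]$ is genuinely independent (so the image is the full square, not some sub-polytope) — this is immediate because $a$ and $b$ vary over independent cubes. I would present the argument compactly via the change of variables $(a,b)=(x-\maxT v,x+\maxT v)$, as this makes both the polytope identification and the vertex computation transparent.
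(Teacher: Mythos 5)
Your proof is correct, but it takes a different route from the paper's. The paper works per coordinate in the original variables: it writes $\domdynnum$ as the set of $(x,v)$ with $(x_j,v_j)\in\domdynone$ for every $j$, where $\domdynone=\conv\{(0,0),(1,0),(1/2,1/(2\maxT)),(1/2,-1/(2\maxT))\}$, observes that $\Rj_\theta(\domdynnum)$ is the Minkowski sum $\theta_1\domdynone+\dotsb+\theta_d\domdynone$, and then collapses this sum using the distributivity of scaling over Minkowski sums for convex sets together with the symmetry $-\domdynone=\domdynone-(1,0)$, ending with $(s_+-s_-)\domdynone+(s_-,0)$ and reading off the vertices. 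You instead diagonalize via the change of variables $(a,b)=(x-\maxT v,\,x+\maxT v)$, under which $\domdynnum$ becomes the product cube $[0,1]^d\times[0,1]^d$, so that $\Rj_\theta(\domdynnum)$ is the image of the square $[s_-,s_+]^2$ (in the coordinates $\theta\cdot a,\theta\cdot b$, which vary independently) under an invertible linear map, and the vertices are just the images of the four corners. Your route avoids the Minkowski-sum algebra entirely and makes the corner bookkeeping mechanical; the paper's route keeps the planar set $\domdynone$ explicit, which it then reuses verbatim in the companion result on the domain of $u_t$. Two minor remarks: your explicit justification that the constraints at all $t\in\alltimes$ collapse to the extreme times $\pm\maxT$ (componentwise convexity of the cube) is a step the paper states without comment, so making it explicit is fine; and your caution about the inclusion $\Rj_\theta(\domdynnum)\subset\projdomdyn$ is reasonable — the paper's proof likewise only establishes the parallelogram identity, the inclusion being a matter of how $\projdomdyn$ is read in the numerical setting where $\domdyn$ has been enlarged to $\domdynnum$.
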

\begin{proof}
    Since we assume $\domstat=[0,1]^d$, we have
    \begin{align*}
        (x,v)\in\domdynnum&\iff\forall j=1,\dotsc,d:0\leq x_j+\maxT v_j\leq 1\land 0\leq x_j-\maxT v_j\leq 1 \\
        &\iff \forall j=1,\dotsc,d: (x_j,v_j)\in\conv\set*{
        \begin{pmatrix}
            0 \\ 0
        \end{pmatrix},
        \begin{pmatrix}
            1 \\ 0
        \end{pmatrix},
        \begin{pmatrix}
            1/2 \\ 1/(2\maxT)
        \end{pmatrix},
        \begin{pmatrix}
            1/2 \\ -1/(2\maxT)
        \end{pmatrix}
        }.
    \end{align*}
    Denote the convex hull above by $\domdynone\subset\R^2$. For a given $\thins$ we have, using the Minkowski sum of sets,
    \begin{align*}
        \Rj_\theta(\domdynnum)&=\set{(\theta\cdot x, \theta\cdot v)\given (x,v)\in\domdynnum}\\
        &=\bigg\{(y,w)\in\R^2\,\bigg| \exists(x_j,v_j)\in\domdynone,j=1,\dotsc,d:(y,w)=\sum_{j=1}^d\theta_j(x_j,v_j)\bigg\}\\
        &=\theta_1\domdynone +\dotsb + \theta_d\domdynone\\
        &=\sum_{j:\theta_j>0}(\theta_j\domdynone) + \sum_{j:\theta_j<0}(\theta_j\domdynone) .
    \end{align*}
    By convexity of $\domdynone$, we can use the distributivity for Minkowski sums after introducing a minus sign,
    \begin{equation*}
        \sum_{j:\theta_j>0}(\theta_j\domdynone) + \sum_{j:\theta_j<0}(\theta_j\domdynone)
        =\sum_{j:\theta_j>0}(\theta_j\domdynone) + \sum_{j:\theta_j<0}((-\theta_j)(-\domdynone))
        =s_+\domdynone + (-s_-)(-\domdynone).
    \end{equation*}
    Note that $-\domdynone=\domdynone-(1,0)$ so that
    \begin{align*}
        s_+\domdynone + (-s_-)(-\domdynone)
        =s_+\domdynone + (-s_-)\domdynone+(s_-,0)
        =(s_+-s_-)\domdynone+(s_-,0).
    \end{align*}
    Applying the scaling and translation in the last expression to the four points inside the definition of $\domdynone$, we obtain the stated result.
\end{proof}
\begin{prop}[Domain of $u_t$]\label{thm:dom_u}
     Let $t\in\R$ be given.
     \begin{enumerate}
         \item If $\abs{t}\leq\maxT$, we have $\mv^d_t(\domdynnum)=[0,1]^d$.
         \item If $\abs{t}>\maxT$, we have $\mv^d_t(\domdynnum)=\left[\frac{1}{2}-\frac{\abs{t}}{2\maxT},\frac{1}{2}+\frac{\abs{t}}{2\maxT}\right]^d$.
     \end{enumerate}
\end{prop}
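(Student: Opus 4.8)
\textbf{Proof proposal for \cref{thm:dom_u}.}

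The plan is to reduce everything to the one-dimensional picture, just as in the proof of \cref{thm:dom_mu}. Recall from that proof that $(x,v)\in\domdynnum$ if and only if $(x_j,v_j)\in\domdynone$ for every coordinate $j$, where $\domdynone=\conv\{(0,0),(1,0),(1/2,1/(2\maxT)),(1/2,-1/(2\maxT))\}\subset\R^2$ is the parallelogram carved out by the constraints $0\le x_j\pm\maxT v_j\le1$. Since $\mv^d_t$ acts coordinatewise via $(x,v)\mapsto x+tv$, we have $\mv^d_t(\domdynnum)=\prod_{j=1}^d\{x_j+tv_j\mid(x_j,v_j)\in\domdynone\}$, so it suffices to compute the one-dimensional image $I_t:=\{x+tv\mid(x,v)\in\domdynone\}$ and then take the $d$-fold Cartesian product.

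First I would observe that $I_t$ is the image of the convex polygon $\domdynone$ under the linear functional $(x,v)\mapsto x+tv$, hence a closed interval whose endpoints are attained at vertices of $\domdynone$. Evaluating $x+tv$ at the four vertices gives the values $0$, $1$, $\tfrac12+\tfrac{t}{2\maxT}$, and $\tfrac12-\tfrac{t}{2\maxT}$; thus $I_t=[\min V_t,\max V_t]$ with $V_t=\{0,1,\tfrac12+\tfrac{t}{2\maxT},\tfrac12-\tfrac{t}{2\maxT}\}$. Now one simply splits into the two cases. If $\abs t\le\maxT$, then $\abs{\tfrac{t}{2\maxT}}\le\tfrac12$, so both $\tfrac12\pm\tfrac{t}{2\maxT}$ lie in $[0,1]$; hence $\min V_t=0$ and $\max V_t=1$, giving $I_t=[0,1]$ and therefore $\mv^d_t(\domdynnum)=[0,1]^d$. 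If $\abs t>\maxT$, then $\abs{\tfrac{t}{2\maxT}}>\tfrac12$, so one of $\tfrac12\pm\tfrac{t}{2\maxT}$ exceeds $1$ and the other is negative; a short check (using $\tfrac12+\tfrac{\abs t}{2\maxT}>1>0>\tfrac12-\tfrac{\abs t}{2\maxT}$, and that the sign of $t$ merely swaps which vertex realizes which endpoint) shows $\min V_t=\tfrac12-\tfrac{\abs t}{2\maxT}$ and $\max V_t=\tfrac12+\tfrac{\abs t}{2\maxT}$, so $I_t=[\tfrac12-\tfrac{\abs t}{2\maxT},\tfrac12+\tfrac{\abs t}{2\maxT}]$ and $\mv^d_t(\domdynnum)=[\tfrac12-\tfrac{\abs t}{2\maxT},\tfrac12+\tfrac{\abs t}{2\maxT}]^d$.

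There is no real obstacle here; the only mild care needed is the case analysis on the sign of $t$ and verifying that the extreme values of the linear functional are indeed attained at the vertices computed (which is immediate since a linear functional on a polytope attains its extrema at vertices, and the two "interior" vertex values $\tfrac12\pm\tfrac{t}{2\maxT}$ never become the extremizers in the regime $\abs t\le\maxT$). I would also note in passing that the boundary cases $\abs t=\maxT$ are consistent between the two formulas, since then $\tfrac12\pm\tfrac{\abs t}{2\maxT}\in\{0,1\}$.
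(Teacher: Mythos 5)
Your proposal is correct and follows essentially the same route as the paper: both reduce to the coordinatewise picture $\mv^d_t(\domdynnum)=\bigtimes_{j=1}^d\mv^1_t(\domdynone)$ and use linearity to identify $\mv^1_t(\domdynone)$ with the convex hull (i.e.\ the interval spanned by) the four vertex values $0,\,1,\,\tfrac12\pm\tfrac{t}{2\maxT}$. The only difference is that you spell out the case analysis on $\abs{t}$ which the paper leaves as ``the claim follows easily''.
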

\begin{proof}
    Write
    \begin{equation*}
        S\coloneqq\set*{
        \begin{pmatrix}
            0 \\ 0
        \end{pmatrix},
        \begin{pmatrix}
            1 \\ 0
        \end{pmatrix},
        \begin{pmatrix}
            1/2 \\ 1/(2\maxT)
        \end{pmatrix},
        \begin{pmatrix}
            1/2 \\ -1/(2\maxT)
        \end{pmatrix}
        }
    \end{equation*}
    and define again $\domdynone=\conv S$. We have
    \begin{equation*}
        \mv^d_t(\domdynnum)=\set{x+tv\given (x,v)\in\domdynnum}
        =\bigtimes_{j=1}^d\mv^1_t(\domdynone)
    \end{equation*}
    and by linearity of $\mv^1_t$ we know that
    \begin{equation*}
\mv^1_t(\domdynone)=\mv^1_t(\conv S)=\conv\mv^1_t(S)
=\conv\set*{0, 1, \frac{1}{2}+\frac{t}{2\maxT},\frac{1}{2}-\frac{t}{2\maxT}}\,,
    \end{equation*}
    from which the claim follows easily.
\end{proof}

Now knowing the exact domains, we generate regular grids for each variable, that is, grids $\gridu{t}$ for each $t\in\domt$ and grids $\gridmu{\theta}$ for each $\theta\in\alldirs$. Examples %
can be seen in \cref{fig:grid_proj}. The grids $\gridmu{\theta}$ are generated by applying an appropriate affine transform to a standard rectangular grid as suggested by \cref{thm:dom_mu}.
We employ the same number of $\nod$ grid cells along each dimension to obtain a more or less isotropic resolution (in our numerical experiments, $\nod=100$ or $200$).
Each variable $u_t$, $t\in\domt$, and $\liftVar_\theta$, $\theta\in\alldirs$, is now discretized as a piecewise constant function on the corresponding grid,
whose masses in the different grid cells (function value times cell volume) are collected in the vectors $\uvec{t}\in\R^{\nod^d}$, $t\in\domt$, and $\muvec{\theta}\in\R^{\nod^2}$, $\theta\in\alldirs$.

\subsection{Discretized operators and optimization problem}
\begin{figure}[ht]
    \centering
    \input{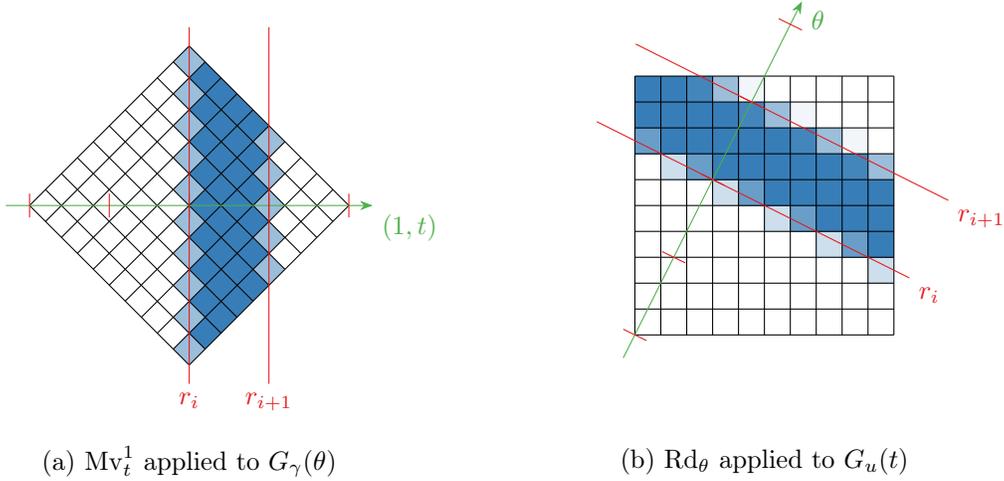}
    \caption{Example grids and projections. The green line in projection direction $\theta$ is subdivided into red bins corresponding to the grid $\gridr{\theta}{t}$. The contribution of a grid cell to the projection onto a red bin $[r_i, r_{i+1}]$ is determined by its relative area of intersection (visualized by opacity) with the strip orthogonal to $\theta$.}
    \label{fig:grid_proj}
\end{figure}

Next, the operators $\Rs_\theta$ and $\mv_t^1$ need to be discretized. We restrict our attention to the case $d=2$ for simplicity and note that, for two spatial dimensions, these operators are identical up to a rescaling: For every $t\in\R$, we have $\mv_t^1=\pf{[x\mapsto\sqrt{1+t^2}x]}\Rs_{\theta(t)}$, where $\theta(t)\coloneqq (1,t)/\sqrt{1+t^2}$. Thus it will be sufficient to only describe the implementation of the operator $\Rs_\theta$ in the following.

For every direction $\theta\in\alldirs$, we need to discretize the operation $\Rs_\theta u_t$, where $u_t$ is represented by the vector $\uvec{t}$ of function values on the grid $\gridu{t}$.
The result of the projection $\Rs_\theta u_t$ is a measure on $\projdomstat\subset\R$,
whose domain $\Rs_\theta(\mv^d_t(\domdynnum))=\mv^1_t(\Rj_\theta(\domdynnum))\subset\projdomstat$ depends on $t$ and $\theta$ and is again discretized via a one-dimensional equispaced grid $\gridr{\theta}{t}$.
To have a comparable resolution as for the variables $u_t$ and $\liftVar_\theta$ we pick $\gridr\theta t$ to have the same number $\nod$ of intervals as the other grids have along each direction.
Again, measures on $\projdomstat$ are discretized as piecewise constant functions on that grid.

After discretization, the Radon transform $\Rs_\theta u_t$ will be represented by $\matu{t}{\theta}\uvec{t}$ for a matrix $\matu{t}{\theta}\in\R^{\nod\times\nod^2}$.
(In a completely analogous manner the move operator $\mv_t^1$ in $\mv_t^1\gamma_\theta$ will be represented by a matrix $\matmu{\theta}{t}\in\R^{\nod\times\nod^2}$).
The entry of the $i$-th row and $j$-th column in $\matu{t}{\theta}$ is given by the relative area of intersection between the strip
\begin{equation*}
    \set{x\in\R^2\given r_i\leq\thdot{x}\leq r_{i+1}}
\end{equation*}
(where $[r_i, r_{i+1}]$ represents the $i$-th grid cell of the grid $\gridr{\theta}{t}$)
and the $j$-th cell of the grid $\gridu{t}$ (cf.\ \cref{fig:grid_proj}).

Finally, we take $\measmat{t}$ to be appropriate matrices discretizing the forward operators $\fopstat_t$ on the grids $\gridu{t}$ for all $t\in\alltimes$.
In our experiments we consider as forward operator the truncated discrete Fourier transform \added{$\ftrunc$} on $\Omega=[0,1]^2$ from \cref{ex:truncatedFourier}.
To compute the entries of $\measmat{t}$ we simply apply $\fopstat_t$ to Dirac measures located at the centres of the grid cells in $\gridu{t}$ for each time $t\in\alltimes$.

In principle one would now obtain a discretized version of \ref{eq:dim_reduced_product_only} by replacing all variables and operators by their discretized counterparts.
However, the constraints $\Rs_\theta u_t=\mv_t^1\liftVar_\theta$ should not be enforced exactly in the discretized setting
since this is likely to introduce discretization artefacts.
Therefore we replace them by a bound $\redconsbnd>0$ on the violation so that the discretized problem reads
\begin{equation}\label{eqn:discrprob}
\begin{split}
\min_{\substack{
\muvec{\theta}\in\R^{\nod^2}, \theta\in\alldirs\\
\uvec{t}\in\R^{\nod^d}, t\in\domt
}}\,
\sum_{\theta\in\alldirs}\norm{\muvec{\theta}}_1+\sum_{t\in\domt}\norm{\uvec{t}}_1
+\frac1{2\alpha}\sum_{t\in\alltimes}\norm{\measmat{t}\uvec{t}-\data_t}_2^2\\
\quad\text{ such that }
\left(\sum_{\theta\in\alldirs, t\in\domt}\norm{\matmu{\theta}{t}\muvec{\theta} - \matu{t}{\theta}\uvec{t}}_2^2\right)^{1/2}\leq\tau.
\end{split}
\end{equation}
A coarse parameter search has shown a value of $\redconsbnd=0.001$ to be a good choice, which we use in all our experiments.
The resulting second-order cone program is solved using the commercial MOSEK solver \cite{ApS2019} called through the Python library CVXPY \cite{diamond2016cvxpy,agrawal2018rewriting}.

\subsection{Application to truncated Fourier measurements}
As a model super-resolution problem, we consider the case of truncated Fourier measurements \added{$\ftrunc$} from \cref{ex:truncatedFourier} in $d=2$ dimensions, where we set the frequency cutoff to $\maxFrequency=2$.
This setting in particular serves to benchmark our dimension reduction against the full-dimensional dynamic reconstruction proposed by Alberti et al.

\paragraph{Measurement times.}
We are interested in evaluating the performance of the method for different numbers of measurement times $\alltimes$. For simplicity, we choose times $\alltimes\subset[-1,1]$ centred around 0 as follows: Given a $K\in\N$, choose $\alltimes\coloneqq\set{k/K\given k\in\set{-K,\dotsc,K}}$. The values of $K$ used in the following are $K=1,2,3$ resulting in 3, 5 and 7 times respectively.

\paragraph{Datasets.}
To compare different parameter choices for the algorithm and to benchmark it against other methods, we generate three datasets \dataset{3}, \dataset{5}, \dataset{7} consisting of 2000 particle configurations in $d=2$ dimensions, one dataset for each choice of times $\alltimes$.
Each configuration within a dataset consists of $4\leq N\leq 20$ particles with randomly chosen positions and velocities $(x_i,v_i)\in\domdynnum$ and weights $m_i$ drawn from the uniform distribution on $[0.9, 1.1]$.

We define the minimal dynamic separation of a particle configuration $\dsupp=\set{(x_i,v_i)\given i=1,\dotsc,N}$ as
\begin{equation*}
    \dynsep(\dsupp)\coloneqq\min_{t\in\alltimes}\min_{i\neq j}\norm{x_i+tv_i-(x_j+tv_j)}.
\end{equation*}
Since we are interested in testing the reconstruction methods for a range of separation values, we want configurations with low and high separations to be equally well represented in each dataset. To this end, we use a \emph{rejection-sampling} method adapted from \cite{AlbertiAmmariRomeroWintz2019}, which roughly works by drawing configurations from the uniform distribution on $\domdynnum$ and accepting or rejecting the new configuration based on an acceptance probability. For our datasets this probability is chosen as to make the dynamic separation $\dynsep$ uniformly distributed on $[0,0.1]$.

\paragraph{Postprocessing and error measures.}
Because static reconstruction will be one of the methods we compare against, we only consider the reconstruction results for the centre time $t=0$, that is, the reconstructed snapshot $u_0$.
We use two different error measures to evaluate the reconstruction results.
The first is simply the unbalanced Wasserstein divergence $\unbalancedWasserstein[2]{R}(u_0^\dagger,u_0)$ as defined in \cref{sec:unbalancedTransport}, where $u_0^\dagger$ is the ground truth as before and $u_0$ is the measure obtained by placing a Dirac measure at each cell centre of $\gridu{0}$ with weights as given in $\uvec{0}$. The radius is chosen as $R=0.05$ to balance transport and mass creation inside the unit square.
The second error measure uses a clustering method:
Since usually the ground truth particle is not exactly representable on the discrete grid, the solution to the discretized problem will typically contain a cluster of positive weights near a true particle position.
Therefore, in a post-processing step, we first set all weights with values below a threshold $\clusterthresh=0.1$ to zero. Afterwards, we cluster neighbouring non-zero weights and create a list of detected particles, where each cluster is assumed to be a detected particle with its position equal to the cluster's centre of mass. Now, a configuration of particles is seen as correctly reconstructed if there exists a pairing between true and reconstructed particles at time $t=0$ such that the paired particles have distance less than $0.01$ %
and there remain no unpaired particles.

\paragraph{Methods and parameters.}
Our goal is to compare the following methods:
\begin{itemize}
    \item \textbf{Static:} We reconstruct $u_0$ by solving
\begin{equation*}%
    \min_{u\in\Mp(\domstat)}\mnorm{u}+
    \frac1{2\alpha}\norm{\fopstat_0 u-\data_0}_2^2
\end{equation*}
    (the fidelity-term version of the static reconstruction problem \eqref{eqn:staticModel}),
    disregarding the data $\data_t$ for $t\in\alltimes\setminus\set{0}$.
    In contrast to \eqref{eqn:discrprob}, the discretization of this problem is straightforward.
    In practice, we reuse our implementation of the dimension-reduced problem \eqref{eqn:discrprob}, setting $\alldirs=\varnothing$.

    \item \textbf{ADCG:} We use the implementation of Alberti et al.\ in \cite{AlbertiAmmariRomeroWintz2019}, which applies the Alternating Descent Conditional Gradient method (ADCG) from \cite{Boyd2017alternating_mh} to the full-dimensional dynamic formulation
\begin{equation*}
\min_{\lambda \in \Mp(\domdynnum)} \| \lambda \|_\M \quad \text{such that } \fopstat_t\,\mv^d_t \lambda =  \data_t \quad \text{for all }t \in \alltimes.
\end{equation*}
In short, this method is an extension of the well-known Frank--Wolfe method and works by alternating between adding new source points and differentiable optimization of source position and weights.

\item \textbf{Dimred.\ low}: Our method with $\abs{\alldirs}=3$ and $\abs{\domt\setminus\alltimes}=0$.
\item \textbf{Dimred.\ mid}: Our method with $\abs{\alldirs}=5$ and $\abs{\domt\setminus\alltimes}=3$.
\item \textbf{Dimred.\ high}: Our method with $\abs{\alldirs}=10$ and $\abs{\domt\setminus\alltimes}=7$.
\end{itemize}

The directions $\alldirs$ for the dimension-reduced methods are chosen as equidistributed points on the right half of the unit circle $\sphere^{1}$ with maximum possible spacing (depending on the number of directions), $\alldirs=\{\exp(\ii\pi(\frac j{|\alldirs|}-\frac12))\,|\,j=0,\ldots,|\alldirs|-1\}$ when expressed in the complex plane. The additional times in $\domt\setminus\alltimes$ are similarly chosen after applying the transformation $t\mapsto(1, t) / \sqrt{1+t^2}$ to map times to points on $\sphere^{1}$.
For the ADCG method we chose the parameters according to the reference implementation as follows:
\begin{table}[H]
    \centering
    \noindent\makebox[\textwidth]{%
        \begin{tabular}{lc}\toprule
    Description & Value \\ \midrule
    Maximum iterations in outer loop & 100 \\
    Maximum iterations in coordinate descent & 200 \\
    Grid size to find initial guess for new source point & $20\times20$ \\
    Minimum optimality gap before termination & $10^{-5}$ \\
    Minimum objective progress before termination & $10^{-4}$ \\
 \bottomrule
\end{tabular}
    }
    \caption{Parameters for ADCG}\label{tab:adcgparams}
\end{table}

\paragraph{Experiments without noise.}
In order to evaluate the exact reconstruction properties, we run two experiments.
In both, rather than actually imposing the constraint that the measurements $\data_t$ are exactly matched, we pick a very small regularization parameter $\alpha=0.005$. %
The size of the grid is chosen as $\nod=100$.

First, we study the influence of the number of measurement times by
comparing the performance of our ``dimred.\ mid'' method for each number of measurement times $\abs{\alltimes}=3,5,7$ to the static reconstruction. To this end, we run both methods on truncated Fourier measurements of particle configurations from dataset \dataset{3}. Since here we are only interested in the effect of $\abs{\alltimes}$ independent of the dynamic separation values, we use the dataset \dataset{3} for all choices of $\abs{\alltimes}$. Each reconstruction result is evaluated with the clustering method as either correctly or incorrectly reconstructed. For each number of times, the results are split into four groups depending on whether both, one or neither of the methods was successful. The size of the groups is visualized by the area of the green blobs in \cref{fig:exact}, top.

We see a large increase in the number of correctly reconstructed samples by our (dimension-reduced) dynamic method as the number of measurement times increases from $\abs{\alltimes}=3$ to $\abs{\alltimes}=5$, as expected. Meanwhile, adding two more measurement times ($\abs{\alltimes}=7$) seems to have less of an impact.
As expected, we also see that it is very rare for the dynamic reconstruction to fail on a configuration which is correctly reconstructed by the static method. Such cases cannot be fully excluded however, since the additional constraints in our dynamic formulation introduce some numerical dissipation, which sometimes results in spurious  or smeared out non-zero weights. These might then be incorrectly identified as clusters by the clustering evaluation.
Since we chose the frequency cutoff of the measurement operator quite low at $\maxFrequency=2$, the reconstruction task is challenging so that the number of configurations on which both methods fail stays large even for $\abs{\alltimes}=7$.

\begin{figure}%
\centering
\pgfplotsset{
all axes style/.style={
    width=\linewidth/3,
    scale only axis,
    every axis plot/.append style={
        thick
    }
},
blob axis style/.style={
    height=\linewidth/3,
    title style={yshift=2.5em},
    all axes style,
    axis lines=box,
    axis line style={draw=none},
    xmin=-2, xmax=2,
    ymin=-2, ymax=2,
    xtick=data, ytick=\empty,
    xticklabels={correct recons., failed recons.},
    xtick pos=top,
    ytick pos=left,
    tick align=inside,
    yticklabel style={
        rotate=90
    },
    xlabel={dynamic},
    label style={font=\bfseries},
    title style={font=\bfseries},
    y dir=reverse
},
blob plot style/.style={
    mark=*,
    mark options={
        fill=YlGn-I,
    },
    scatter,
    only marks,
    visualization depends on={value \thisrow{count} \as \blobcount},
    visualization depends on={value \thisrow{style} \as \blobstyle},
    visualization depends on={\thisrow{size} \as \marksize},
    scatter/@pre marker code/.style={
        /tikz/mark size=\marksize
    },
    scatter/@post marker code/.style={},
    nodes near coords*={\blobcount},
    every node near coord/.code={
        \ifnum \blobstyle=0
            \tikzset{color=white}
        \else
            \tikzset{color=black}
        \fi
        },
    nodes near coords align={
        \ifnum \blobstyle=0
            anchor=center,xshift=0pt
        \else
            anchor=west,xshift=3pt
        \fi
        },
},
blob cross style/.style={
    all axes style,
    height=\linewidth/3,
    axis lines=center,
    xmin=-1, xmax=1,
    ymin=-1, ymax=1,
    ticks=none,
    axis line style={-}
},
lines axis style/.style={
    all axes style,
    height=6 cm,
    xlabel={Dynamic separation},
    xticklabel style={
        /pgf/number format/fixed,
        /pgf/number format/precision=2
    },
    grid=major,
    xmin=0, xmax=0.1,
    ymin=-0.02, ymax=1.05,
    legend pos=north west,
    reverse legend,
},
lines plot style/.style={
    mark=none
},
}
\centerline{%
\begin{tabular}{ccc}
\includegraphics{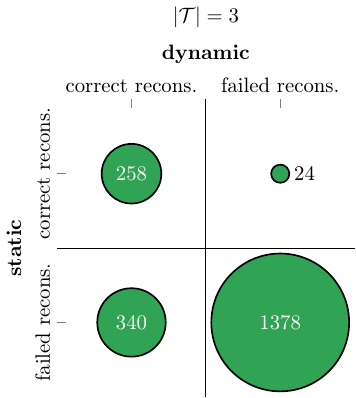}%
&
\includegraphics{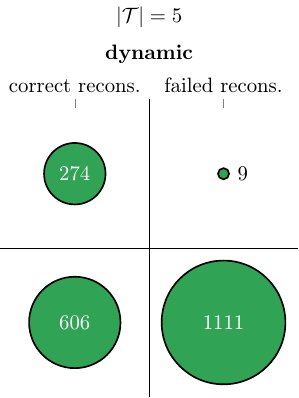}%
&
\includegraphics{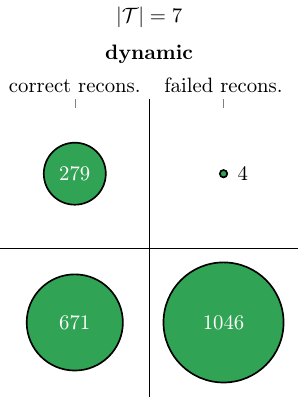}%

\\[0.5cm]
\includegraphics{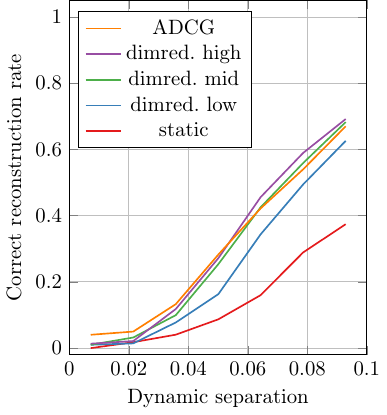}%
&
\includegraphics{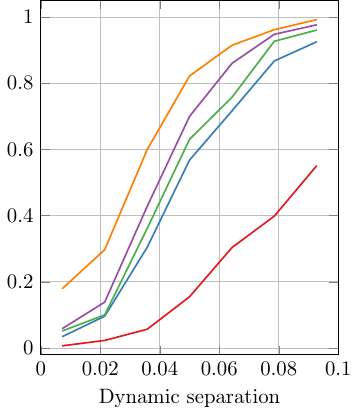}%
&
\includegraphics{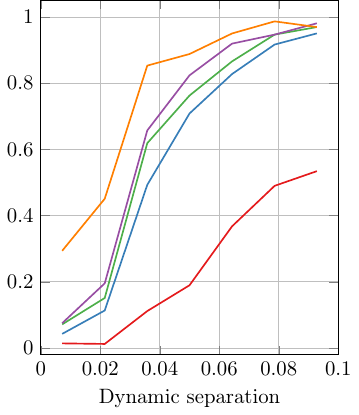}%

\end{tabular}%
}%

\caption{\textbf{Top row:} Comparing static reconstruction to our dynamic method with parameters as in dimred.\ mid, applied to the \dataset{3} dataset with 3, 5 and 7 measurement times (left to right). Results on each configuration evaluated by the clustering method as ``correct'' or ``failed'' and divided into four groups per number of measurement times. \textbf{Bottom row:} Comparing the correct reconstruction rate of our dimension-reduced methods against static reconstruction and ADCG applied to the full-dimensional model. Datasets used are \dataset{3}, \dataset{5}, \dataset{7} (left to right), particle configurations are binned by their dynamic separation, and the rate of correctly reconstructed configurations, as evaluated by the clustering method, is plotted for each bin.}\label{fig:exact}
\end{figure}

As a second experiment, we compare the performance of our dimension-reduced method for different parameter choices against the static as well as the full-dimensional reconstruction solved by ADCG. Here, we use the dataset \dataset{n}, $n=3,5,7$ for each respective number of measurement times in order to have a uniform sample of configurations across the whole range of dynamic separation values. The resulting reconstruction rates are displayed as a function of the dynamic separation in \cref{fig:exact}, bottom.

We notice that the dynamic reconstruction method ``dimred.\ low'', which uses only three projection directions to couple the information from multiple measurements, is able to improve upon the static reconstruction by a large margin. Unsurprisingly, all reconstruction methods improve with higher dynamic separation of the particle configurations, since our measurement operator filters out fine scale information.
For all values of $\abs{\alltimes}$, higher numbers of projection directions in our formulation (dimred.\ low to high) result in better reconstruction.
For $\abs{\alltimes}\in\set{5,7}$, the full-dimensional model solved by ADCG still beats our dimension-reduced method. Curiously, for only three measurement times, our dimension-reduced models mid and high slightly outperform ADCG for high separations, which might be explained by suboptimal parameter choices for ADCG (such as the grid size for the initial guess of a new source point).

Note that the performance of the static reconstruction method slightly improves with increasing $\abs{\alltimes}$ for each fixed dynamic separation $\dynsep$.
This is explained by the fact that, on average, for the same dynamic separation a larger $\abs{\alltimes}$ means a higher particle separation at the single time point $0$.

\paragraph{Experiments with noise.}
Finally, we intend to investigate the behaviour of our method with noise added to measurements. In this section, we fix the setting with five measurement times, $\alltimes=\set{-1,-0.5,0,0.5,1}$. For a given noise level $\delta$, Gaussian noise with standard deviation $\sqrt{2\delta / 250}$ is added to the vector resulting from truncated Fourier measurements \added{$\ftrunc$} with cutoff $\maxFrequency=2$. Since the size of this vector is 250, this is consistent in expectation with the definition of $\delta$ in \eqref{eqn:noiseStrength}.

We are especially interested in supporting our convergence analysis from \cref{sec:noisyReconstruction}. This analysis suggests the optimal regularization parameter $\alpha$ in \eqref{eqn:discrprob} to scale like the square root $\sqrt{\delta}$ of the noise level, thus $\alpha=\noiseconst\sqrt{\delta}$ for some constant $\noiseconst>0$. By a coarse parameter search, we have determined the value $\noiseconst=0.2$ to be roughly optimal, which we will use for the following experiments.

Two new datasets are generated for this setting: A dataset consisting only of a single particle in each configuration ($N=1$), as well as a second dataset consisting of two particles in each configuration which stay far apart ($N=2$ with $\dynsep>0.4$) over the five measurement times. Additionally, the dataset \dataset{3} from before (labelled ``$4\leq N\leq 20$'' in \cref{fig:noisy}) is filtered to contain only configurations which where correctly reconstructed in the noiseless experiments by the respective methods. All datasets are thinned to no less than 50 configurations per noise level.

We compare the dimension reduced method ``dimred.\ mid'' from before, but with grid size set to $200\times 200$, to the ADCG algorithm applied to the full-dimensional formulation. %
The error measure used is the mean of the unbalanced Wasserstein divergence per noise level.

\begin{figure}[ht]
\centering
\input{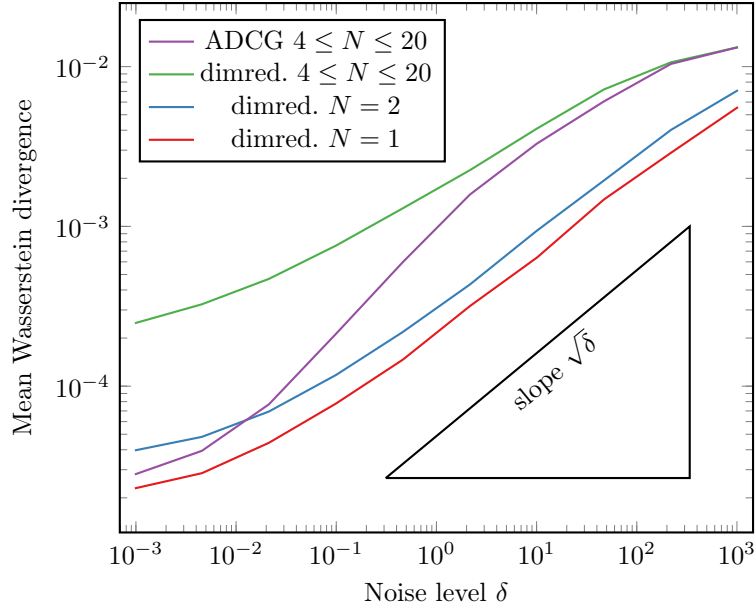}
\caption{Convergence properties for vanishing noise level for our method applied to datasets with one, two and $4-20$ particles in each configuration, as well as ADCG on the latter. Slope $\sqrt{\delta}$ provided for comparison.}\label{fig:noisy}
\end{figure}

From our analysis in \cref{thm:errorEstimatesSummary}, we expect to see a convergence rate of the Wasserstein divergence of $\sqrt{\delta}$ for our dimension-reduced method. \Cref{fig:noisy} shows this to be approximately true for the datasets with one and two particles per configuration in the regime $\delta\in[1,100]$. It is not surprising that the curves flatten in the regime of very low noise levels, since our grid-based implementation cannot surpass the limit given by the grid resolution.

We notice that the curve ``dimred.\ $4\leq N\leq 20$'', which corresponds to our method applied to the diverse \dataset{3} dataset, never quite reaches the expected slope of $\sqrt{\delta}$. Our hypothesis is that in this case, errors from the discretization start to degrade the reconstruction result already before the noise level gets into the regime in which the expected rate would be observed. Finally, the full-dimensional ADCG method seems to even surpass the rate $\sqrt{\delta}$ before flattening for very low noise levels.

\appendix
\section{}

\subsection{Alternative Formulations}\label{sec:alternativeFormulations}
\added{\lookUp{\ref{item3}}%
In the dimension-reduced problem \ref{eq:dim_reduced_general}, we employed the product space $\Mp(\domstat)^\domt$ for the snapshots and the measure space $\Mp(\alldirs \times \projdomdyn) $ for the position-velocity projections. As discussed in \cref{rem:equivalent_formulations} there are, however,  formulations alternative to \eqref{eq:lifted_exact} that work with either only product spaces or only measure spaces. The purpose of this section is to introduce and analyse these variants, in particular with respect to their equivalence to \ref{eq:dim_reduced_general}.
}

\removed{This will be the main setting in our work, because using $\Mp(\domstat)^\domt$ (as opposed to $\Mp(\domstat \times \domt)$) allows to apply the observation operator $\fopstat_t$ at every $t\in\domt$ and because using $\Mp(\alldirs \times \projdomdyn)$ (as opposed to $\Mp( \projdomdyn)^\alldirs $) already provides some regularity of the measures $\liftVar$ with respect to $\theta \in \alldirs$.
There are, however, formulations alternative to \eqref{eq:lifted_exact}.
}

One alternative is to use the product topology both for the temporal dimension of the snapshots and the angular dimension of the position-velocity projections as follows,
\leqnomode
\begin{equation*} \label{eq:dim_reduced_product_only} \tag*{{$\Pi$-\problemTagNoLink[\alpha]{\data}}}
\min_{ \substack{ \liftVar \in \Mp( \projdomdyn)^{\alldirs}  \\ u\in \Mp(\domstat)^{\domt} }}\!
\| \liftVar_{\hat\theta} \|_\M
\!+\!\frac1{2\alpha}\!\sum_{t\in\alltimes}\|\fopstat_tu_t\!-\!\data_t\|_H^2
\quad \text{ such that }
\mv_t \liftVar_\theta = \Rs_\theta u_t \text{ for all }t\!\in\!\domt, \, \theta\!\in\!\alldirs,
\end{equation*}
\reqnomode
where $\hat\theta\in\alldirs$ is arbitrary, but fixed.

Another alternative would be to consider both the snapshots as well as the position-velocity projections as Radon measures on a product space. To achieve this, we need to adapt the measurement operators $(\fopstat_t)_{t \in \alltimes}$ such that an evaluation on $u \in \Mp(\domstat \times \domt)$ is well-defined. For the moment, let $\fopdyn:\Mp(\domstat \times \domt) \rightarrow H^{|\alltimes|}$ denote such a generalization (which will be specified in \cref{defn:radon_space_measurement_operator} below).
Then the second variant of the dimension-reduced problem reads
\leqnomode
\begin{equation*} \label{eq:dim_reduced_measure_only}\tag*{{$\M$-\problemTagNoLink[\alpha]{\data}}}
\min_{ \substack{ \liftVar \in \Mp(\alldirs \times \projdomdyn)  \\ u \in \Mp(\domstat \times \domt) }}\!
\| \liftVar \|_\M
\!+\!\frac1{2\alpha}\!\sum_{t\in\alltimes}\|(\fopdyn u)_t\!-\!\data_t\|_H^2
\quad \text{ such that }
\amv \liftVar = \Rf u.
\end{equation*}
\reqnomode

\removed{The goal of this section is to investigate when the three dimension-reduced settings \ref{eq:dim_reduced_general}, \ref{eq:dim_reduced_product_only} and \ref{eq:dim_reduced_measure_only} are actually equivalent. %
}
\added{

The main results of this section are the existence of solutions to \ref{eq:dim_reduced_measure_only} in \cref{prop:existence_reduced_measure_only} and to \ref{eq:dim_reduced_product_only} in \cref{prop:existence_dim_reduced_product_only}, where for the former we require that the Radon operator corresponding to $(\alldirs, \mdirs)$ is injective. The equivalence of \ref{eq:dim_reduced_measure_only} and \ref{eq:dim_reduced_general} will be shown under the same condition in \cref{prop:equivalence_measure}. Finally, the equivalence of \ref{eq:dim_reduced_product_only} and \ref{eq:dim_reduced_general} will be shown in \cref{prop:equivalence_product_only}, where we require that the move operator corresponding to $(\domt, \mtime)$ is injective.
}

At first, we elaborate on the relation between \ref{eq:dim_reduced_general} and \ref{eq:dim_reduced_measure_only}. To this end, we will need to decompose $u \in \Mp(\domstat \times \domt) $ as  $u_t \prodm t  \wrt \mtime $. As the following lemma shows, this is always possible when $u$ corresponds to the projection of a full-dimensional measure $\lambda  \in \Mp(\Omega)$. 
\begin{lem}[Measure decomposition with full-dimensional lifting] Let $u \in \Mp(\R^d \times \domt)$ and $\lambda  \in \Mp(\R^d \times \R^d)$ be such that 
\[ u = \mv^d \lambda .\]
Then there is a unique family of measures $(u_t)_{t\in \domt}$ in $\Mp(\R^d)$ such that
\[ u = u_t \prodm t \mtime \quad \text{ and } \quad t \mapsto \int_{\R^d} \varphi \wrt u_t \text{ is continuous for every } \varphi \in C_c(\R^d).
\]
\begin{proof}
Regarding existence, we can select $u_t = \mv_t^d \lambda$ for each $t \in \domt$ such that the decomposition follows since by definition $\mv^d \lambda = \mv_t^d \lambda \prodm t \mtime$.
With this choice it then follows from uniform continuity of any  $\varphi \in C_c(\R^d)$ that the mapping
\[ t \mapsto \int_{\R^d} \varphi \wrt u_t = \int_{\R^d \times \R^d} \varphi (x + tv) \wrt \lambda (x,v)
\] is (even uniformly) continuous. Given any other $\tilde{u}_t$ with $u = \tilde{u}_t \prodm t \mtime$ we must have $\tilde{u}_t = u_t$ $\mtime$-almost everywhere. If $\tilde u$ in addition satisfies the continuity requirement it follows that $\tilde{u}_t = u_t$ for all $t \in \domt$. 
\end{proof}
\end{lem}

Now we aim to obtain a similar regularity result for a measure $u \in \Mp(\domstat \times \domt)$ with $ \Rf u = \amv \liftVar $ for some $\liftVar \in \Mp(\alldirs \times \projdomdyn)$.
To this end, we first need a lemma about disintegration \added{\lookUp{\ref{item3}}that is a direct extension of \cref{lem:projected_constraint_decomposition_mixed_model} to the setting where  $u \in \Mp(\domstat \times \domt)$.}
\begin{lem}[Measure decomposition with projected lifting on measure spaces] \label{lem:projected_constraint_decomposition}
Let $u \in \Mp(\R^d \times \domt )$ and $\liftVar  \in \Mp(\alldirs \times  \R^2)$ be such that
\[ \Rf u = \amv \liftVar .\]
Then $u$ and $\liftVar$ can be decomposed as
\[ u = u_t \prodm t \mtime \quad\text{ and }\quad \liftVar = \mdirs \prodm \theta \liftVar_\theta \]
with $(u_t)_{t\in \domt}$ a family of measures in $\Mp(\R^d)$ such that $\|u_t\| _\M = \|u\|_\M$ and 
$(\liftVar_\theta)_{\theta\in \alldirs}$ a family of measures in $\Mp(\R^2)$ such that $\|\liftVar_\theta\| _\M = \|\liftVar \|_\M$. Further, $\|u\|_\M = \|\liftVar\|_\M$ and 
\[ \Rs _\theta u_t = \mv_t \liftVar_\theta, \quad
\Rs u_t = \amv_t \liftVar,\quad
\Rf _\theta u = \mv \liftVar_\theta
\]
for $\mtime$-almost every $t$ and $\mdirs$-almost every $\theta$.
\begin{proof}
At first, note that $\Rf u = \amv \liftVar$ implies that
\[ \mdirs \prodm \theta \Rf_\theta u = \amv_t \liftVar \prodm t \mtime.
\] 
Now using the disintegration theorem \cite[Thm.\,5.3.1]{ambrosio2008gradient}, we decompose $u =  u_t \prodm t \nu_u $ and $\liftVar = \nu_\liftVar  \prodm \theta \liftVar_\theta$  with $\nu_u=\frac1{\|u\|_\M}\pf{(\pi^\domt)}u$, $\nu_{\liftVar}=\frac1{\|\liftVar\|_\M}\pf{(\pi^\alldirs)}\liftVar$ being probability measures and $\|u_t\|_\M =\| u\|_\M $ and $\|\liftVar_\theta\|_\M = \|\liftVar\|_{\M}$ for almost every $t$ and $\theta$. Here, $\pi^\domt : \R^d \times \domt \rightarrow \domt$ and $\pi^\alldirs :  \alldirs  \times \R^2  \rightarrow \alldirs$ denote the canonical projections.
From this decomposition it follows by a direct computation that
\[
\Rf_\theta u = \Rs _\theta u_t \prodm t \nu_u 
\quad \text{and} \quad 
\amv_t \liftVar = \nu_\liftVar  \prodm  \theta \mv_t \liftVar_\theta .
\]
Hence we obtain
\[\mdirs \prodm \theta  (\Rs _\theta u_t \prodm t  \nu_u )= 
(\nu_\liftVar  \prodm \theta \mv_t \liftVar_\theta) \prodm t \mtime .\]
Evaluating the measures on both sides at $\Theta \times \R \times A $ with $A \subset \domt$ an arbitrary Borel set, we obtain $\|u\|_\M\nu_u(A) = \|\liftVar\|_\M\mtime(A)$. The choice $A=\domt$ yields $\|u\|_\M=\|\liftVar\|_\M$ and thus $\nu_u=\mtime$. Likewise, evaluation at $\omega \times \R \times \domt$ for $\omega \subset \Theta$ an arbitrary Borel set leads to $ \|\liftVar\|_\M\nu_\liftVar (\omega) = \|u\|_\M\mdirs (\omega) $ and thus $\nu_\liftVar=\mdirs$. This proves the decomposition of $u$ and $\liftVar$, and the claimed equalities follow 
\added{\lookUp{\ref{item3}} similarly as in the proof of \cref{lem:projected_constraint_decomposition_mixed_model}}.
\end{proof}
\end{lem}

Now if $(\alldirs,\mdirs)$ is such that the Radon transform is injective (see \cref{prop:radon_injective}), we can obtain a certain uniqueness of the decomposition of $u$ in the above \namecref{lem:projected_constraint_decomposition}, even for \emph{every} (rather than $\mtime$-almost every) $t$. As our argument requires weak-$\ast$ continuity of the Radon transform, we formulate it for the bounded support setting so that \cref{prop:supports} can be applied.
\begin{prop}[Unique decomposition with Radon operators] \label{prop:decomposition_measure_u_unique}
Assume $(\alldirs,\mdirs)$ to be such that $\Rs:\M(\Omega)\to\M(\alldirs\times\projdomstat)$ is injective,
and let $u \in \Mp(\domstat \times \domt)$ and $\liftVar  \in \Mp(\alldirs \times  \projdomdyn)$ satisfy
\[ \Rf u = \amv \liftVar. \]
Then there is a unique family of measures $(u_t)_{t\in\domt}$ such that
\[ u = u_t \prodm t  \mtime \quad \text{ and } \quad t \mapsto \int_{\alldirs \times \projdomstat} \varphi \wrt \Rs  u_t \text{ is continuous for every } \varphi \in C(\alldirs \times \projdomstat).
\]
\begin{proof}
By \cref{lem:projected_constraint_decomposition}, there is  a family of measures $u$ such that
\[ u = u_t \prodm t  \mtime  \quad \text{and} \quad \Rs u_t = \amv_t \liftVar \quad \text{for } \mtime\text{-almost every }t . \]
Since $\Rs$ is injective, this implies $u_t=\Rs^{-1}\amv_t\liftVar$ for $\mtime$-almost every $t$, where $\Rs^{-1}$ denotes the left-inverse of $\Rs$.
To show that this formula can be used to specify $u_t$ for \emph{every} $t\in\domt$, we show that $\amv_t \liftVar$ is in the range of $\Rs$ for every $t \in \domt$. Indeed, for an arbitrary $t \in \domt$ we can find a sequence $t_1,t_2,\ldots$ converging to $t$ such that $\Rs u_{t_n} = \amv_{t_n}\liftVar$. Since $\|u_{t_n}\|_\M = \|u\|_\M$ for all $n$, the sequence $u_{t_n}$ admits a (non-relabelled) subsequence converging to some $v \in \Mp(\domstat)$. Weak-$\ast$ continuity of $\Rs$ implies convergence of $\Rs u_{t_n}$ to $\Rs v$. On the other hand, it is easy to see that also $\amv_{t_n} \liftVar$ weakly-$\ast$ converges to $\amv_t \liftVar$ as $n \rightarrow \infty$ such that $\amv_t \liftVar$ is indeed in the range of $\Rs$ as claimed. 
Now it follows from uniform continuity of $\varphi \in C(\alldirs \times \projdomstat)$ that the mapping
\begin{align*}
 t \mapsto  \int_{\alldirs \times \projdomstat} \varphi \wrt \Rs u_t 
& =\int_{\alldirs \times \projdomstat} \varphi \wrt \amv_t \liftVar  
 =\int_{\alldirs \times \projdomdyn} \varphi(\theta,s+tr) \liftVar(\theta,s,r)  
\end{align*}
is continuous (again even uniformly) with respect to $t$.

As for uniqueness, given any other family $(\tilde{u}_t)_{t\in\domt}$ satisfying $u =\tilde u_t \prodm t  \mtime$ and such that $t\mapsto\int_{\alldirs \times \projdomstat} \varphi \wrt \Rs\tilde u_t$ is continuous, it follows that $\Rs u_t = \Rs \tilde{u}_t$ for all $t$ and, from injectivity of $\Rs$, that $ u_t = \tilde{u}_t$ for all $t$ as claimed.
\end{proof}
\end{prop}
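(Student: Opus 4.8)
The plan is to first extract \emph{some} admissible decomposition of $u$ from the disintegration result already available, then to rigidify it — using injectivity of $\Rs$ together with a weak-$\ast$ compactness argument — so that the slice $u_t$ is pinned down for \emph{every} $t\in\domt$, and finally to verify the continuity claim (which is routine) and the uniqueness (which then follows from injectivity of $\Rs$). The only genuinely delicate point is passing the identity $\Rs u_t=\amv_t\liftVar$ from a set of times of full $\mtime$-measure to \emph{all} times in $\domt$.

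First I would apply \cref{lem:projected_constraint_decomposition} to the constraint $\Rf u=\amv\liftVar$. This produces a family $(u_t)_{t\in\domt}$ in $\Mp(\R^d)$ supported in $\domstat$, with $\|u_t\|_\M=\|u\|_\M$ for $\mtime$-a.a.\ $t$, satisfying $u=u_t\prodm t\mtime$ and $\Rs u_t=\amv_t\liftVar$ for $\mtime$-a.a.\ $t$. On that conull set of times, injectivity of $\Rs:\M(\domstat)\to\M(\alldirs\times\projdomstat)$ forces $u_t=\Rs^{-1}(\amv_t\liftVar)$, with $\Rs^{-1}$ the left inverse of $\Rs$ on its range. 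Since redefining the family on a $\mtime$-nullset does not change the product $u_t\prodm t\mtime$, the whole problem reduces to showing $\amv_t\liftVar\in\range\Rs$ for \emph{every} $t\in\domt$; then $u_t:=\Rs^{-1}(\amv_t\liftVar)$ provides the sought family.

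The main obstacle is precisely this last reduction. Fix $t\in\domt$ and pick $t_n\to t$ inside the $\mtime$-conull set where the above identities hold; this is possible because $\mtime$ has full support $\domt$, so that set is dense in $\domt$. Then $\Rs u_{t_n}=\amv_{t_n}\liftVar$. Because $\|u_{t_n}\|_\M=\|u\|_\M$ is bounded and all $u_{t_n}$ are supported in the fixed compact set $\domstat$, a subsequence satisfies $u_{t_n}\weakstarto v\in\Mp(\domstat)$; weak-$\ast$ continuity of $\Rs$ from \cref{prop:supports} then gives $\Rs u_{t_n}\weakstarto\Rs v$. On the other hand, evaluating against test functions via $\langle\amv_{t_n}\liftVar,\varphi\rangle=\int\varphi(\theta,s+t_nr)\wrt\liftVar(\theta,s,r)$ and using dominated convergence (boundedness of $\varphi$, finiteness of $\liftVar$) shows $\amv_{t_n}\liftVar\weakstarto\amv_t\liftVar$. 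Uniqueness of weak-$\ast$ limits then yields $\amv_t\liftVar=\Rs v\in\range\Rs$, as required.

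Finally, for the continuity assertion I would rewrite, for $\varphi\in C(\alldirs\times\projdomstat)$,
\[
\int_{\alldirs\times\projdomstat}\varphi\wrt\Rs u_t=\int_{\alldirs\times\projdomstat}\varphi\wrt\amv_t\liftVar=\int_{\alldirs\times\projdomdyn}\varphi(\theta,s+tr)\wrt\liftVar(\theta,s,r),
\]
and deduce (uniform) continuity in $t$ from uniform continuity of $\varphi$ on the compact set $\alldirs\times\projdomstat$ together with compactness of $\projdomdyn$. Uniqueness is then immediate: any competing family $(\tilde u_t)$ with $u=\tilde u_t\prodm t\mtime$ and $t\mapsto\int\varphi\wrt\Rs\tilde u_t$ continuous must agree with $(u_t)$ for $\mtime$-a.a.\ $t$, so the two continuous functions $t\mapsto\int\varphi\wrt\Rs u_t$ and $t\mapsto\int\varphi\wrt\Rs\tilde u_t$ coincide on a dense subset of $\domt$, hence everywhere; injectivity of $\Rs$ then upgrades $\Rs u_t=\Rs\tilde u_t$ to $u_t=\tilde u_t$ for every $t\in\domt$.
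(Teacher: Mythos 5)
Your proposal is correct and follows essentially the same route as the paper's proof: disintegrate via \cref{lem:projected_constraint_decomposition}, use injectivity to identify $u_t=\Rs^{-1}\amv_t\liftVar$ for $\mtime$-almost every $t$, extend to all $t\in\domt$ by the same weak-$\ast$ compactness argument showing $\amv_t\liftVar\in\range\Rs$, and then obtain continuity and uniqueness exactly as in the paper. Your only addition is to make explicit the density of the $\mtime$-conull set of times (via full support of $\mtime$), a point the paper's proof uses tacitly when choosing $t_n\to t$.
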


For any $u \in \Mp(\domstat\times\domt)$ admitting a decomposition $u = u_t \prodm t  \mtime$ with $u_t$ uniquely defined in the above sense for every $t \in \domt$, we can now define an observation operator at all observation times.
\begin{defn}[Dynamic observation operator] \label{defn:radon_space_measurement_operator}
If $(\alldirs,\mdirs)$ is such that $\Rs$ is injective, abbreviate
\begin{equation*}
\Cc:=\{u \in \Mp(\domstat \times \domt)\,|\,\Rf u = \amv \liftVar\text{ for some }\liftVar\in\M(\alldirs\times\projdomdyn)\}
\end{equation*}
and denote by $u = u_t \prodm t  \mtime$ the unique decomposition of a $u\in\Cc$ from \cref{prop:decomposition_measure_u_unique}.
With $(\fopstat_t)_{t \in \alltimes}$ the family of observation operators from \cref{defn:pointwise_measurement_operator}  we define the \emph{dynamic observation operator} $\fopdyn:\Cc \subset  \Mp (\domstat \times \domt) \rightarrow H^{|\alltimes|}$ as
\[
\fopdyn u:=  (\fopstat _tu_t)_{t \in \alltimes}.
\]
\end{defn}

With this definition of $\fopdyn$, the minimization problem \ref{eq:dim_reduced_measure_only} is well-defined and also admits a solution.
\begin{prop}[Existence for \ref{eq:dim_reduced_measure_only}] \label{prop:existence_reduced_measure_only} Let $(\alldirs,\mdirs)$ be such that $\Rs:\M(\domstat) \rightarrow \M(\alldirs \times \projdomstat) $ is injective and, if $\alpha=0$, assume that there exists $\lambda \in \Mp(\domdyn)$ with $\fopstat_t\mv^d_t\lambda = \data^\dagger _t$ for all $t \in \alltimes$. Then the minimization problem \ref{eq:dim_reduced_measure_only} is well-defined and admits a solution.
\end{prop}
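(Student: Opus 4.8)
The plan is to run the direct method of the calculus of variations, carrying the slices of $u$ along via \cref{prop:decomposition_measure_u_unique}; the skeleton is exactly that of \cref{prop:well_posedness_dim_reduced_noisy}, the new ingredient being that slices of a weakly-$\ast$ convergent sequence of measures on $\domstat\times\domt$ again converge, which is where injectivity of $\Rs$ is used.

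First I would record well-definedness. If $(\liftVar,u)$ is feasible for \ref{eq:dim_reduced_measure_only}, i.e. $\amv\liftVar=\Rf u$ with $\liftVar\in\Mp(\alldirs\times\projdomdyn)\subset\M(\alldirs\times\projdomdyn)$, then $u\in\Cc$, so by injectivity of $\Rs$ and \cref{prop:decomposition_measure_u_unique} there is a unique family $(u_t)_{t\in\domt}$ with $u=u_t\prodm t\mtime$ and $t\mapsto\int\varphi\wrt\Rs u_t$ continuous for every $\varphi$; hence $\fopdyn u=(\fopstat_t u_t)_{t\in\alltimes}$ is defined (\cref{defn:radon_space_measurement_operator}) and so is the data term. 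Feasibility is nonempty: for $\alpha>0$ the pair $(0,0)$ is admissible, and for $\alpha=0$ the hypothesis yields $\lambda\in\Mp(\domdyn)$ with $\fopstat_t\mv^d_t\lambda=\data_t$ for $t\in\alltimes$; putting $\liftVar=\Rj\lambda$ and $u=\mv^d\lambda$ gives $\liftVar\in\Mp(\alldirs\times\projdomdyn)$ and $u\in\Mp(\domstat\times\domt)$ by \cref{prop:supports,prop:move_supports}, while $\Rf u=\Rf\mv^d\lambda=\amv\Rj\lambda=\amv\liftVar$ by \cref{lem:radon_move_exchange}; moreover the uniqueness in the decomposition forces $u_t=\mv^d_t\lambda$, so $(\fopdyn u)_t=\fopstat_t\mv^d_t\lambda=\data_t$ and the hard constraint holds.

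Next I would take a minimizing sequence $(\liftVar^n,u^n)$. The objective bounds $\|\liftVar^n\|_\M$, and $\Rf u^n=\amv\liftVar^n$ together with \cref{lem:projected_constraint_decomposition} gives $\|u^n\|_\M=\|\liftVar^n\|_\M$, so $(\|u^n\|_\M)_n$ is bounded. Since $\alldirs\times\projdomdyn$ and $\domstat\times\domt$ are compact metric spaces with separable preduals $C(\cdot)$, a common subsequence gives $\liftVar^n\weakstarto\liftVar\in\Mp(\alldirs\times\projdomdyn)$ and $u^n\weakstarto u\in\Mp(\domstat\times\domt)$ (positivity is weak-$\ast$ closed). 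Weak-$\ast$ continuity of $\amv$ and $\Rf$ on these compactly supported measures (\cref{prop:move_supports,prop:supports}) yields $\amv\liftVar=\Rf u$, so $(\liftVar,u)$ is feasible, $u\in\Cc$, and $u=u_t\prodm t\mtime$ with $\Rs u_t=\amv_t\liftVar$ for every $t$ (from the proof of \cref{prop:decomposition_measure_u_unique}).

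The one non-routine step — and the one I expect to be the main obstacle — is lower semicontinuity of the data term, which needs $u^n_t\weakstarto u_t$ for the finitely many $t\in\alltimes$. I would obtain this from the identity $\Rs u^n_t=\amv_t\liftVar^n$ (valid for every $t,n$ by the proof of \cref{prop:decomposition_measure_u_unique}): since $\liftVar^n\weakstarto\liftVar$ and $\amv_t$ is weak-$\ast$ continuous (\cref{prop:move_supports}), $\Rs u^n_t\weakstarto\amv_t\liftVar=\Rs u_t$; the norms $\|u^n_t\|_\M=\|u^n\|_\M$ are bounded, so every weak-$\ast$ limit point $w$ of $(u^n_t)_n$ satisfies $\Rs w=\Rs u_t$ by weak-$\ast$ continuity of $\Rs$ (\cref{prop:supports}), hence $w=u_t$ by injectivity of $\Rs$, and therefore $u^n_t\weakstarto u_t$. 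Weak-$\ast$-to-weak continuity of $\fopstat_t$ then gives $\fopstat_t u^n_t\rightharpoonup\fopstat_t u_t$ in $H$, so $\|\fopstat_t u_t-\data_t\|_H\le\liminf_n\|\fopstat_t u^n_t-\data_t\|_H$ for $t\in\alltimes$ (and for $\alpha=0$ the hard constraint $\fopstat_t u^n_t=\data_t$ passes to the limit). Combining with weak-$\ast$ lower semicontinuity of $\|\cdot\|_\M$, the pair $(\liftVar,u)$ has objective value at most $\liminf_n$ of the minimizing sequence, hence equal to the infimum, so it is a minimizer.
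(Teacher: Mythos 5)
Your proof is correct, but it follows a genuinely different route from the paper. The paper disposes of this proposition in one line: it invokes the equivalence of \ref{eq:dim_reduced_measure_only} with \ref{eq:dim_reduced_general} (\cref{prop:equivalence_measure}) and then imports the existence result \cref{prop:well_posedness_dim_reduced_noisy} for the latter, so that all the analytic work is concentrated in the equivalence proof (which in turn uses the range characterization \cref{prop:abstractRangeCharacterization}). You instead run the direct method entirely inside the measure-space formulation: feasibility via $\liftVar=\Rj\lambda$, $u=\mv^d\lambda$ and \cref{lem:radon_move_exchange}, compactness from the norm bound $\|u^n\|_\M=\|\liftVar^n\|_\M$ of \cref{lem:projected_constraint_decomposition}, closedness of the constraint by weak-$\ast$ continuity of $\Rf$ and $\amv$, and — the genuinely new step — convergence of the slices $u^n_t\weakstarto u_t$ at the finitely many observation times, obtained from $\Rs u^n_t=\amv_t\liftVar^n\weakstarto\amv_t\liftVar=\Rs u_t$, boundedness of $\|u^n_t\|_\M$, weak-$\ast$ continuity of $\Rs$ on the compactly supported setting, and injectivity of $\Rs$ to identify every cluster point with $u_t$ (metrizability on bounded sets then upgrades cluster-point convergence to convergence of the full sequence). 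This uses injectivity exactly where the paper uses it for well-definedness, but nowhere needs the full equivalence machinery, so your argument is more self-contained; the price is that it duplicates the lower-semicontinuity bookkeeping that the paper gets for free from \cref{prop:well_posedness_dim_reduced_noisy}, and it does not by itself yield the equivalence statement, which the paper needs anyway for \cref{thm:modelEquivalences}.
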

This result on existence is a direct consequence of the existence result \cref{prop:well_posedness_dim_reduced_noisy} for \ref{eq:dim_reduced_general} and the following equivalence of the two problems.

\begin{prop}[Equivalence of \ref{eq:dim_reduced_general} and \ref{eq:dim_reduced_measure_only}] \label{prop:equivalence_measure} If $(\alldirs,\mdirs)$ is such that $\Rs:\M(\domstat) \rightarrow \M(\alldirs \times \projdomstat) $ is injective, the minimization problems \ref{eq:dim_reduced_general} and \ref{eq:dim_reduced_measure_only} are equivalent in the following sense.
\begin{enumerate}
\item If $((u_t)_{t\in\domt},\liftVar)$ solves \ref{eq:dim_reduced_general}, then $(u,\liftVar)$ solves \ref{eq:dim_reduced_measure_only} with $u:= u_t \prodm t  \mtime \in \Mp(\domstat \times \domt)$.
\item If $(u,\liftVar)$ solves \ref{eq:dim_reduced_measure_only}, then $((u_t)_{t\in\domt},\liftVar)$ solves \ref{eq:dim_reduced_general}, where $u=u_t\prodm t\mtime$ is the unique decomposition of $u$ from \cref{prop:decomposition_measure_u_unique}.
\end{enumerate}
\begin{proof}
First we show that if $((u_t)_{t\in\domt},\liftVar)$ is admissible for \ref{eq:dim_reduced_general}, then $(u,\liftVar)$ with $u:= u_t \prodm t\mtime$ is admissible for \ref{eq:dim_reduced_measure_only} and has same cost:
Indeed, as long as $u=u_t\prodm t\mtime$ is actually well-defined and lies in $\Mp(\domstat \times \domt)$,
then the condition $\Rs u_t = \amv_t \liftVar$ for all $t \in \domt$ automatically implies $\Rf u=\amv\liftVar$.
Furthermore, since $t\mapsto\int_{\alldirs\times\projdomstat}\varphi\wrt\Rs u_t
=\int_{\alldirs \times \projdomstat} \varphi \wrt \amv_t \liftVar  
=\int_{\alldirs \times \projdomdyn} \varphi(\theta,s+tr) \liftVar(\theta,s,r)$
is continuous in $t$, the unique decomposition of $u$ from \cref{prop:decomposition_measure_u_unique} is in fact given by $u=u_t\prodm t\mtime$
so that $\fopdyn u = (\fopstat_t u_t)_{t \in \alltimes}$.
It still remains to show that $u_t\prodm t\mtime\in\Mp(\domstat \times \domt)$ is well-defined.
To this end, first note that by \cref{prop:properties_product_measure} the product measure $\amv_t \liftVar \prodm t  \mtime =\Rs u_t \prodm t  \mtime \in \Mp(\alldirs \times \projdomstat \times \domt)$ is well-defined.
Next we show existence of $\tilde u \in \Mp(\domstat \times \domt)$ such that $\Rf\tilde u = \Rs u_t \prodm t  \mtime$. For this purpose we note that
\[ \Rf^*:C_0(\alldirs \times \projdomstat \times \domt) \rightarrow C_0(\domstat \times \domt),\quad
\Rf^* \varphi (x,t):=\int_\alldirs \varphi( \theta,\theta \cdot x,t) \wrt \mdirs(\theta) \]
is well-defined and, as a direct computation shows, has $\Rf$ as its dual. Then, existence of $\tilde u \in \Mp(\domstat \times \domt)$ such that $\Rf\tilde u = \Rs u_t \prodm t  \mtime$ follows from \cref{prop:abstractRangeCharacterization} in the appendix since
\begin{multline*}
 \sup_{\substack{
 \varphi \in C_0(\alldirs \times \projdomstat \times \domt) \\\sup_{x \in \domstat,t \in \domt}(\Rf^* \varphi)(x,t) \leq 1
 }} \int _ \domt \int _{\alldirs \times \projdomstat}  \varphi(\theta,s,t) \wrt \Rs u_t(\theta,s) \wrt \mtime(t) \\  
  \leq \sup_{\substack{
  \varphi \in C_0(\alldirs \times \projdomstat \times \domt)  \\
  \sup_{x \in \domstat,t \in \domt} \int_\alldirs \varphi( \theta,\theta \cdot x,t) \wrt \mdirs(\theta) \leq 1 }} \sup_{t \in \domt} \int _{\alldirs \times \projdomstat } \varphi(\theta,s,t) \wrt \Rs u_t(\theta,s) \\
  \leq \sup_{t \in \domt} \sup_{\substack{
   \varphi \in C_0(\alldirs \times \projdomstat) \\
    \sup_{x \in \domstat } \int_\alldirs \varphi( \theta,\theta \cdot x) \wrt \mdirs(\theta) \leq 1 }}
     \int _{\alldirs \times \projdomstat}\varphi (\theta,s) \wrt  \Rs u_t(\theta,s) \leq  \sup_{t \in \domt} \|u_t\|_\M<  \infty.
\end{multline*}
Now by \cref{prop:decomposition_measure_u_unique} we can uniquely decompose $\tilde u=\tilde u_t \prodm t\mtime$ so that
\[ \Rs u_t \prodm t \mtime = \Rf\tilde u = \Rs \tilde u_t \prodm t \mtime \]
and thus $\Rs u_t = \Rs \tilde u_t$ for $\mtime$-almost every $t \in \domt$. Injectivity of the Radon transform then implies $\tilde u = u_t \prodm t \mtime \in \Mp(\domstat \times \domt)$.

Now we consider the reverse situation and show that if $(u,\liftVar)$ is admissible for \ref{eq:dim_reduced_measure_only} then $((u_t)_{t\in\domt},\liftVar)$ is so for \ref{eq:dim_reduced_general} with same cost (where $u=u_t\prodm t\mtime$ is the unique decomposition of $u$ from \cref{prop:decomposition_measure_u_unique}).
Indeed, by definition of the dynamic observation operator we have $\fopdyn u = (\fopstat_t u_t)_{t \in \alltimes}$ for all $t\in\alltimes$.
Furthermore, as in the proof of \cref{prop:decomposition_measure_u_unique} it follows that $\Rs u_t = \amv_t \liftVar $ for every $t \in \domt$.

Since an admissible point for one problem induces an admissible point for the other problem with exactly same cost,
minimizers of one problem must induce minimizers of the other.
\end{proof}
\end{prop}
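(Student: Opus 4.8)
The plan is to exhibit a cost-preserving correspondence between feasible points of \ref{eq:dim_reduced_general} and of \ref{eq:dim_reduced_measure_only}. Concretely, I would show that a point $((u_t)_{t\in\domt},\liftVar)$ feasible for \ref{eq:dim_reduced_general} yields a feasible $(u,\liftVar)$ for \ref{eq:dim_reduced_measure_only} with $u:=u_t\prodm t\mtime$ and the same objective value, and that a point $(u,\liftVar)$ feasible for \ref{eq:dim_reduced_measure_only} yields $((u_t)_{t\in\domt},\liftVar)$ feasible for \ref{eq:dim_reduced_general} — with $u=u_t\prodm t\mtime$ the decomposition of \cref{prop:decomposition_measure_u_unique} — again with the same objective. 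Since $\|\liftVar\|_\M$ is literally the same expression on both sides and the two data terms coincide as soon as $\fopdyn u=(\fopstat_tu_t)_{t\in\alltimes}$, such a correspondence immediately turns minimizers of one problem into minimizers of the other.

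The direction from \ref{eq:dim_reduced_measure_only} to \ref{eq:dim_reduced_general} is essentially free. If $(u,\liftVar)$ is feasible for \ref{eq:dim_reduced_measure_only}, then $\Rf u=\amv\liftVar$ places $u$ in the class $\Cc$ of \cref{defn:radon_space_measurement_operator}, and \cref{prop:decomposition_measure_u_unique} provides the unique decomposition $u=u_t\prodm t\mtime$ together with $\Rs u_t=\amv_t\liftVar$ for \emph{every} $t\in\domt$; thus $((u_t)_{t\in\domt},\liftVar)$ is feasible for \ref{eq:dim_reduced_general}, and by the very definition of $\fopdyn$ we have $\fopdyn u=(\fopstat_tu_t)_{t\in\alltimes}$, so the objectives agree. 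Nothing here needs more than quoting \cref{prop:decomposition_measure_u_unique}.

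The real work is the direction from \ref{eq:dim_reduced_general} to \ref{eq:dim_reduced_measure_only}, and the one genuinely delicate point is that for feasible $((u_t)_{t\in\domt},\liftVar)$ the object $u:=u_t\prodm t\mtime$ must first be shown to be a well-defined measure in $\Mp(\domstat\times\domt)$, since $(u_t)_t$ is a priori only given pointwise. Rather than verifying measurability of $t\mapsto u_t$ by hand, I would argue by existence-and-identification: from $\Rs u_t=\amv_t\liftVar$ and the norm preservation of $\Rs$ and $\amv_t$ on nonnegative measures one gets $\|u_t\|_\M=\|\liftVar\|_\M$ uniformly in $t$; the map $t\mapsto\amv_t\liftVar$ is weak-$\ast$ continuous since $\langle\amv_t\liftVar,\varphi\rangle=\int\varphi(\theta,s+tr)\wrt\liftVar(\theta,s,r)$ depends continuously on $t$ by uniform continuity of $\varphi$ on the compact domain, so the product $\Rs u_t\prodm t\mtime=\amv_t\liftVar\prodm t\mtime$ is a well-defined element of $\Mp(\alldirs\times\projdomstat\times\domt)$ by \cref{prop:properties_product_measure}. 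I would then apply the abstract range characterization \cref{prop:abstractRangeCharacterization} to the operator $\Rf$ with predual $\Rf^*\varphi(x,t)=\int_\alldirs\varphi(\theta,\theta\cdot x,t)\wrt\mdirs(\theta)$, whose applicability reduces to the finiteness of $\sup\{\langle\Rs u_t\prodm t\mtime,\varphi\rangle:\|\Rf^*\varphi\|_\infty\le1\}$, which after moving the supremum over $t$ outside is bounded by $\sup_{t\in\domt}\|u_t\|_\M<\infty$. This yields $\tilde u\in\Mp(\domstat\times\domt)$ with $\Rf\tilde u=\Rs u_t\prodm t\mtime$; disintegrating $\tilde u=\tilde u_t\prodm t\mtime$ by \cref{prop:decomposition_measure_u_unique} gives $\Rs\tilde u_t=\Rs u_t$ for $\mtime$-a.e.\ $t$, so injectivity of $\Rs$ forces $\tilde u_t=u_t$ a.e., whence $u_t\prodm t\mtime=\tilde u\in\Mp(\domstat\times\domt)$ is well-defined.

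Once $u=u_t\prodm t\mtime$ is an honest measure, the rest is bookkeeping: integrating the pointwise constraint $\amv_t\liftVar=\Rs u_t$ against $\mtime$ (using \cref{rem:radon_operator_decomposition,rem:move_operator_decomposition}) gives $\amv\liftVar=\Rf u$, and continuity of $t\mapsto\langle\Rs u_t,\varphi\rangle$ identifies the canonical decomposition of $u$ from \cref{prop:decomposition_measure_u_unique} as exactly $u=u_t\prodm t\mtime$, so $\fopdyn u=(\fopstat_tu_t)_{t\in\alltimes}$ and the data term and regularizer match those of \ref{eq:dim_reduced_general}. Combining both directions then gives the claimed equivalence. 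I expect the existence step for $\tilde u$ via the range characterization to be the only real obstacle; the remaining manipulations are routine properties of pushforwards, product measures, and the injectivity of $\Rs$.
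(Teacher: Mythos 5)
Your proposal is correct and follows essentially the same route as the paper's proof: the same cost-preserving correspondence between feasible points, the same well-definedness argument for $u_t\prodm t\mtime$ via the product-measure construction and the abstract range characterization applied to $\Rf$ with predual $\Rf^*$ (bounded by $\sup_{t}\|u_t\|_\M$), then injectivity of $\Rs$ to identify $\tilde u$ with $u_t\prodm t\mtime$, continuity in $t$ to pin down the canonical decomposition and hence $\fopdyn u=(\fopstat_t u_t)_{t\in\alltimes}$, and the easy reverse direction via \cref{prop:decomposition_measure_u_unique}. The only cosmetic difference is that the identity $\Rs u_t=\amv_t\liftVar$ for \emph{every} $t$ is established in the proof of \cref{prop:decomposition_measure_u_unique} rather than in its statement, which the paper handles by citing that proof — this is a citation nuance, not a gap.
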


Before considering in a second step the equivalence of \ref{eq:dim_reduced_general} and \ref{eq:dim_reduced_product_only}, let us briefly show well-posedness of \ref{eq:dim_reduced_product_only}.

\begin{prop}[Existence for \ref{eq:dim_reduced_product_only}]\label{prop:existence_dim_reduced_product_only}
Let $\data = (\data_t)_{t\in\alltimes} \in H^{|\alltimes|}$ and $\alpha \in [0,\infty)$.
If $\alpha=0$, assume there exists $\lambda \in \Mp(\domdyn)$ with $\fopstat_t\mv^d_t\lambda = \data _t$ for all $t \in \alltimes$.
Then \ref{eq:dim_reduced_product_only} has a minimizer.
\end{prop}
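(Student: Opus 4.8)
The plan is to argue by the direct method of the calculus of variations, in the spirit of the proof of \cref{prop:well_posedness_dim_reduced_noisy}, the new difficulty being that the position--velocity projections now form a family $(\liftVar_\theta)_{\theta\in\alldirs}$ indexed by a possibly uncountable set rather than a single measure. First I would check that the feasible set of \ref{eq:dim_reduced_product_only} is nonempty: for $\alpha>0$ the pair $(u,\liftVar)=(0,0)$ is admissible, and for $\alpha=0$ the choice $u_t:=\mv^d_t\lambda$, $\liftVar_\theta:=\Rj_\theta\lambda$ lies in $\Mp(\domstat)$ and $\Mp(\projdomdyn)$ by \cref{prop:move_supports,prop:supports}, satisfies $\mv_t\liftVar_\theta=\Rs_\theta\mv^d_t\lambda=\Rs_\theta u_t$ by \cref{lem:radon_move_exchange}, and has $\fopstat_tu_t=\data_t$ by assumption; hence a minimizing sequence $(u^n,\liftVar^n)$ exists.

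The a priori bound is built into the problem: boundedness of the energy gives a uniform bound on $\|\liftVar^n_{\hat\theta}\|_\M$, and since $\mv_t$ and $\Rs_\theta$ preserve the total variation of nonnegative measures (\cref{prop:radon_properties,prop:move_properties}), the constraint forces $\|\liftVar^n_\theta\|_\M=\|\mv_t\liftVar^n_\theta\|_\M=\|\Rs_\theta u^n_t\|_\M=\|u^n_t\|_\M=\|\liftVar^n_{\hat\theta}\|_\M$ for all $t\in\domt$ and $\theta\in\alldirs$, so all measures involved are uniformly bounded with supports in the fixed compacta $\projdomdyn$ and $\domstat$. Since $\domt$ and $\alldirs$ may be uncountable, there is no single subsequence along which $u^n_t$ converges for every $t$ and $\liftVar^n_\theta$ for every $\theta$. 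Instead I would fix countable dense subsets $D_\domt\subset\domt$ with $\alltimes\subset D_\domt$ and $D_\alldirs\subset\alldirs$ with $\hat\theta\in D_\alldirs$, use a diagonal argument to obtain a subsequence (not relabelled) with $u^n_t\weakstarto u_t$ for all $t\in D_\domt$ and $\liftVar^n_\theta\weakstarto\liftVar_\theta$ for all $\theta\in D_\alldirs$, and pass to the limit in the constraint to obtain $\mv_t\liftVar_\theta=\Rs_\theta u_t$ for $t\in D_\domt$, $\theta\in D_\alldirs$. For $t\in\domt\setminus D_\domt$ I would then pick $u_t$ as a weak-$\ast$ cluster point of $(u^n_t)_n$ along a $t$-dependent subsequence; since along that subsequence $\liftVar^n_\theta\weakstarto\liftVar_\theta$ still holds for every $\theta\in D_\alldirs$, this yields $\mv_t\liftVar_\theta=\Rs_\theta u_t$ for \emph{all} $t\in\domt$ and every $\theta\in D_\alldirs$. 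Symmetrically, for $\theta\in\alldirs\setminus D_\alldirs$ I would pick $\liftVar_\theta$ as a weak-$\ast$ cluster point of $(\liftVar^n_\theta)_n$, obtaining $\mv_t\liftVar_\theta=\Rs_\theta u_t$ for all $t\in D_\domt$.

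The hard part will be to upgrade this to the full constraint $\mv_t\liftVar_\theta=\Rs_\theta u_t$ for pairs with both indices outside the dense sets, where no common subsequence is available; here I would argue by continuity in $t$. For fixed $\theta$ the map $t\mapsto\mv_t\liftVar_\theta$ is weak-$\ast$ continuous on $\domt$, because $t\mapsto\int\varphi(s+tr)\wrt\liftVar_\theta(s,r)$ is continuous for every continuous test function $\varphi$. The map $t\mapsto\Rs_\theta u_t$ is weak-$\ast$ continuous as well: by the previous step $\Rs_{\theta'}u_t=\mv_t\liftVar_{\theta'}$ holds for all $t\in\domt$ whenever $\theta'\in D_\alldirs$, and $\Rs_\theta u_t=\lim_{D_\alldirs\ni\theta'\to\theta}\Rs_{\theta'}u_t$ by weak-$\ast$ continuity of $\theta\mapsto\Rs_\theta u_t$ for fixed $u_t$ (the map $x\mapsto\theta\cdot x$ depending uniformly continuously on $\theta$ over the compactum $\domstat$); moreover this convergence is \emph{uniform in $t$}, since $|\langle\Rs_{\theta'}u_t-\Rs_\theta u_t,\varphi\rangle|\le\|u_t\|_\M\sup_{x\in\domstat}|\varphi(\theta'\cdot x)-\varphi(\theta\cdot x)|\to 0$ as $\theta'\to\theta$ uniformly in $t$, by uniform continuity of $\varphi$ together with the uniform bound on $\|u_t\|_\M$ and $\supp u_t\subset\domstat$. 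Hence $t\mapsto\Rs_\theta u_t$ is a uniform limit of weak-$\ast$ continuous maps, so it is weak-$\ast$ continuous. Since two weak-$\ast$ continuous maps on the metric space $\domt$ into a fixed (metrizable) weak-$\ast$ compact ball that agree on the dense set $D_\domt$ must coincide, we conclude $\mv_t\liftVar_\theta=\Rs_\theta u_t$ for all $t\in\domt$, $\theta\in\alldirs$, so $(u,\liftVar)$ is admissible for \ref{eq:dim_reduced_product_only}.

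It then remains to check optimality, which is routine: weak-$\ast$ lower semicontinuity of $\liftVar\mapsto\|\liftVar_{\hat\theta}\|_\M$ and of $v\mapsto\|v-\data_t\|_H^2$, the weak-$\ast$-to-weak continuity of $\fopstat_t$ (needed, in the case $\alpha=0$, to pass the hard constraint $\fopstat_tu_t=\data_t$ to the limit), and the convergences along the finitely many indices $\alltimes\subset D_\domt$ and $\hat\theta\in D_\alldirs$ show that the constructed $(u,\liftVar)$ has energy no larger than the infimum. Hence it is a minimizer of \ref{eq:dim_reduced_product_only}. Feasibility and the a priori estimate are straightforward; the delicate step is the density/continuity argument that propagates the constraint to the index pairs not reached by any single subsequence.
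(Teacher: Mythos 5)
Your proof is correct, but it takes a genuinely different route from the paper's. The paper's proof of \cref{prop:existence_dim_reduced_product_only} avoids sequences altogether: it endows $\Mp(\domstat)^\domt\times\Mp(\projdomdyn)^\alldirs$ with the product of the weak-$\ast$ topologies, uses the mass identities to confine all finite-energy points to a product of norm balls, which is compact by Banach--Alaoglu together with Tychonoff's theorem, and concludes by lower semi-continuity of the energy on that compact set (the constraint set is an intersection over all $(t,\theta)$ of coordinatewise closed sets, hence closed). This sidesteps exactly the difficulty you identify---that no single subsequence controls uncountably many coordinates---at the price of working with a non-metrizable product topology. Your argument stays sequential, in the spirit of the $\alltimes$-cluster-point construction in \cref{prop:well_posedness_dim_reduced_noisy}: diagonal extraction over countable dense index sets $D_\domt\supset\alltimes$ and $D_\alldirs\ni\hat\theta$, coordinatewise cluster points for the remaining indices, and then propagation of the constraint to pairs with both indices outside the dense sets via the uniform-in-$t$ convergence $\Rs_{\theta'}u_t\to\Rs_\theta u_t$ (the uniform mass bound does survive the weak-$\ast$ limits, as you note) combined with weak-$\ast$ continuity of $t\mapsto\mv_t\liftVar_\theta$ and density of $D_\domt$; this propagation step is sound and is the new ingredient your route requires. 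It also incidentally exhibits a regularity of the constructed minimizer (weak-$\ast$ continuity of $t\mapsto\Rs_\theta u_t$) that the paper only establishes later, in \cref{prop:decomposition_measure_u_unique,prop:unique_decomp_move}, under injectivity assumptions. In short: the paper's argument is shorter and handles the uncountable index sets wholesale; yours is more elementary (no Tychonoff, only sequential compactness in separable duals) and more explicit about how the limit object is assembled.
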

\begin{proof}
We rewrite \ref{eq:dim_reduced_product_only} as minimization of the energy
\[
E(u,\liftVar)=\|\liftVar_{\hat\theta}\|_\M+\frac1{2\alpha}\sum_{t\in\alltimes}\|\fopstat_tu_t-\data_t\|_H^2+\iota_{\{\Rs_\theta u_t=\mv_t\liftVar_\theta\,\forall t,\theta\}}(u,\liftVar)
\]
over $(u,\liftVar)\in\Mp(\Omega)^\domt\times\Mp(\projdomdyn)^\alldirs$, where $\iota_A$ is the indicator function of a constraint set $A$.
There exists $C<\infty$ with $\inf E<C$ since $E$ is finite at $(u,\liftVar)=(0,0)$ when $\alpha>0$
and at $(u,\liftVar)=((\mv^d_t\lambda)_{t\in\domt},(\Rj_\theta\lambda)_{\theta\in\alldirs})$ when $\alpha=0$.
We now endow $\Mp(\Omega)^\domt\times\Mp(\projdomdyn)^\alldirs$ with the product topology of the weak-$\ast$ topologies on $\Mp(\Omega)$ and $\Mp(\projdomdyn)$, respectively.
Then the set
$B=\{(u,\liftVar)\in\Mp(\Omega)^\domt\times\Mp(\projdomdyn)^\alldirs\,|\,\|u_t\|_\M,\|\liftVar_\theta\|_\M\leq C\text{ for all }t\in\domt,\theta\in\alldirs\}$
is compact by Tychonoff's theorem, since $\|\cdot\|_\M$-balls are compact on $\Mp(\Omega)$ and $\Mp(\projdomdyn)$, respectively, by the Banach--Alaoglu theorem.
By \cref{prop:radon_properties,prop:move_properties}, any $(u,\liftVar)$ with finite energy satisfies
$\|u_t\|_\M=\|\liftVar_\theta\|_\M=\|\liftVar_{\hat\theta}\|_\M$ for all $t\in\domt$ and $\theta\in\alldirs$
so that $\inf E=\inf_BE$.
Existence of a minimizer on the compact set $B$ now follows from the lower semi-continuity of $E$ with respect to the chosen topology,
which in turn follows from the weak-$\ast$-to-weak-$\ast$ continuity of $\fopstat_t$, $\mv_t$, and $\Rs_\theta$
as well as the weak-$\ast$ lower semi-continuity of norms.
\end{proof}

Now we aim to show also equivalence of \ref{eq:dim_reduced_general} and \ref{eq:dim_reduced_product_only}, which, similarly to the previous case, requires injectivity of the move operator $\mv$.
First we obtain for $ \liftVar \in \Mp(\alldirs \times \projdomstat)$ with $\amv _t \liftVar = \Rs u_t$ a decomposition $\liftVar = \mdirs  \prodm \theta \liftVar_\theta$ with $\liftVar_\theta$ being uniquely defined for every $\theta \in \alldirs$ in a certain sense.
\begin{prop}[Unique decomposition with move operators]\label{prop:unique_decomp_move} Assume $(\domt,\mtime)$ to be such that $\mv:\M(\projdomdyn) \rightarrow \M(\projdomstat \times \domt) $ is injective, and let  $ \liftVar \in \Mp(\alldirs \times \projdomdyn)$ and $u \in \Mp(\domstat)^\domt$ satisfy
\[ \amv_t \liftVar = \Rs u_t \quad \text{for every }t \in \domt.\] 
Then there exists a unique family of measures $(\liftVar_\theta)_{\theta \in \alldirs}$ such that
\[ \liftVar = \mdirs \prodm \theta \liftVar_\theta \quad \text{ and } \quad \theta \mapsto \int_{\projdomstat \times \domt} \varphi \wrt \mv \liftVar_\theta \text{ is continuous for each } \varphi \in C(\projdomstat \times \domt).
\]
\begin{proof}
First we can argue as in the proof of \cref{prop:equivalence_measure} to obtain, from $\amv_t \liftVar = \Rs u_t$, existence of $\tilde u \in \M(\domstat \times \domt)$ such that
\[ \Rf\tilde u = \amv \liftVar ,\]
noting that injectivity of the Radon transform in \cref{prop:equivalence_measure} was only required to obtain uniqueness of $\tilde u$, which is not necessary here.
Now \cref{lem:projected_constraint_decomposition} implies that there is  a family of measures $(\liftVar_\theta)_{\theta \in \alldirs}$ such that
\[ \liftVar =  \mdirs \prodm \theta \liftVar_\theta \quad \text{and} \quad \mv \liftVar_\theta = \Rf_\theta\tilde u  \quad \text{ for $\mdirs$-almost every }\theta \in \alldirs . \]
Similarly as in the proof of \cref{prop:decomposition_measure_u_unique}, this implies $\liftVar_\theta=\mv^{-1}\Rf_\theta\tilde u$ for $\mdirs$-almost every $\theta \in \alldirs$, and our goal is to define $\liftVar_\theta$ this way for \emph{every} $\theta \in \alldirs$.
 To this end, we show that $\Rf_\theta\tilde u$ is in the range of $\mv$ for every $\theta \in \alldirs$: For $\theta \in \alldirs$ arbitrary, we can find a sequence $\theta_1,\theta_2,\ldots$ converging to $\theta$ such that $\Rf_{\theta_n}\tilde u = \mv \liftVar_{\theta_n}$.
  Since $\|\liftVar_{\theta_n}\|_\M = \|\liftVar\|_\M$ for all $n$, the sequence $\liftVar_{\theta_n}$ admits a (non-relabelled) subsequence converging weakly-$\ast$ to some $\tilde{\liftVar} \in \Mp(\projdomdyn)$. Weak-$\ast$ continuity of $\mv$ implies convergence of $\mv \liftVar_{\theta_n}$ to $\mv \tilde{\liftVar}$. On the other hand, it is easy to see that also $\Rf_{\theta_n}\tilde u$ weakly-$\ast$ converges to $\Rf_\theta\tilde u$ as $n \rightarrow \infty$ such that $\Rf_\theta\tilde u$ is in the range of $\mv$ as claimed. 
Hence we have $\liftVar = \mdirs \prodm \theta \liftVar_\theta$ with $\liftVar_\theta = \mv^{-1}\Rf_\theta\tilde u$ for every $\theta  \in \alldirs$.
Now it follows from uniform continuity of $\varphi \in C(\projdomstat \times \domt)$ that the mapping
\begin{align*}
 \theta \mapsto  \int_{\projdomstat \times \domt} \varphi \wrt \mv \liftVar_\theta 
& =\int_{\projdomstat \times \domt} \varphi \wrt \Rf_\theta\tilde u  
 =\int_{\domstat \times \domt} \varphi(\theta \cdot x,t) \wrt\tilde u(x,t)  
\end{align*}
is continuous (again even uniformly) with respect to $\theta$ as claimed. Given any other family $(\tilde{\liftVar}_\theta)_{\theta\in\alldirs}$ satisfying $\liftVar  = \mdirs (\theta) \times \tilde{\liftVar}_\theta$ and continuity of $\theta\mapsto\int_{\projdomstat \times \domt} \varphi \wrt \mv\tilde\liftVar_\theta$, it follows that $\mv \liftVar_\theta = \mv \tilde{\liftVar}_\theta$ for all $\theta$ and, from injectivity of $\mv$, that $ \liftVar_\theta = \tilde{\liftVar}_\theta$ for all $\theta$ as claimed.
\end{proof}
\end{prop}

As before, this unique decomposition allows to show equivalence between \ref{eq:dim_reduced_general} and \ref{eq:dim_reduced_product_only}.

\begin{prop}[Equivalence of \ref{eq:dim_reduced_general} and \ref{eq:dim_reduced_product_only}] \label{prop:equivalence_product_only}
If $(\domt,\mtime)$ is such that $\mv:\M(\projdomdyn) \rightarrow \M(\projdomstat \times \domt) $ is injective, the minimization problems \ref{eq:dim_reduced_general} and \ref{eq:dim_reduced_product_only} are equivalent in the following sense.
\begin{enumerate}
\item If $(u,\liftVar)$ solves \ref{eq:dim_reduced_general}, then $(u,(\liftVar_\theta)_{\theta\in\alldirs})$ solves \ref{eq:dim_reduced_product_only},
where $\liftVar = \mdirs \prodm \theta \liftVar_\theta$ is the unique decomposition of $\liftVar$ from \cref{prop:unique_decomp_move}.
\item If $(u,(\liftVar_\theta)_{\theta \in \alldirs})$ solves \ref{eq:dim_reduced_product_only}, then $(u,\liftVar)$ solves \ref{eq:dim_reduced_general} with $\liftVar:= \mdirs \prodm \theta \liftVar_\theta \in \Mp(\alldirs \times \projdomdyn)$.
\end{enumerate}
\begin{proof}
First we show that if $(u,(\liftVar_\theta)_{\theta \in \alldirs})$ is admissible for \ref{eq:dim_reduced_product_only}, then $(u,\liftVar)$ with $\liftVar:= \mdirs \prodm \theta \liftVar_\theta$ is admissible for \ref{eq:dim_reduced_general} and has same cost:
Indeed, as long as $\liftVar= \mdirs \prodm \theta \liftVar_\theta$ is actually well-defined and lies in $\Mp(\alldirs \times \projdomdyn)$,
the condition $\Rs_\theta u_t=\mv_t\liftVar_\theta$ for all $t\in\domt$ and $\theta\in\alldirs$ automatically implies $\Rs u_t=\amv_t\liftVar$ for all $t\in\domt$.
Furthermore, by \cref{prop:radon_properties,prop:move_properties} we have $\|\liftVar_\theta\|_\M=\|u_t\|_\M$ for all $t\in\domt$ and $\theta\in\alldirs$
and thus also $\|\liftVar\|_\M=\|\mdirs \prodm \theta \liftVar_{\theta}\|_\M=\|\liftVar_{\hat\theta}\|_\M$.
It still remains to show that $\mdirs \prodm \theta \liftVar_\theta\in\Mp(\alldirs \times \projdomdyn)$ is well-defined.
To this end, first note that by \cref{prop:properties_product_measure} the product measure
$\mdirs \prodm \theta \Rs_\theta u_t  = \mdirs \prodm \theta \mv_t \liftVar_\theta \in \Mp(\alldirs \times \projdomstat)$ is well-defined for each $t \in \domt$. Further observe that for any  $ \varphi \in C_0(\alldirs \times \projdomstat \times \domt)$ the mapping
\[ t \mapsto  
\int_\alldirs \int_\projdomstat \varphi(\theta,s,t) \wrt \mv_t \liftVar_\theta(s) \wrt  \mdirs (\theta)
= \int_\alldirs \int_\projdomdyn \varphi(\theta,z_1+tz_2,t) \wrt\liftVar_\theta(z_1,z_2) \wrt  \mdirs (\theta)
\]
is continuous in $t$. Hence we can define the mapping 
\[ C_0(\alldirs \times \projdomstat \times \domt) \ni \varphi   \mapsto \left( (\mdirs \prodm \theta \mv_t \liftVar_\theta) \prodm t \mtime  \right) (\varphi): =
\int_\domt \int_\alldirs \int_\projdomstat \varphi(\theta,s,t) \wrt \mv_t \liftVar_\theta(s) \wrt  \mdirs (\theta) \wrt \mtime(t)
 \]
 and, since $ |\left( (\mdirs \prodm \theta \mv_t \liftVar_\theta) \prodm t \mtime  \right) (\varphi)| \leq C \|\varphi\|_\infty $  with $C>0$ by a direct estimate, obtain that 
$  (\mdirs \prodm \theta \mv_t \liftVar_\theta) \prodm t \mtime \in \Mp(\alldirs \times \projdomstat \times \domt) $.
We next show existence of some $\tilde\liftVar \in \Mp(\alldirs \times \projdomdyn)$ such that $\amv \tilde\liftVar  =(\mdirs \prodm \theta \mv_t \liftVar_\theta) \prodm t \mtime $. To this end, first note that
\[ \amv^*:C_0(\alldirs \times \projdomstat \times \domt) \rightarrow C_0(\alldirs \times \projdomdyn),\quad
\amv^* \varphi (\theta,(z_1,z_2)):= \int_\domt \varphi( \theta,z_1 + tz_2,t) \wrt \mtime(t) \]
is well-defined and, as a direct computation shows, has dual $\amv$. Then, existence of $\tilde\liftVar \in \Mp(\alldirs \times \projdomdyn)$ such that $\amv \tilde\liftVar  =(\mdirs \prodm \theta \mv_t \liftVar_\theta) \prodm t \mtime $ again follows from \cref{prop:abstractRangeCharacterization} since
\begin{multline*}
 \sup_{\substack{
 \varphi \in C_0(\alldirs \times \projdomstat \times \domt) \\\sup_{\theta \in \alldirs,z \in \projdomdyn}(\amv[]^* \varphi)(\theta,z) \leq 1
 }} \int _\domt \int_\alldirs \int_{\projdomstat} \varphi(\theta,s,t) \wrt \mv _t \liftVar_\theta(s) \wrt \mdirs(\theta) \wrt \mtime (t)\\  
  \leq \sup_{\substack{
  \varphi \in C_0(\alldirs \times \projdomstat \times \domt)  \\
  \sup_{\theta \in \alldirs,(z_1,z_2) \in \projdomdyn} \int_\domt \varphi( \theta,z_1 + tz_2,t) \wrt \mtime(t) \leq 1 }} \sup_{\theta \in \alldirs} \int_\domt \int_{\projdomstat} \varphi(\theta,s,t) \wrt \mv _t\liftVar_\theta(s)\wrt \mtime (t) \\
  \leq \sup_{\theta \in \alldirs} \sup_{\substack{
   \varphi \in C_0(\projdomstat \times \domt) \\
    \sup_{(z_1,z_2) \in \projdomdyn } \int_\domt \varphi( z_1+tz_2,t) \wrt \mtime(t) \leq 1 }}
     \int_\domt \int_{\projdomstat} \varphi(s,t) \wrt \mv _t\liftVar_\theta(s)\wrt \mtime (t) \leq  \sup_{\theta \in \alldirs} \|\liftVar_\theta\|_\M<  \infty.
\end{multline*}
Now by \cref{prop:unique_decomp_move} we can uniquely decompose $\tilde\liftVar=\mdirs \prodm \theta \tilde\liftVar_\theta$ so that
\[\mdirs\prodm\theta\mv\liftVar_\theta=\amv\tilde\liftVar=\mdirs\prodm\theta\mv\tilde\liftVar_\theta\]
and thus $\mv\liftVar_\theta=\mv\tilde\liftVar_\theta$ for $\mdirs$-almost every $\theta \in \alldirs$.
Injectivity of the move operator then implies $\tilde\liftVar=\mdirs\prodm\theta\liftVar_\theta\in\Mp(\alldirs\times\projdomdyn)$.

Now we consider the reverse situation and show that if $(u,\liftVar)$ is admissible for \ref{eq:dim_reduced_general}, then $(u,(\liftVar_\theta)_{\theta\in\alldirs})$ is so for \ref{eq:dim_reduced_product_only} with same cost (where $\liftVar=\mdirs\prodm\theta\liftVar_\theta$ is the unique decomposition of $\liftVar$ from \cref{prop:unique_decomp_move}).
Indeed, as in the proof of \cref{prop:unique_decomp_move} it follows that $\mv_t\liftVar_\theta=\Rs_\theta u_t$ for all $\theta\in\alldirs$ and $t\in\domt$,
and \cref{prop:radon_properties,prop:move_properties} then imply $\|\liftVar\|_\M=\|\mdirs \prodm \theta \liftVar_{\theta}\|_\M=\|\liftVar_{\hat\theta}\|_\M$.

Again, since each admissible point for one problem induces an admissible point of the other with same cost, the statement on the minimizers follows.
\end{proof}
\end{prop}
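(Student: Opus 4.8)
The plan is to follow the template of \cref{prop:equivalence_measure}: I would show that every admissible point of one of the two problems induces an admissible point of the other with exactly the same objective value, after which the assertion on the minimizers is immediate.

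First I would treat the direction from \ref{eq:dim_reduced_product_only} to \ref{eq:dim_reduced_general}. Given an admissible $(u,(\liftVar_\theta)_{\theta\in\alldirs})$ for \ref{eq:dim_reduced_product_only}, so that $\mv_t\liftVar_\theta=\Rs_\theta u_t$ for all $t\in\domt$, $\theta\in\alldirs$, I set $\liftVar:=\mdirs\prodm\theta\liftVar_\theta$. Granting for the moment that this is a well-defined element of $\Mp(\alldirs\times\projdomdyn)$, the constraint $\amv_t\liftVar=\Rs u_t$ for every $t\in\domt$ follows at once from the decomposition $\amv_t(\mdirs\prodm\theta\liftVar_\theta)=\mdirs\prodm\theta\mv_t\liftVar_\theta$ of \cref{rem:move_operator_decomposition}. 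Moreover, by \cref{prop:radon_properties,prop:move_properties} the number $\|\liftVar_\theta\|_\M=\|\mv_t\liftVar_\theta\|_\M=\|\Rs_\theta u_t\|_\M=\|u_t\|_\M$ does not depend on $\theta$ (nor on $t$), and since $\liftVar\ge0$ and $\mdirs$ is a probability measure we get $\|\liftVar\|_\M=\int_\alldirs\|\liftVar_\theta\|_\M\wrt\mdirs(\theta)=\|\liftVar_{\hat\theta}\|_\M$, so the two objectives coincide.

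The crux — and the step I expect to be the main obstacle — is verifying that $\mdirs\prodm\theta\liftVar_\theta$ really is a finite Radon measure, since the family $(\liftVar_\theta)_\theta$ is only prescribed pointwise and no joint measurability in $\theta$ has been assumed. Here I would argue exactly as in \cref{prop:equivalence_measure}: the product $\mdirs\prodm\theta\mv_t\liftVar_\theta=\mdirs\prodm\theta\Rs_\theta u_t=\Rs u_t$ exists by \cref{prop:properties_product_measure}, and $t\mapsto\int_{\alldirs\times\projdomstat}\varphi\wrt\Rs u_t$ is continuous, so $(\mdirs\prodm\theta\mv_t\liftVar_\theta)\prodm t\mtime\in\Mp(\alldirs\times\projdomstat\times\domt)$ is well-defined; using the adjoint $\amv^*$ together with the abstract range characterization \cref{prop:abstractRangeCharacterization} — whose hypothesis is a chain of test-function suprema bounded by $\sup_{\theta\in\alldirs}\|\liftVar_\theta\|_\M<\infty$ — one obtains $\tilde\liftVar\in\Mp(\alldirs\times\projdomdyn)$ with $\amv\tilde\liftVar=(\mdirs\prodm\theta\mv_t\liftVar_\theta)\prodm t\mtime$; decomposing $\tilde\liftVar=\mdirs\prodm\theta\tilde\liftVar_\theta$ by \cref{prop:unique_decomp_move} gives $\mv\tilde\liftVar_\theta=\mv\liftVar_\theta$ for $\mdirs$-a.e.\ $\theta$, and injectivity of $\mv$ forces $\tilde\liftVar=\mdirs\prodm\theta\liftVar_\theta$, which is therefore well-defined and nonnegative.

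For the converse direction, given $(u,\liftVar)$ admissible for \ref{eq:dim_reduced_general}, i.e.\ $\amv_t\liftVar=\Rs u_t$ for all $t\in\domt$, I would invoke \cref{prop:unique_decomp_move} (which is where injectivity of $\mv$ enters) to obtain the unique decomposition $\liftVar=\mdirs\prodm\theta\liftVar_\theta$; its proof simultaneously yields $\mv_t\liftVar_\theta=\Rs_\theta u_t$ for every $\theta\in\alldirs$ and $t\in\domt$, so $(u,(\liftVar_\theta)_{\theta\in\alldirs})$ is admissible for \ref{eq:dim_reduced_product_only}, and \cref{prop:radon_properties,prop:move_properties} again give $\|\liftVar\|_\M=\|\liftVar_{\hat\theta}\|_\M$. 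Since the fidelity term $\tfrac1{2\alpha}\sum_{t\in\alltimes}\|\fopstat_tu_t-\data_t\|_H^2$ is literally identical in the two problems, this correspondence preserves the entire cost and hence maps minimizers to minimizers in both directions, which is the claim.
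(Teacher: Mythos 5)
Your proposal is correct and follows essentially the same route as the paper's own proof: both directions are handled by showing that admissible points correspond with equal cost, the well-definedness of $\mdirs\prodm\theta\liftVar_\theta$ is established via the product measure $(\mdirs\prodm\theta\mv_t\liftVar_\theta)\prodm t\mtime$, the adjoint $\amv^*$ together with \cref{prop:abstractRangeCharacterization}, and then \cref{prop:unique_decomp_move} plus injectivity of $\mv$, exactly as in the paper. The only cosmetic difference is that you frame the argument as transplanting the proof of \cref{prop:equivalence_measure}, which is also how the paper structures it.
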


\removed{For the sake of completeness, let us state here that \cref{thm:modelEquivalences} is nothing more than a summary of \cref{prop:well_posedness_dim_reduced_noisy,prop:existence_reduced_measure_only,prop:equivalence_measure,prop:existence_dim_reduced_product_only,prop:equivalence_product_only}.}

\subsection{Auxiliary Results}

\begin{prop}[Duality-based characterization of operator range]\label{prop:abstractRangeCharacterization}
Let $A:X\to Y$ be a bounded linear operator such that $X$ and $Y$ admit predual spaces $X^\#$ and $Y^\#$ and such that $A^\#:Y^\# \rightarrow X^\#$ is a bounded linear operator whose dual operator is $A$. Then, with $K\subset X^\#$ a cone and $K^\circ\subset X$ the polar cone, we have
\begin{equation*}
y\in A(K^\circ)
\qquad\Leftrightarrow\qquad
\sup_{y^\#\in Y^\#,A^\#y^\#\in B_1^{X^\#}+K}\langle y^\#,y\rangle<\infty\,,
\end{equation*}
where $B_1^{X^\#}$ denotes the closed unit ball in $X^\#$.
Further, in the supremum, $Y^\#$ may also be replaced by a dense subset of $Y^\#$.
\begin{proof}
Given $y\in Y$, define
\begin{equation*}
f:Y^\#\to\R,\,f(y^\#)=\langle y^\#,-y\rangle\,,
\qquad
g:X^\#\to[0,\infty],\,g(x^\#)=\iota_{B_1^{X^\#}+K}(x^\#)
\end{equation*}
where $\iota_S$ denotes the indicator function of a set $S$.0
Then $g$ is continuous in $0$ and $0\in A^\#\domain f=\range A^\#$.
Thus, by Fenchel--Rockafellar duality we have
\begin{equation*}
\inf_{y^\#\in Y^\#}f(y^\#)+g(A^\#y^\#)
=\sup_{x\in X}-f^*(Ax)-g^*(-x)\,.
\end{equation*}
Noting $g^*(x)=\|x\|_{X}+\iota_{K^\circ}(x)$ and $f^*(\tilde y)=\iota_{-y}(\tilde y)=0$ if $\tilde y=-y$ and $\infty$ else, the above equality turns into
\begin{equation*}
\inf_{y\in Y^\#,A^\#y^\#\in B_1^{X^\#}+K}\langle y^\#,-y\rangle
=\sup_{x\in K^\circ,Ax=y}-\|x\|_{x}
\begin{cases}
=-\infty&\text{if }y\notin A(K^\circ)\,,\\
>-\infty&\text{else.}
\end{cases}
\end{equation*}
This directly implies the desired equivalence.
Since $y^\#\mapsto\langle y^\#,-y\rangle$ is continuous on $Y^\#$ and $A^\#(\lambda y^\#)\in\mathrm{int}(B_1^{X^\#}+K)$ for any $A^\#y^\#\in B_1^{X^\#}+K$ and $\lambda\in[0,1)$,
the supremum stays the same when only taken over a dense subset of $Y^\#$.
\end{proof}
\end{prop}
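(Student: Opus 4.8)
The plan is to read off the claimed equivalence from Fenchel--Rockafellar duality, arranging matters so that the left-hand membership $y\in A(K^\circ)$ and the finiteness of the supremum on the right become the optimal values of a mutually dual pair of convex problems. First I would fix $y\in Y$ and introduce the proper convex functions $f:Y^\#\to\R$, $f(y^\#)=\langle y^\#,-y\rangle$ (a continuous linear functional), and $g:X^\#\to[0,\infty]$, $g=\iota_{B_1^{X^\#}+K}$, the convex indicator of the Minkowski sum of the closed unit ball and the cone $K$. By definition of $g$, the primal problem $\inf_{y^\#\in Y^\#}f(y^\#)+g(A^\#y^\#)$ then equals exactly $-\sup_{y^\#:\,A^\#y^\#\in B_1^{X^\#}+K}\langle y^\#,y\rangle$, i.e. minus the quantity appearing in the statement.

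To invoke Fenchel--Rockafellar I would check the qualification condition: $f$ is finite on all of $Y^\#$ and $g$ is continuous at $0=A^\#0$, since $0$ lies in the interior of $B_1^{X^\#}$ and hence of $B_1^{X^\#}+K$. This gives the strong duality identity $\inf_{y^\#}f(y^\#)+g(A^\#y^\#)=\sup_{x\in X}-f^*(Ax)-g^*(-x)$, where the crucial point is that the dual problem is phrased in terms of $A$ acting on $X$ (not $A^\#$). The next step is the routine computation of the conjugates: $f^*=\iota_{\{-y\}}$, and $g^*(x)=\|x\|_X+\iota_{K^\circ}(x)$, the norm term coming from maximizing a linear functional over the unit ball and the polar-cone indicator from maximizing over the cone $K$ (the supremum of a linear functional over a cone is $0$ on the polar and $+\infty$ off it).

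Substituting these into the dual and flipping the sign via $x\mapsto -x$ turns the right-hand side into $\sup_{x\in K^\circ,\,Ax=y}-\|x\|_X$, which is $>-\infty$ exactly when the constraint set $\{x\in K^\circ\mid Ax=y\}$ is nonempty, that is, exactly when $y\in A(K^\circ)$; otherwise it is a supremum over the empty set and equals $-\infty$. Equating this with the primal value $-\sup_{y^\#:\,A^\#y^\#\in B_1^{X^\#}+K}\langle y^\#,y\rangle$ yields the asserted equivalence, since the latter supremum is finite iff the corresponding primal value is finite iff $y\in A(K^\circ)$.

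Finally, for the density addendum I would note that $y^\#\mapsto\langle y^\#,-y\rangle$ is norm-continuous on $Y^\#$ and that any feasible $y^\#$ — say $A^\#y^\#=b+k$ with $\|b\|_{X^\#}\le1$, $k\in K$ — can be rescaled: for $\lambda\in[0,1)$ one has $A^\#(\lambda y^\#)=\lambda b+\lambda k\in\mathrm{int}(B_1^{X^\#})+K\subset\mathrm{int}(B_1^{X^\#}+K)$, so a whole neighbourhood of $\lambda y^\#$ is feasible; hence every feasible point is approximated, in objective value, by feasible points from any prescribed dense subset, and restricting the supremum to such a subset changes nothing. I expect the only genuinely delicate points to be verifying the constraint qualification and keeping the signs and the predual/dual pairings straight (ensuring $A$ and $A^\#$ appear on the correct sides); the conjugate computations and the density argument are straightforward once the setup is fixed.
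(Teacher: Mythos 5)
Your proposal is correct and follows essentially the same route as the paper's proof: the same choice of $f=\langle\cdot,-y\rangle$ and $g=\iota_{B_1^{X^\#}+K}$, the same Fenchel--Rockafellar qualification (continuity of $g$ at $0=A^\#0$), the same conjugate computations $f^*=\iota_{\{-y\}}$ and $g^*=\|\cdot\|_X+\iota_{K^\circ}$, and the same rescaling argument $A^\#(\lambda y^\#)\in\mathrm{int}(B_1^{X^\#}+K)$ for the density addendum. No gaps to report.
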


\subsection*{Acknowledgements}
The work is supported by the European Fund for Regional Development (ERDF) within the project TRACAR: Non-invasive Imaging  of Pharmacokinetics and Optimization of CAR T-Cell Therapies for Solid Tumors.
It was further supported by the Deutsche Forschungsgemeinschaft (DFG) under Germany's Excellence Strategy EXC 2044 -- 390685587, Mathematics M\"unster: Dynamics-Geometry-Structure,
and by the Alfried  Krupp  Prize  for  Young  University  Teachers  awarded  by  the  Alfried  Krupp  von  Bohlen  und Halbach-Stiftung. MH acknowledges support by the Austrian Science Fund (FWF) (Grant J 4112).

\bibliography{lit_dat,mh_lit_dat,as_lit_dat}
\bibliographystyle{abbrv}

\end{document}